\numberwithin{equation}{section}
\newtheorem{theorem}{Theorem}[section]
\newtheorem{corollary}[theorem]{Corollary}
\newtheorem{lemma}[theorem]{Lemma}
\newtheorem{prop}[theorem]{Proposition}
\theoremstyle{definition}
\newtheorem{definition}[theorem]{Definition}
\newtheorem{example}[theorem]{Example}
\newtheorem{remark}[theorem]{Remark}
\newcommand{\bP}{\mathbb{P}}
\newcommand{\der}{\mathrm{d}}
\newcommand{\area}{\ensuremath \mathrm{Area}}
\newcommand{\fc}{\ensuremath F^{\mathrm{cyc}}}
\newcommand\cut{\operatorname{cut}}
\newcommand\glue{\operatorname{glue}}
\newcommand\form[1]{\langle #1\rangle}
\newcommand{\PP}{\mathbb{P}}
\newcommand{\PO}{\mathbb{P}^1_{a,b,c}}
\newcommand{\CC}{\mathbb{C}}
\newcommand{\RR}{\mathbb{R}}
\newcommand{\HH}{\mathbb{H}^2}
\newcommand{\Z}{\mathbb{Z}}
\newcommand{\nat}{\mathbb{N}}
\newcommand{\AI}{A_\infty}
\newcommand{\CA}{\mathcal{A}}
\newcommand{\CS}{\mathcal{S}}
\newcommand{\WT}[1]{\widetilde{#1}}
\newcommand\cay{\operatorname{Cay}}
\newcommand{\Jac}{\mathrm{Jac}}
\title{Lagrangian Floer potential of orbifold spheres}
\begin{document}

\author[Cho]{Cheol-Hyun Cho}
\address{Department of Mathematical Sciences, Research institute of Mathematics\\ Seoul National University\\ Gwanak-ro 1\\ Gwanak-gu \\Seoul 151-747\\ Korea}
\email{chocheol@snu.ac.kr}
\author[Hong]{Hansol Hong}
\address{Department of Mathematical Sciences\\ Seoul National University\\ Gwanak-ro 1\\ Gwanak-gu \\Seoul 151-747\\ Korea}
\email{hansol84@snu.ac.kr}
\author[Kim]{Sang-hyun Kim}
\address{Department of Mathematical Sciences, Research institute of Mathematics\\ Seoul National University\\ Gwanak-ro 1\\ Gwanak-gu \\Seoul 151-747\\ Korea}
\email{s.kim@snu.ac.kr}
\author[Lau]{Siu-Cheong Lau}
\address{Department of Mathematics\\ Harvard University\\ One Oxford Street\\ Cambridge \\ MA 02138\\ USA}
\email{s.lau@math.harvard.edu}
\begin{abstract}
For each sphere with three orbifold points, we construct an algorithm to compute the open Gromov-Witten potential, which serves as the quantum-corrected Landau-Ginzburg mirror and is an infinite series in general.  This gives the first class of general-type geometries whose full potentials can be computed.  As a consequence we obtain an enumerative meaning of mirror maps for elliptic curve quotients.  Furthermore, we prove that the open Gromov-Witten potential is convergent, even in the general-type cases, and has an isolated singularity at the origin, which is an important ingredient of proving homological mirror symmetry.
\end{abstract}
\maketitle
\tableofcontents
\section{Introduction}
Mirror symmetry reveals deep relations between symplectic and complex geometry.  Closed-string mirror symmetry provides a powerful tool to enumerate rational curves by using complex deformation theory of the mirror, and open-string mirror symmetry builds a bridge between Lagrangian Floer theory in symplectic geometry and sheaf theory in complex geometry.  Mirror symmetry has brought many exciting results to geometry in the last two decades.


Let $\PO$ be an orbifold sphere with $a, b, c \geq 2$ equipped with the K\"ahler structure $\omega$ with constant curvature descended from its universal cover, which is a space-form (either the sphere, Euclidean plane, or hyperbolic plane).  The mirror of $\PO$ is a Landau-Ginzburg superpotential $W$, which is a holomorphic function defined over $\CC^3$.  Mirror symmetry between $\PO$ and $W$ has been intensively investigated \cite{S, ST, ET, Ef, R}.  On the other hand, the enumerative nature of the superpotential $W$ was less understood.  Moreover the `flat coordinate' near the large complex structure limit mirror to $\PO$ in the general-type case $\frac{1}{a} + \frac{1}{b} + \frac{1}{c} < 1$ was not investigated.

The aim of this paper is to compute the (quantum corrected) mirror superpotential $W$ of an orbifold sphere $\PO$ by appropriate counts of polygons for all $a,b,c  \geq 2$.\footnote{$\PO$ is toric when $c=1$, and the Lagrangian Floer potential in this case was known by \cite{CP}.}  $W$ will also be called the Lagrangian Floer potential or open Gromov-Witten potential, when we want to emphasize the enumerative nature of $W$ by counting polygons bounded by a certain Lagrangian immersion, which gives open Gromov-Witten invariants.  It gives an expression of the superpotential $W$ in terms of the flat coordinate.

The open Gromov-Witten potential $W$ takes the form
\begin{equation} \label{eq:W_intro}
-q^6 xyz + q^{3a} x^a + q^{3b} y^b + q^{3c} z^c + \cdots
\end{equation}
where $q$ is related to the K\"ahler flat coordinate $Q = \exp (-t)$ by the equality $Q = q^8$, and $t$ is the area of $\PO$ with respect to $\omega$.

The explicit expression of $W$ in the case $(a,b,c) = (3,3,3)$ was computed in \cite{CHL}.  For this specific case the polygons contributing to $W$ are all triangles, and so the computation is relatively easy.  In general the holomorphic polygons contributing to $W$ are rather complicated, and the number of corners of those polygons contributing to $W$ goes to infinity.  In this paper we classify all holomorphic polygons combinatorially and invented an algorithm to compute $W$.  As a consequence, we obtain an explicit expression of $W$ in the other two elliptic curve quotients where $(a,b,c)=(2,4,4)$ and $(2,3,6)$, and give a computational check that it agrees with the inverse mirror maps.

When $\frac{1}{a} + \frac{1}{b} + \frac{1}{c} < 1$, the orbifold sphere $\bP^1_{a,b,c}$ comes from quotient of a Riemann surface with genus greater than one.  It is known as the general-type case, in which the superpotential is an infinite series.  To the authors' knowledge, this is the first class of compact general-type manifolds or orbifolds\footnote{$\PO$ and its covering Riemann surface $\Sigma$ have the same expression of mirror superpotential (defined over the domains $\CC^3$ and $\CC^3 / G$ for a finite group $G$ respectively).  Thus $W$ can be considered as mirror to both $\Sigma$ and $\bP^1_{a,b,c}$.} that one has an algorithm to compute the full open Gromov-Witten potential.

In the toric case, by \cite{FOOO_MS} quantum cohomology of a toric manifold $X$ has a presentation in terms of its (quantum-corrected) Landau-Ginzburg superpotential $W$ via a natural Kodaira-Spencer map
$$ \mathfrak{ks}: QH^*(X) \stackrel{\cong}{\longrightarrow} \Jac (W). $$
Such a Kodaira-Spencer map can also be constructed for an orbifold sphere $X = \PO$, and we expect the above isomorphism should hold.  It is known as closed-string mirror symmetry.\footnote{The idea of construction and proof should be rather similar to that in \cite{FOOO_MS}, together with the fact that the Fukaya category of $X$ is generated by the Seidel Lagrangian shown in Figure \ref{fig:lagintro}.  However we are not going to prove it in this paper because it involves many analytic details.}  Hence our computation of $W$ should lead to an explicit presentation of the quantum cohomology ring.    



In the following we explain our ideas and results in more detail.

\subsection{Combinatorial geometry of the Seidel Lagrangian}
In \cite{CHL}, a geometric construction of mirror geometry based on (immersed) Lagrangian Floer theory was proposed.   It automatically comes with a functor transforming Lagrangian branes to matrix factorizations, which realizes the homological mirror symmetry conjecture.  The construction was applied to orbifold spheres $\PO$ to derive homological mirror symmetry, using the immersed Lagrangian defined by Seidel \cite{S} (see Figure \ref{fig:lagintro}).  Moreover for the elliptic curve quotient $E / \Z_3$, the construction leads to an enumerative meaning of the mirror map.

\begin{figure}[htb!]
\includegraphics[scale = 0.4]{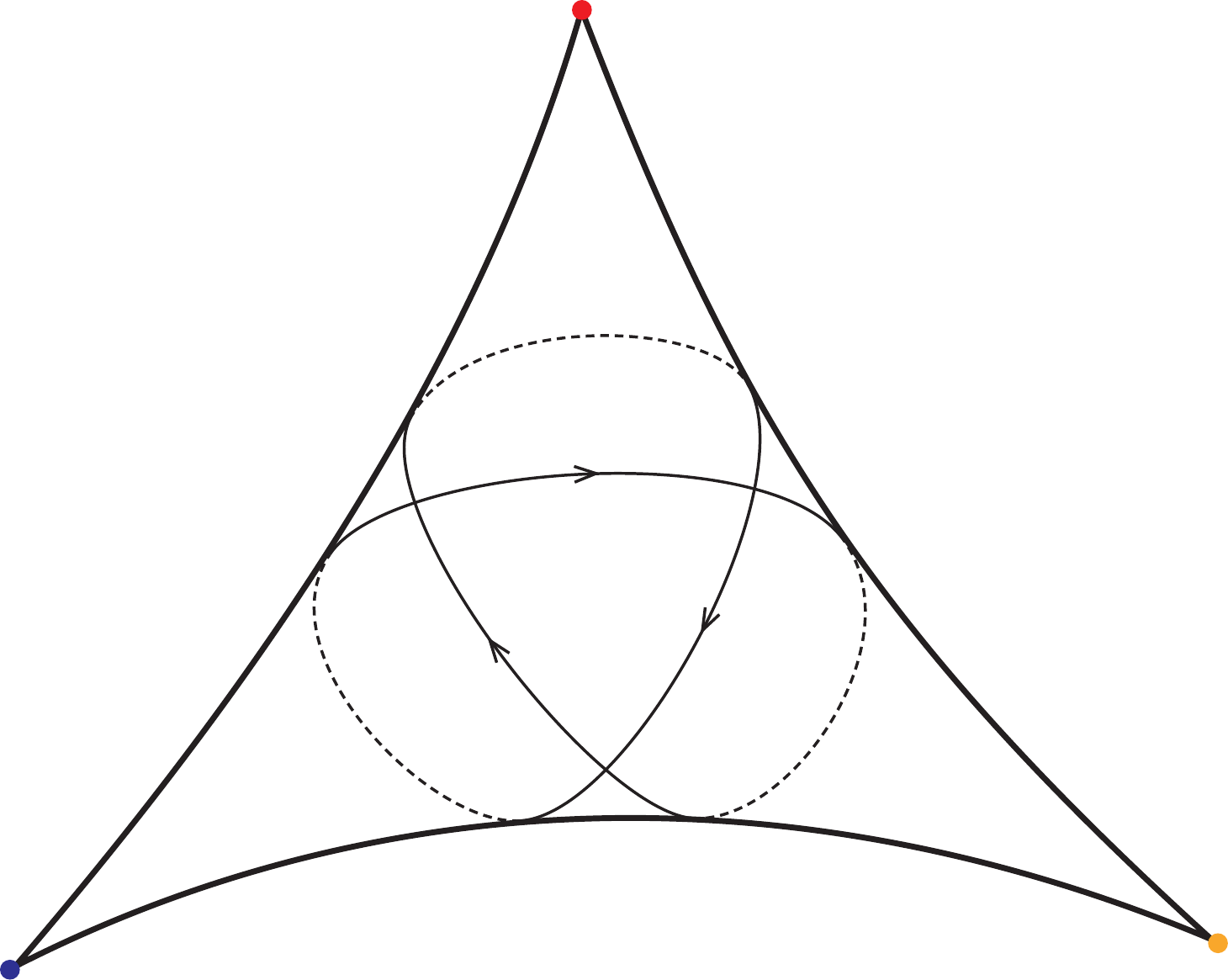}
\caption{$\mathbb{P}^1_{a,b,c}$ and the Seidel Lagrangian.}\label{fig:lagintro}
\end{figure}

The superpotential $W$ in Equation \eqref{eq:W_intro} is obtained by counting polygons bounded by the Seidel Lagrangian shown in Figure \ref{fig:lagintro}.  In order to classify all possible holomorphic polygons, we do a careful treatment to the related combinatorial geometries in the orbifold universal cover of $\PO$.

The orbifold universal cover $E$ of $\PO$ is $S^2, \RR^2, \HH$  when the orbifold Euler characteristic $\chi = \frac{1}{a} + \frac{1}{b} + \frac{1}{c} -1$ is positive, zero, or negative respectively (see Lemma \ref{lem:orbcov}). We refer them as spherical, elliptic, or hyperbolic case.

We introduce two tessellations of $E$. The first tessellation is given by the pre-image of the horizontal equator of $\PO$.  It is a triangle tessellation formed by geodesics in $E$, and we can label the triangles in either black or white such that every two adjacent triangles have different colors.  Every triangle of the tessellation is called a \emph{base triangle}.  The other one is a hexagon tessellation $Z$ of $E$; see Definition \ref{def:hexZ}.  The vetices of $Z$ are preimages of the orbifold points $A,B,C$ of $\PO$ together with centers of white base triangles which are called white vertices.  Every edge of $Z$ has exactly one white vertex, and it is labelled as $\alpha, \beta, \gamma$ when its another vertex is $A,B,C$ respectively.  The hexagon tessellation $Z$ plays a key role in the classification of polygons.  Examples of the triangle and hexagon tessellations are shown in Figure \ref{fig:444basic},  \ref{fig:ygraph}, \ref{fig:236tes}, \ref{fig:tess225}, \ref{fig:abc225}.

Now consider the pre-image of Seidel Lagrangian $L$ in $E$.  Combinatorially it is a union of boundaries of the so-called \emph{minimal $xyz$ triangles}.  There is a one-to-one correspondence between minimal $xyz$ triangles and base triangles: each minimal $xyz$ triangle is contained in one base triangle.  The vertices of a minimal $xyz$ triangle are mid-points of the three edges of the base triangle containing it.  We do the construction such that the four triangles obtained from this subdivision of a base triangle have the same area.

The pre-image of Seidel Lagrangian consists of several branches.  Each of the branches is referred as {\em a Seidel Lagrangian
in $E$}.  In the hyperbolic and elliptic cases, we show that a Seidel Lagrangian in $E$ is always embedded (instead of immersed) in $E$  (Corollary \ref{cor:bigon_no}).
In the spherical case, a Seidel Lagrangian in $E$ is immersed if and only if $(a,b,c) = (2,2,r)$ for an odd number $r$; otherwise a Seidel Lagrangian in $E$ is a circle which bisects the sphere $E$ (Lemma \ref{lem:sphdiss}).

The Seidel Lagrangian has three self-intersection points.  Each self-intersection point corresponds to a degree-one Lagrangian deformation, and it is called an immersed generator associated to a variable $x,y$ and $z$ respectively.  The immersed generators are lifted upstairs to $E$ and give $G$-equivariant deformations, where $\PO = E / G$ for a finite group $G$.

\subsection{Algorithm to count polygons}

To compute the superpotential $W$, we need to count polygons immersed in $\PO$ bounded by the Seidel Lagrangian $L$, where corners of polygons are mapped to the immersed generators $x,y$ or $z$ (see Figure \ref{fig:seidel_lag} for the minimal $xyz$ triangle as an example).

By taking the lifts of $L$ and polygons to the universal cover $E$, we show that it suffices to count embedded polygons in $E$ (Lemma \ref{lem:lift}). A polygon in $E$ which contributes to $W$ will be called a \emph{polygon for the potential}.

Given a polygon for the potential, we consider the intersection of the polygon with the hexagon tessellation $Z$, which gives what we call an $(a,b,c)$-diagram.  We find out a characterization of all $(a,b,c)$-diagrams (Lemma \ref{lem:diagram}), 
and thereby classify all polygons for the potential combinatorially.  Moreover, area formulas for polygons are deduced in terms of combinatorial data of $(a,b,c)$-diagrams (Theorem \ref{thm:area}).

Another combinatorial object we use is \emph{boundary word} of a polygon for the potential.  It is obtained by recording the intersection pattern between the tessellation $Z$ and the boundary of a polygon using three letters $\alpha, \beta, \gamma$.  We deduce its characterizing properties in Lemma \ref{lem:standard}. 

Next, we find an algorithm to generate all possible polygons for the potential in hyperbolic cases, by the so-called elementary move
(Definition \ref{defn:move}, \ref{defn:move}). Such an algorithm is needed since there are infinitely many different types of polygons for the potential in the hyperbolic cases. Roughly speaking, elementary move is a systematic way to enlarge a polygon
for the potential. We define elementary move in terms of the corresponding $(a,b,c)$-diagram.  Correspondingly we have the so-called {\em cut-glue} operation (Section~\ref{s:hyp}) on boundary words. If one of $a,b,c$ equals $2$, then the elementary move and cut-glue operation become more complicated and they are defined separately.

In Theorem \ref{thm:move}, we show that elementary moves generate all polygons for the potential (or all $(a,b,c)$-diagram) from the minimal $xyz$-triangle.  We associate a natural number $p$ to every polygon for the potential and show that each polygon is obtained by applying $p+1$ elementary moves from the minimal $xyz$-triangle in Lemma \ref{lem:pcount}.

In summary, we construct an algorithm to compute the potential in the hyperbolic case.
In Section \ref{ss:eval}, we apply the algorithm in several examples to compute $W$ inductively.  The examples are computed by running our algorithm in Mathematica. A Mathematica code for the computation of $W$ in the case of $a,b,c \geq 3$ can
be found in \cite{Ch}.

\subsection{Enumerative meaning of mirror maps of elliptic curve quotients}

The mirror map matches the flat coordinates on the K\"ahler moduli and the complex moduli of the mirror and is essential in the computation of Gromov-Witten invariants using mirror symmetry.  It arises from classical deformation theory and can be obtained from solving Picard-Fuchs equations in the B side.

Enumerative meaning of the mirror map\footnote{The mirror map we mention here is a map from the K\"ahler moduli to the complex moduli of the mirror, which is indeed the ``inverse mirror map'' to in the conventions of most existing literatures.  For simplicity we call it to be the mirror map in the rest of this introduction.} was obtained in the compact semi-Fano toric case and toric Calabi-Yau case \cite{CLT11,CLLT,CLLT12,CCLT13}.  It was shown that the mirror map equals to the so-called SYZ map, which arises from SYZ construction and is written in terms of disc invariants.

We expect that such a derivation of the mirror map from the A-side should hold in great generality.  In particular, it was shown in \cite{CHL} that for the elliptic curve quotient $E / \Z_3 = \bP^1_{(3,3,3)}$, the mirror map $\check{q}(Q)$ equals to $\frac{-\psi(q)}{\phi(q)}$ under the relation $Q = q^8$, where $Q$ is the K\"ahler parameter of $E/\Z_3$, $\psi(q)$ and $\phi(q)$ are generating functions of triangle countings where the three corners of the triangles are mapped to the immersed generators $x,y,z$ and $x,x,x$ respectively, and $q = \exp(-A)$ with $A$ being the area of the minimal $xyz$ triangle.

The mirror of the elliptic curve $E$ is again an elliptic curve, which can be written as a cubic
$$ x^3 + y^3 + z^3 + \check{q} xyz $$
in $\bP^2$.  The mirror map $\check{q}(\tilde{Q})$ of the elliptic curve $E$ is the same as that of $E/\Z_3$, under the relation $\tilde{Q} = Q^3$ between their K\"ahler parameters.  The relations $Q = q^8$ and $\tilde{Q} = Q^3$ come from the geometric facts that $E/\Z_3$ has area eight times the minimal $xyz$ triangle, and $E$ has area three times $E/\Z_3$.  Thus the mirror map of an elliptic curve also has an enumerative meaning.

In this paper we study the other two elliptic curve quotients $E/\Z_4 = \bP^1_{(2,4,4)}$ and $E/\Z_6= \bP^1_{(2,3,6)}$ and obtain an enumerative meaning of the mirror map of $E/\Z_4$ and $E/\Z_6$ (and hence enumerative meanings of the elliptic curve $E$,  whose mirror is expressed as curves in the weighted projective space $\bP^2(2,4,4)$ and $\bP^2(2,3,6)$ respectively).  $(a,b,c) = (3,3,3), (2,4,4), (2,3,6)$ are all the cases such that $\bP^1_{(a,b,c)}$ is a quotient of an elliptic curve.

We computed the explicit expressions of the superpotentials $W$ for $\bP^1_{(2,4,4)}$ and $\bP^1_{(2,3,6)}$ in Section \ref{sec:236}, \ref{sec:244} respectively.  They are weighted homogeneous polynomials whose coefficients are generating functions of polygon countings.

\begin{theorem}[Theorem \ref{thm:W236}]
The superpotential $W$ for $(a,b,c)= (2,3,6)$ is
$$W= -q xyz  + q^6 x^2 + c_y(q) y^3 + c_z(q) z^6+ c_{yz2}(q) y^2z^2 + c_{yz4} yz^4.
$$
See Theorem \ref{thm:W236} for the explicit expressions of the coefficients $c_y(q), c_z(q), c_{yz2}(q), c_{yz4}(q)$.
\end{theorem}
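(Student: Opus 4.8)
The plan is to identify $W$ for $\bP^1_{(2,3,6)}$ as the generating function counting holomorphic polygons bounded by the Seidel Lagrangian $L$ with corners at the immersed generators $x,y,z$, and then to carry out this count explicitly. First I would apply Lemma~\ref{lem:lift} to pass to the orbifold universal cover $E$. Since $\tfrac12+\tfrac13+\tfrac16=1$ we are in the elliptic (Euclidean) case, so $E=\RR^2$: the base triangles are flat $(\tfrac\pi2,\tfrac\pi3,\tfrac\pi6)$-triangles, the minimal $xyz$ triangles are their medial triangles, and the preimage of $L$ is the union of those boundaries. By that lemma the polygons in $\bP^1_{(2,3,6)}$ contributing to $W$ are in bijection with embedded polygons for the potential in $\RR^2$, and each such polygon $P$ contributes a term $\pm q^{\,m(P)}x^iy^jz^k$, where $(i,j,k)$ records the cyclic sequence of corners of $P$ and $m(P)$ is read off from $\area(P)$ via Theorem~\ref{thm:area}, with $q$ normalised so that the minimal $xyz$ triangle contributes $-q\,xyz$.

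The second step is to show that only six monomials can occur. Because $W$ is weighted homogeneous of degree $6$ with $\deg x=3$, $\deg y=2$, $\deg z=1$ — so a monomial $x^iy^jz^k$ of $W$ satisfies $3i+2j+k=6$ — and every corner of a polygon for the potential contributes at least $1$ to this degree, such a polygon has at most six corners and the only candidate monomials are $x^2,\ xyz,\ xz^3,\ y^3,\ y^2z^2,\ yz^4,\ z^6$. Intersecting a polygon for the potential with the hexagon tessellation $Z$ and applying the characterisation of $(a,b,c)$-diagrams (Lemma~\ref{lem:diagram}) together with the boundary-word constraints (Lemma~\ref{lem:standard}), specialised to $(a,b,c)=(2,3,6)$, one checks that no embedded polygon realises the corner pattern $xz^3$; this leaves exactly the six monomials of the statement. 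Two coefficients are then immediate: $xyz$ is contributed only by the minimal triangle, giving $-q$, and $x^2$ only by the thin bigon at the order-two orbifold point, giving $q^6$.

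It remains to enumerate, for each of the corner types $y^3$, $z^6$, $y^2z^2$ and $yz^4$, all polygons for the potential of that type and to resum their contributions. In the flat plane these polygons organise into finitely many families obtained by repeated enlargement of a smallest representative — the elementary move of Definition~\ref{defn:move}, in the variant appropriate when one of $a,b,c$ equals $2$, together with the attendant cut--glue operation on boundary words — indexed by a size parameter $n$. Theorem~\ref{thm:area} then expresses $\area(P)$, hence the exponent $m(P)$, as an explicit function of $n$, and summing the resulting $q$-series (of theta type) over each family produces the coefficients $c_y(q)$, $c_z(q)$, $c_{yz2}(q)$, $c_{yz4}(q)$. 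Inserting the orientation and spin signs dictated by the Seidel-Lagrangian conventions yields the asserted formula, with the explicit coefficients as recorded in Theorem~\ref{thm:W236}.

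The main obstacle is the exhaustiveness of the classification in the second and third steps: one must prove that in $\RR^2$ there is genuinely no embedded polygon for the potential outside the six listed corner types — in particular none realising $xz^3$ — and that every polygon of an occurring type belongs to one of the described families, with no sporadic exception. This rests entirely on the flat-geometry angle bookkeeping and the combinatorics of $Z$ packaged in Lemmas~\ref{lem:diagram} and~\ref{lem:standard}, and is the delicate part. A secondary, more clerical, difficulty is to pin down each family precisely and resum it without omission or double counting, so that the closed forms for $c_y(q),c_z(q),c_{yz2}(q),c_{yz4}(q)$ come out exactly right.
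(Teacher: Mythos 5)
Your opening steps match the paper's: pass to the universal cover $\RR^2$, use the elliptic constraint $P/2+Q/3+R/6=1$ of Theorem \ref{thm:area}(2) (your weighted-homogeneity condition $3i+2j+k=6$ is the same statement) to cut the candidate monomials down to $x^2, xyz, xz^3, y^3, y^2z^2, yz^4, z^6$, and compute $q^6x^2 - qxyz$ from the bigon and the minimal triangle. Two caveats there. First, Lemma \ref{lem:lift} is proved only in the hyperbolic case; in the elliptic case the paper checks embeddedness of lifts directly, so you should not cite that lemma as stated. Second, your exclusion of $xz^3$ by ``Lemma \ref{lem:diagram} together with Lemma \ref{lem:standard}, specialised to $(2,3,6)$'' is asserted, not argued, and it is not clear those combinatorial lemmas alone suffice (for instance, the abelianization of $\pi_1^{\mathrm{orb}}$ does not obstruct the corner pattern $xz^3$). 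The paper's mechanism is geometric and simpler: any polygon with an $x$-corner must lie inside the unique bigon through that self-intersection point (the argument given for the $xyz$ term in Section \ref{sec:236}, together with Corollary \ref{cor:bigon_no}), which kills $xz^3$ and pins down $xyz$ simultaneously.

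The genuine gap is in the enumeration, which is the actual content of the theorem since the explicit series $c_y, c_z, c_{yz2}, c_{yz4}$ are what is being claimed. You propose to generate the $y^3$, $z^6$, $y^2z^2$, $yz^4$ polygons by elementary moves and cut--glue operations, ``indexed by a size parameter $n$,'' and then resum. This does not match the structure of the problem: the $z^6$ polygons form a four-parameter family $(n,a,b,c)$ with nontrivial cyclic symmetries (hence the partition $T_1\coprod T_2\coprod T_3\coprod T_6$ and the factor $\eta(n,a,b,c)$), $yz^4$ a three-parameter family, $y^2z^2$ two two-parameter families (trapezoids and parallelograms), so a one-parameter ``size'' induction cannot organize them; moreover the cut--glue calculus of Section \ref{s:hyp} is set up only for $1/a+1/b+1/c<1$ and, when $a=2$, excludes exactly the $\beta\gamma$-type data these polygons carry. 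The paper instead classifies each polygon by the Eisenstein lattice polygon spanned by its $C$-vertices (an equilateral lattice triangle with corners cut off, following Thurston), reads off the boundary word of each shape, computes the area via $\area(U)=48v_C+3S-8R$ from Lemma \ref{lem:area} and Lemma \ref{lem:relations}, and performs the $\eta$/double-counting bookkeeping explicitly. Since your proposal defers precisely these steps (you label them ``delicate'' and ``clerical'' without carrying them out), the stated coefficients are never derived, and the proof is incomplete at its main step.
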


\begin{theorem}[Theorem \ref{thm:W244}]
The superpotential $W$ for $(a,b,c)= (2,4,4)$ is
$$W= -q xyz + q^6 x^2 + d_y(q) (y^4 +  z^4) + d_{yz}(q) y^2z^2,
$$
See Theorem \ref{thm:W244} for the explicit expressions of the coefficients  $d_y(q)$ and $d_{yz}(q)$.
\end{theorem}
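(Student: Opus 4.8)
\section*{Proof proposal}

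The plan is to run the combinatorial program of the preceding sections for the specific parameters $(a,b,c)=(2,4,4)$. Since $\frac1a+\frac1b+\frac1c=1$, Lemma~\ref{lem:orbcov} identifies the orbifold universal cover as $E=\RR^2$, and $\bP^1_{(2,4,4)}=E/G$ for a cocompact group $G$ of isometries of $\RR^2$; by Corollary~\ref{cor:bigon_no} every branch of the lifted Seidel Lagrangian is embedded in $E$. Hence, by Lemma~\ref{lem:lift}, computing $W$ reduces to enumerating the embedded polygons for the potential in $\RR^2$, recording for each such $P$ its multiset of corner types $x,y,z$ together with its area, and summing the contributions $\pm\, q^{\nu(P)}\, x^{n_x(P)}y^{n_y(P)}z^{n_z(P)}$, where $\nu(P)$ is determined by $\area(P)$ via Theorem~\ref{thm:area} and the sign by the orientation of $P$.

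First I would determine which monomials occur. Weighted homogeneity of $W$, visible already in \eqref{eq:W_intro}, together with the constraint on the corner pattern of a polygon for the potential --- equivalently, on the orbifold grading of the associated monomial, exactly as worked out for $\bP^1_{(3,3,3)}$ in \cite{CHL} --- forces every contributing monomial to lie among $xyz$, $x^2$, $y^4$, $z^4$, $y^2z^2$; in particular the monomials $xy^2$, $xz^2$, $y^3z$, $yz^3$, which have the right weighted degree, are excluded by the grading. The reflection of $\bP^1_{(2,4,4)}$ exchanging its two order-four points induces a symmetry of the polygon count interchanging $y$ and $z$, so $y^4$ and $z^4$ acquire a common coefficient $d_y(q)$ and $d_{yz}(q)$ is fixed by this symmetry. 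The leading terms $-q\,xyz$ (from the minimal $xyz$-triangle) and $q^6 x^2$ (from the polygon with two $x$-corners, unique up to the $G$-action) are then read off directly from the set-up. It remains to compute the series $d_y(q)$ and $d_{yz}(q)$.

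For each target monomial ($y^4$, $z^4$, $y^2z^2$) I would enumerate all polygons for the potential with the prescribed corner multiset. Intersecting such a polygon with the hexagon tessellation $Z$ yields a $(2,4,4)$-diagram, and Lemma~\ref{lem:diagram} characterizes precisely which diagrams arise; the associated boundary word in $\{\alpha,\beta,\gamma\}$ obeys the rules of Lemma~\ref{lem:standard}. Since $a=2$, the specialized elementary move and cut-glue operation for the order-two point apply, and by Theorem~\ref{thm:move} each such diagram is obtained from the minimal $xyz$-triangle by a sequence of elementary moves whose number is controlled by the integer $p$ of Lemma~\ref{lem:pcount}. Organizing the enumeration by $p$, one checks that in this elliptic setting the diagrams contributing to a fixed monomial fall into finitely many self-similar families along which $\area(P)$ grows affinely in a bounded number of integer parameters recording how far the polygon is stretched in $\RR^2$ (Theorem~\ref{thm:area}). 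Summing the resulting geometric series over these finitely many families produces the closed forms for $d_y(q)$ and $d_{yz}(q)$ asserted in Theorem~\ref{thm:W244}; the orientations supply the signs, and Lemma~\ref{lem:standard} guarantees that no diagram is double-counted.

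The main obstacle is this last step: executing the cut-glue recursion through the order-two orbifold point --- where, as the excerpt notes, the move is genuinely more intricate than in the case $a,b,c\ge3$ --- and proving that the elementary-move tree stabilizes into exactly the stated finite list of families with affine area growth, so that the generating functions close up to the explicit expressions. A secondary difficulty is the combinatorial check that $xy^2$, $xz^2$, $y^3z$, $yz^3$ really receive no contribution, i.e.\ that no legal $(2,4,4)$-diagram carries the corresponding corner multiset. The outcome can finally be cross-checked against the Mathematica computation of Section~\ref{ss:eval} and, through the relation $Q=q^8$, against the inverse mirror map of $E/\Z_4$.
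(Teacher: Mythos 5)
There are two genuine gaps. First, your claim that $xy^2$, $xz^2$, $y^3z$, $yz^3$ are ``excluded by the grading'' cannot work: the constraint coming from Theorem \ref{thm:area}(2) in the elliptic case is exactly $P/2+Q/4+R/4=1$, and all four of these monomials satisfy it ($1/2+2/4=1$ and $3/4+1/4=1$). Their absence is a geometric fact, not a weight count. In the paper the terms with $P\geq 1$ are handled by the observation (as in the $(2,3,6)$ case) that any polygon with an $x$-corner must lie inside the unique bigon of Corollary \ref{cor:bigon_no}, so only $x^2$ and $xyz$ survive; and for $P=0$ one shows that the polygons for the potential are precisely lattice rectangles in the checkerboard tessellation of $\mathbb{R}^2$ by unit squares containing alternately $B$- and $C$-vertices, where the two corners on a side have the same type ($y$ or $z$) if and only if that side has odd length --- which forbids a $3$--$1$ split and hence kills $y^3z$ and $yz^3$. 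You flag this as a ``secondary difficulty'' but offer no mechanism, and the mechanism you do name (grading) is exactly the one that fails.

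Second, your main enumeration step is mis-structured. The paper does not run the elementary-move/cut-glue recursion here (that machinery is deployed for the hyperbolic potentials in Section \ref{s:hyp}); instead it uses the rectangle description above to parametrize all contributing polygons directly by their two side lengths, e.g.\ $(2r+1)\times(2s+1)$ for $y^4$ and $z^4$, $(2r+1)\times 2s$ or $2r\times 2s$ for $y^2z^2$, and then computes $\area(U)=16(v_B+v_C)-4$ from Lemma \ref{lem:relations}, giving exponents such as $16(2r+1)(2s+1)-4$ and $64rs-4$. These exponents are \emph{bilinear}, not affine, in the parameters, so the coefficients $d_y(q)$, $d_{yz}(q)$ are genuine two-parameter lattice sums (the ``explicit expressions'' of Theorem \ref{thm:W244}), not a finite collection of geometric series; your asserted stabilization into ``finitely many self-similar families along which $\area(P)$ grows affinely'' is false and the proposed summation would not reproduce the answer. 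Finally, the coefficients also require the symmetry bookkeeping $\eta=4$ for squares, $\eta=2$ for non-square odd rectangles and for the parallelogram-type $y^2z^2$ polygons, $\eta=1$ for the trapezoid-type ones, together with the sign $(-1)^{s(U)}$ that makes the adjacent-$y$-corner family enter negatively; none of this is addressed in your outline, and it is exactly where the stated formulas for $d_y$ and $d_{yz}$ come from.
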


By doing coordinates changes the superpotentials $W$ for $(a,b,c)= (2,3,6)$ and $(a,b,c)= (2,4,4)$ can be expressed in the forms
$$W= x^2 + y^3 + z^6 + \sigma_{236}(q) yz^4
$$
and
$$W= x^2 + y^4 + z^4 + \sigma_{244}(q) y^2z^2
$$
respectively.  See Equation \eqref{eq:sigma236} and \eqref{eq:sigma244} for the explicit expressions of $\sigma_{236}$ and $\sigma_{244}$ in terms of polygon countings.  We do computer checks that
\begin{align*}
i_{236} \left( \sigma_{236}(q) \right) =& \frac{1}{q^{48}} + 744 + 196884 q^{48} + 21493760 q^{96} + 864299970 q^{144} \\ &+ 20245856256 q^{192} + 333202640600 q^{240} + \ldots
\end{align*}
and
\begin{align*}
i_{244} \left(\sigma_{244}(q)  \right) =& \frac{1}{q^{32}} + 744 + 196884 q^{32} + 21493760 q^{64} + 864299970 q^{96} \\ &+ 20245856256 q^{128} + 333202640600 q^{160} + \ldots
\end{align*}
which coincide with the $j$-function for the elliptic curve (under the relation $\tilde{Q} = q^{48}$ and $\tilde{Q} = q^{32}$ respectively, due to the geometric facts that $E$ has area $48$ times ($32$ times) the minimal $xyz$ triangle in $E/\Z_6$ ($E/\Z_4$ resp.)).  These provide evidences that $\sigma_{236}$ and $\sigma_{244}$ are the mirror maps.

We have also computed the full superpotential for the spherical cases $\frac{1}{a} + \frac{1}{b} + \frac{1}{c} > 1$, which can be further divided into type $A$, $D$ and $E$.

The $A_n$ case is $c=1$.   The orbifold sphere $\bP^1_{a,b,1}$ is toric, and the open Gromov-Witten potential (taking $L$ to be the Lagrangian torus fibers of the moment map) is known by \cite{CP}.  Namely,
$$ W = z^a + \frac{Q}{z^b} $$
over $H^2(\bP^1_{a,b,1})$ which is one-dimensional, and $Q$ is the flat coordinate of $H^2(\bP^1_{a,b,1})$.

The $D_n$ case is $(a,b,c) = (2,2,n)$ for $n \in \nat$, and the $E_n$ case is $(a,b,c) = (2,2,n-3)$ for $n=6,7,8$ respectively.

\begin{theorem}[Theorem \ref{thm:W22r}, \ref{thm:W233}, \ref{thm:W234}, \ref{thm:W235}]
The Lagrangian-Floer potential for the spherical cases $\frac{1}{a} + \frac{1}{b} + \frac{1}{c} > 1$ are given as follows.
\begin{enumerate}
\item For $\mathbb{P}^1_{2,2,r}$,
\begin{equation} \label{eq:W_D}
W = -qxyz + q^6 x^2 + q^6 y^2 + (-1)^r q^{3r} z^r + r\sum_{k=1}^{\lfloor \frac{r}{2} \rfloor} (-1)^{r+k} q^{3r + 10k}  \frac{(r-k-1)!}{k! (r-2k)!} z^{r-2k}.
\end{equation}
\item For $\mathbb{P}^1_{2,3,3}$,
\begin{equation}
W = -qxyz + q^6 x^2 - q^9 y^3 - q^9 z^3 -4 q^{22} yz +2 q^{48}.
\end{equation}
\item For $\mathbb{P}^1_{2,3,4}$,
\begin{equation}
W = -qxyz + q^6 x^2 - q^9 y^3 + q^{12} z^4  + 5q^{25}yz^2 + 3 q^{54} z^2 +3 q^{38} y^2 - 2 q^{96}.
\end{equation}
\item For $\mathbb{P}^1_{2,3,5}$, 
\begin{equation}
\begin{array}{c}
W = -qxyz + q^6 x^2 - q^9 y^3 - q^{15} z^5  + 4 q^{60} z^4  -3q^{105} z^3 + 5 q^{150} z^2 \\
 -6 q^{28} yz^3 -9 q^{73} yz^2  -7 q^{41} y^2 z+ 5 q^{86} y^2  -2 q^{240}.
\end{array}
\end{equation}
\end{enumerate}
\end{theorem}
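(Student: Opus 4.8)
The plan rests on the observation that in every spherical case the orbifold universal cover $E$ is the round sphere $S^2$, which is \emph{compact}. Consequently the base-triangle tessellation and the hexagon tessellation $Z$ of Definition~\ref{def:hexZ} have only finitely many faces, and by the characterizations of $(a,b,c)$-diagrams (Lemma~\ref{lem:diagram}) and of boundary words (Lemma~\ref{lem:standard}) there are only \emph{finitely many} polygons for the potential; in particular, unlike in the hyperbolic case, no inductive generation by elementary moves (Definition~\ref{defn:move}, Theorem~\ref{thm:move}) is required. So the strategy is: enumerate directly all embedded polygons in $S^2$ bounded by the branches of the lifted Seidel Lagrangian with corners at the lifts of $x,y,z$; for each one read off the monomial $x^iy^jz^k$ prescribed by its cyclic sequence of corners, the symplectic area (which fixes the power of $q$) from Theorem~\ref{thm:area}, and the sign coming from the orientation data; and finally sum the contributions, descending from counts in $E$ to the potential of $\PO$ by Lemma~\ref{lem:lift}.

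For the three Platonic cases $(a,b,c)=(2,3,3),(2,3,4),(2,3,5)$ (the $E_6,E_7,E_8$ cases) the base-triangle tessellation of $S^2$ is the barycentric subdivision of, respectively, the tetrahedron, the octahedron and the icosahedron, and by Lemma~\ref{lem:sphdiss} each branch of the Seidel Lagrangian in $E$ is an \emph{embedded} great circle; the full preimage of $L$ is a finite union of such circles, whose complementary regions I would simply tabulate (compare Figures~\ref{fig:tess225}, \ref{fig:abc225}). The enumeration is organized by the deck group $G$ --- here the tetrahedral group $A_4$, the octahedral group $S_4$ and the icosahedral group $A_5$ --- so that polygons for the potential fall into $G$-orbits whose sizes produce the integer coefficients in the statement. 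It then remains to check that the displayed monomials are exactly those with nonzero coefficient and to compute their areas and signs; in particular one must locate the ``mixed'' and $y^2$-type contributions, such as the $q^{22}yz$ for $(2,3,3)$, the $q^{25}yz^2$ and $q^{38}y^2$ for $(2,3,4)$, and the $q^{28}yz^3$ and $q^{86}y^2$ for $(2,3,5)$. This is a finite verification, which can be done by hand from the pictures or, as described in Section~\ref{ss:eval}, by machine.

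The case $(a,b,c)=(2,2,r)$ --- the $D$ family --- is the one that needs a uniform argument, since it is an infinite family. Here $S^2$ carries the $(2,2,r)$ ``orange-slice'' tessellation: $4r$ base triangles arranged around the two poles, which are the two preimages of the order-$r$ point $C$, while the preimages of the order-$2$ points $A,B$ lie on the equatorial geodesic. The minimal $xyz$ triangle contributes $-qxyz$, the two bigons with both corners at $x$ (respectively $y$) contribute $q^6x^2$ and $q^6y^2$, and one shows via Lemma~\ref{lem:standard} that every remaining polygon for the potential has all its corners at the lift of the $z$-generator and winds about a pole. I expect such a ``wrapping'' polygon contributing to $z^{r-2k}$ to be specified by a choice of $k$ pairwise cyclically non-adjacent sectors among the $r$ sectors around the pole (the sectors at which the boundary folds back); the number of such choices is $\frac{r}{r-k}\binom{r-k}{k}=r\cdot\frac{(r-k-1)!}{k!\,(r-2k)!}$, its area from Theorem~\ref{thm:area} works out to $(3r+10k)$ times the minimal area, and its sign works out to $(-1)^{r+k}$, yielding \eqref{eq:W_D}. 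Equivalently, the $z$-part of $W$ is then $(-1)^rq^{3r}D_r(z;q^{10})$ with $D_r$ the Dickson polynomial of the first kind, a form essentially forced by the $\Z_r$-symmetry about the pole and hence an independent check of the coefficients. When $r$ is odd the relevant branch of the Seidel Lagrangian in $E$ is only immersed (Lemma~\ref{lem:sphdiss}), so in applying Lemma~\ref{lem:lift} one takes the lift of $L$ rather than of a single branch; the combinatorics of the wrapping polygons is unchanged.

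The main obstacle is the bookkeeping for the $(2,2,r)$ family: pinning down precisely which polygons for the potential contribute to a given $z^{r-2k}$, proving that their number is the non-adjacent-subset count uniformly in $r$, and --- the usual delicate point --- verifying the sign $(-1)^{r+k}$ consistently for all $r$ and all wrapping types, where the parity subtlety introduced by the Seidel Lagrangian being immersed in $E$ for odd $r$ must be tracked carefully. Once this is settled, the three Platonic cases reduce to routine finite checks.
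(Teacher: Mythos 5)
Your proposal follows essentially the same route as the paper: a direct enumeration of the finitely many polygons on $S^2$ (which the paper merely organizes via elementary moves/``resolving'' sides of the central $r$-gon and, for the $E_6,E_7,E_8$ cases, generation by generation), with areas from Theorem~\ref{thm:area}, signs from $s(U)$, and the symmetry/multiplicity weights $\tfrac{s(U)+s^{\mathrm{op}}(U)}{\eta(U)}$. In particular, for $\mathbb{P}^1_{2,2,r}$ your count of cyclically non-adjacent $k$-subsets of sectors, $\tfrac{r}{r-k}\binom{r-k}{k}$, is arithmetically identical to the paper's count of gap compositions $\binom{r-k-1}{k-1}$ divided by $k$ (quotient identification) and multiplied by $r$ (boundary $e$-points), so the coefficients in \eqref{eq:W_D} agree, and your Dickson-polynomial reformulation is a pleasant extra consistency check --- provided you still carry out the $\eta$/$s^{\mathrm{op}}$ bookkeeping that you correctly flagged as the delicate point.
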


The flat coordinates in the spherical case was computed by Rossi \cite{R} by analyzing Frobenius structure on the moduli.  We can check that Equation \eqref{eq:W_D} matches with the calculation of \cite{R}.  Thus in the $D_n$ case, we also obtain an enumerative meaning of mirror map.

\subsection{Convergence and critical locus of the superpotential}

The Lagrangian Floer potential $W$ a priori is an infinite series, and so there is an issue whether $W$ is convergent or not.

In the spherical case $\frac{1}{a} + \frac{1}{b} + \frac{1}{c} > 1$, there are just finitely many polygons, and so $W$ is a polynomial whose coefficients are also polynomials in $q$.

In the elliptic case $\frac{1}{a} + \frac{1}{b} + \frac{1}{c} = 1$, $W$ is a polynomial whose coefficients are infinite series in $q$, see Theorem \ref{thm:W236}, Theorem \ref{thm:W244} and Theorem 6.4 of the paper \cite{CHL}.  It is easy to verify by ratio test that all of them are convergent series.

For the hyperbolic case $\frac{1}{a} + \frac{1}{b} + \frac{1}{c} < 1$, $W$ is itself an infinite series.  It is not obvious whether $W$ is  convergent or not, since there is no full explicit expression of $W$.  In this paper we prove that $W$ is still convergent.  To the authors' knowledge this is the first class of general-type manifolds or orbifolds whose superpotentials in flat coordinates are known to be convergent.

\begin{theorem}[Theorem \ref{thm:conv}] \label{thm:conv_intro}
For the hyperbolic case $\frac{1}{a} + \frac{1}{b} + \frac{1}{c} < 1$, $W(x,y,z,q)$ is complex analytic for $|x|,|y|,|z|,|q|$ small enough.
\end{theorem}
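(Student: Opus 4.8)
The plan is to reduce the statement to an exponential upper bound on the number of polygons contributing to $W$ of a given $q$-weight, and then to deduce analyticity at the origin by a majorant estimate.

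First I would fix notation and reduce. Write $W=\sum_{P}\varepsilon(P)\,q^{\,w(P)}\,m(P)$, the sum running over all polygons for the potential, where $\varepsilon(P)\in\{\pm1\}$, $m(P)=x^{i(P)}y^{j(P)}z^{k(P)}$ records the corners, and $w(P)\in\NN$ is the $q$-weight. By Lemma~\ref{lem:lift} this is a count among embedded polygons in the orbifold universal cover $E$, which by Corollary~\ref{cor:bigon_no} are honest embedded discs in the hyperbolic case; by Theorem~\ref{thm:move} each such $P$ is obtained from the distinguished minimal $xyz$ triangle $T_0\subseteq E$ by a sequence of elementary moves, each of which only enlarges the polygon, so that $T_0\subseteq P$ for every $P$. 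Granting that
\[
  \#\{\,P:\ w(P)=w\,\}\ \le\ C^{\,w}\qquad\text{for all }w
\]
for some constant $C=C(a,b,c)\ge1$, and that $i(P)+j(P)+k(P)$ is bounded by a fixed multiple of $w(P)$ (both come out of the argument below), each monomial $x^iy^jz^kq^w$ then appears in $W$ with a coefficient that is a finite signed sum of absolute value $\le C^{\,w}$; since moreover $w(P)\ge w(T_0)>0$, for $|x|,|y|,|z|\le1$ and $|q|<1/C$ we obtain $\sum_{P}|q|^{\,w(P)}\,|m(P)|\le\sum_{w}C^{\,w}|q|^{\,w}<\infty$. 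Hence $W$ is represented by an absolutely convergent power series on a polydisc about the origin, and is complex analytic there.

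The exponential bound rests on two pieces of geometry. First, by construction the three edges of a minimal $xyz$ triangle subdivide each base triangle $B$ into four sub-triangles of equal area, and the preimage of the Seidel Lagrangian meets the interior of $B$ in exactly the three open arcs bounding the central sub-triangle; hence the intersection of any polygon for the potential with the interior of $B$ is a union of some of these four pieces. Consequently $\overline P$ is a connected union of $N(P)$ such ``quarter pieces'', and
\[
  \operatorname{Area}(P)\ \ge\ \tfrac14\Bigl(\pi-\tfrac{\pi}{a}-\tfrac{\pi}{b}-\tfrac{\pi}{c}\Bigr)N(P),
\]
the bracketed base-triangle area being a fixed positive number in the hyperbolic case. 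Second, by Theorem~\ref{thm:area} the weight $w(P)$ is an explicit linear expression with positive coefficients in $N(P)$, the number of corners of $P$, and finitely many further nonnegative combinatorial quantities; in particular $w(P)\ge c_1 N(P)$ for a fixed $c_1=c_1(a,b,c)>0$, and $P$ has at most $(1/c_1)w(P)$ corners --- this is the bound on $i(P)+j(P)+k(P)$ used above.

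Now I would carry out the count. By the above $\overline P$ is a connected union of $N(P)\le w(P)/c_1$ quarter pieces containing the fixed quarter piece $\overline{T_0}$. The quarter pieces tessellate $E$ with bounded combinatorics --- each is adjacent to only boundedly many others, since $a,b,c$ are fixed --- so by the standard lattice-animal estimate the number of connected unions of at most $N$ of them containing $\overline{T_0}$ is at most $C_0^{\,N}$ for an absolute constant $C_0$; since $P\mapsto\overline P$ is injective on polygons for the potential (an embedded disc is the interior of its closure), this gives $\#\{P:w(P)\le w\}\le C_0^{\,w/c_1}$, hence the required $\#\{P:w(P)=w\}\le C^{\,w}$ with $C:=C_0^{1/c_1}$. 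In the spherical and elliptic cases there is nothing to prove: $W$ is then a polynomial, respectively a polynomial in $x,y,z$ whose coefficients are the explicit convergent $q$-series of Theorems~\ref{thm:W236} and~\ref{thm:W244}.

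I expect the main obstacle to be the second geometric input: extracting from the combinatorial area formula of Theorem~\ref{thm:area} that $w(P)$ is bounded below by a fixed positive multiple of $N(P)$, uniformly over all $P$. This has to be checked through all the cases of the $(a,b,c)$-diagram classification --- in particular the cases where one of $a,b,c$ equals $2$, where the $(a,b,c)$-diagrams and hence the relevant linear functionals behave differently. A secondary, bookkeeping point is the anchoring ``$T_0\subseteq P$''; if one prefers not to invoke the structure theorem here, one can instead anchor the union of quarter pieces forming $\overline P$ at a corner of $P$ --- each corner lies over an immersed generator, of which only boundedly many lifts are relevant --- and rerun the lattice-animal count using Lemma~\ref{lem:lift} alone, at the cost of a harmless polynomial-in-$w$ overcounting factor. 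Equivalently, the count can be organized through the elementary-move algorithm directly: by Lemma~\ref{lem:pcount} a polygon reached by $p+1$ moves has $p$ comparable to $N(P)$, hence to $w(P)$, so provided each elementary move admits only a uniformly bounded number of outcomes, the same exponential bound follows.
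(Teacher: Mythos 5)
Your argument is correct in substance, but it takes a genuinely different route from the paper's. The paper's proof of Theorem \ref{thm:conv} organizes the sum by the number of corners $S=P+Q+R$: the area formula of Theorem \ref{thm:area} gives $k_1S<\mathrm{Area}(U)<k_2S$ for large $S$; the relation $f_5=3p+S$ from Lemma \ref{lem:relations}, combined with the fact that each pentagon has area bounded below, gives $p\le A_0S$; and the number of polygons with a given $S$ is then bounded by counting standard boundary words, i.e.\ the ways of inserting $S$ corner subwords into $(\gamma\beta\alpha)^p$, via combinations with repetition, after which a ratio test yields convergence for $|x|,|y|,|z|,|q|$ small. You instead count the polygons themselves, as connected unions (``lattice animals'') of the equal-area quarter pieces of base triangles anchored at a fixed minimal triangle, using that the adjacency graph of quarter pieces has uniformly bounded degree and that the $q$-exponent is (up to a fixed constant, in fact exactly) the number of pieces. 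This replaces the boundary-word combinatorics of Lemma \ref{lem:standard} by a purely geometric count and needs only Lemma \ref{lem:lift}, the equal-area subdivision, and a linear bound on the number of corners in terms of the weight; what the paper's route buys is that it stays entirely inside the word formalism already built for the enumeration algorithm, while yours is more elementary and gives the cleaner identity ``weight $=$ number of quarter pieces''.

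Two points in your write-up need small repairs, neither fatal. First, the per-polygon coefficient in $W$ is not $\pm1$: by Equation \eqref{eq:formulaW} (or Theorem \ref{thm:term}) it is $\pm(2p+S)/\eta$, which grows linearly in the weight; this only inserts a polynomial-in-$w$ factor into your majorant, which the exponential bound absorbs, but the reduction should be stated accordingly. Second, ``$T_0\subseteq P$ for every $P$'' is literally false for polygons in $E$; what Theorem \ref{thm:move} gives is that every polygon contains \emph{some} minimal triangle, so each $G$-orbit has an anchored representative, and the anchored count overcounts each orbit by at most $v_W(P)$ --- again a harmless linear factor, close in spirit to the alternative anchoring you already mention. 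Finally, the step you flag as the main obstacle is a non-issue and requires no case analysis when one of $a,b,c$ equals $2$: since all quarter pieces have area $\sigma$, the $q$-exponent of $U$ equals the number of pieces in $\overline{U}$ on the nose, and the corner bound follows from Theorem \ref{thm:area} (stated for all $a,b,c\ge2$), whose right-hand side is at least $3S$ minus a fixed constant.
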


The proof of the above theorem uses the combinatorial correspondence between polygons for the potential and $(a,b,c)$ diagrams established in this paper.  Using Theorem \ref{thm:conv_intro} we prove that $W$ has an isolated singularity at the origin for the elliptic and hyperbolic cases, which is an important ingredient for proving homological mirror symmetry for $\PO$ in \cite{CHL}.

\begin{theorem}[Theorem \ref{thm:mainapply}]
Suppose $\frac{1}{a} + \frac{1}{b} + \frac{1}{c} =1$, and consider the corresponding potential $W$.  Then, $W$ has a unique singular point at the origin.
Suppose $\frac{1}{a} + \frac{1}{b} + \frac{1}{c} <1$, and consider the corresponding potential $W$. The hypersurface $W^{-1} (0)$ has an isolated singularity at the origin.
\end{theorem}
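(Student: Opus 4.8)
We argue according to the sign of $\chi=\tfrac1a+\tfrac1b+\tfrac1c-1$. Throughout, $q$ is a fixed nonzero complex number with $|q|$ small, so that by Theorem~\ref{thm:conv_intro} the series $W=W(\cdot,q)$ converges to a holomorphic germ at $0\in\CC^3$; since $W$ has no constant or linear part, the origin is a singular point of $W$ and of $W^{-1}(0)$.

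\smallskip
\noindent\emph{The parabolic case $\chi=0$.} With the weighting $\deg x=\tfrac1a,\ \deg y=\tfrac1b,\ \deg z=\tfrac1c$ the monomials $xyz,x^a,y^b,z^c$ all have degree $1$; by the area formula of Theorem~\ref{thm:area} (and as one sees directly in Theorems~\ref{thm:W236},~\ref{thm:W244} and Theorem~6.4 of \cite{CHL}) every polygon for the potential contributes a monomial of this same degree, so $W$ is weighted homogeneous. By the Euler relation $\mathrm{Crit}(W)\subset W^{-1}(0)$, hence $\mathrm{Crit}(W)$ is exactly the singular locus of the affine hypersurface $W^{-1}(0)$; being a $\CC^\times$-invariant cone, it reduces to $\{0\}$ precisely when $W^{-1}(0)$ is smooth away from the origin. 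The explicit formulas, after a weight-preserving linear change of coordinates (for $\bP^1_{3,3,3}$ merely a rescaling, for $\bP^1_{2,4,4}$ also completing a square, for $\bP^1_{2,3,6}$ completing a square and a cube), put $W$ into one of the normal forms $x^3+y^3+z^3+\lambda xyz$, $x^2+y^3+z^6+\lambda yz^4$, $x^2+y^4+z^4+\lambda y^2z^2$, with $\lambda=\lambda(q)$ a non-constant meromorphic function; and for each of these, $\{W=0\}$ is singular away from the origin exactly when $\lambda$ lies in the zero set of a fixed nonzero discriminant polynomial. Since $q\mapsto\operatorname{disc}(\lambda(q))$ is meromorphic and not identically zero, it is nonzero for $0<|q|$ small, and the claim follows.

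\smallskip
\noindent\emph{The hyperbolic case $\chi<0$.} Now $xyz$ has degree strictly below that of $x^a,y^b,z^c$ and $W$ is a genuine infinite series. Write
\[
W=-q\,xyz+q^{n_a}x^a+q^{n_b}y^b+q^{n_c}z^c+\sum_{(i,j,k)}c_{ijk}(q)\,x^iy^jz^k,
\]
where the first four terms record the minimal $xyz$-triangle and the minimal polygons with all corners at a single generator (when $\min(a,b,c)=2$ one of these ``pure'' terms and its exponent are those supplied by Section~\ref{s:hyp}), and the sum is over all other polygons for the potential. Let $P=x^a+y^b+z^c-xyz$ be the cusp polynomial. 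Because $\tfrac1a+\tfrac1b+\tfrac1c<1$, the point $(1,1,1)$ is a vertex of the Newton polyhedron $\Gamma_+$ of $P$, which is convenient, and $P$ is Newton non-degenerate: on each compact face $\Gamma$ of $\Gamma_+$ the face polynomial has a partial derivative equal to a nonzero scalar multiple of a monomial --- obtained by differentiating the $xyz$-term --- hence does not vanish on $(\CC^\times)^3$. The essential combinatorial input is that every monomial $x^iy^jz^k$ occurring in $W$ has exponent vector $(i,j,k)\in\Gamma_+$, i.e.\ lies on or above the Newton boundary of $P$. Granting this, $W$ has Newton polyhedron $\Gamma_+$ as well, and its principal part --- whose coefficients of $xyz,x^a,y^b,z^c$, namely $-q,q^{n_a},q^{n_b},q^{n_c}$, are nonzero --- is still Newton non-degenerate (the same monomial-derivative argument applies; the extra principal-part monomials that can occur for special arithmetic relations among $a,b,c$ are handled directly from the known coefficients). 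By Kouchnirenko's theorem $W$ then has an isolated singularity at the origin, so a fortiori $W^{-1}(0)$ has an isolated singularity there. (The function $W$ itself has further critical points off the origin --- as does $P$ already --- which is why only the statement about $W^{-1}(0)$ is asserted.)

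\smallskip
\noindent The step I expect to be the main obstacle is the combinatorial input just invoked: that the classification of $(a,b,c)$-diagrams (Lemma~\ref{lem:diagram}) and of standard boundary words (Lemma~\ref{lem:standard}), interpreted through the area formula of Theorem~\ref{thm:area}, never produces a polygon whose monomial lies strictly below the Newton boundary of the cusp polynomial. The triples with $\min(a,b,c)=2$, whose elementary moves and cut--glue operations are defined separately, require their own bookkeeping. The remaining ingredients --- convenience and non-degeneracy of the Newton data, and non-vanishing of $\operatorname{disc}(\lambda(q))$ for $0<|q|$ small --- are routine once the $xyz$-coefficient is nonzero and the parabolic-case formulas are in hand.
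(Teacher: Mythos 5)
Your treatment of the elliptic case is fine and is essentially the paper's own argument in different packaging: the paper passes to the same normal forms \eqref{eq:244co}, \eqref{eq:236co} (and the cubic for $(3,3,3)$), solves the critical-point equations directly, and checks that $\sigma(q)$ avoids the finitely many degenerate values, which is exactly your discriminant non-vanishing statement for small $q\neq 0$.

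The hyperbolic case, however, has a genuine gap, and it is precisely the step you flag: your whole Kouchnirenko argument rests on the claim that every monomial of $W$ lies on or above the Newton boundary of the cusp polynomial $x^a+y^b+z^c-xyz$, and neither you nor the paper proves this. What the paper actually establishes is much weaker: absence of $xy^l, yz^m, zx^n$ when $a,b,c\ge 3$ (Lemma~\ref{lem:Wxylyzm}), the bound $l\ge b$ for pure-power polygons (Corollary~\ref{cor:specialpropW1}), and lower bounds on the exceptional mixed terms that do occur when one weight equals $2$ (Corollary~\ref{cor:specialpropW2}). None of this controls two-variable monomials such as $x^iy^j$ with $2\le i<a$, $2\le j<b$ and $i/a+j/b<1$, which lie strictly below your Newton boundary as soon as $a,b$ are large; and the positivity of area does not exclude them either --- for instance a hypothetical polygon with $P=Q=2$, $R=0$ in the $(5,5,5)$ case would have the perfectly admissible area $8\sigma$ by Theorem~\ref{thm:area}, so ruling such polygons out (if they can be ruled out at all) would require new combinatorial work of the same depth as Corollaries~\ref{cor:specialpropW1}--\ref{cor:specialpropW2}, not routine bookkeeping. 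This is exactly why the paper abandons Newton-polyhedron methods here: for $a,b,c\ge 3$ it runs an explicit elimination in the Jacobian ideal localized at $W=0$, showing $xyz, x^a, y^b, z^c\in (W,W_x,W_y,W_z)$ (Proposition~\ref{prop:hyperisol3}), and when one weight is $2$ it combines the convergence theorem (Theorem~\ref{thm:conv}) with the Weierstrass preparation theorem and a two-variable elimination (Propositions~\ref{prop:hyperisolc6}, \ref{prop:hyperisolab4}). Finally, even granting your Newton claim, you would still need non-degeneracy of the principal part of $W$ itself (its faces can carry additional $q$-dependent coefficients beyond those of the cusp polynomial); you only gesture at this, though for small $q$ it is a comparatively minor point.
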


\subsection*{Acknowledgement}
The second author thanks Kyoung-Seog Lee for valuable discussions on the computation of Jacobian ideals.
The fourth author expresses his gratitude to Yefeng Shen and Jie Zhou for useful discussions on the mirror maps for elliptic curve quotients.  The work of C.H. Cho and H. Hong was supported by the National Research Foundation of Korea (NRF) grant funded by the Korea Government (MEST) (No. 2012R1A1A2003117) and (MSIP) (No. 2013042157).  The work of S.-C. Lau was supported by Harvard University.

\section{Tessellations of space-forms}
Let us consider the universal (orbifold) cover
of $\PO$ and its triangular tessellation by real equators.
The orbifold Euler characteristic (due to Thurston) of $\PO$ is given by
\begin{equation}\label{firstchern}
 \chi(\PO) =  \frac{1}{a} + \frac{1}{b} +\frac{1}{c} -1.
 \end{equation}
\begin{lemma}\label{lem:orbcov}
The universal orbifold cover $\pi:E \to \PO$ of  $\PO$ is given as
\begin{equation}
\left\{ \begin{array}{lr}
\mathbb{H}^2 & \;\;\textrm{if} \;\; \chi(\PO) <0\\
\mathbb{R}^2 &  \;\;\textrm{if} \;\; \chi(\PO) =0 \\
\mathbb{S}^2 &  \;\;\textrm{if} \;\; \chi(\PO) >0.
\end{array}\right.
\end{equation}
\end{lemma}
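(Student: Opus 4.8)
The plan is to exhibit each space-form as the quotient of the universal orbifold cover of $\PO$ by a triangle group, using the classical picture that a sphere with three cone points is the metric double of a single geodesic triangle. First I would record the purely topological fact that $\PO$ is an $S^2$ assembled from two triangles glued along a common $3$-cycle of edges through $A,B,C$; equivalently, $\PO$ is the double of a single ``triangle'' whose three corners carry cone angles $2\pi/a,\,2\pi/b,\,2\pi/c$, hence whose corner angles are $\tfrac{\pi}{a},\tfrac{\pi}{b},\tfrac{\pi}{c}$. This reduces the lemma to producing such a triangle geometrically and organizing its reflections.

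Next I would produce the geometric model, and this is where the trichotomy enters. Let $X$ denote $\mathbb{S}^2$, $\RR^2$ or $\HH$ according as $\chi(\PO)>0$, $=0$, $<0$. By the Gauss--Bonnet theorem, a geodesic triangle in a surface of constant curvature $k$ with interior angles $\alpha,\beta,\gamma$ satisfies $\alpha+\beta+\gamma-\pi=k\cdot\area$; conversely, any angle triple with $\alpha+\beta+\gamma-\pi$ of the sign of $k$ (and with sum $<\pi$ when $k>0$) is realized by a geodesic triangle, unique up to isometry (after rescaling the round metric in the spherical case). Since $\tfrac{\pi}{a}+\tfrac{\pi}{b}+\tfrac{\pi}{c}-\pi=\pi\,\chi(\PO)$, a geodesic triangle $T$ with angles $\tfrac{\pi}{a},\tfrac{\pi}{b},\tfrac{\pi}{c}$ exists in $X$, and in $X$ only.

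Then I would run the reflection-group construction. Let $\GG^*\le\mathrm{Isom}(X)$ be generated by the reflections $r_A,r_B,r_C$ in the three sides of $T$, subject to the evident Coxeter relations (each reflection has order $2$, and the product of the two reflections in the sides adjacent to the angle-$\tfrac{\pi}{a}$ vertex is a rotation of order $a$, similarly for $b$ and $c$). By Poincar\'e's fundamental-polygon theorem, $\GG^*$ acts properly discontinuously on $X$ with $T$ as a strict fundamental domain. Its orientation-preserving index-$2$ subgroup $\GG\le\mathrm{Isom}^+(X)$ then acts properly discontinuously with fundamental domain $T\cup r(T)$ for one generating reflection $r$ — the metric double of $T$ across an edge — where the stabilizer in $\GG$ of each vertex-orbit is cyclic of order $a$, $b$, $c$ respectively and all other stabilizers are trivial (the mirror lines lie in $\GG^*\setminus\GG$). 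Hence $X/\GG$ is, as an orbifold, precisely this double: an $S^2$ carrying exactly three cone points of orders $a,b,c$, i.e.\ $X/\GG\cong\PO$. Since $X$ is simply connected and carries no cone points, $X\to X/\GG\cong\PO$ is \emph{the} universal orbifold cover, and uniqueness of the latter gives $E\cong X$, which is the asserted trichotomy; in particular $\PO$ is always a good orbifold, the bad $2$-orbifolds having at most two cone points.

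The hard part will be the third step: showing that $r_A,r_B,r_C$ generate a \emph{discrete} group with $T$ as a strict fundamental domain, with no identifications forced around the vertices beyond those coming from the three rotation relations. This is exactly the content of Poincar\'e's theorem — equivalently, of the fact that a Coxeter group with a spherical, Euclidean, or hyperbolic Coxeter diagram acts faithfully and discretely on the corresponding space-form — and the angle-cycle condition around each vertex closing up exactly after $a$ (resp.\ $b$, $c$) copies of $T$ is where the hypothesis $a,b,c\ge 2$ is used. The remaining ingredients — Gauss--Bonnet, the passage to the index-$2$ rotation subgroup, and uniqueness of the universal orbifold cover — are standard. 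One could instead invoke Thurston's classification of closed $2$-orbifolds directly, but the triangle-group route has the advantage of simultaneously constructing the equatorial ``base triangle'' tessellation of $E$ used throughout the paper.
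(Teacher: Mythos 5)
Your proof is correct, and it supplies the standard triangle-group argument for a statement the paper simply quotes as a classical fact without proof; moreover it is exactly the picture the paper then builds on in Section 2.1, where the reflection group $H=\langle \tau_a,\tau_b,\tau_c \mid \tau_a^2=\tau_b^2=\tau_c^2=(\tau_a\tau_b)^c=(\tau_b\tau_c)^a=(\tau_c\tau_a)^b=1\rangle$ acts on $E$ with the base triangle $\Delta_{a,b,c}$ as fundamental domain and $G=\pi_1^{\mathrm{orb}}(\PO)$ is its index-two orientation-preserving subgroup, so your Poincar\'e/Coxeter step is the content the paper leaves implicit. One small slip: your parenthetical existence condition for spherical triangles (``sum $<\pi$ when $k>0$'') is mis-stated, since positive excess forces the angle sum to exceed $\pi$; the usual extra condition is $\alpha+\beta+\gamma<\pi+2\min(\alpha,\beta,\gamma)$, which holds automatically for the triples $\bigl(\tfrac{\pi}{a},\tfrac{\pi}{b},\tfrac{\pi}{c}\bigr)$ with $\tfrac1a+\tfrac1b+\tfrac1c>1$, so the argument is unaffected.
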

If $\chi(\PO) >0$, the map $\pi:E \to \PO$ is a $d$-fold orbifold cover with $d \cdot \chi(\PO) = 2$.
We denote by $L_E$ the union of all possible lifts of $L \subset \PO$ in $E$.

We will introduce two tessellations of the universal cover $E$.
The first one is a triangle tessellation which comes in a more direct way.  The second one is a hexagon tessellation $Z$ which provides
an effective way to count the polygons for the potential.

Let $M$ be a two dimensional topological space, which will be either $S^2, \RR^2, \HH$.
A \emph{cellulation of $M$} is a 2-dimensional CW-complex homeomorphic to $M$
such that its 1-skeleton is a simplicial graph and each closed 2-face is glued along a combinatorial simple closed curve.
We simply call such a CW-complex as a \emph{cellulation} when $M$ is clear from the context.
We call a 2-face simply as a \emph{face}.
If the faces are combinatorially isomorphic (isometric if there is a metric) to each other
then we call the cellulation as a \emph{tessellation}. 

Denote by $F$ the free group generated by $\CA=\{\alpha,\beta,\gamma\}$. In particular, the triangular orbifold group
\begin{equation}\label{def:G}
G:=\pi_1^{\mathrm{orb}}(\PO)= \form{\alpha,\beta,\gamma\vert\; \alpha^a=\beta^b=\gamma^c=\alpha\beta\gamma=1}
\end{equation}
is a quotient of $F$. 
We regard $F$ as the set of reduced words in $\CA\cup\CA^{-1}$.
We call an element of $\CA\cup\CA^{-1}$ as a \emph{letter}.
A consecutive sequence of letters in a word $w\in F$ is called a \emph{subword} of $w$.
If $u$ is a subword of $w$, we write $u\preccurlyeq w$.
We let $\tau(\alpha)=\beta,\tau(\beta)=\gamma,\tau(\gamma)=\alpha$.
A word is \emph{cyclically reduced} if its cyclic conjugations are all reduced.
A \emph{cyclic word} in $F$ means the set of cyclic conjugations of a cyclically reduced word in $F$.
We let $\fc$ denote the set of cyclic words in $F$. For $w\in F$, we denote by $[w]\in\fc$ the cyclic word represented by $w$. 

\subsection{Triangular tessellation by real equator}

We label the three orbifold points of $\PO$ by $A,B$ and $C$, see Figure~\ref{fig:orbifold}.  The geodesic equator which passes through the orbifold points divides the orbifold into two isometric triangles, each of which called a \emph{base triangle}.  The base triangle has angles $\pi/a, \pi/b, \pi/c$ at $A,B,C$ respectively.  The isometry type of a base triangle is denoted as $\Delta_{a,b,c}$.
We color the two base triangles by \emph{black} and \emph{white}.

The universal cover $E$ of $\PO$ is naturally tessellated by lifts of black and white base triangles so that they form a checkerboard pattern, which means two adjacent triangles must have different colors.
The labeling and transverse orientations shown in Figure~\ref{fig:orbifold} are all pulled back to $E$; see Figure~\ref{fig:ygraph}.
In particular, the positive orientation on a white base triangle in $E$ reads the vertex labels in the order $(A,B,C)$. 

Also, the pre-images of the equator  in $E$ are given by union of geodesics, which are great circles for $S^2$, straight lines for $\RR^2$,
and infinite arcs in $\HH$ whose asymptotics are perpendicular to the boundary $\partial \HH$.
The reflections across these geodesics define a reflection group $H$ given by
$$H =\langle \tau_a, \tau_b, \tau_c \mid \tau_a^2 = \tau_b^2 = \tau_c^2 = (\tau_a \tau_b)^c=( \tau_b \tau_c)^a=( \tau_c \tau_a)^b=1 \rangle.$$
Here, $\tau_a, \tau_b, \tau_c$ are reflections along edges opposite to the vertices $A,B,C$ of a base triangle in $E$ respectively.
The orbifold fundamental group $G$ is an index two subgroup of $H$.
The dual graph of this triangle tessellation can be identified with the Cayley graph of $H$.

\begin{figure}[htb!]
\includegraphics[width=.6\textheight]{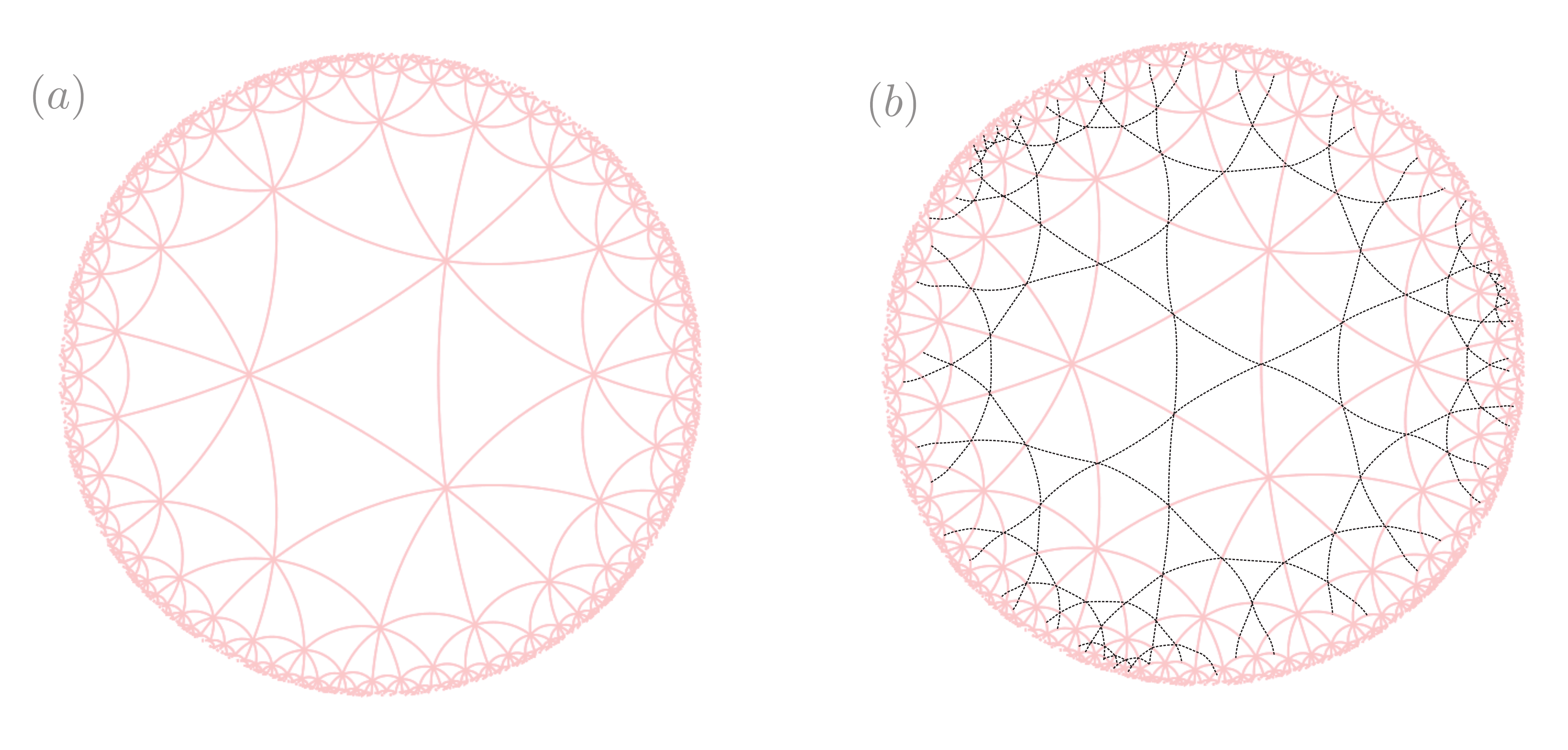}
\caption{$(a)$ Tessellation of the universal cover of $\mathbb{P}^1_{4,4,4}$ by real equator, $(b)$ Seidel Lagrangian in $\HH$}\label{fig:444basic}
\end{figure}

The Seidel Lagrangian $L \subset \PO$ can also be pulled back to the universal cover $E$ (see $(b)$ of Figure
~\ref{fig:444basic}). We remark that a lift $\WT{L} \subset E$ of the Seidel Lagrangian is not a geodesic. The pre-image $L_E \subset E$ of $L$, which is the union of all lifts of $L$, is also immersed, and the immersed points lie on the edges of the trianglular tessellation by real equators.

\subsection{Hexagon tessellation $Z$}
Now, we define a new hexagon tessellation of $E$.
\begin{figure}[htb]
  \tikzstyle {a}=[red,postaction=decorate,decoration={%
    markings,%
    mark=at position 1 with {\arrow[red]{stealth};}}]
  \tikzstyle {b}=[blue,postaction=decorate,decoration={%
    markings,%
    mark=at position .85 with {\arrow[blue]{stealth};},%
    mark=at position 1 with {\arrow[blue]{stealth};}}]
  \tikzstyle {c}=[orange,postaction=decorate,decoration={%
    markings,%
    mark=at position .7 with {\arrow[orange]{stealth};},%
    mark=at position .85 with {\arrow[orange]{stealth};},
    mark=at position 1 with {\arrow[orange]{stealth};}
}]
  \tikzstyle {av}=[red,draw,shape=circle,fill=red,inner sep=2pt]
  \tikzstyle {bv}=[blue,draw,shape=circle,fill=blue,inner sep=2pt]
  \tikzstyle {cv}=[orange,draw,shape=circle,fill=orange,inner sep=2pt]
  \tikzstyle {wv}=[black,draw,shape=circle,fill=white,inner sep=2.5pt]  
  \tikzstyle {gv}=[inner sep=0pt]
  \tikzstyle {pv}=[black,draw,shape=rectangle,fill=black,inner sep=2pt] 
  \tikzstyle {ar}=[teal,postaction=decorate,decoration={%
    markings,%
    mark=at position .35 with {\arrow[teal]{stealth};}}]
{
	\begin{tikzpicture}[thick]
   	\node [gv] at (0,0) (c) {};
   	\node [gv] at (90:2) (pp) {};
   	\node [gv] at (210:2) (qp) {};
	\node [gv] at (150:1.5) (pq) {};
   	\node [gv] at (330:2) (rp) {};
	\draw (pp) edge [bend right,ultra thick] node [gv,pos=.5] (zp) {} (qp);
	\draw (qp) edge [bend right,ultra thick] node [gv,pos=.5] (xp) {} (rp);
	\draw (rp) edge [bend right,ultra thick] node [gv,pos=.5] (yp) {} (pp);
	\node [right] at (150:2) {$z$};
	\node [above] at (270:2) {$x$};	
	\node [left] at (30:2) {$y$};		
	\node [red,right=2] at (90:2.2) {$A$};
	\node [blue,left=2] at (210:2) {$B$};	
	\node [orange,right=2] at (330:2) {$C$};	
	\draw (yp) edge [ar,teal,bend right=20,ultra thick] (zp);
	\draw (yp) edge [dashed,teal,bend left=20,ultra thick] node [pos=.35,gray,fill,star,star points=6,star point ratio=2,inner sep=2pt] {} (zp);	
	\draw (zp) edge [teal,bend right=20,ultra thick] node [pos=.5,left=1] {} (xp);
	\draw (zp) edge [ar,dashed,teal,bend left=20,ultra thick] node [pos=.5,left=1] {} (xp);	
	\draw (xp) edge [ar,teal,bend right=20,ultra thick] node [pos=.5,left=1] {} (yp);		
	\draw (xp) edge [dashed,teal,bend left=20,ultra thick] node [pos=.5,left=1] {} (yp);		
	\draw [red,ultra thick,out=-100,in=100] (pp) edge node [black,fill,pos=.49,star,star points=5,star point ratio=2,inner sep=2pt] {} (c);
	\draw [blue,ultra thick,out=40,in=200] (qp) edge (c);
	\draw [orange,ultra thick,out=140,in=-20] (rp) edge (c);
	\draw [a] (85:1.45) -- (105:1.5);
	\draw (75:1.5) node [red] {$\alpha$};
	\draw [b] (200:1.45) -- (220:1.5);
	\draw (192:1.45) node [blue] {$\beta$};
	\draw [c] (320:1.45) -- (340:1.5);
	\draw (312:1.45) node [orange] {$\gamma$};
	\draw (pp) node [av] {};
	\draw (rp) node [cv] {};
	\draw (qp) node [bv] {};
	\draw (c) node [wv] {};
	\node  [inner sep=0.9pt] at (-2.5,0) {}; 	
	\node  [inner sep=0.9pt] at (2.5,0) {}; 			
	\end{tikzpicture}}
\caption{Orbifold $\PO$. The five- and six-prong stars denote $x_0$ and $x_0'$, respectively.}\label{fig:orbifold}
\end{figure}
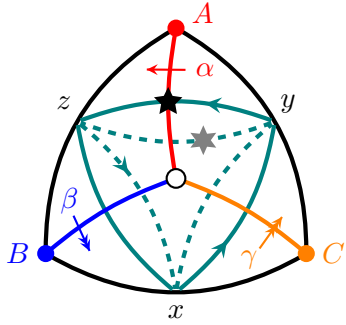
In Figure~\ref{fig:orbifold}, the Seidel Lagrangian $L$, which is the union of edges connecting $(x,z)$, $(y,z)$ and $(z,x)$, divides each base triangle into four smaller triangles. 
We assume that these four triangles have the same area $\sigma$.
Among these four triangles, the one  containing no orbifold point is called the \emph{middle triangle} or \emph{minimal $xyz$ triangle}.
We label the vertices of a middle triangle by $x,y$ and $z$ which lie on the arc $BC$, $CA$ and $AB$ of the corresponding base triangle respectively.

Consider the white base triangle of $\PO$.  We fix the center $W$, which is called a {\em white vertex}, located in the interior of the middle triangle (the white vertex is drawn as a hollow circle at the center of Figures~\ref{fig:orbifold}).  Moreover we define $x_0=AW\cap L$
and let $x_0'$ be the image of $x_0$ reflected along the equator.

We join the three cone points $A, B, C$ to $W$ by geodesic segments. The geodesic segments $AW$, $BW$ and $CW$ are labeled by $\alpha$,$\beta$ and $\gamma$ respectively.
We give transverse orientations (meaning orientations on the normal bundles) to these three geodesic segments by going counterclockwisely around $W$. We always use the convention that $A,\alpha$ is represented by the red (Apple) color, $B,\beta$ by the blue (Blueberry) color and $C,\gamma$ by the orange (Cantaloup) color.
See Figures~\ref{fig:orbifold} and~\ref{fig:ygraph}.
In particular, the positive orientation on the white base triangle in $\PO$ reads the vertex labels in the order $(A,B,C)$.

\begin{definition}[Tessellation $Z$]\label{def:hexZ}
We denote by $Y$ the union of the $\alpha,\beta$ and $\gamma$-geodesic segments in $\PO$.
 The union of the lifts of $Y$ in the universal cover $E$ gives a hexagon tessellation $Z$ of  $E$.
 \end{definition}

\begin{figure}[htb!]
\includegraphics[width=.6\textheight]{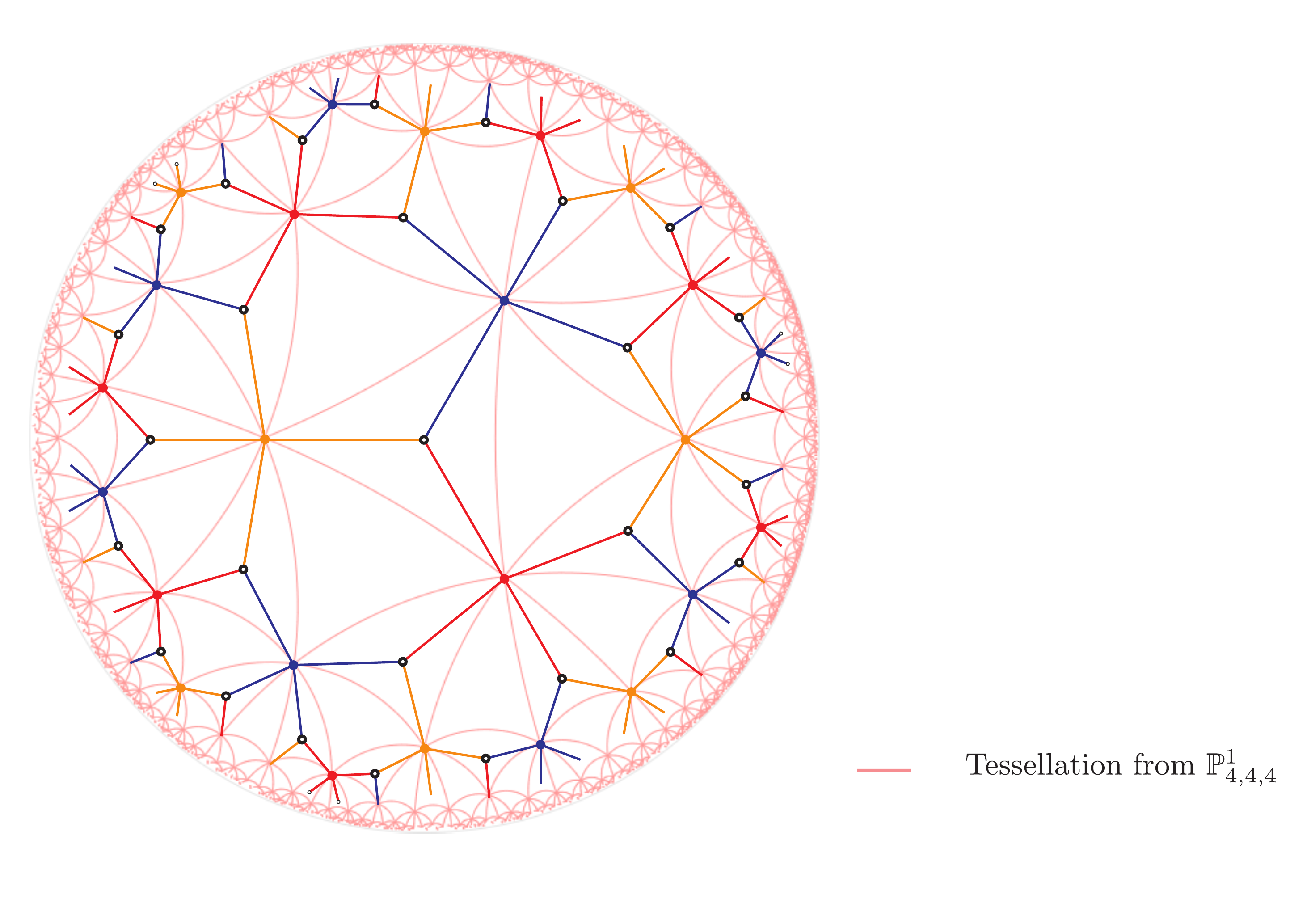}
\caption{Universal cover of $\mathbb{P}_{4,4,4}$ and its tessellation $Z$.}\label{fig:ygraph}
\end{figure}

Then each hexagon is a fundamental domain for the $ \pi_1^{\mathrm{orb}}(\PO)$-action on $E$.
Hence the labels and the orientations of the vertices and the edges of $Z$ are $\pi_1^{\mathrm{orb}}(\PO)$-invariant.
The 1-skeleton of $Z$ is simply the dual of the Cayley graph $\Gamma=\cay( \pi_1^{\mathrm{orb}}(\PO),\CA)\subseteq E$.
By a \emph{$W$-vertex} or a \emph{white vertex}, we mean a lift of $W$ from $\PO$ to $E$.
We similarly define $A$-, $B$- and $C$-vertices as lifts of $A, B$ and $C$; we call these vertices as \emph{colored}.

Now we consider the relation between the Seidel Lagrangian $L_E$ and the tessellation $Z$ of $E$.
First we define the label-reading of an oriented curve which is transverse to the edges of $Z$.
\begin{definition}\label{def:labelreading}
Let $\delta$ be an oriented curve in $E$ such that $\delta$ is transverse to the edges of $Z$.
We follow the trajectory of $\delta$.  Whenever $\delta$ crosses an edge $e$, we record the label $t$ of $e$ or its inverse $t^{-1}$, depending on whether the crossing coincides with the transverse orientation of $e$ or not.
The resulting word (after free reduction if necessary) is denoted as $w(\delta)\in F$ and called the \emph{label-reading of $\delta$}.
\end{definition}

If $\delta$ is an oriented loop, then $[w(\delta)]$ is the cyclic word represented by the label-reading of $\delta$ with respect to an arbitrary choice of the base point.
In particular, when $U$ is a holomorphic polygon with a positively oriented boundary $\partial U$,
we call $[w(\partial U)]$ to be the \emph{boundary (cyclic) word} of $U$.

The following is immediate from construction. 
The label-reading of a Seidel Lagrangian $\tilde{L}$ in $E$ (without turning at any immersed point) is the bi-infinite word
\[\cdots\gamma\beta\alpha\gamma\beta\alpha\gamma\beta\alpha\cdots
=(\gamma\beta\alpha)^\infty.\]
If we turns at an immersed generator $z$, $x$ or $y$ in the Seidel Lagrangian $L_E$, label-reading gives a word $\alpha\beta$,$\beta\gamma$ or $\gamma\alpha$ respectively.  See Figure \ref{fig:consecutiverefl} for an example.

\begin{figure}[htb!]
\includegraphics[width=.33\textheight]{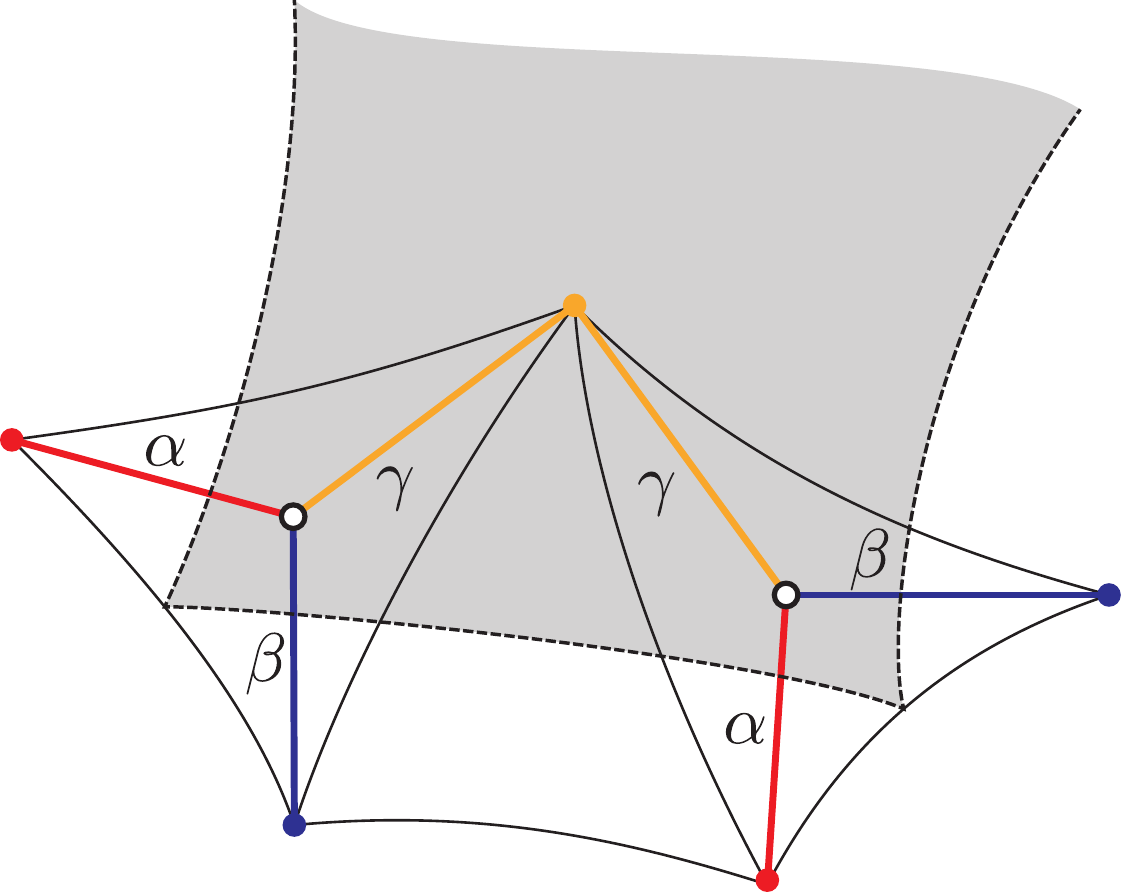}
\caption{Consecutive turns $(\alpha \beta)^2$}\label{fig:consecutiverefl}
\end{figure}

\subsection{Lifting Seidel Lagrangians}
The pre-image $L_E$ of the Seidel Lagrangian in the universal cover consists of several branches, we call a branch of $L_E$ by a
{\em Seidel Lagrangian in $E$} (or just Seidel Lagrangian if it is clear from the context).
A Seidel Lagrangian in $E$ is  embedded in most cases. 

In the spherical cases (see section \ref{sect:S2}), case by case analysis shows that  Seidel Lagrangians in $E$  are immersed (not embedded) in the case of $(2,2,r)$ for $r$ odd, and are embedded otherwise. In the elliptic cases, they are always embedded.
 
In the hyperbolic case, we prove that any Seidel Lagrangian in $E$ is always embedded (which may be intuitively clear in examples)
We give a proof using hyperbolic geometry in Corollary~\ref{cor:bigon_no}.  We present another proof of it here. Let $1/a+1/b+1/c\le 1$ be given. If a Seidel Lagrangian in $E$ is immersed (but not embedded), then there exists a minimal 1-gon with boundary on $L_E$. The corner of this 1-gon has to be mapped to one of the $x, y,z$ immersed generators from the orientation consideration. But from the area formula for a polygon for the potential (Theorem~\ref{thm:area}), such a 1-gon has to have a negative area, which is a contradiction. Note that every nonempty subword of $(\gamma\beta\alpha)^\infty$ is nontrivial by Lemma~\ref{lem:nontrivial}.

In the hyperbolic case, we want to show that there exists a compact manifold cover $\Sigma$ (instead of the hyperbolic plane $E$) of $\PO$ such that any branch of the pre-image in $\Sigma$ of the Seidel Lagrangian is embedded.

Let us regard $L$ as a based loop at $x\in\PO$ and choose a branch $\tilde L\subseteq\HH$ of the pre-image of $L$ (regarded as a path), which reads $\gamma\beta\alpha$. Suppose $p:\;(\Sigma,\tilde x)\to (\PO,x)$ is a covering map. Choose $k$ to be the smallest positive integer such that the concatenation of the $k$ copies of $L$ lifts to a loop $L':\; S^1\to \Sigma$ based at $\tilde x$.
Let us call $L'$ as the \emph{elevation} of $L$ at $\tilde x$ with respect to $p$.

\begin{prop}
If $1/a+1/b+1/c<1$, then there exists a finite cover $\Sigma$ of $\PO$ such that any elevation of $L$ in $\Sigma$ is an embedded circle. 
\end{prop}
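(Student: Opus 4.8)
The plan is to realize $\Sigma$ as a quotient $E/\Gamma$ with $E=\HH$ and $\Gamma\trianglelefteq G:=\pi_1^{\mathrm{orb}}(\PO)$ a finite-index \emph{normal} and torsion-free subgroup, chosen so that $\Gamma$ avoids a certain finite family of double cosets. Fix a branch $\tilde L\subseteq E$ of $L_E=\pi^{-1}(L)$; by the preceding paragraph (via the area formula of Theorem~\ref{thm:area}) $\tilde L$ is an embedded line, and since $L$ is a single immersed loop whose lifts starting at the various preimages of the base point exhaust all branches, every branch is a $G$-translate $g\tilde L$. Because $G$ preserves the labelled tessellation $Z$ and the label-reading along $\tilde L$ is the bi-infinite word $(\gamma\beta\alpha)^\infty$, whose primitive period $\gamma\beta\alpha$ consists of three distinct letters, any nontrivial $g\in G$ with $g\tilde L=\tilde L$ translates $\tilde L$ by an integer number of $L$-loops; hence $\operatorname{Stab}_G(\tilde L)=\langle h\rangle$ is infinite cyclic with $h=[L]$, and the elevation of $L$ based at a point of $\Sigma$ over $x$ that lies on the image of $g\tilde L$ is precisely the image of $g\tilde L$ in $\Sigma$, i.e.\ $g\tilde L/\bigl(\Gamma\cap g\langle h\rangle g^{-1}\bigr)$.

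Next I record the obstruction set $T:=\{\,g\in G: g\tilde L\cap\tilde L\neq\emptyset,\ g\notin\langle h\rangle\,\}$. Since $L_E$ is immersed with isolated transverse double points lying over the finitely many self-intersection points of the compact curve $L\subseteq\PO$, the intersection points of $\tilde L$ with the remaining branches fall into finitely many $\langle h\rangle$-orbits; as $\{g:g\tilde L=B\}$ is a single right coset of $\langle h\rangle$ for each branch $B$, it follows that $T$ is a \emph{finite} union of double cosets $\langle h\rangle g_1\langle h\rangle,\dots,\langle h\rangle g_N\langle h\rangle$ with every $g_j\notin\langle h\rangle$. The key sufficiency statement is then: if $\Gamma\trianglelefteq G$ has finite index, is torsion-free and satisfies $\Gamma\cap T=\emptyset$, then every elevation of $L$ in $\Sigma=E/\Gamma$ is an embedded circle. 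Indeed, the image of any branch $g\tilde L$ in $\Sigma$ is a circle since $\Gamma\cap g\langle h\rangle g^{-1}$ is a nontrivial finite-index (hence infinite cyclic) subgroup of $g\langle h\rangle g^{-1}$ acting freely on the line $g\tilde L$ by torsion-freeness; and it is embedded because if $\gamma\in\Gamma$ satisfies $\gamma g\tilde L\cap g\tilde L\neq\emptyset$ then $g^{-1}\gamma g\in\langle h\rangle\cup T$, while normality gives $g^{-1}\gamma g\in\Gamma$, so $\Gamma\cap T=\emptyset$ forces $g^{-1}\gamma g\in\langle h\rangle$, i.e.\ $\gamma\in\operatorname{Stab}_G(g\tilde L)$, so no two distinct points of $g\tilde L$ are identified in $\Sigma$ beyond the cyclic quotient.

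It remains to produce such a $\Gamma$, which is the only genuine input: cocompact Fuchsian groups are LERF (Scott), so the cyclic subgroup $\langle h\rangle$ is separable in $G$. For each $j$ pick a finite-index subgroup $K_j\supseteq\langle h\rangle$ with $g_j\notin K_j$ and let $\rho_j\colon G\to\operatorname{Sym}(G/K_j)$ be the left coset action; then every power of $\rho_j(h)$ fixes the coset $K_j$ while $\rho_j(g_j)$ does not, so $\rho_j(g_j)\notin\langle\rho_j(h)\rangle$. Setting $q=(\rho_1,\dots,\rho_N)$ and $\Gamma_0=\ker q$ (finite index), an element $h^ag_jh^b\in\Gamma_0$ would give $q(g_j)=q(h)^{-a-b}\in\langle q(h)\rangle$, a contradiction in the $j$-th coordinate; hence $\Gamma_0\cap T=\emptyset$. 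Finally intersect $\Gamma_0$ with a finite-index torsion-free subgroup supplied by Selberg's lemma and pass to the normal core of the result to obtain $\Gamma$: it is finite-index, normal and torsion-free and still satisfies $\Gamma\cap T=\emptyset$ since $\Gamma\subseteq\Gamma_0$. Then $\Sigma=E/\Gamma$ is the desired finite cover. The expected main obstacle is precisely the separability of $\langle h\rangle$ — equivalently, the classical fact that essential closed curves on a surface embed in finite covers, transported to $\PO$ — while the rest is bookkeeping about branches of $L_E$ under the $G$-action that is already available from the tessellation $Z$.
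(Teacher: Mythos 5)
Your proof is correct, and it rests on the same essential input as the paper's: Scott's LERF property for the cocompact Fuchsian group $G$, used to produce a finite-index subgroup avoiding a finite obstruction set, followed by passage to a torsion-free normal finite-index subgroup. Where you differ is in the bookkeeping. The paper first fixes a torsion-free normal finite-index $H\le G$, takes $k$ minimal with $(\gamma\beta\alpha)^k\in H$, and uses LERF to separate the finitely many nontrivial subwords of $(\gamma\beta\alpha)^k$ from a finite-index $K$ containing $(\gamma\beta\alpha)^k$; failure of embeddedness is then detected as two vertices of the lifted path lying in one $(H\cap K)$-orbit, i.e.\ a subword lying in $K$. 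You instead work directly with a branch $\tilde L$: you identify $\operatorname{Stab}_G(\tilde L)=\langle h\rangle$ via the label-reading $(\gamma\beta\alpha)^\infty$, encode non-embeddedness by the set $T$ of elements moving $\tilde L$ to a distinct crossing branch, show $T$ is a finite union of double cosets $\langle h\rangle g_j\langle h\rangle$ (finiteness coming from the finitely many self-intersection points of $L$), and separate $\langle h\rangle$ from the $g_j$. Your packaging buys a uniform treatment of \emph{all} elevations at once (via normality and conjugating the obstruction set), and it does not depend on the auxiliary choice of $H$ and the resulting $k$; the price is the extra verifications that the stabilizer is exactly $\langle h\rangle$ and that $T$ decomposes into finitely many double cosets, both of which you supply. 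Two minor streamlinings: since $\langle h\rangle\subseteq K_j$, an inclusion $h^ag_jh^b\in K_j$ already forces $g_j\in K_j$, so the normal core of $\bigcap_j K_j$ avoids $T$ without the coset-action computation; and when you identify the elevation with the image circle $g\tilde L/\bigl(\Gamma\cap g\langle h\rangle g^{-1}\bigr)$, it is worth stating explicitly that minimality of $k$ guarantees the elevation traverses that circle exactly once, which is what upgrades ``embedded image'' to ``embedded circle.''
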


\begin{proof}
Let $H$ be a finite index torsion free normal subgroup of $G$, and $k$ be the smallest positive integer such that $ (\gamma\beta\alpha)^k\in H$. Define W as the set of all nonempty finite sub-words of $ (\gamma\beta\alpha)^k$. Since the Fuchsian group $G$ satisfies the \emph{LERF} property \cite{Scott1978}, there exists a finite-index normal subgroup $K$ of $G$ such that $ (\gamma\beta\alpha)^k\in K$ and $W \cap K =\varnothing$. We have an elevation $L'$ satisfying the following diagram:
\[ \xymatrix{ S^1\ar[d]_-{k\mathrm{-to-}1}\ar[r]^-{L'}& \HH/ H\cap K = \Sigma\ar[d]^-{<\infty} \\ S^1\ar[r] & \HH/G } \] The hexagon tessellation of $\HH$ induces that of $\Sigma$, and $L'$ reads $(\gamma \beta \alpha)^k$ in this tessellation. If $L'$ were not embedded, then two vertices in the path $ (\gamma\beta\alpha)^k$ in $\HH$ belong to the same $(H\cap K)$--orbit. This is impossible since $W\cap K = \varnothing$.

\end{proof}



\section{Lagrangian Floer potential for the Seidel Lagrangian}
In this section, we briefly recall the definition of the Lagrangian Floer potential for the Seidel Lagrangian from \cite{CHL}.
Figure \ref{fig:lagintro} shows the Lagrangian immersion $L$ introduced by Seidel.  Its $\AI$-algebra can be defined following
the idea of Akaho-Joyce \cite{AJ} and Seidel  \cite{S}.  We consider its weak Maurer-Cartan elements
given as linear combinations of odd degree immersed generators, 
whose associated $m_0$-term is the desired Lagrangian Floer potential.

\subsection{Lagrangian Floer potential for immersed Lagrangians}
First,  the Novikov ring $\Lambda$ is defined by 
$$\Lambda:= \left\{ \sum_{i} a_i T^{\lambda_i} \mid a_i \in \CC,\quad \lambda_i \nearrow \infty \right\},
\Lambda_0:= \left\{ \sum_{i} a_i T^{\lambda_i}  \in \Lambda \mid \lambda_i \geq 0 \right\}. $$

%
%
%
According to \cite{AJ} and \cite{S}, the Floer complex $CF(L,L)$ of $L$ with itself for a Lagrangian immersion is defined by a free $\Lambda$-module generated by two types of generators.  Generators of the first type come from the singular cohomology of $L$, which is
isomorphic to the cohomology of the Morse complex of a Morse function $f$ on $L$.  For simplicity we assume $f$ has only two critical points, which are the minimum point $e$ and the maximum point $p$. Generators of the second type come from self-intersection points.
Each self intersection point has two branches, say $br_1, br_2$, and the two ordered pairs $(br_1, br_2)$ or $(br_2, br_1)$
are generators of the second type of $CF(L,L)$, and we call them to be immersed generators.  At each self intersection point $X$, one of the immersed generator has odd degree and the other one has even degree.  We denote them as $X$ and $\bar{X}$ respectively, see Figure \ref{fig:angles}.  Geometrically the immersed generators can be thought as formal smoothings at the self-intersection points.

\begin{figure}[ht]
\begin{center}
\includegraphics[height=2in]{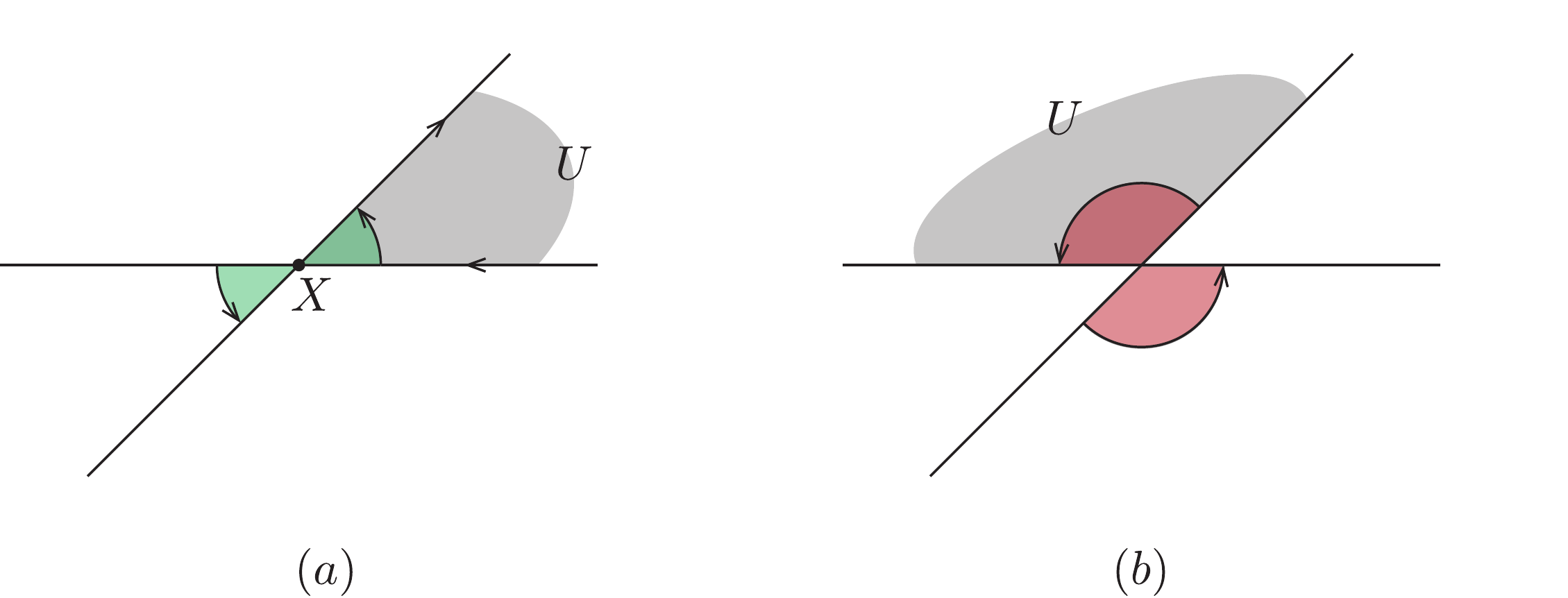}
\caption{A polygon $U$ with the input $a_i$ for (a) $a_i =X$ and (b) $a_i = \bar{X}$ }\label{fig:angles}
\end{center}
\end{figure}

The point $e$ serves as the unit for $CF(L,L)$. $CF(L,L)$ admits a natural $\Z_2$-grading for which $X$ and $p$ have odd degrees, and $\bar{X}$ and $e$ have even degrees for each self-intersection point $X$ of $L$. One can show, for example, that the Floer differential sends odd generators to even and vice versa.

$A_\infty$ operations $\{m_k\}_{k \geq 0}$ on $CF(L,L)$ can be defined using holomorphic polygons (or more generally pearly trees as in \cite{S}).  We will be interested in the cases that all inputs $a_1,\ldots,a_k$ are given by immersed generators.  $m_k$ takes the form
\begin{equation}\label{eq:mk}
m_k (a_1, \cdots, a_k) = \sum_{b,u} \pm T^{\omega (u)} \cdot b
\end{equation}
where the sum is taken over all generators $b$ of $CF(L,L)$ and (immersed) holomorphic polygons $u$ described as follows.\footnote{Rigorously speaking we need to consider moduli problems and their corresponding obstruction theories.  In our surface case all relevant moduli spaces are regular and just consist of points.  Hence we simply write $m_k$ as a genuine counting instead of some integrations over moduli.}  (Recall that $T$ is the formal variable in the Novikov ring.)  $u$ is a continuous map $(D,\partial D, \{z_0,\ldots,z_k\}) \to (X,L)$ which is holomorphic in the interior of $D$.  $z_1,\ldots,z_k$ are called input marked points (ordered couterclockwisely), and they are required to be mapped to the immersed generators $a_1, \cdots, a_k$ respectively (see Figure \ref{fig:angles} for a local picture around a corner).  $z_0$ is called the output marked point, and it is required to be mapped to $b$.  We require $u$ to be convex at $z_i$ for all $i=0,\ldots,k$, which means they have angles less than or equal to $\pi$.  In our case $a_i$'s are taken to be the immersed generators $X, Y, Z$ of the Seidel Lagrangian $L$.




The contribution from a non-constant polygon to the coefficient of $p$ in $m_k (a_1, \cdots, a_k)$ always vanishes.
The coefficient of $e$ in $m_k (a_1, \cdots, a_k)$ is given by the signed count of holomorphic polygons $u$ with convex corners $z_1,\cdots, z_k$ mapped to immersed generators $a_1,\cdots, a_k$ respectively, and the output marked point $z_0$ is mapped to $e$.

The sign in Equation \eqref{eq:mk} is defined by \cite{S}.  Pick a generic point $x_0$ on $L$ to define a non-trivial spin structure on $L$.  We can choose $x_0$ to be $e$ for computational convenience.

Fix a holomorphic polygon $u$ which is a map $(D,\partial D) \to (X,L)$ ($X$ is taken to be $\PO$ later).  If the induced orientation on each side of $\partial D$ matches with the orientation of the Lagrangian $L$, and $\partial D$ does not pass through $x_0$, then the sign of the contribution of $U$ is positive.  Disagreement of orientations along each side $\overline{a_i a_{i+1}}$ of $\partial D$ changes the sign of $u$ by $(-1)^{|a_i|}$ for $2 \leq  i \leq k$. The orientation of the first side(edge) $\overline{a_1 a_2}$ does not affect the sign and disagreement of orientations along the last side changes the sign by $(-1)^{|a_1| + |b|}$. In addition to this, we multiply by $(-1)^k$ where $k$ is the number of times $\partial u$ passes through $x_0$.  Thus the sign in Equation \ref{eq:mk} is determined.

%


The definition of $\{m_k\}_{k \geq 0}(a_1,\ldots,a_k)$ extends multilinearly to $a_1,\ldots,a_k \in CF(L,L)$.  Let $X_1,\cdots, X_k$ be all the odd degree immersed generators of $L$.  
Let $b = \sum x_i X_i \in CF(L,L)$ for $x_i \in \Lambda_0$.  

\begin{definition}
$b = \sum x_i X_i \in CF(L,L)$ is called a {\em weak Maurer-Cartan element} (bounding cochain) if
$m(e^b)$ is a constant multiple of the unit $e$. Namely,
$$\sum_{k=1}^\infty m_k(b,\cdots,b) = c(b) \cdot e$$
for $c(b) \in \Lambda_0$.
If such a weak Maurer-Cartan element exists, then $L$ is said to be {\em weakly unobstructed}.  The Lagrangian Floer potential $W$ is a function on the set of all such weak Maurer-Cartan elements $b$, which takes the value $c(b)$ defined by the above equation for each $b$.
\end{definition}

%
%
%

\subsection{$\mathbb{P}^1_{a,b,c}$ and the Seidel Lagrangian}
Now we restrict ourselves to the case of orbi-sphere $\mathbb{P}^1_{a,b,c}$ and the Seidel Lagrangian $L$.

\begin{figure}[ht]
\begin{center}
\includegraphics[height=2in]{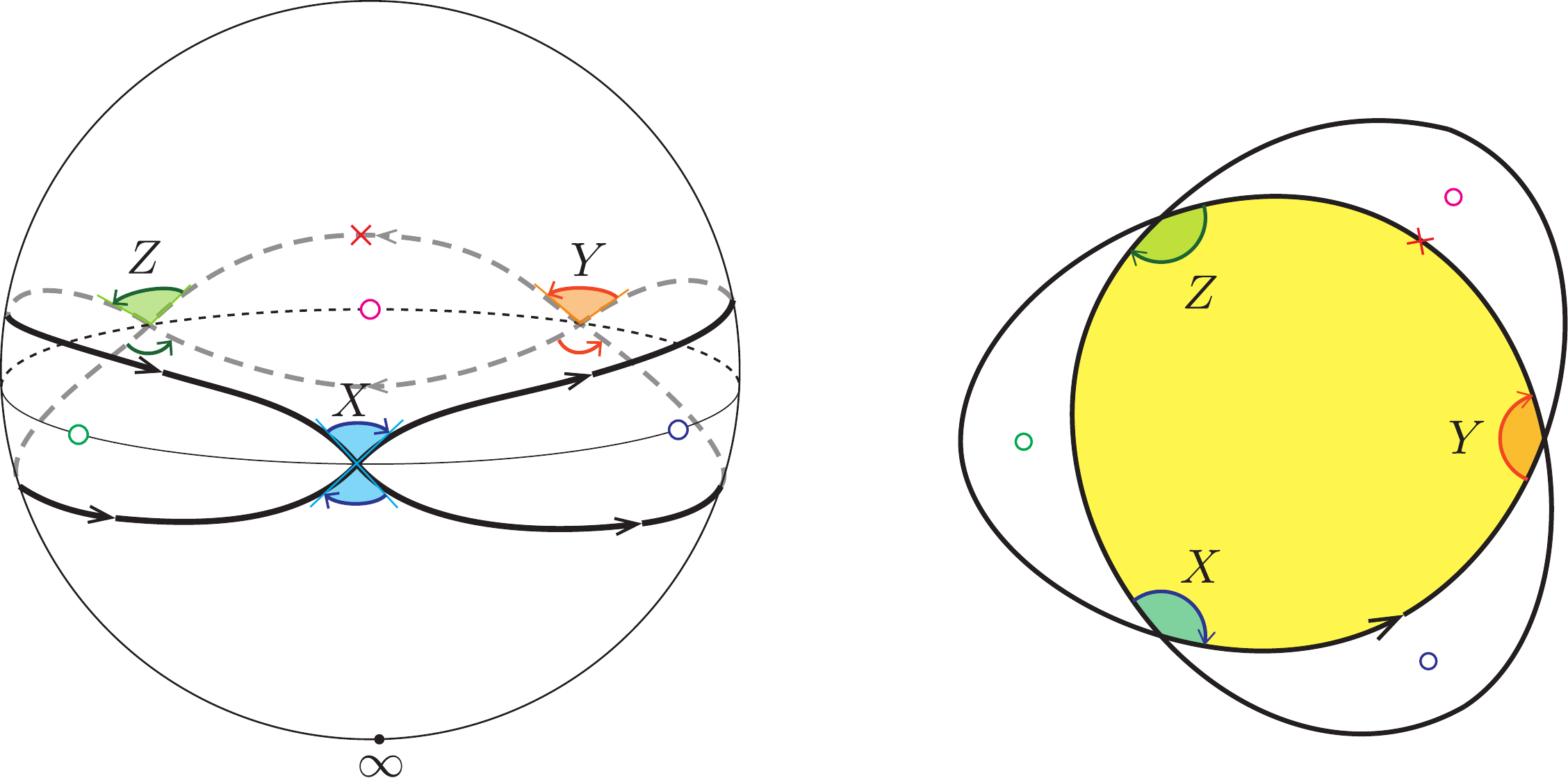}
\caption{The Seidel Lagrangian and the minimal $xyz$-triangle $U_{xyz}$: $\times$ represents both $e$ and the generic point $x_0$. }\label{fig:seidel_lag}
\end{center}
\end{figure}

As in Figure \ref{fig:seidel_lag}, $L$ has three self-intersection points which are denoted by $X$, $Y$ and $Z$ respectively. We assume that $L$ is symmetric with respect to the reflection about the equator of $\mathbb{P}^1_{a,b,c}$ passing through three orbifold points.  Each self-intersection point $X$ (or $Y$, $Z$) corresponds to two immersed generators, one has degree one which is still denoted by $X$ (or $Y$, $Z$ resp.), one has degree two which is denoted by $\bar{X}$ (or $\bar{Y}, \bar{Z}$ resp.).  Here we use the integer grading introduced by Seidel \cite{S}.

In \cite{CHL} we proved that all linear combinations of $X,Y,Z$ are weak Maurer-Catan elements.

\begin{lemma}\cite[Lemma 7.5]{CHL}
Let $V= \Lambda_0^3$.  For $(x,y,z) \in V$, define $b= xX + yY + zZ$. $b$ is a weak Maurer-Cartan element.
Hence there exists $W: V \to \Lambda_0$ such that
\begin{equation}\label{eq:defW}
m(e^b) = W(x,y,z) \cdot e
\end{equation}
\end{lemma}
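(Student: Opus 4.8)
\emph{The plan.} Since this is literally \cite[Lemma 7.5]{CHL}, the task is to recall why it holds, starting from the polygon description of the operations $m_k$ set up above. Write $b = xX+yY+zZ$ with $x,y,z\in\Lambda_0$. By multilinearity of $m_k$,
\[
\sum_{k\ge 1} m_k(b,\dots,b) \;=\; \sum_{k\ge 1}\;\sum_{(i_1,\dots,i_k)} x_{i_1}\cdots x_{i_k}\, m_k(X_{i_1},\dots,X_{i_k}),
\]
the inner sum running over all words in the letters $X,Y,Z$ (here $x_i$ denotes the coefficient $x,y$ or $z$). By \eqref{eq:mk} each $m_k(X_{i_1},\dots,X_{i_k})$ is a $\Lambda_0$-linear combination of the generators of $CF(L,L)$; the convergence of the double series in $\Lambda_0$ follows from Gromov compactness (only finitely many holomorphic polygons of area below any bound), so the only issue is to identify which generators can occur with nonzero coefficient. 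I would show that only $e$ can, and then \emph{define} $W(x,y,z)$ to be its coefficient, which lies in $\Lambda_0$ since every polygon has nonnegative area. Explicitly $W(x,y,z)$ is the signed, $T^{\mathrm{area}}$-weighted count of holomorphic polygons bounded by $L$ with convex corners mapped to $X,Y,Z$ and with output boundary point at $e=x_0$.

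\emph{Step 1: the coefficient of $p$ vanishes.} Every polygon occurring in $m_k(X_{i_1},\dots,X_{i_k})$ has at least $k\ge 1$ convex corners forced at the immersed generators, hence is non-constant (for $k=1$ there is simply no monogon bounded by $L$ with a single convex corner, as $L$ is embedded away from its three self-intersection points); and, as recalled above, a non-constant polygon never contributes to the coefficient of $p$. A degree/parity check in Seidel's grading also rules out an output at $X,Y$ or $Z$ itself.

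\emph{Step 2: the coefficients of $\bar X,\bar Y,\bar Z$ vanish.} This is the degree count. In Seidel's integer grading the odd immersed generators $X,Y,Z$ all have degree $1$, while $\bar X,\bar Y,\bar Z$ carry the even degree. The expected dimension of the moduli of rigid holomorphic polygons contributing to $m_k$ with all $k$ inputs of degree $1$ pins down the degree of the output generator; the output $e$ is the one consistent with rigidity, whereas an output at $\bar X,\bar Y$ or $\bar Z$ would force the moduli space to have positive virtual dimension and hence contribute nothing to the count. (Equivalently, one can invoke the reflection symmetry of $L$ across the equator to pair such polygons with opposite signs.) Combining Steps 1 and 2, $m(e^b) = W(x,y,z)\cdot e$, so $b$ is a weak Maurer--Cartan element and $W\colon V\to\Lambda_0$ is well defined.

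\emph{Main obstacle.} The delicate point is Step 2: one must fix Seidel's integer grading on $CF(L,L)$ for the immersed Lagrangian $L\subset\PO$ together with the exact index formula for immersed polygons with prescribed convex corners, so that the bookkeeping unambiguously forces the output to be $e$; handling the signs from the chosen spin structure at $x_0=e$ simultaneously is what makes this more than a formality. In the present paper it is enough to recall this argument and refer to \cite{CHL} for the grading and sign conventions.
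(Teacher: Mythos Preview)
The paper does not prove this lemma; it is quoted verbatim from \cite{CHL}, so there is no in-paper argument to compare against. Your Step~1 and the parity remark excluding $p,X,Y,Z$ as outputs are fine and match what the paper records.

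The gap is in Step~2. Your primary argument there --- that the integer grading, or equivalently the virtual dimension, forces the output to be $e$ rather than $\bar X,\bar Y,\bar Z$ --- does not work. With the stated gradings $|X|=|Y|=|Z|=1$, $|\bar X|=|\bar Y|=|\bar Z|=2$, $|e|=0$, the formal degree of $m_k(b,\dots,b)$ is $k+(2-k)=2$, which would \emph{select} $\bar X,\bar Y,\bar Z$ and \emph{exclude} $e$, the opposite of what you want. The point is that the $A_\infty$ operations here do not preserve the $\Z$-grading (only the $\Z_2$-grading): the minimal $xyz$-triangle already contributes nontrivially to $\langle m_3(X,Y,Z),e\rangle$ with $|e|=0\ne 2$. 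For the same reason the dimension formula $\dim=\mu(\beta)-2$ quoted later in the paper does not separate outputs at $e$ from outputs at $\bar X$; there is no obstruction from rigidity alone.

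What actually kills the $\bar X,\bar Y,\bar Z$ coefficients is exactly the mechanism you relegate to a parenthesis: the anti-symplectic reflection across the equator preserves $L$ with its orientation and pairs the polygons contributing to $\bar X$ (resp.\ $\bar Y,\bar Z$) with opposite signs, so they cancel. That sign computation is the content of \cite[Lemma~7.5]{CHL} and should be the main line of Step~2, not an afterthought.
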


$W$ is in general a formal power series in $x,y,z$.  The main purpose of this paper is to compute $W$ explicitly in the spherical and elliptic case, and to give an algorithm to compute $W$ up to any order in the hyperbolic case.

Note that holomorphic polygons contributing to $W$ must have Maslov index two by dimension counting.  Namely, the moduli space of polygons in a certain class $\beta$ with $k$ input marked points which are mapped to the degree one immersed generators $X,Y,Z$ and one output marked point which is mapped to the point $e$ has dimension $1+\mu(\beta) + k+1 - 3 - k-1 = \mu(\beta) - 2$, which is zero if and only if the Maslov index $\mu = 2$.  Hence the counting is defined for polygons with Maslov index two.  The readers are referred to \cite{CHL} for more detailed discussions on the disc moduli.  We include the definition of Maslov index for polygons for readers\rq{} convenience.

\begin{definition}
Let $u$ be a holomorphic polygon with convex corners bounded by $L$.  At each convex corner $p$, we connect the tangent space $T_pL_i$ with $T_pL_{i+1}$ by a canonical path
obtained by complex multiplications $e^{it}$ for $ 0 \leq t \leq c$ for some $c \leq \pi$. Since $p$ is an odd degree,
this provides a path between $T_pL_i$ with $T_pL_{i+1}$  as oriented spaces. Then, Maslov index $\mu$ of $u$ is defined as the winding number of the corresponding oriented Lagrangian loop in the trivialization of $u^*TE$.
\end{definition}

We need to classify all holomorphic polygons with Maslov index two whose corners are mapped to the immersed generators $X,Y,Z$.  In this paper, we will work on the universal orbifold cover $E$ of $\mathbb{P}^1_{a,b,c}$ which is $S^2, \RR^2$ or $ \mathbb{H}^2$ as explained in Lemma \ref{lem:orbcov}.
Any holomorphic polygon (which is a map from the unit disk) lifts to the universal cover, and we will count them in $E$ modulo the deck transformation group, which is $G = \pi_1^{\mathrm{orb}}(\PO)$.

\begin{lemma}
Any  holomorphic polygon in $\PO$ with boundary on $L$ 
lifts to a holomorphic polygon $U:(D^2, \partial D^2) \to (E, L_E)$.
\end{lemma}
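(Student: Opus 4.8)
The plan is to use the standard lifting property of covering maps applied to discs, which are simply connected. Specifically, given a holomorphic polygon $u : (D^2, \partial D^2, \{z_0,\ldots,z_k\}) \to (\PO, L)$ with corners mapped to the immersed generators, I would first observe that $D^2$ is simply connected, so the continuous map $u : D^2 \to \PO$ lifts through the orbifold universal cover $\pi : E \to \PO$ to a continuous map $U : D^2 \to E$ with $\pi \circ U = u$, once a lift of one boundary base point is chosen. The only subtlety is that $\pi$ is an \emph{orbifold} cover rather than an honest cover, so one must check that the lift still exists near points of $\PO$ where $u$ meets the orbifold points $A, B, C$. But the corners of $u$ are all mapped to $X, Y, Z$ on $L$, which are not orbifold points, and the holomorphic polygons for the potential have boundary on $L$ which avoids the orbifold points; in the interior, since $u$ is holomorphic and $\pi$ is a branched cover near the orbifold points (locally $w \mapsto w^a$ etc.), a holomorphic disc still lifts holomorphically by the standard removable-singularity / local uniformization argument for Riemann surfaces. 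So the lift $U$ exists and is holomorphic in the interior of $D^2$ because $\pi$ is a local biholomorphism away from the branch locus and the local model at the branch locus allows holomorphic lifts of holomorphic maps.

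The key steps, in order, would be: (1) restrict $u$ to $\partial D^2$ and lift the boundary path into $L_E \subseteq E$, which is possible because $L_E = \pi^{-1}(L)$ by definition and $\partial D^2$ is an interval's worth of arcs between corners, each landing in $L$ minus orbifold points; (2) invoke simple-connectedness of $D^2$ to extend this to a continuous lift $U : D^2 \to E$ of the whole disc (the orbifold cover is a genuine topological cover away from the cone points, and near a cone point one uses the local chart $\DD \to \DD$, $w \mapsto w^n$, together with holomorphicity of $u$ to define the lift); (3) check that $U$ is holomorphic in the interior, which follows since $\pi$ is holomorphic and a local biholomorphism away from the finite branch set, and at the branch points the local model forces $U$ to be holomorphic as well (a holomorphic map into an orbifold point lifts holomorphically after choosing a branch of the root); (4) note that $U$ maps $\partial D^2$ into $L_E$ and that the corners of $U$ are mapped to lifts of $X, Y, Z$, i.e.\ to immersed generators of $L_E$, with the same convexity and Maslov index preserved since $\pi$ is a local isometry for the metrics descended from the space-form.

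The main obstacle I expect is making precise the behavior of the lift near the orbifold points $A, B, C$: one has to rule out that the polygon $u$ ``wraps around'' a cone point in a way that obstructs lifting, and argue that even when the image of $u$ covers a cone point, the holomorphic disc lifts through the local branched-cover model $w \mapsto w^n$. This is handled by the classical fact that a holomorphic map from a disc into $\PP^1$ passing through the orbifold locus lifts holomorphically to the orbifold chart whenever its local intersection multiplicity with the cone point is divisible by the cone order — and for polygons contributing to $W$ this divisibility is automatic, or alternatively one works with the associated developing map on the simply connected domain and appeals to the universal property of the orbifold universal cover as in Lemma~\ref{lem:orbcov}. Once this local analysis is in place, the global lift is immediate and the remaining assertions (boundary on $L_E$, corners at immersed generators, holomorphicity, Maslov index) are routine.
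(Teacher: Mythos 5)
The paper states this lemma without proof, so there is no argument of its own to compare against; your covering-space argument is the natural one and is essentially correct, and it supplies the proof the paper leaves implicit. The one step that needs tightening is your treatment of the cone points: you assert that the divisibility of the local intersection multiplicity by the cone order ``is automatic'' for polygons contributing to $W$, but as a statement about holomorphic maps into the underlying sphere this is false --- for instance, a holomorphic bigon covering once the component of $\PO \setminus L$ containing the orbifold point $A$ has boundary on $L$ and convex corners, yet it does not lift to $E$. What makes the divisibility automatic is definitional: a holomorphic polygon in the orbifold $\PO$ (as opposed to its underlying sphere) is by convention a holomorphic map of orbifolds from a disc carrying no orbifold points, i.e.\ a map admitting local lifts to the uniformizing charts near $A,B,C$, which is equivalent to its local multiplicity at a cone point of order $n$ being a multiple of $n$. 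Discs meeting a cone point with multiplicity not divisible by the order are orbi-discs with interior orbifold marked points and are excluded from the counting that defines $W$ (they would enter only a bulk-deformed potential). Once this definitional input replaces the phrase ``is automatic,'' your steps --- lifting $u|_{\partial D^2}$ into $L_E = \pi^{-1}(L)$, extending over the simply connected domain using that $\pi$ is a genuine covering away from the cone points together with the local model $w \mapsto w^n$ at them, and verifying holomorphicity, the boundary condition, and the corner/index data --- go through and give a complete proof.
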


Hence, it is enough to find all such polygons in the universal cover $E$. We will  call them to be {\em polygons for the potential in $E$} for convenience.

%
%

We finish this section by writing $W$ in terms of counting polygons for the potential in $E$.  Let $\WT{e} \in L_E$ be a lift of the point $e$ in $L$.  Counting holomorphic polygons $U$ passing through a fixed lift \textbf{$\WT{e} \in L_E$} is equivalent to counting the number of times that $\partial U$ passes through all the lifts of $e$.

Note that holomorphic polygons always appear in pairs since $L$ is preserved under the reflection about the equator of $\mathbb{P}^1_{a,b,c}$. One is a polygon $U$ whose induced boundary orientation matches with that of the Lagrangian $L$, and its partner is the reflection image of $U$ denoted by $U^{op}$, whose induced orientation on $\partial U^{op}$ is opposite to that of the Lagrangian $L$. This is because the reflection is an anti-symplectic involution while it preserves the orientation of $L$. 
 
For the polygon $U$, we define $s(U)$ to be the number of $e$'s on the boundary of $U$.  Similarly $s(U^{op})$ is defined as the number of $e$'s on the boundary of $U^{op}$, which is the same as the number $s^{\mathrm{op}}(U)$ of $e'$ on the boundary of $U$, where $e'$ is the reflection image of $e$.

The sign of the contribution of $U$ is $(-1)^{s(U)}$.  For $U^{op}$, the sign is
$$(-1)^{s^{op}(U) + (\textrm{number of sides})} = (-1)^{s(U)}$$
by Lemma 7.4 of \cite{CHL}.

Finally, when $g \in G$ preserves $U$, we need to identify $(U, \WT{e})$ with $(g(U), g(\WT{e})) = (U, g(\WT{e}))$.
Let $\eta(U)$ be the largest positive number $k$ such that the polygon $U$ in $E $ has $\mathbb{Z}/k$ symmetry from the deck transformation group $G$.  Then the counting of $U$ gives $(-1)^{s(U)} \frac{s(U)}{\eta(U)}$, and that of $U^{op}$ gives $(-1)^{s(U)} \frac{s^{op}(U)}{\eta(U)}$.  Thus we have the following proposition:

\begin{prop}[\cite{CHL}]
Let
$\omega(U)$ be the symplectic area of $U$, and $\vec{\mathbf{x}}^U := x^P y^Q z^R$ where $P,Q,R$ are the numbers of $x,y,z$-corners (vertices) in $U$ respectively.
Then, the potential of $L$ is can be written as
\begin{equation}\label{eq:formulaW}
W = \sum_U (-1)^{s(U)} \frac{s(U) + s^{\mathrm{op}}(U)}{\eta(U)} T^{\omega(U)} \vec{\mathbf{x}}^U
\end{equation}
where the sum is taken over all holomorphic polygons $U$, up to $G$ action, with Maslov index two bounded by $L_E$, whose boundary orientation matches with that of $L_E$.
\end{prop}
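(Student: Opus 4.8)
The plan is to unwind the defining Equation \eqref{eq:defW}, $m(e^b)=W(x,y,z)\cdot e$ for $b=xX+yY+zZ$, into a signed, area-weighted count of holomorphic polygons in $\PO$, and then to regroup that count according to lifts of the polygons to the universal orbifold cover $E$, where the combinatorial machinery of the later sections applies. There are three things to pin down: which polygons appear, and with which monomial $\vec{\mathbf{x}}^U$ and area weight $T^{\omega(U)}$; how many holomorphic polygons in $\PO$ descend from a single $G$-orbit of polygons in $E$; and that every one of them enters with the sign $(-1)^{s(U)}$.

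For the first point I would expand $\sum_{k\ge1}m_k(b,\dots,b)$ multilinearly: the contribution of a tuple $(a_1,\dots,a_k)$ with each $a_i\in\{X,Y,Z\}$ is $x^{P}y^{Q}z^{R}\,m_k(a_1,\dots,a_k)$, where $P,Q,R$ count the $a_i$ equal to $X,Y,Z$. Since a non-constant polygon contributes nothing to the coefficient of $p$ and $b$ is a weak Maurer-Cartan element, only the coefficient of $e$ survives, and by Equation \eqref{eq:mk} it is the signed $T^{\omega}$-weighted count of holomorphic polygons $u\colon(D,\partial D,\{z_0,\dots,z_k\})\to(\PO,L)$ whose corners $z_1,\dots,z_k$ go to the immersed generators and whose output $z_0$ goes to $e$; such a $u$ is recorded by its image together with the position of $z_0$ among the boundary points mapped to $e$. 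By the dimension computation recalled in the text the moduli space of such $u$ is zero-dimensional exactly when the Maslov index equals two, so only Maslov-two polygons contribute, and $W$ is the corresponding signed generating series in $\vec{\mathbf{x}}$ and $T$.

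For the second point I would invoke the lifting lemma: every such $u$ lifts to a holomorphic polygon $U\colon(D,\partial D)\to(E,L_E)$, unique up to the $G=\pi_1^{\mathrm{orb}}(\PO)$-action, with $\vec{\mathbf{x}}^U=\vec{\mathbf{x}}^u$ and $\omega(U)=\omega(u)$; conversely composing an $E$-polygon with $\pi$ recovers such a $u$. Fix a $G$-orbit $[U]$ of Maslov-two $E$-polygons whose boundary orientation matches $L_E$. A polygon $u$ in $\PO$ with output at $e$ and matching orientation is the same as a pair $(U,\tilde e)$ with $\tilde e$ a lift of $e$ on $\partial U$, considered modulo $G$; the stabilizer of $U$ in $G$ is cyclic of order $\eta(U)$ (this is the definition of $\eta$) and acts freely on the $s(U)$ such lifts, which are generic points with trivial stabilizer, so there are $s(U)/\eta(U)$ of them. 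A polygon $u$ with output at $e$ but with the opposite orientation is the reflection across the equator of a matching-orientation polygon whose output lies at $e'$, the reflection of $e$; the same count with $e'$ in place of $e$ produces $s^{\mathrm{op}}(U)/\eta(U)$ more polygons from $[U]$, for a total of $\bigl(s(U)+s^{\mathrm{op}}(U)\bigr)/\eta(U)$.

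Finally, for the third point, I would use the sign rules recalled above (Lemma 7.4 of \cite{CHL}), taking $x_0=e$: every polygon in either family contributes with sign $(-1)^{s(U)}$. Multiplying by $\vec{\mathbf{x}}^U$ and $T^{\omega(U)}$ and summing over all orbits $[U]$ then gives Equation \eqref{eq:formulaW}. I expect this last step to be the main obstacle: one must check that the product of the orientation-disagreement factors along the sides of $\partial u$ and the spin factor $(-1)^{\#(\partial u\cap x_0)}$ really does reduce, uniformly, to the single factor $(-1)^{s(U)}$ for a polygon, for its equatorial reflection, and for every placement of the output marked point at the various lifts of $e$ and of $e'$ — this is precisely where one appeals to the sign computation of \cite{CHL}.
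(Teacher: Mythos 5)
Your argument follows essentially the same route as the paper: expand $m(e^b)$ so that only Maslov-index-two polygons with output at $e$ contribute, lift them to $E$, pair each polygon with its equatorial reflection $U^{op}$ (whose output points are the lifts of $e'$, counted by $s^{\mathrm{op}}(U)$), divide by the $\Z/\eta(U)$-symmetry when identifying $(U,\WT{e})$ with $(U, g(\WT{e}))$, and defer the uniform sign $(-1)^{s(U)}$ — including the identity $(-1)^{s^{\mathrm{op}}(U)+(\text{number of sides})}=(-1)^{s(U)}$ for the reflected polygon — to Lemma 7.4 of \cite{CHL}, exactly as the paper does. The proposal is correct and matches the paper's derivation.
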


For instance, for the minimal $xyz$ triangle $U_{xyz}$ whose boundary orientation matches with that of $L_E$, $s( U_{xyz}) = 1$ and $s^{\rm op} (U_{xyz}) =0$.  Let $\sigma :=\omega(U_{xyz})$ and $q = T^{-\sigma}$.  As we shall see, areas of all other holomorphic polygons $U$ are integer multiple of $A$, and hence $T^{\omega(U)} = q^{k(U)}$ for $k(U) \in \Z_{>0}$.

At this moment $T$ and $q$ are considered as formal parameters, and $W$ is a formal power series.  We will put $T = e$, the exponential constant, in order to compare with the mirror map in the case of elliptic curve quotients.  Moreover, we shall prove that even in the hyperbolic case, $W$ is convergent in a neighborhood of $0$ after we put $T=e$.  Thus the Landau-Ginzburg theory is well-defined not just in a formal neighborhood of $0$ of the moduli.



\section{Holomorphic polygons and their boundary words}\label{sec:holomorphic polygons}
In this section, we discuss important topological and combinatorial properties of Seidel Lagrangians in $E$.
Then we show that the polygons for the potential in $E$ are embedded, and describe
label reading (see Definition \ref{def:labelreading}) of their boundaries.

\subsection{Combinatorial neighborhood of a Seidel Lagrangian}
Let us assume that $1/a+1/b+1/c\le 1$
and
$a\le b\le c$.
Recall we have a triangular tessellation of $E$ by $(\pi/a,\pi/b,\pi/c)$ triangles.

If $a\ge3$, a \emph{component cell} will just mean a base triangle.
If $a=2$ and $b\ge4$, then the union of the four triangles around an $A$-vertex is called a \emph{component cell}. When $a=2$ and $b=3$, the six base triangles around a $B$-vertex is called a \emph{component cell}. Note that in each of the cases, a component cell is either a geodesic triangle or quadrilateral. 

\begin{lemma}\label{lem:component}
For each Seidel Lagrangian $L$ in $E$,  there exists a geodesic $g$ of $E$ such that
for any component cell $C_0$ of $E$, $ L$ intersects  $C_0$ if and only if $g$ intersects $C_0$.
Moreover, such a geodesic is unique if $E$ is the hyperbolic plane.\end{lemma}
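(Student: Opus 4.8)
The plan is to dispose of the non-hyperbolic cases by the finite case analyses already available and to concentrate on $E=\HH$. When $E=\mathbb{S}^2$ one reads $g$ directly off Lemma~\ref{lem:sphdiss} (a bisecting great circle, or the explicit circle in the $(2,2,r)$, $r$ odd, case), and when $E=\RR^2$ a Seidel Lagrangian in $E$ is already a geodesic line, so $g=\WT L$; in both cases $g$ need not be unique (over $\RR^2$ a whole pencil of parallel geodesics meets the same cells), which is why uniqueness is asserted only for $\HH$. So assume $1/a+1/b+1/c<1$. Here I would first identify the candidate geodesic: viewing $\WT L\subseteq\HH$ as a branch of $L_E$, it is a bi-infinite lift of the immersed loop $L\subset\PO$ and hence is invariant under the element $g_0:=\gamma\beta\alpha\in G$ representing $L$, with $\WT L/\form{g_0}\cong L$ compact. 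By Lemma~\ref{lem:nontrivial} no power of $g_0$ is trivial, so $g_0$ has infinite order; since $G$ is cocompact Fuchsian it has no parabolics, so $g_0$ is hyperbolic. Set $g:=\mathrm{axis}(g_0)$. Then $\WT L$ (embedded, by Corollary~\ref{cor:bigon_no}) and $g$ are $\form{g_0}$-invariant properly embedded lines in $\HH$ sharing the ideal endpoints $g_0^{\pm\infty}\in\partial\HH$.

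Next I would establish the combinatorial skeleton. Each base triangle carries its minimal $xyz$ triangle, whose vertices are the midpoints of the three edges, and $\WT L$ traverses one of its sides; using this together with the fact that $\WT L$ goes straight at every self-intersection point (Definition~\ref{def:labelreading}) and that $\WT L$ is embedded, one checks that $\WT L$ meets a component cell $C_0$ in at most one arc, and that such an arc enters and leaves through interior points of two distinct edges of $C_0$ — precisely the point of bundling several base triangles into one component cell when $a=2$. Consequently the cells met by $\WT L$ form a bi-infinite sequence $(C_j)_{j\in\Z}$ of distinct cells with $C_j\cap C_{j+1}$ an edge $e_j$ that $\WT L$ crosses at a single interior point $p_j$; the sequence is $\form{g_0}$-invariant, the union $U=\bigcup_j C_j$ is a properly embedded ``one cell wide'' band with $\WT L\subseteq\mathrm{int}\,U$, and $\partial U$ consists of two $\form{g_0}$-periodic arcs running along cell edges.

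The heart of the matter is to show $g\subseteq U$ and that $g$ crosses exactly the edges $e_j$; granting this, $g$ runs successively through the interiors of the convex cells $C_j$ and no others, so $g$ meets precisely the cells met by $\WT L$, and both implications of the lemma follow. To bound $g$ against $\WT L$: they are $\form{g_0}$-periodic lines with the same ideal endpoints, hence at finite Hausdorff distance, and any excursion of $g$ out of $U$ is trapped by an arc of $\partial_{\pm}U$ in a disk whose diameter is controlled by the finitely many cell types occurring in one $g_0$-period. Making this excursion spill-free uniformly in $(a,b,c)$ is, I expect, the main obstacle: soft hyperbolicity alone only gives boundedness, not containment in the single-cell-wide band. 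The extra input should be the explicit geometry of the Seidel Lagrangian — that $\WT L$ is invariant under the reflection of $\HH$ across the equator geodesic through each $p_j$ (since $L\subset\PO$ is symmetric about the equator through $A,B,C$), so that the zigzag turns symmetrically across $e_j$ — combined with the positivity in the area formula of Theorem~\ref{thm:area} (the same mechanism that already forbids monogons), which together pin $g$ inside $U$.

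Finally, for uniqueness over $\HH$: any geodesic $g'$ meeting exactly the cells $C_j$ is contained in $\bigcup_j C_j=U$; since $U/\form{g_0}$ is compact and $g\subseteq U$, the band $U$ lies in a bounded neighborhood of $g$, so $g'$ does too, whence $g'$ and $g$ have the same ideal endpoints in $\partial\HH$ and therefore coincide.
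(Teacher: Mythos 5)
Your reduction to the hyperbolic case and your uniqueness argument (a geodesic trapped in a bounded neighborhood of the band, hence of $g$, must share its ideal endpoints) are fine, but the heart of the lemma is exactly the step you leave open. Taking $g$ to be the axis of $g_0=\gamma\beta\alpha$ only gives, via periodicity and common ideal endpoints, that $g$ stays at \emph{finite Hausdorff distance} from $\WT L$; the lemma needs the much stronger statement that $g$ passes through precisely the component cells met by $\WT L$, i.e.\ stays inside the one-cell-wide band $U$ and crosses exactly the edges $e_j$. You acknowledge this is ``the main obstacle'' and only gesture at possible inputs (reflection symmetry of $\WT L$, positivity in Theorem~\ref{thm:area}); no actual argument is given, so as it stands the proof is incomplete precisely where the content of Lemma~\ref{lem:component} lies. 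Worse, the auxiliary facts you do invoke are circular in this paper's logical order: Corollary~\ref{cor:bigon_no} (embeddedness of $\WT L$, no bigons) is itself proved \emph{using} Lemma~\ref{lem:component}, Lemma~\ref{lem:nontrivial} rests on Lemma~\ref{lem:real line} which again fellow-travels the geodesic produced by this lemma, and the area formula of Theorem~\ref{thm:area} sits downstream of the whole $(a,b,c)$-diagram machinery. So you cannot use them here without reorganizing the paper or supplying independent proofs.

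The paper avoids this entirely by \emph{constructing} the geodesic inside the strip of cells, rather than naming a candidate and trying to confine it afterwards. For $a\ge 3$ one takes the convex strip $S$ of six consecutive base triangles containing $\WT L$, parametrizes chords $P_tQ_t$ between the initial and terminal copies of the side $AB$, and applies the intermediate value theorem to the induced map $f:[0,r]\to[0,r]$ to find a fixed point; the resulting chord projects to a $3$-periodic billiard path in the base triangle, whose bi-infinite lift is a geodesic lying in the union of the strips, hence meeting exactly the cells that $\WT L$ meets. For $a=2,b\ge 4$ the geodesic is obtained by symmetry as the line through two interior $A$-vertices, perpendicular to the relevant copies of $BC$, and for $a=2,b=3$ one runs the same billiard/fixed-point argument on the larger component cells. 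If you want to salvage your axis-of-$g_0$ approach, you would need an independent, non-circular proof that the axis lies in $U$ (for instance by first proving the billiard-path statement, at which point you have reproduced the paper's construction and identified it with the axis), so the constructive route is genuinely the shorter path here.
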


\begin{proof}
Let us first consider the case $a\ge 3$.
As in Figure~\ref{fig:billiard} (a), we can choose a strip $S$ containing $ L$ such that $S$ consists of six consecutive base triangles.
The condition $a\ge3$ implies that $S$ is a Euclidean or hyperbolic convex polygon.
The strip $S$ contains three copies $e_\iota,e_\tau,e_\mu$ of a side, say $AB$,
such that $e_\iota$ and $e_\tau$ contain the initial and the terminal points of the intersection of $ L$ with $S$.
We let $r$ be the length of $AB$.
For each $0\le t \le r$, let us choose $P_t\in e_\iota$ and $Q_t\in e_\tau$ such that $P_t$ and $Q_t$ are distance $t$ away from the copies of $A$ on $e_\iota$ and $e_\tau$, respectively. 
We let $f(t)$ be the distance from the copy of $A$ on $e_\mu$ to the point $e_\mu\cap P_t Q_t$.
Then $f:\;[0,r]\to[0,r]$ is a continuous function and therefore, has a fixed point $t_0$.
The natural projection $\PO\to ABC$ maps $P_{t_0}Q_{t_0}$ to a \emph{3--periodic billiard path}~\cite{Tabachnikov2005}, as shown in Figure~\ref{fig:billiard} (b).
So the bi-infinite path $g$ in $E$ which projects onto this billiard path is a geodesic.
Since every component cell intersecting $ L$ appears in some strip that is obtained by repeatedly gluing copies of $S$, we see that $g$ is the desired geodesic.

Let us assume $a=2$ and $b\ge4$.
The collection of the component cells intersecting $ L$ is obtained by repeatedly gluing component cells (which are quadrilaterals) along opposite sides.
Figure~\ref{fig:billiard} (c) shows a strip $S$, which is the union of two consecutive copies of these component cells.
Let us call the interior $A$-vertices in $S$ as $u$ and $v$.
Then by symmetry, the geodesic $g$ extending $uv$ is perpendicular to every copy of $BC$ that intersects $ L$.

Finally assume $a=2$ and $b=3$. Then each component cell (see Figure \ref{fig:lem:nbhd}) is an equilateral triangle of the interior angle $2\pi/c$. By symmetry, there exists a $3$-periodic billiard path which reflects at $A$ vertices on each side. Note that $ L$ belongs to the union of strips as in Figure~\ref{fig:billiard} (a) where each triangle is now a component cell consisting of six base triangles.
The geodesic triangle joining the three copies of $A$-vertices in each component cell $C_0$ is a 3--periodic billiard path in $C_0$.
So as in the first paragraph of this proof, we have a desired geodesic $g$.

To prove the uniqueness, let us assume $E$ is hyperbolic. In each of the above cases, the union of the component cells intersecting $g$ is a bounded neighborhood of $g$, and hence contains a unique geodesic.
\end{proof}

\begin{figure}[htb]
  \tikzstyle {a}=[red,postaction=decorate,decoration={%
    markings,%
    mark=at position 1 with {\arrow[red]{stealth};}}]
  \tikzstyle {b}=[blue,postaction=decorate,decoration={%
    markings,%
    mark=at position .85 with {\arrow[blue]{stealth};},%
    mark=at position 1 with {\arrow[blue]{stealth};}}]
  \tikzstyle {c}=[orange,postaction=decorate,decoration={%
    markings,%
    mark=at position .7 with {\arrow[orange]{stealth};},%
    mark=at position .85 with {\arrow[orange]{stealth};},
    mark=at position 1 with {\arrow[orange]{stealth};}
}]
  \tikzstyle {av}=[red,draw,shape=circle,fill=red,inner sep=1pt]
  \tikzstyle {bv}=[blue,draw,shape=circle,fill=blue,inner sep=1pt]
  \tikzstyle {cv}=[orange,draw,shape=circle,fill=orange,inner sep=1pt]
  \tikzstyle {wv}=[draw,shape=circle,fill=white,inner sep=2.5pt]  
  \tikzstyle {gv}=[draw,shape=circle,fill=black,inner sep=1pt]
  \tikzstyle {hv}=[draw,shape=circle,fill=white,inner sep=1pt]  
  \tikzstyle {ev}=[draw,teal,shape=circle,fill=teal,inner sep=1pt]
  \tikzstyle {pv}=[black,draw,shape=rectangle,fill=black,inner sep=1pt] 
  \tikzstyle {ar}=[teal,postaction=decorate,decoration={%
    markings,%
    mark=at position .35 with {\arrow[teal]{stealth};}}]
\subfloat[(a)]{
\begin{tikzpicture}[scale=.8,ultra thick]
\node [bv] at (-3.5,0) (p1) {};
\node [cv] at (-1.5,-.2) (p2) {};
\node [av] at (.5,-.2) (p3) {};
\node [bv] at (2.5,0) (p4) {};
\node [av] at (-2.5,1.73) (p5) {};
\node [bv] at (-.5,1.9) (p6) {};
\node [cv] at (1.5,1.9) (p7) {};
\node [av] at (3.5,1.73) (p8) {};
\draw (p1) -- (p2) -- (p3) -- (p4);
\draw (p5) -- (p6) -- (p7) -- (p8);
\draw (p1) -- (p5) -- (p2) -- (p6) -- (p3) -- (p7) -- (p4) -- (p8);
\draw (p1) edge node [pos=.6,ev] (l1) {} (p5);
\draw (p4) edge node [pos=.6,ev] (l2) {} (p8);
\draw (l1) edge [teal] node [ev,pos=.47] (l3) {} (l2);
\node [left] at (l1) {\small $P_t$};
\node [right,below=7] at (l1) {\small $e_\iota$};
\node [right] at (l2) {\small $Q_t$};
\node [below=2,left=1] at (p1) {\small $B$};
\node [above=2] at (p5) {\small $A$};
\node [below=2] at (p2) {\small $C$};
\node [left=3,below=8] at (l3) {\small $f(t)$};
\node [right=4,above=2] at (l3) {\small $e_\mu$};
\node [right=140,below=7] at (l1) {\small $e_\tau$};
\node [left=3,above=5] at (l1) {\small $t$};
\node [left=3,above=3] at (l2) {\small $t$};
\end{tikzpicture}
}
$\qquad$
\subfloat[(b)]{
\begin{tikzpicture}[ultra thick]
\node [bv] at (-1.5,.3) (p1) {};
\node [cv] at (1.5,.3) (p2) {};
\node [av] at (.5,2.5) (p3) {};
\draw (p1) edge [bend left=15] node [pos=.6,ev] (l3) {} (p2);
\draw (p2) edge [bend left=15] node [pos=.5,ev] (l1) {} (p3);
\draw (p3) edge [bend left=10] node [pos=.4,ev] (l2) {} (p1);
\draw (l1) edge [teal,bend left=5] (l2);
\draw (l2) edge [teal,bend left=5] (l3);
\draw (l3) edge [teal,bend left=5] (l1);
\node [gv,left=7,above=3] at (l1) {};
\node [gv,below=6] at (l1) {};
\node [above=3,right=2] at (l2) {\tiny x};
\node [left=2,below=3] at (l2) {\tiny x};
\node [hv,above=3,right=4] at (l3) {};
\node [hv, left=6,above=2] at (l3) {};
\end{tikzpicture}
} \\
\subfloat[(c)]{
\begin{tikzpicture}[ultra thick]

\node [gv] at (-2.2,-.2) (p) {};
\node [gv] at (2.2,-.2) (q) {};
\draw (p) -- (0,0) -- (q) -- (2,2) -- (-2,2) -- (p) -- (0,2) -- (q);
\draw (-2,2) -- (0,0) -- (2,2);
\draw (0,0) -- (0,2);

\foreach \x in {-2,2} {\draw (\x,2) node [bv] {}; }	
\foreach \x in {0} {\draw (\x,0) node [bv] {}; }	
\draw (p) node [cv] {};
\draw (q) node [cv] {};

\foreach \x in {0} {\draw (\x,2) node [cv] {}; }	
\foreach \x in {-1,1} {\draw (\x,1) node [av] {}; }	

\draw [dashed, teal,ultra thick,->] (-2.5,1.5)--(-.5,1.5) -- (.5,.5) -- (2.5,.5) node [teal,right] {\small $ L$};

\end{tikzpicture}
}
\caption{Finding geodesics in neighborhoods of Seidel Lagrangians.}\label{fig:billiard}
\end{figure}
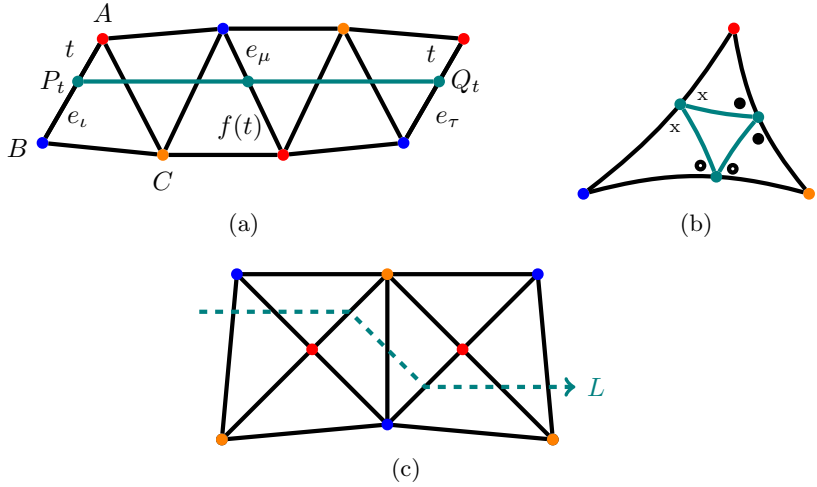

\begin{figure}[htb]
  \tikzstyle {av}=[red,draw,shape=circle,fill=red,inner sep=1pt]
  \tikzstyle {bv}=[blue,draw,shape=circle,fill=blue,inner sep=1pt]
  \tikzstyle {cv}=[orange,draw,shape=circle,fill=orange,inner sep=1pt]
  \tikzstyle {wv}=[black,draw,shape=circle,fill=white,inner sep=1pt]  
  \tikzstyle {gv}=[inner sep=0pt]
  \tikzstyle {v}=[gray,fill=gray,draw,shape=rectangle,inner sep=1pt]    
  \tikzstyle{every edge}=[-,draw]
	\begin{tikzpicture}[scale=0.75,thick]
	
	\draw (-3,0) -- (3,0)--(0,1.73)--(-3,0)--(0,-1.73)--(3,0);
	\draw (0,1.73)--(0,-1.73);
	\draw (-1.5,-1.73/2)--(0,1.73)--(1.5,-1.73/2);
	\draw (-1.5,1.73/2)--(0,-1.73)--(1.5,1.73/2);
	
	\draw [dashed, teal,ultra thick,->] (-2.5,1.73*2.5-1.23*3+.5)--(-1.23*3/1.73,.5)-- (0,.5) -- (2.23*3/4/1.73,-2.23*3/4+.5) -- (3,-2.23*3/4+.5) node [teal,right] {\tiny $ L$};

	\foreach \x in {1.73,-1.73} {\draw (0,\x) node [cv] {}; }	
	\foreach \x in {3,-3} {\draw (\x,0) node [cv] {}; }	
	
	\foreach \x in {-1.73/2,1.73/2} {\draw (1.5,\x) node [av] {}; }		
	\foreach \x in {-1.73/2,1.73/2} {\draw (-1.5,\x) node [av] {}; }		
	
	\foreach \x in {1,-1} {\draw (\x,0) node [bv] {}; }			
	
	\draw (0,0) node [left=5,below] {\tiny $v$};

	\draw (-1,1.5) node [] {\tiny $C_0$};
	\draw (1,1.5) node [] {\tiny $C_1$};	

	\node  [inner sep=0.9pt] at (-1,0) {}; 	
	\node  [inner sep=0.9pt] at (4,0) {}; 			
	\end{tikzpicture}
\caption{The combinatorial neighborhood of a Seidel Lagrangian for $a=2, b=3$.}\label{fig:lem:nbhd}
\end{figure}
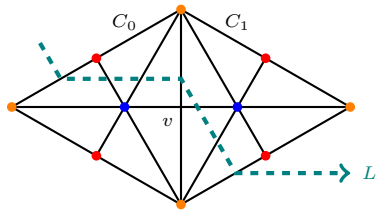

\begin{definition}\label{defn:nbhd}
For a subset $X$ of $E$,
the smallest convex collection of base triangles containing $X$ is called the \emph{combinatorial neighborhood} of $X$ and denoted as $N(X)$.
\end{definition}

\begin{lemma}\label{lem:nbhd}
The combinatorial neighborhood of a Seidel Lagrangian $ L$ in $E$ is the collection of the component cells intersecting $ L$. In particular, it is an infinite sided convex polygon.
\end{lemma}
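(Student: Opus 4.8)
The plan is to deduce Lemma~\ref{lem:nbhd} directly from Lemma~\ref{lem:component} together with the definition of the combinatorial neighborhood. Recall that Lemma~\ref{lem:component} produces a geodesic $g$ of $E$ such that a component cell $C_0$ meets $L$ if and only if $C_0$ meets $g$. Write $\mathcal{C}$ for the collection of all component cells of $E$ that intersect $L$, and let $N = \bigcup \mathcal{C}$ be their union. First I would observe that $N$ contains $L$: indeed $L$ lies in the union of the base triangles it meets, and each such base triangle is contained in a component cell (for $a\ge 3$ a component cell \emph{is} a base triangle; for $a=2,b\ge 4$ or $a=2,b=3$ every base triangle sits inside one of the larger component cells by construction), so every base triangle meeting $L$ lies in some member of $\mathcal{C}$. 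Hence $N \supseteq L$, and since $N(L)$ is by Definition~\ref{defn:nbhd} the \emph{smallest} convex collection of base triangles containing $L$, it remains to show (i) that $N$ is a convex collection of base triangles, and (ii) that $N$ is contained in every convex collection of base triangles that contains $L$; together these give $N(L) = N$.

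For (i), the key point is that $\mathcal{C}$ is exactly the set of component cells crossed by the geodesic $g$. A geodesic in a space-form crosses a convex tessellation in an ``interval'': the union of the cells it meets, being the union of those cells of a convex cellulation that a geodesic passes through, is itself convex (concretely, in the hyperbolic or Euclidean case one can take the convex hull and check it adds no new base triangles; in the spherical case $g$ is a great circle and the same holds on the relevant hemisphere). Since each component cell is itself a convex union of base triangles (a geodesic triangle or quadrilateral, as noted before Lemma~\ref{lem:component}), $N$ is a convex collection of base triangles. This is the step I expect to require the most care: making precise the claim that ``the union of the component cells a geodesic crosses is convex'' and verifying it uniformly across the hyperbolic, elliptic and spherical cases, as well as across the three component-cell types. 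I would handle it by arguing that the convex hull of $N$ cannot contain a base triangle $T\notin N$: if it did, $T$ would lie in a component cell $C$ with $C\cap g=\varnothing$, yet $C$ would be ``between'' two component cells meeting $g$, forcing $g$ to pass through $C$ by convexity of the arrangement of component cells along $g$ — a contradiction.

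For (ii), suppose $\mathcal{D}$ is any convex collection of base triangles containing $L$, with union $M = \bigcup\mathcal{D}$. I want $N \subseteq M$, equivalently every component cell $C$ meeting $L$ is contained in $M$. Since $C$ meets $L$, it contains at least one base triangle $T_0$ that meets $L$; as $L\subseteq M$ and $M$ is a union of base triangles, $T_0\in\mathcal{D}$. Now every component cell is the convex hull of its colored vertices (two copies of an $A$-vertex and the opposite edges, etc.), and in each of the three configurations $C$ is in fact the convex hull of $T_0$ together with the other base triangles of $C$ that meet $L$; more simply, $C$ is contained in the convex hull of the base triangles of $C$ meeting $L$, each of which lies in $\mathcal{D}$, so by convexity of $\mathcal{D}$ we get $C\subseteq M$. (If $a\ge 3$ there is nothing to prove since $C=T_0$.) This gives $N\subseteq M$, hence $N \subseteq N(L)$, completing the equality.

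Finally, the ``in particular'' clause: $N(L)$ is an infinite-sided convex polygon. Convexity is (i). That it is a genuine polygon (a cellulated convex region) rather than all of $E$, and that it has infinitely many sides, follows because $g$ is a single geodesic: the component cells meeting $g$ form a bi-infinite strip, so $N(L)$ is unbounded with infinitely many boundary edges, but on either side of $g$ there are component cells not meeting $g$ (e.g.\ cells at combinatorial distance $\ge 2$ from $g$), so $N(L)\ne E$ and its boundary is a bi-infinite broken geodesic. I would spell this out using the explicit strip descriptions from the proof of Lemma~\ref{lem:component} (Figure~\ref{fig:billiard}), which already exhibit $N(L)$ as an infinite union of copies of a fixed finite strip $S$ glued end to end.
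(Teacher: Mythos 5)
There is a genuine gap, and it sits exactly where you flagged it: step (i), the convexity of the union $N$ of component cells meeting $L$. The general principle you invoke --- that the union of the cells of a convex tessellation crossed by a geodesic is convex --- is false. Already in the Euclidean equilateral-triangle tessellation, a line slightly tilted against the rows crosses a ``staircase'' of triangles whose union is not convex, and there is a triangle wedged between two crossed triangles that the line never enters; so no contradiction of the kind you describe arises. Your proposed justification (``$C$ would be between two component cells meeting $g$, forcing $g$ to pass through $C$ by convexity of the arrangement of component cells along $g$'') presupposes precisely the convexity being proved, so the argument is circular at its crucial point. What makes convexity true in this specific situation is quantitative: the component cells crossed by $L$ are glued along opposite sides into a periodic strip (this is the content of the proof of Lemma~\ref{lem:component}), and the interior angles $\pi/a,\pi/b,\pi/c$ with $1/a+1/b+1/c\le 1$ force the angle of this strip at every boundary vertex to be at most $\pi$. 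The paper then builds the abstract piecewise Euclidean/hyperbolic complex $X'$ glued from copies of a component cell in the same pattern and applies Lemma~\ref{lem:BH} to get a global isometric embedding onto $N$, whence $N$ is an infinite-sided convex polygon. Some such local-angle-plus-local-to-global input (or an explicit supporting-line/half-plane argument using the boundary angle bound) is indispensable; without it your (i) does not go through.

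A secondary, more repairable issue is in (ii), and in the related inclusion $N\supseteq$ each cell meeting $L$: for $a=2$, $b=3$ it is not true that a component cell $C_0$ (six base triangles around a $B$-vertex) lies in the convex hull of just those of its own base triangles that meet $L$ --- $L$ only meets three of them, roughly half the cell. The paper gets around this by bringing in the uniquely determined adjacent component cell $C_1$ sharing an $A$-vertex with $C_0$, and using the six base triangles of $C_0\cup C_1$ that $L$ meets; since any convex collection $\mathcal{D}$ containing $L$ contains all base triangles whose interiors meet $L$, this still yields $C_0\subseteq\bigcup\mathcal{D}$. Your argument as written covers the cases $a\ge 3$ and $a=2,\ b\ge 4$, but needs this extra step in the $(2,3,c)$ case. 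Finally, the spherical discussion is unnecessary: the lemma is stated and used under the standing assumption $1/a+1/b+1/c\le 1$, and the Euclidean case is the immediate ``flat strip'' one.
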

\begin{proof}
In the Euclidean case, a combinatorial neighborhood is a flat strip and the proof is immediate. So we assume $1/a+1/b+1/c<1$. We first claim that if $ L$ intersects a component cell $C_0$, then $C_0\subseteq N( L)$. This is obvious from definition for $a\ge3$. If $a=2$ and $b\ge4$, then the claim follows from that $ L$ intersects three of the four base triangles around an $A$-vertex in $C_0$ and that the combinatorial neighborhood of these three base triangles is $C_0$. For $a=2$ and $b=3$, then let us consider the component cell $C_1$ uniquely chosen as follows: $C_1$ shares an $A$ vertex $v$ with $C_0$ such that $ L$ intersects one base triangle in $C_0$ and two base triangles in $C_1$ around $v$. See Figure~\ref{fig:lem:nbhd}. Then $ L$ intersects three base triangles from $C_0$ and also from $C_1$ and the combinatorial neighborhood of these six base triangles contains $C_0$. So $C_0\subseteq N( L)$.

It remains to show that the collection $X$ of component cells intersecting $ L$ is convex.
Recall from the proof of Lemma~\ref{lem:component} that
$X$ is obtained by repeatedly gluging component cells; see Figure~\ref{fig:billiard} (a) and (c).
Let us consider the (abstract) piecewise hyperbolic cell complex $X'$ obtained by gluing copies of a component cell in the same manner. Since each interior angle of $X'$ is non-obtuse, we have a locally isometric embedding from $X'$ onto $X$. This implies that $X'$ and $X$ are isometric by Lemma~\ref{lem:BH}.
Since $X'$ is an infinite sided convex polygon, so is $X$.
\end{proof}

The following is now immediate from Lemma~\ref{lem:nbhd}.
\begin{lemma}\label{lem:sector}
Let $L$ be a Seidel Lagrangian in $E$ and $v$ be a vertex on the boundary of $N(L)$.
If $g_0$ and $g_1$ are the geodesic half-rays extending the two sides of $N(L)$ that share the vertex $v$,
then $g_0\cup g_1$ is disjoint from $ L$.
\end{lemma}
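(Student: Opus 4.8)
The plan is to read the statement directly off the convexity of $N(L)$ established in Lemma~\ref{lem:nbhd}. First I would dispose of the Euclidean case: as recorded in the proof of Lemma~\ref{lem:nbhd}, when $1/a+1/b+1/c=1$ the combinatorial neighborhood $N(L)$ is a flat strip bounded by two parallel geodesics, so $\partial N(L)$ has no vertices and there is nothing to prove. Hence I may assume $E=\HH$.

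Now fix the side $s_0$ of $N(L)$ that $g_0$ extends and the side $s_1$ that $g_1$ extends, so $s_0\cap s_1=\{v\}$. Let $\hat g_0$ be the complete geodesic of $\HH$ containing $s_0$ (so $g_0\subseteq\hat g_0$), and similarly $\hat g_1\supseteq s_1$. By Lemma~\ref{lem:nbhd}, $N(L)$ is a convex geodesic polygon of which $s_0$ is a side; therefore $\hat g_0$ is a supporting geodesic of $N(L)$, i.e. $N(L)$ lies in one of the two closed half-planes cut out by $\hat g_0$, and $N(L)\cap\hat g_0=s_0$ — the standard fact that a face of a convex set is its intersection with a supporting hyperplane. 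The key step is then the inclusion $L\cap\hat g_0\subseteq N(L)\cap\hat g_0=s_0$, which uses $L\subseteq N(L)$ (Definition~\ref{defn:nbhd}, Lemma~\ref{lem:nbhd}).

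It remains to observe that $L$ is disjoint from $\partial N(L)$, hence in particular from $s_0$ and from $v$. Indeed, using the description of $N(L)$ in Lemma~\ref{lem:nbhd} as the union of the component cells meeting $L$: whenever $L$ crosses an edge of a base triangle it does so transversally at the midpoint, and that edge must be interior to $N(L)$ — if it lay on $\partial N(L)$, the component cell on its other side would also meet $L$, a contradiction — while elsewhere $L$ runs through the interiors of base triangles (the three medial arcs joining edge–midpoints inside each base triangle it meets). Thus $L\cap s_0=\varnothing$, and combined with $L\cap\hat g_0\subseteq s_0$ we get $L\cap\hat g_0=\varnothing$; a fortiori $L\cap g_0=\varnothing$. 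The identical argument applied to $\hat g_1$ gives $L\cap g_1=\varnothing$, so $L\cap(g_0\cup g_1)=\varnothing$.

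The only points needing a little care are the equality $N(L)\cap\hat g_0=s_0$ and the disjointness $L\cap\partial N(L)=\varnothing$; both are routine given Lemma~\ref{lem:nbhd} — the first is the face/supporting-hyperplane fact for convex sets, the second follows from the explicit local picture of $L$ inside a base triangle together with the component-cell description of $N(L)$. I do not expect any genuine obstacle: the content of the lemma is entirely carried by the convexity of $N(L)$ proved in Lemma~\ref{lem:nbhd}.
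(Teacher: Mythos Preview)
Your argument is correct and is exactly the intended one: the paper's own proof is the single line ``immediate from Lemma~\ref{lem:nbhd}'', and what you have written is a careful unpacking of that immediacy via the supporting--geodesic property of a side of a convex polygon together with $L\cap\partial N(L)=\varnothing$. One small point worth making explicit is that $N(L)\cap\hat g_0=s_0$ uses that the interior angles of $N(L)$ at the endpoints of $s_0$ are strictly less than $\pi$; this is guaranteed by the proof of Lemma~\ref{lem:nbhd}, where the angles are seen to be non-obtuse.
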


\begin{lemma}\label{lem:real line}
Let $ L$ be a Seidel Lagrangian in $E$.
\begin{enumerate}
\item 
$ L$ is topologically a real line.
\item
If $a\ge3$, then $ L$ can be realized by a closed geodesic in $\PO$.
\end{enumerate}
\end{lemma}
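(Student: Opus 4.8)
The plan is to extract both statements from the structural results on combinatorial neighborhoods already established, especially Lemma~\ref{lem:component}, Lemma~\ref{lem:nbhd}, and Lemma~\ref{lem:sector}. For part (1), I would first use Lemma~\ref{lem:nbhd}: the combinatorial neighborhood $N(L)$ is an infinite-sided convex polygon, so it is homeomorphic to a closed half-plane (in the Euclidean case it is a strip, which is still homeomorphic to $\RR^2$, but in any case simply connected with the homotopy type of a point). The Seidel Lagrangian $L$ is a connected $1$-manifold properly embedded in this simply connected surface-with-boundary, disjoint from $\partial N(L)$ by construction. Since a connected $1$-manifold is either a circle or a line, it suffices to rule out the circle: a circle in $N(L)$ would bound a disc $D\subseteq N(L)$, hence $D$ is a union of finitely many base triangles, but then $L$ would fail to be the branch that escapes to infinity in the strip built by repeated gluing of the strip $S$ from the proof of Lemma~\ref{lem:component}. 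Concretely, $L$ passes through infinitely many component cells (one in each copy of $S$), so it cannot be compact; therefore $L$ is a line.

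For part (2), assume $a\ge 3$. Here the work is already essentially done in the proof of Lemma~\ref{lem:component}: the fixed point $t_0$ of the continuous self-map $f:[0,r]\to[0,r]$ produces a chord $P_{t_0}Q_{t_0}$ whose image under the projection $\PO\to ABC$ (equivalently, whose $G$-translates) is the geodesic $g$, and $g$ projects to a genuine $3$-periodic billiard path in the base triangle $\Delta_{a,b,c}$. I would argue that the quotient of $g$ under its stabilizer in $G$ is a closed geodesic in $\PO$, and that $L$ is freely homotopic to it: indeed $L$ and $g$ both lie in (and intersect exactly the same) component cells by Lemma~\ref{lem:component}, and $N(L)$ deformation retracts onto $g$, carrying $L$ along. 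Since a closed geodesic is the unique closed curve of minimal length in its free homotopy class on a surface of nonpositive curvature (using that $g$ is the unique geodesic inside the bounded neighborhood $N(L)$ of $g$, which is exactly the uniqueness clause of Lemma~\ref{lem:component}), $L$ is isotopic to the closed geodesic obtained by projecting $g$; alternatively one can simply say $L$ ``can be realized by'' this closed geodesic in the sense that they are freely homotopic and bound the same combinatorial region.

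The main obstacle I anticipate is the Euclidean case in part (1), where $N(L)$ is a bi-infinite flat strip rather than a half-plane: one must be slightly careful that ``properly embedded connected $1$-manifold disjoint from the boundary'' still forces the line/circle dichotomy and that the circle is excluded — here the cleanest argument is again that $L$ meets infinitely many component cells, so it is noncompact. A second, more cosmetic obstacle is making precise in what sense $L$ ``is realized by'' a closed geodesic when $a\ge 3$: I would phrase this as free homotopy (or isotopy within $N(L)$) to the projection of the billiard geodesic $g$, invoking the uniqueness of geodesics in bounded neighborhoods from Lemma~\ref{lem:component} rather than re-deriving any hyperbolic geometry. Everything else is a direct citation of the neighborhood lemmas already proved.
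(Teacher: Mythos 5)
Your argument is correct and follows essentially the same (very terse) route as the paper: part (1) is precisely the statement that $L$ fellow-travels the geodesic $g$ of Lemma~\ref{lem:component} inside the convex combinatorial neighborhood of Lemma~\ref{lem:nbhd}, hence is a proper, noncompact line rather than a circle, and part (2) is the paper's ``project a geodesic path in $N(L)$ onto $\PO$'', which you spell out via the $3$-periodic billiard geodesic and a free homotopy carrying $L$ to its projection. The only caveats are cosmetic: $N(L)$ is topologically a bi-infinite strip rather than a closed half-plane, and the embeddedness of the branch $L$, which you presuppose in calling it ``properly embedded'', rests on the same cell-by-cell strip structure (Lemmas~\ref{lem:component} and~\ref{lem:nbhd}) that the paper also invokes implicitly.
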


\begin{proof}
The proof of (1) is immediate since $ L$ ``fellow-travels'' a geodesic line. For the part (2), we project a geodesic path in $N( L)$ onto $\PO$.
\end{proof}

\begin{corollary}\label{cor:bigon_no}
Two Seidel Lagrangians in $E$ do not bound any bigon with convex corners for $(a,b,c) \geq 3$. If one of $a, b,c $ equals 2, then there exist a unique bigon (up to the $G$-action) with area $4\sigma$ contributing to the potential, and there are no other bigons with convex corners in $E$.
\end{corollary}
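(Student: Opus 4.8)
The plan is to first localize any bigon using the coarse geometry of Seidel Lagrangians established above, and then finish by a short combinatorial analysis of boundary words. Let $B$ be a bigon with convex corners bounded by $L_E$, with $\partial B=\alpha_1\cup\alpha_2$ where $\alpha_i$ is an arc of a branch $L_i$ of $L_E$, and let $p,q$ be the two corners, each of which is a self-intersection point of $L_E$ lying on an edge of the triangular tessellation and projecting to an immersed generator. By Lemma~\ref{lem:real line}(1) each branch of $L_E$ is an embedded line, so $\alpha_1$ and $\alpha_2$ cannot be distinct arcs of a single branch; hence $L_1\ne L_2$. Each $L_i$ fellow-travels a geodesic $g_i$ and is contained in the convex infinite-sided polygon $N(L_i)$ (Lemmas~\ref{lem:component},~\ref{lem:nbhd}), and $N(L_i)$ contains a unique branch of $L_E$ by construction. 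Consequently $g_1\ne g_2$: if $g_1=g_2$ then $L_1$ and $L_2$ meet the same component cells by Lemma~\ref{lem:component}, so $N(L_1)=N(L_2)$ and therefore $L_1=L_2$, contradicting $L_1\ne L_2$. In the Euclidean case $N(L_1)$ and $N(L_2)$ are flat strips around distinct lines, and in the hyperbolic case they are neighborhoods of distinct geodesics; in either case $g_1\cap g_2$ consists of at most one point $O$, the convex set $N(L_1)\cap N(L_2)$ is bounded, and it contains $p$, $q$ and the geodesic segment $[p,q]$. Using the convexity of the corners together with Lemma~\ref{lem:sector} and the monotonicity of the fellow-traveling, one shows $[p,q]\subseteq\overline{B}$ and hence that $B$ lies in a bounded neighborhood of $O$, so that $O$ is a colored vertex of $Z$ and the combinatorial neighborhood $N(B)$ is controlled by the link of $O$.

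It remains to examine this localized picture. The boundary cyclic word $[w(\partial B)]$ has the form $s_1\,t_1\,s_2\,t_2$, where $s_1,s_2$ are (possibly empty) subwords of $(\gamma\beta\alpha)^\infty$ and $t_1,t_2\in\{\alpha\beta,\beta\gamma,\gamma\alpha\}$ record the two turns at $p$ and $q$; since $B$ is a disk in the simply-connected space $E$, this word represents the identity of $G$. Using that every nonempty subword of $(\gamma\beta\alpha)^\infty$ is nontrivial (Lemma~\ref{lem:nontrivial}) together with the standard-form bookkeeping for boundary words (Lemma~\ref{lem:standard}) and the bound on $N(B)$ from the previous paragraph, one checks that the only nontrivial possibility is that $[w(\partial B)]$ is a cyclic rotation of $(\alpha\beta)^{\pm2}$, $(\beta\gamma)^{\pm2}$ or $(\gamma\alpha)^{\pm2}$; and, using $\alpha\beta\gamma=1$, these equal $\gamma^{\mp2}$, $\alpha^{\mp2}$, $\beta^{\mp2}$ in $G$, which vanish precisely when $c=2$, $a=2$, $b=2$ respectively. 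Thus a bigon forces one of $a,b,c$ to equal $2$ (in the elliptic and hyperbolic ranges at most one of them can be $2$, since $(2,2,r)$ is spherical), and then $B$ is the configuration of Figure~\ref{fig:consecutiverefl} surrounding the order-two cone vertex. That configuration encloses four minimal $xyz$-triangles, so it has area $4\sigma$; its two corners are immersed generators and a direct Maslov count gives $\mu=2$, so it contributes to the potential; and it is unique up to the $G$-action because all order-two cone vertices of $Z$ lie in one $G$-orbit. When $(a,b,c)\geq3$ no such word represents the identity, so there is no bigon at all.

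The step I expect to be the main obstacle is making the localization genuinely tight in the case one of $a,b,c$ equals $2$: there the component cells are quadrilaterals, the short relators $\alpha^2,\beta^2,\gamma^2$ exist, and one must control exactly how far the non-geodesic branch $L_i$ is allowed to deviate from $g_i$ near the corners so that the bounded neighborhood of $O$ contains no bigon other than the claimed one, and separately confirm that the claimed bigon has convex corners. I would lean on the convexity of $N(L_i)$ (Lemma~\ref{lem:nbhd}) and on Lemma~\ref{lem:sector} to pin $N(B)$ down, and on Lemma~\ref{lem:nontrivial} to rule out every relator except the single length-two one. A cleaner alternative, which I would ultimately prefer, is to bypass the geometric localization and prove the combinatorial statement of the second paragraph directly: a cyclic word $s_1t_1s_2t_2$ of the above shape is trivial in the triangle group $G$ only in the cases listed there.
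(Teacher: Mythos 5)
Your route differs from the paper's, but it has two genuine gaps. First, the localization step is not sound as written. In the Euclidean case two distinct fellow-travelled lines $g_1,g_2$ can be parallel, in which case $N(L_1)\cap N(L_2)$ need not be bounded; in the hyperbolic case $g_1$ and $g_2$ need not intersect at all, so the point $O$ may not exist. Even when $O$ exists, nothing you have said forces it to be a \emph{colored vertex of $Z$}, and the phrase ``$N(B)$ is controlled by the link of $O$'' is not a statement you can invoke later. Second, and more seriously, the combinatorial step that is supposed to finish the proof is only asserted: the claim that a cyclic word $s_1t_1s_2t_2$ (with $s_i\preccurlyeq(\gamma\beta\alpha)^\infty$ and $t_i$ a turn word) is trivial in $G$ only when it is a rotation of $(\beta\gamma)^2$, $(\gamma\alpha)^2$ or $(\alpha\beta)^2$ is essentially equivalent to the corollary itself. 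Lemma~\ref{lem:nontrivial} only handles words with no turns, and Lemma~\ref{lem:standard} is bookkeeping; neither gives this dichotomy, and triangle groups with a small weight have short relators, so some small-cancellation-type or geometric input is unavoidable here --- which is exactly what you acknowledge in your last paragraph without supplying it. Note also that your corner words $t_i\in\{\alpha\beta,\beta\gamma,\gamma\alpha\}$ presuppose that both corners are odd-degree turns, whereas the statement concerns \emph{any} bigon with convex corners, so corners at the even-degree generators are not covered by your word analysis.

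For comparison, the paper's proof avoids both issues by working at a single corner: the two branches cross on an edge $e$ of the triangular tessellation, and one takes the geodesic $h$ extending $e$. By Lemma~\ref{lem:component} the geodesic $g_i\subseteq N(L_i)$ cannot coincide with $h$, so $N(L_i)\cap h=e$ and each branch meets $h$ only once; this rules out a second intersection closing up a bigon when $a,b,c\ge3$, and when one weight equals $2$ the same argument applies in one configuration (Figure~\ref{fig:bigon_no}~(b)) while the other configuration around the $\Z/2$-point produces exactly the one exceptional bigon (Figure~\ref{fig:bigon_no}~(c)). If you want to salvage your approach, the cleanest fix is to replace your first paragraph by this separating-geodesic argument (or by an honest proof of the word-theoretic dichotomy), rather than relying on boundedness of $N(L_1)\cap N(L_2)$ and on the unproved reduction to short relators.
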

\begin{proof}
Suppose two Lagrangians $ L_0,  L_1$ in $E$ intersect along an edge $e$ connecting orbifold point $B$ and $C$. Denote by $h$ the geodesic extending $e$.
Let us consider a geodesic $g_i\subseteq N( L_i)$ given by Lemma~\ref{lem:component}.
Note that $g_i$ cannot coincide with $h$ for each $i=0,1$; see Figure \ref{fig:bigon_no} (a).
In particular, $g_i\cap h$ is a single point and $N( L_i) \cap h$ is given by the edge $e$ itself. Therefore, $ L_0$ and $ L_1$ cannot cross $h$ again.

In the case that one of $a,b,c$ equals 2,  there are two possible configurations as illustrated in Figure \ref{fig:bigon_no} (b) and Figure \ref{fig:bigon_no} (c).
The first case arises if  $B$ or $C$ is $\Z/2$-point, and the second case arises if $A$ is $\Z/2$-point. In the case of Figure \ref{fig:bigon_no} (b), the same proof as
above shows that there is no bigon. In the case of Figure \ref{fig:bigon_no} (c), the figure gives the unique bigon, contributing to the potential.
\end{proof}

\begin{figure}[htb]
  \tikzstyle {a}=[red,postaction=decorate,decoration={%
    markings,%
    mark=at position 1 with {\arrow[red]{stealth};}}]
  \tikzstyle {b}=[blue,postaction=decorate,decoration={%
    markings,%
    mark=at position .85 with {\arrow[blue]{stealth};},%
    mark=at position 1 with {\arrow[blue]{stealth};}}]
  \tikzstyle {c}=[orange,postaction=decorate,decoration={%
    markings,%
    mark=at position .7 with {\arrow[orange]{stealth};},%
    mark=at position .85 with {\arrow[orange]{stealth};},
    mark=at position 1 with {\arrow[orange]{stealth};}
}]
  \tikzstyle {av}=[red,draw,shape=circle,fill=red,inner sep=2pt]
  \tikzstyle {bv}=[blue,draw,shape=circle,fill=blue,inner sep=2pt]
  \tikzstyle {cv}=[orange,draw,shape=circle,fill=orange,inner sep=2pt]
  \tikzstyle {wv}=[draw,shape=circle,fill=white,inner sep=2.5pt]  
  \tikzstyle {gv}=[draw,shape=circle,fill=black,inner sep=1pt]
  \tikzstyle {hv}=[draw,shape=circle,fill=white,inner sep=1pt]  
  \tikzstyle {ev}=[draw,teal,shape=circle,fill=teal,inner sep=1pt]
  \tikzstyle {pv}=[black,draw,shape=rectangle,fill=black,inner sep=2pt] 
  \tikzstyle {ar}=[teal,postaction=decorate,decoration={%
    markings,%
    mark=at position .35 with {\arrow[teal]{stealth};}}]
\subfloat[(a)]{
	\begin{tikzpicture}[scale=.75,thick]
	
	\draw (0,1.73) -- (-2,1.3) -- (-1,0) -- (0,1.73) -- (1,0) -- (2,1.3) -- (0,1.73);
	\draw (-1,0) -- (1,0) -- (2,-1.3) -- (0,-1.73) -- (-2,-1.3) -- (-1,0) -- (0,-1.73) -- (1,0);

	\draw [teal,dashed,ultra thick] (-1*1.1,-1.73*1.1) -- (1*1.1,1.73*1.1) node [right] {\tiny $ L_0$};
	\draw [teal,dashed,ultra thick] (-1*1.1,1.73*1.1) -- (1*1.1,-1.73*1.1) node [right] {\tiny $ L_1$};
	\draw [brown,ultra thick,->] (-2,0) -- (2,0) node [right] {\tiny $h$};

	\draw [red] (-1,0) -- (1,0);

	\draw (-2.3,-.6) node [red,left,below] {\tiny $e$} edge [gray,->,bend right=30] (-.5,0);

	\node  [inner sep=0.9pt] at (-2.5,0) {}; 	
	\node  [inner sep=0.9pt] at (2.5,0) {}; 			
	\end{tikzpicture}
}
$\quad$
\subfloat[(b)]{
	\begin{tikzpicture}[scale=.65,thick]
	
	\draw (0,2) -- (-2.3,1.7) -- (-2,0) -- (-2.3,-1.7) -- (0,-2) -- (2.3,-1.7) -- (2,0) -- (2.3,1.7) -- (0,2) -- (-2,0) -- (0,-2) -- (2,0) -- (0,2) -- (0,-2);
	
		\draw [teal,dashed,ultra thick,bend left=50] (-2.5,.85) edge (2,-2);
		\draw [teal,dashed,ultra thick,bend right=50] (-2.5,-.85) edge (2,2);
		\draw (2,2) node [teal,right] {\tiny $ L_0$};
		\draw (2,-2) node [teal,right] {\tiny $ L_1$};

	\draw [brown,ultra thick,->] (-2.6,0) -- (2.6,0) node [right] {\tiny $h$};

	\draw [red] (0,0) -- (2,0);


	\node  [inner sep=0.9pt] at (-2.5,0) {}; 	
	\node  [inner sep=0.9pt] at (2.5,0) {}; 		
	\end{tikzpicture}
}
$\quad$
\subfloat[(c)]{
	\begin{tikzpicture}[scale=.65,thick]
	
	\draw (0,2) -- (-2.3,1.7) -- (-2,0) -- (-2.3,-1.7) -- (0,-2) -- (2.3,-1.7) -- (2,0) -- (2.3,1.7) -- (0,2) -- (-2,0) -- (0,-2) -- (2,0) -- (0,2) -- (0,-2);
	
		\draw [teal,dashed,ultra thick,out=90,in=180] (-1,-1) edge (1,1);
		\draw [teal,dashed,ultra thick] (1,1) -- (2.6,1);
		\draw [teal,dashed,ultra thick,out=0,in=-90] (-1,-1) edge (1,1);
		\draw [teal,dashed,ultra thick]  (1,1)--(1,2.2) node [teal,right] {\tiny $ L_0$};
		\draw [teal,dashed,ultra thick]  (-1,-1)--(-2.6,-1);
		\draw [teal,dashed,ultra thick]  (-1,-1)--(-1,-2.2);
		\draw (2.6,1) node [teal,right] {\tiny $ L_1$};

			\draw [brown,ultra thick,->] (-2.6,0) -- (2.6,0) node [right] {\tiny $h$};

		\draw [red] (2,0) -- (0,2);


	\node  [inner sep=0.9pt] at (-2.5,0) {}; 	
	\node  [inner sep=0.9pt] at (2.5,0) {}; 		
	\end{tikzpicture}
	}
\caption{Corollary~\ref{cor:bigon_no}.}\label{fig:bigon_no}
\end{figure}

\subsection{Embeddedness of the polygons for the potential}
To compute the potential, we count the holomorphic polygons of Maslov index two with convex corners mapped to odd degree immersed generators. We will call them as {\em polygons for the potential} for short.  Such polygons are rigid (see \cite{S}, Section 13), and hence their countings can be defined.



\begin{lemma}
In the spherical case ($E=S^2$), the image of a lift $U$ of a polygon for the potential do not cover the whole $S^2$.
\end{lemma}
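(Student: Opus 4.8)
The statement is that in the spherical case, a lift $U$ of a polygon for the potential cannot be surjective onto $S^2$. The plan is to argue by area and Maslov index, exploiting the fact that such polygons are \emph{rigid} (Maslov index two, all corners at immersed generators with angle $\le\pi$). First I would recall that the universal orbifold cover $E = S^2$ is a $d$-fold cover of $\PO$ with $d\cdot\chi(\PO) = 2$, so $\area(S^2) = d\cdot\area(\PO) = \tfrac{2}{\chi}\area(\PO)$, which is finite and completely determined by $(a,b,c)$. On the other hand, a polygon $U$ for the potential has symplectic area $\omega(U)$ equal to an integer multiple $k(U)\sigma$ of the area $\sigma$ of the minimal $xyz$ triangle (as remarked after Equation \eqref{eq:formulaW}), and more importantly it is a \emph{holomorphic} map from the disk, so if it were surjective it would have area at least $\area(S^2)$ — in fact strictly larger unless it is an isometry, which a disk with corners cannot be. So the heart of the matter is to rule out $\omega(U) \ge \area(S^2)$ by a Maslov index / Gauss--Bonnet obstruction, not merely by a crude area count.

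The key step is a Gauss--Bonnet computation for the disk $U$. For a holomorphic polygon $u\colon (D^2,\partial D^2) \to (S^2, L_E)$ of Maslov index two with corners $z_1,\dots,z_k$ at odd immersed generators, the Maslov index is $\mu(u) = 2 + (\text{something involving the areas and corner angles})$; concretely, on a surface the Maslov index of a boundary-corner disk is computed by Gauss--Bonnet as
\[
\mu(u) \;=\; \frac{1}{\pi}\Bigl(\int_U \kappa \, dA + \sum_{i} (\pi - \theta_i)\Bigr)
\]
up to the usual normalization, where $\kappa$ is the curvature of $S^2$ (a positive constant, since in the spherical case $E=S^2$ carries the round metric) and $\theta_i \le \pi$ are the interior angles at the corners. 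Since $\mu(u) = 2$ and each $\pi - \theta_i \ge 0$, this forces $\int_U \kappa\,dA \le 2\pi$, i.e. the spherical area of the image of $U$ (counted with multiplicity) is at most $2\pi/\kappa \cdot$const $= $ a definite fraction of $\area(S^2)$. If $U$ were surjective, its area with multiplicity would be at least $\area(S^2)$, and I would need the constants to work out so that $2\pi/\kappa < \area(S^2)$, equivalently that $S^2$ has more than "two units" of curvature — which is true since $\area(S^2) = 4\pi/\kappa$ for the round sphere of curvature $\kappa$, so the bound reads $\text{area}(U) \le \tfrac12\area(S^2) < \area(S^2)$. Hence $U$ cannot cover all of $S^2$.

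Alternatively, and perhaps more in the spirit of the paper, I would use the combinatorial description already set up: by Lemma~\ref{lem:nbhd} the combinatorial neighborhood $N(L)$ of a Seidel Lagrangian in $E$ is a convex polygon, and a polygon for the potential lies in (a union of translates of) such neighborhoods controlled by its boundary word; a surjective $U$ would have to have a boundary word whose label-reading wraps around $S^2$ enough times to force the area bound above to fail, or would force $N(L) = S^2$, contradicting convexity in the spherical triangle-group tessellation (the tessellation has finitely many triangles and $L$ misses at least one of them except in the degenerate $(2,2,r)$, $r$ odd, case, where one checks directly from Lemma~\ref{lem:sphdiss} that the Seidel Lagrangian circle bisects $S^2$ and any polygon it bounds lies in a hemisphere).

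\textbf{Main obstacle.} The delicate point will be pinning down the precise normalization in the Gauss--Bonnet/Maslov formula for an \emph{immersed} disk with corners at self-intersection points of $L$ — in particular getting the sign and the factor right so that $\mu=2$ genuinely yields $\text{area}(U) < \area(S^2)$ rather than just $\le$, and handling the boundary contribution $\int_{\partial U}\kappa_g$ coming from the fact (noted right after Figure~\ref{fig:444basic}) that lifts of the Seidel Lagrangian are \emph{not} geodesics, so $\kappa_g \ne 0$ along $\partial U$. I would absorb the geodesic-curvature term by the same symmetry argument used elsewhere: the four sub-triangles of a base triangle have equal area, so the signed geodesic curvature along the Seidel arcs integrates to a controlled quantity, or simply note that $L$ bounds regions of equal area on the two sides so the net boundary curvature contribution over a closed polygon vanishes. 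The spherical case being finite, one can also just fall back on a direct case check for the finitely many spherical types $(a,b,c)$, using the explicit superpotentials in Theorems~\ref{thm:W22r}, \ref{thm:W233}, \ref{thm:W234}, \ref{thm:W235}, which list all polygons and their areas, none exceeding $\area(S^2)$.
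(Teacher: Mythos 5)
Your overall strategy (an index obstruction: a surjective polygon would violate rigidity/Maslov index two) is the same as the paper's, but the quantitative step you build it on does not work as written. The identity you propose is not a formula for the Maslov index: once you include the geodesic-curvature term you omitted,
\[
\frac{1}{\pi}\Bigl(\int_U \kappa \, dA+\oint_{\partial U}\kappa_g\, ds+\sum_i(\pi-\theta_i)\Bigr)=2\chi(D^2)=2
\]
is just Gauss--Bonnet for the pulled-back metric; it holds for \emph{every} polygon, covering or not, and carries no information about $\mu$. The Maslov index assigns to each corner a fixed integer determined by the gradings of the two Lagrangian branches (not the geometric angle $\theta_i$), and on $S^2$ it picks up contributions from the covering multiplicity (branch points / how often the image passes over points of the sphere). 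Without a term that sees this multiplicity you cannot deduce ``area with multiplicity $\le\frac12\area(S^2)$'' from $\mu=2$; note the constant-term $0$-gons in the spherical potentials have area exactly half the sphere, so the inequality you want is sharp and cannot follow from a formula that is insensitive to multiplicity. Your proposed fix for the $\oint\kappa_g$ term (``the net boundary curvature vanishes because $L$ bisects'') is also false in general: already for the minimal $xyz$ triangle Gauss--Bonnet forces $\oint_{\partial U}\kappa_g = 2\pi-\int_U\kappa\,dA-\sum_i(\pi-\theta_i)\ne 0$. The paper closes exactly this gap differently: it punctures $S^2$ at two antipodal preimages of an orbifold point, grades the Seidel Lagrangians in the cylinder $Y=S^2\setminus\{p_N,p_S\}$ using the meromorphic volume form $\Omega=dz/z$, and derives the index formula $d=k-3-\sum_i\deg(x_i)+2(\mathrm{ord}(u,p_N)+\mathrm{ord}(u,p_S))$; if the image covers $S^2$ the order terms give at least $+4$, so $d\ge 1>0$, contradicting the rigidity of polygons for the potential. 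The order-of-contact term at the poles is precisely the multiplicity contribution missing from your computation.

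Your fallback routes do not repair this. Lemma~\ref{lem:nbhd} (convexity of combinatorial neighborhoods) is established under the hypothesis $1/a+1/b+1/c\le 1$, so it is not available in the spherical case; and the ``direct check against Theorems~\ref{thm:W22r}--\ref{thm:W235}'' is circular, because those classifications of spherical polygons rely on the present lemma — it is what permits removing a point and stereographically projecting to $\RR^2$ so that the planar/diagram arguments apply — and it is not even a finite check, since the $D$-family $(2,2,r)$ contains infinitely many orbifolds.
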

\begin{proof}
It is enough to prove that a holomorphic polygon $U$ in $S^2$ with boundary on $L_E$ whose image covers the whole $S^2$ has Maslov index greater than two.
For this, we first establish the following index formula (similar to the one in \cite{S}).
Consider two antipodal points $p_N, p_S$ of the universal cover $S^2$, 
and its complement $Y :=S^2 \setminus \{p_N, p_S\}$ admits a holomorphic volume form $\Omega= \frac{dz}{z}$,
where $z$ is the coordinate of $\CC \cong S^2 \setminus \{p_S\}$.
Consider (possibly immersed) Lagrangians in $Y$, which admit gradings with respect to $\Omega$
(Lagrangian do not bound a disc in $Y$).  Since $\Omega= \frac{dz}{z}$ has order 1 poles in $p_N$ and $p_S$, we have the following index formula. For the immersed generators $x_1,\cdots, x_n$, the expected dimension of moduli space of holomorphic polygons with
insertions $x_1, \cdots, x_n$ is given by
$$d= k - 3 - \sum_{i=1}^n \deg(x_i) + 2 (ord(u,p_N) + ord(u,p_S))$$
where $ord(u,p_N)$ (resp. $ord(u,p_S)$) is the order of intersection of the map $U$ at $p_N$ (resp. $p_S$).
Since $U$ is holomorphic, $ord(u,p_N)$ and $ord(u,p_S)$ are always non-negative.
If the image of $U$ covers $S^2$,  we have  $ord(u,p_N) + ord(u,p_S) \geq 2$.

Let us consider now the Seidel Lagrangians in $E=S^2$. We may choose two antipodal points in the pre-image of an orbifold point.
It is easy to see that every Seidel Lagrangian in $E$ when restricted to $Y$ winds around the cylinder $Y$ once if embedded, and twice if immersed. Also we can assign grading  to each Seidel Lagrangian to have value in $(0,2\pi)$.
This makes  each intersection point between two Lagrangians to have an integer grading in $[-1,2]$.

Note that we only consider odd degree intersections, and hence $\deg(x_i) = \pm 1$.
Hence $d \geq -3 + 2 (ord(u,p_N) + ord(u,p_S))$, where $d \geq 1$ if $U$ covers $S^2$.  The expected dimension is non-zero, and hence such a 
polygon $U$ whose image covers the whole $S^2$ cannot be a polygon for the potential.
\end{proof}
Hence, in the spherical case, we can take a point in $S^2$ which is not in the image of $U$,  and consider a stereographic projection to $\RR^2$. This means we may assume the polygon to be contained in a plane.

Now, we prove that lifts of polygons for the potential in the universal cover are embedded. 
We will use this to define $(a,b,c)$-diagram corresponding to the embedded image of a polygon for the potential.
We remark that we may instead define $(a,b,c)$-digram in the domain $D^2$ of an immersed polygon for the potential and obtain the same result in this paper.

First, in the spherical and elliptic cases, we will compute all polygons for the potential directly since there are finitely many of them in the spherical cases and there are finitely many types in the
elliptic cases.  The following lemma can be checked directly in these cases.  In the hyperbolic case, there are infinitely many types of polygons for the potential, and we need a more theoretical proof.

\begin{lemma}\label{lem:lift}
In the hyperbolic case, any immersed polygon $u$ for the potential lifts to a  polygon $U$ for the potential in $E$ which is an embedding.
\end{lemma}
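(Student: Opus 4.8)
The plan is to argue by contradiction using the combinatorial geometry of the hexagon tessellation $Z$ and the fact that a Seidel Lagrangian in $E$ is embedded (the second proof given in the excerpt, via Lemma~\ref{lem:nontrivial} and Theorem~\ref{thm:area}). Suppose $u:(D^2,\partial D^2)\to(\PO,L)$ is an immersed polygon for the potential. Since any holomorphic polygon in $\PO$ lifts to the universal cover, we obtain $U:(D^2,\partial D^2)\to(E,L_E)$ with $u=\pi\circ U$, and I want to show this $U$ is an embedding. If $U$ fails to be an embedding, then either $\partial U$ is not injective or the interior of $U$ is not injective; in either case, since $U$ is an immersion (holomorphic and of Maslov index two, hence regular), the image $U(D^2)$ is a union of base triangles glued along edges, and a failure of injectivity forces a nontrivial self-overlap. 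I would first reduce to the case that the self-overlap produces an immersed sub-polygon bounded by $L_E$ with fewer corners — more precisely, a minimal innermost region where the map folds onto itself.

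The key steps, in order, are as follows. First, I would record that $U$ being holomorphic and rigid means $\partial U$ is a concatenation of arcs of Seidel Lagrangians in $E$ with convex corners at lifts of $x,y,z$. Second, using Lemma~\ref{lem:sector} and Lemma~\ref{lem:nbhd} (convexity of the combinatorial neighborhood $N(L)$ of each Seidel Lagrangian), I would show that two distinct branches of $L_E$ that both appear along $\partial U$ can meet the boundary in a controlled way: by Corollary~\ref{cor:bigon_no}, two Seidel Lagrangians in $E$ bound no bigon with convex corners when $a,b,c\ge3$, and at most one (explicitly described) bigon when some $a_i=2$. Third, I would use these facts to rule out any self-overlap: if $U$ were not embedded, one could extract from the overlap region either a $1$-gon bounded by a single Seidel Lagrangian (impossible since each Seidel Lagrangian in $E$ is embedded, equivalently since every nonempty subword of $(\gamma\beta\alpha)^\infty$ is nontrivial by Lemma~\ref{lem:nontrivial}), or a bigon bounded by two branches of $L_E$ with convex corners, which is forbidden by Corollary~\ref{cor:bigon_no} except in the stated $\Z/2$ case; and in that exceptional case a direct check of the area formula (Theorem~\ref{thm:area}) shows the only such bigon is the one already contributing to the potential, so it cannot be an innermost overlap inside a larger polygon without violating the Maslov index two / rigidity constraint. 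Fourth, I would promote "$\partial U$ embedded" to "$U$ embedded": once the boundary is a simple closed curve in $E$ (topologically a disc, since $E$ is simply connected or, in the spherical reduction, $\RR^2$ after stereographic projection), the holomorphic immersion $U$ with embedded boundary and zero self-intersection of the boundary must be an embedding onto the region it bounds, by an argument on the degree of $U$ over generic points combined with positivity of intersections for holomorphic maps.

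I expect the main obstacle to be the third step: carefully extracting, from an arbitrary self-overlap of $U$, a genuine innermost sub-bigon or sub-$1$-gon with \emph{convex} corners bounded by $L_E$, so that Corollary~\ref{cor:bigon_no} applies. The subtlety is that the overlap need not itself be a holomorphic polygon — it is only a region in the image — so one must argue combinatorially at the level of the $(a,b,c)$-diagram (intersection of $U$ with $Z$) that a folding of $\partial U$ forces the appearance of a combinatorial bigon or monogon in the diagram whose corners are among the $x,y,z$ type vertices, hence convex. The case analysis when one of $a,b,c$ equals $2$ will require extra care because of the exceptional bigon of area $4\sigma$, and there one must use that this bigon already appears as a full polygon for the potential together with the rigidity of polygons for the potential to conclude it cannot sit strictly inside a larger $U$. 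Finally, I would note that the spherical case is handled by the preceding lemma (the image of $U$ misses a point of $S^2$, so we may assume $U$ lands in $\RR^2$), but since the statement is phrased for the hyperbolic case only, the argument above in $\HH$ suffices, with Lemma~\ref{lem:nbhd} and Corollary~\ref{cor:bigon_no} doing the essential work.
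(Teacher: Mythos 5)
Your overall strategy (lift, argue by contradiction from a failure of injectivity, use the embeddedness of Seidel Lagrangians and Corollary~\ref{cor:bigon_no}, and finally pass from an embedded boundary to an embedded map via the argument principle) is the right general framework, and your last step matches the paper. But your third step, which you yourself flag as the main obstacle, has a genuine gap: a self-crossing of $\partial U$ does \emph{not} produce an innermost $1$-gon or bigon with convex corners, and no amount of "going to an innermost overlap" will force this. What a boundary self-crossing gives you is an arc $I\subset\partial D^2$, injective on its interior, whose endpoints have the same image, so that $U(I)$ is a simple loop in $E$; this loop bounds a polygon with possibly \emph{many} corners, and the dichotomy is about which side of $U(I)$ the image of $U$ lies on (Figure~\ref{fig:lemma42}). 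In the case where the corners of this loop are convex $x,y,z$-corners, there is nothing to contradict via Corollary~\ref{cor:bigon_no} -- such a loop is simply a polygon for the potential in its own right; the correct way to exclude this case (and the one the paper uses) is that the original map $U$ then has Maslov index strictly greater than two, so it is ruled out by the index/rigidity constraint, not by nonexistence of the sub-polygon. In the other case the corners of $U(I)$ are non-convex (complements of $x,y,z$-corners, except possibly at $U(\partial I)$), so Corollary~\ref{cor:bigon_no}, which concerns bigons with \emph{convex} corners, does not apply to it directly.

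For that second case the paper's argument is different from what you propose: if $U(I)$ has no corner it lies on a single Seidel Lagrangian, contradicting that such a Lagrangian is topologically a line (here your appeal to Lemma~\ref{lem:nontrivial} is in the right spirit); if it has a corner $v$ (not of order two, the order-two situation being exactly the exceptional bigon of Figure~\ref{fig:bigon_no}~(c)), one takes the geodesic $h$ extending the equator edge through $v$ and uses the separation statement underlying Corollary~\ref{cor:bigon_no} -- the two Seidel Lagrangian branches forming the corner never cross $h$ again -- together with the fact that one is working in a plane, to conclude that the loop $U(I)$ cannot close up. So the essential missing idea in your proposal is this two-case split according to the side of the loop, with the Maslov-index count handling the convex side and the geodesic-separation/planarity argument handling the non-convex side; the reduction to a forbidden monogon/bigon that your plan rests on is not available.
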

\begin{proof}
Since $u$ can always be lifted to an immersion $U$ in $E$, it is enough to show that $U$ is an embedding.
Suppose $U: D^2 \to E$ is not embedded.
Consider the boundary $U(\partial D^2)$, which is an immersed curve in $E$.  This curve cannot be embedded, or otherwise by argument principle $U$ is an embedding.  Let $I \subset \partial D^2$ be an arc such that $U$ restricted to the interior of $I$ is injective and the two boundary points of $I$ are mapped by $U$ to the same point.  Depending on whether the image of $U(D^2)$ lies inside or outside of the curve $U(I)$,
we may assume that either $U(I)$ bounds a polygon for the potential with corners given by $x,y,z$ (the immersed corner $U(\partial I)$ should also be one of $x,y,z$-corners from the orientation consideration)
, or $U(I)$ bounds a polygon whose corners are complement of $x,y,z$-corners, possibly except at the immersed corner $U(\partial I)$ (see (a) and (b) of Figure \ref{fig:lemma42}).

\begin{figure}[htb!]
\includegraphics[height=.2\textheight]{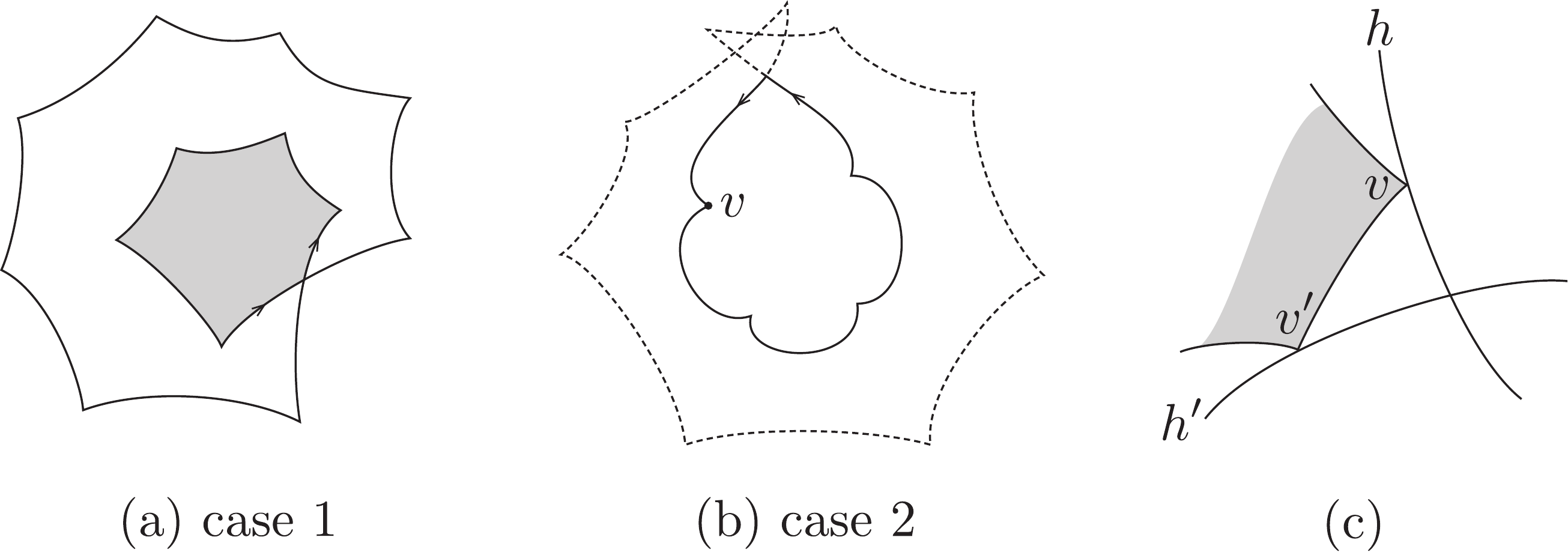}
\caption{$U(I)$ for (a) case 1 and (b) case 2}\label{fig:lemma42}
\end{figure}

In the first case as in (a) of Figure \ref{fig:lemma42}, one can notice that the Maslov index of $U$ is greater than 2, and hence such a map $U$ does not contribute to the potential.

We argue that the second case cannot occur based on the properties of Seidel Lagrangians in $E$.
Suppose $U(I)$ does not have any corners.  Then $U(I)$ is contained in one Seidel Lagrangian in $E$. This is a contradiction that Seidel
Lagrangian in $E$ is topologically a line (Corollary \ref{cor:bigon_no}).  Now suppose that $U(I)$ has at least one of $x,y,z$ corners $v$ as in (b) of Figure \ref{fig:lemma42}.  Take one of $x,y,z$-corners $v$ of $U(I)$, and we may assume that it does not have order two (otherwise as in Figure \ref{fig:bigon_no} (c), we will get a bigon).  Consider a geodesic $h$ extending the edge $e$ of the real equator intersecting the corner $v$.
The Seidel Lagrangians forming the corner $v$ do not intersect the geodesic $h$ again by Corollary \ref{cor:bigon_no}.  Even if we consider the possible next corner $v'$, it cannot cross a new geodesic $h'$ similarly.  Since we are on a plane, this shows that it is impossible for $U(I)$ to form such a polygon. This proves the lemma.
\end{proof}

\subsection{Boundary words of polygons for the potential}
We now characterize the words obtained by label-reading of boundaries of polygons.

An \emph{$x$-corner} of a polygon $U$ for the potential in $E$ is a corner of $U$ that is mapped to the immersed generator $X$.
We similarly define \emph{$y$-corner} and \emph{$z$-corner}. See Figure~\ref{fig:area} for examples.

\begin{figure}[htb!]
\includegraphics[height=.3\textheight]{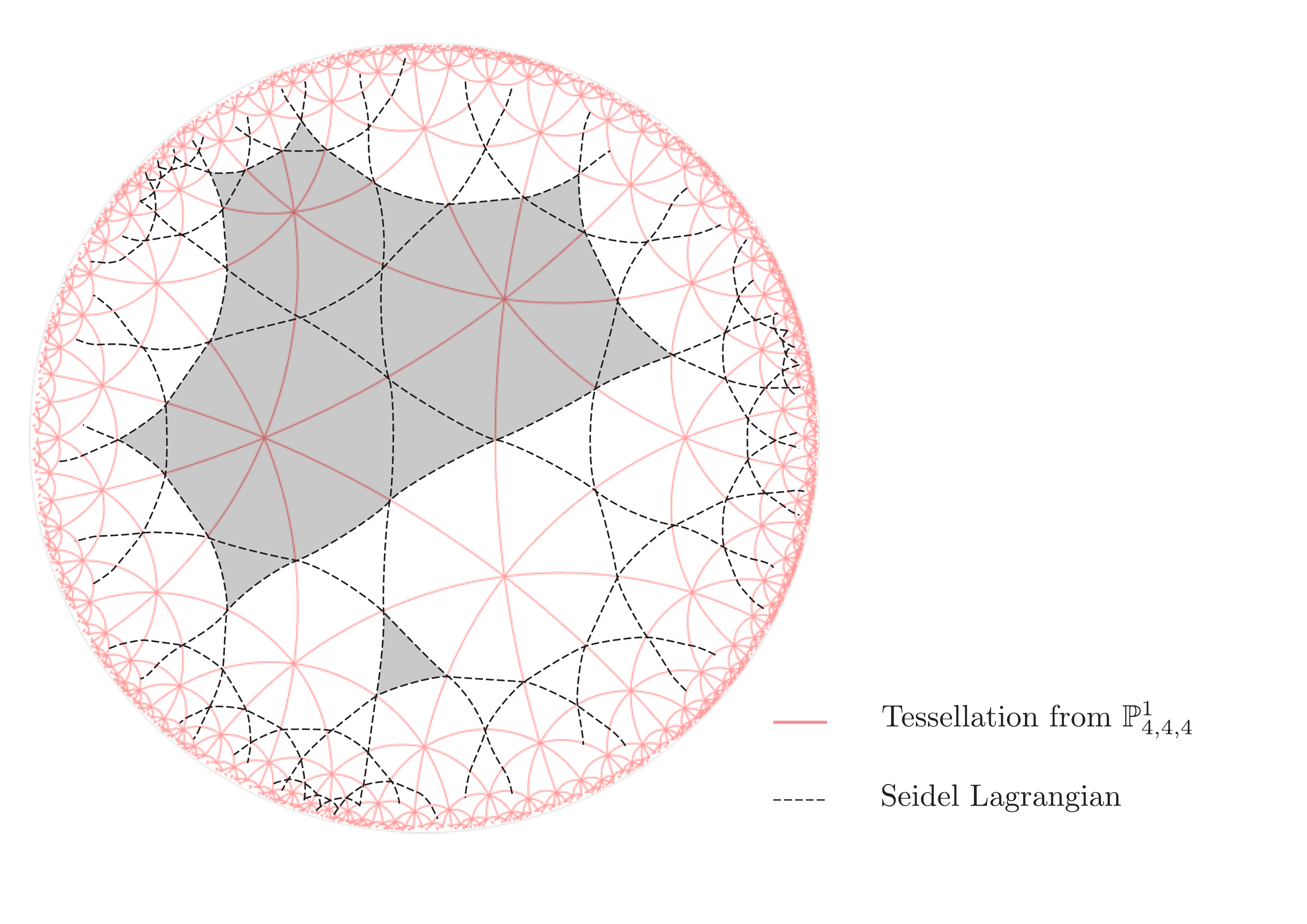}
\caption{Holomorphic polygons bounded by Seidel Lagrangians in $E$ }\label{fig:area}
\end{figure}

We explain an important property for polygons for the potential.
\begin{lemma}
Let $U$ be a polygon for the potential in $E$.
All corners of $U$ are locally contained in base triangles of the same color, which is either black or white.
\end{lemma}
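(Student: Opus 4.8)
The plan is to describe the local picture of $L_E$ at each immersed point and then spread a single color over all corners by equivariance. Recall that every self-intersection point of the Seidel Lagrangian in $\PO$ lies on the equator, hence on an edge $e$ of the triangular tessellation after lifting to $E$; write $\tilde p$ for a lift of one of the three self-intersection points $X,Y,Z$ of $L$. The reflection of $\PO$ about the equator is an anti-holomorphic involution preserving $L$ and fixing the three self-intersection points, and it lifts to the reflection of $E$ across the geodesic $h$ extending $e$ (an element of the reflection group $H$). Hence the two local branches of $L_E$ meeting at $\tilde p$ are mirror images of each other across $h$, so a small disc about $\tilde p$ is divided by those branches into four sectors: two that straddle $h$ (meeting both base triangles adjacent to $e$) and two ``side'' sectors, one contained in the black base triangle bordering $e$ and one in the white.

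First I would check that the corner of a polygon $U$ for the potential at $\tilde p$ always occupies one of the two side sectors. At a transverse double point of an immersed Lagrangian there are exactly two Floer generators, so at most two of the four sectors can occur as the interior angle of a corner mapped to an immersed generator; and these two are the side sectors, as one sees from the minimal $xyz$-triangle $U_{xyz}$. Indeed the three vertices of $U_{xyz}$ lie on the three sides of a base triangle $T_0$ and $U_{xyz}$ is the sub-triangle containing no cone point, so $U_{xyz}\subseteq T_0$; therefore the interior angle of $U_{xyz}$ at each vertex $x,y,z$ is precisely the side sector lying in $T_0$. It follows that the interior angle of $U$ at $\tilde p$ is determined, among the two side sectors, by the immersed generator carried by $\tilde p$ together with the induced boundary orientation of $U$.

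Next I would invoke equivariance. The deck group $G=\pi_1^{\mathrm{orb}}(\PO)$ acts on $E$ preserving $L_E$, the three immersed generators, and the checkerboard coloring of the triangular tessellation (it is the orientation-preserving index-two subgroup of $H$, and a product of two edge reflections preserves colors). As $X,Y,Z\in\PO$ are not cone points, $G$ acts transitively on the lifts of each of them; hence, for a fixed boundary orientation, the side sector singled out above lies at every lift of $X$ (resp. $Y$, $Z$) in a base triangle of one and the same color. Reading these colors off from $U_{xyz}$, whose three corners all lie in the single base triangle $T_0$, shows they coincide and equal the color of $T_0$; the opposite boundary orientation (as for $U_{xyz}^{\mathrm{op}}$) merely replaces this by the other color. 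In either case a single color $\epsilon_0$ results, so every corner of $U$ is locally contained in a base triangle of color $\epsilon_0$, which is black or white according to the boundary orientation of $U$.

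The step requiring the most care is the second one --- matching each pair (immersed generator, boundary orientation) with the correct one of the four local sectors at $\tilde p$; but this is a single finite local computation done once on $U_{xyz}$ and then transported everywhere by the $G$-action, and the degenerate cases $a=2$ (where the component cells of Lemma~\ref{lem:nbhd} are quadrilaterals rather than base triangles) do not interfere, since the argument speaks only of base triangles.
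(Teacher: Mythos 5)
Your reduction of the lemma to a local model at the crossings is fine as far as it goes: every corner of a polygon for the potential is mapped to an odd immersed generator, the two sectors compatible with a fixed generator are the two opposite ones, and the minimal triangle together with its reflection $U_{xyz}^{\mathrm{op}}$ identifies these with the two ``side'' sectors, so each corner is indeed locally contained in a single base triangle. The gap is in how you make the colors at \emph{different} corners agree. You let the color at a corner be a function of ``the induced boundary orientation of $U$'', i.e.\ of whether $\partial U$ runs with or against the orientation of $L$ near that corner, and you then treat this comparison as a single global bit attached to $U$. That constancy is never argued, and it is not automatic: a priori the boundary could run with $L$ along one side and against $L$ along another (the sign rule in the paper explicitly allows side-by-side disagreement for general polygons), and in that case your own matching would place some corners in white triangles and others in black ones --- exactly the situation the lemma must exclude. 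So as written your argument only shows that each corner individually lies in one base triangle, not that the colors coincide, which is the actual content of the statement.

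The fix is a second local computation that you have set up but not carried out: at a side-sector corner the incoming and outgoing sides of $\partial U$ compare to the orientation of $L$ in the same way (this uses that the equatorial reflection preserves the orientation of $L$, so the two branches at $\tilde p$ are oriented as mirror images of each other), while the comparison flips only at the two straddling sectors, which correspond to the even generator and are excluded. With that, the comparison is constant along $\partial U$, your equivariance step applies, and the proof closes. For contrast, the paper's proof avoids this orientation bookkeeping entirely: it walks along $\partial U$ from one corner toward the next and uses convexity together with the odd-degree requirement to show that the next admissible turn occurs at an odd-numbered crossing of $L_E$, which automatically lies in a base triangle of the same color as the previous corner.
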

\begin{figure}[htb!]
\includegraphics[width=.38\textheight]{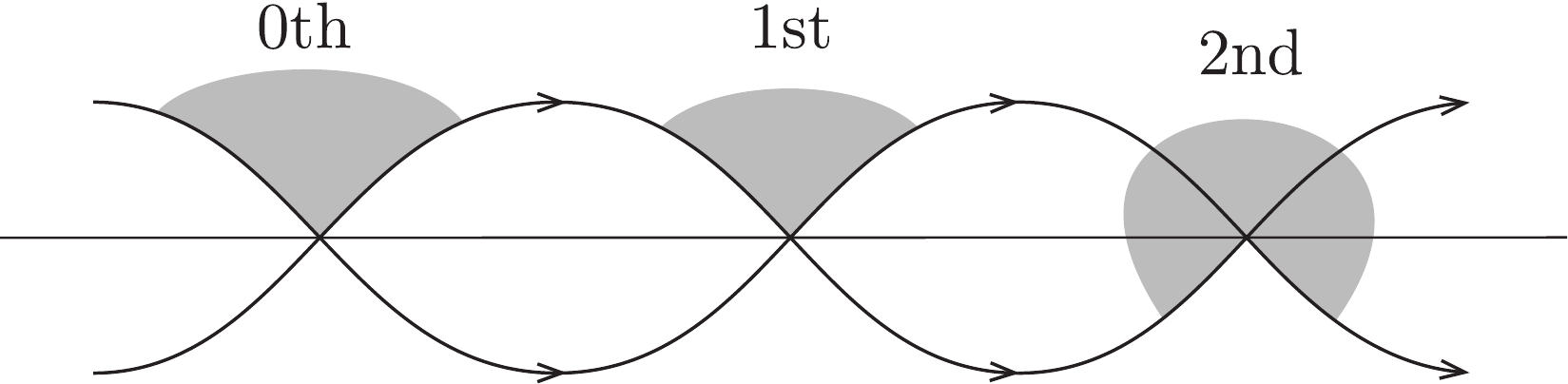}
\caption{Corners of a holomorphic polygon}\label{fig:uppercorner}
\end{figure}
\begin{proof}
The reason is that we are only allowed to have odd degree insertions for the polygons.  Starting from a corner (0-th) of $L$, if one turns again at the next (1st) corner, then these two corners occur
in the same base triangle. Suppose that we do not turn at the 1st corner and turn at the $2$nd corner instead.  But this corner has to be non-convex which is not allowed. (See Figure \ref{fig:uppercorner}.)
Following the same argument, only turning at $(2k+1)$-th corner is allowed for $k \geq 0$, which lies in the base triangle of the same color.
\end{proof}

A Seidel Lagrangian is an bi-infinite path reading $(\gamma\beta\alpha)^\infty$ in the Cayley graph of $G$.
So Lemma~\ref{lem:real line} (1) implies the following.

\begin{lemma}\label{lem:nontrivial}
Each nonempty subword of $(\gamma\beta\alpha)^\infty$ is nontrivial in $G$.
\end{lemma}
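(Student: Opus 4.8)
The plan is to reduce everything to two facts about the element $\delta:=\gamma\beta\alpha\in G$: that $\delta$ has infinite order, and that $\delta$ (hence every nonzero power of it) acts on $E$ without fixed points. Both follow from Lemma~\ref{lem:real line}(1): a Seidel Lagrangian $\tilde L\subseteq E$ is an embedded real line with label-reading $(\gamma\beta\alpha)^\infty$, and it is preserved by (a conjugate of) $\delta$, with the quotient of $\tilde L$ by this cyclic subgroup equal to the compact curve $L$. Since $\tilde L$ is noncompact, the cyclic subgroup is infinite, so $\delta$ has infinite order; and as we are in the case $1/a+1/b+1/c\le 1$, where $E=\HH$ or $\RR^2$ and $G$ acts properly discontinuously and orientation-preservingly, an infinite-order element of $G$ is hyperbolic, parabolic, or a translation, and in all cases it and its nonzero powers are fixed-point-free on $E$.

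Next I would set up bookkeeping for subwords. Index the bi-infinite word $(\gamma\beta\alpha)^\infty$ as $(t_k)_{k\in\Z}$ with $t_k=\gamma,\beta,\alpha$ according to $k\equiv 1,2,0\pmod 3$, and define $v_k\in G$ by $v_0=1$ and, for $k>0$ (resp.\ $k<0$), $v_k=t_1\cdots t_k$ (resp.\ $v_k=(t_{k+1}\cdots t_0)^{-1}$), so that every nonempty subword of $(\gamma\beta\alpha)^\infty$ equals $v_i^{-1}v_j$ for a unique pair $i<j$. From the $3$-periodicity of $(t_k)$ one checks $v_{k+3}=\delta\,v_k$, and writing $k=3q_k+r_k$ with $r_k\in\{0,1,2\}$ this gives the closed form $v_k=\delta^{q_k}u_{r_k}$ with $u_0=1$, $u_1=\gamma$, $u_2=\gamma\beta$.

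Now suppose for contradiction $v_i=v_j$ with $i<j$. Then $\delta^{\,n}=u_{r_i}u_{r_j}^{-1}$ where $n:=q_j-q_i\ge 0$, and $n=0$ forces $r_i<r_j$. If $r_i=r_j$ then $n\ge 1$ and $\delta^n=1$, contradicting that $\delta$ has infinite order. If $r_i\ne r_j$ then $u_{r_i}u_{r_j}^{-1}$ is one of the six elements $\gamma^{\pm1},(\gamma\beta)^{\pm1},\gamma\beta\gamma^{-1},\gamma\beta^{-1}\gamma^{-1}$; each is a conjugate of a power of one of $\alpha,\beta,\gamma$ — for instance $\gamma\beta=\beta^{-1}\alpha^{-1}\beta$ using $\beta\gamma=\alpha^{-1}$, and $\gamma\beta^{\pm1}\gamma^{-1}$ is a conjugate of $\beta^{\pm1}$ — hence has a fixed point on $E$. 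But $\delta^n$ is fixed-point-free for $n\ge 1$, while $\delta^0=1$ cannot equal such a nontrivial element (one checks $u_0,u_1,u_2$ are pairwise distinct in $G$; e.g.\ $\gamma\beta=1$ would give $\alpha=1$). In every case we get a contradiction, so $v_i^{-1}v_j\ne 1$, which is the claim.

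I expect the only delicate point to be the first paragraph — extracting ``$\delta$ has infinite order and acts freely'' cleanly from Lemma~\ref{lem:real line}(1) and the properly-discontinuous orientation-preserving action of $G$ on $E$. Everything after the closed form $v_k=\delta^{q_k}u_{r_k}$ is routine. Note also that one could phrase the conclusion ``$\delta^n$ is not torsion'' via the standard fact that in these von Dyck groups the torsion elements are exactly the conjugates of powers of $\alpha,\beta,\gamma$, but the argument above only uses the easy direction, namely that the six specific words $u_{r_i}u_{r_j}^{-1}$ are torsion.
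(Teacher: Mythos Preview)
Your argument is correct. Let me compare it with the paper's.

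The paper's proof is essentially one sentence: a Seidel Lagrangian $\tilde L$ in $E$ determines a bi-infinite walk $k\mapsto v_k$ in the Cayley graph of $G$ (the sequence of hexagonal faces of $Z$ that $\tilde L$ traverses), reading $(\gamma\beta\alpha)^\infty$; since $\tilde L$ is an embedded real line by Lemma~\ref{lem:real line}(1), this walk is injective, so every subword $v_i^{-1}v_j$ is nontrivial. The step left implicit is why embeddedness of $\tilde L$ forces injectivity of the walk: if $v_i=v_j=F$ then the two arcs of $\tilde L$ inside the face $F$ either coincide (when $i\equiv j\pmod3$, since each edge of $\partial F$ carries exactly one point of $L_E$) or are two distinct arcs of $L_E\cap F$, and any two of the three arcs of $L_E\cap F$ meet at one of the immersed points $x,y,z$ --- in either case $\tilde L$ would fail to be embedded.

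Your route is genuinely different and more algebraic. You use Lemma~\ref{lem:real line}(1) only to extract that $\delta=\gamma\beta\alpha$ has infinite order (via the infinite covering $\tilde L\to L$), and then finish inside $G$: writing $v_k=\delta^{q_k}u_{r_k}$ you reduce $v_i=v_j$ to $\delta^n=u_{r_i}u_{r_j}^{-1}$, and check that for $r_i\ne r_j$ the right-hand side is a conjugate of a nontrivial power of $\alpha$, $\beta$, or $\gamma$, hence torsion. This avoids any appeal to how the three arcs of $L_E$ sit inside a single hexagon. One small simplification: the fixed-point discussion is not strictly needed --- once $\delta$ has infinite order, $\delta^n$ is non-torsion for $n\ge1$, which already contradicts $\delta^n=u_{r_i}u_{r_j}^{-1}$; and for $n=0$ you have verified the $u_r$ are pairwise distinct.
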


A polygon for the potential can be recorded by its boundary word defined in Definition \ref{def:labelreading}.
Each boundary word is obtained by reading labels on a simple loop in the Cayley graph of $G=\pi_1^{\mathrm{orb}}(\PO)$. In particular, we have:

\begin{lemma}\label{lem:trivial}
The boundary word of a polygon is trivial in $G$.
\end{lemma}

Let us now introduce a canonical way of representing a boundary word:

\begin{lemma}\label{lem:standard}
The boundary cyclic word of a polygon for the potential takes the form
$$w_1 w_2 w_3 w_4\cdots w_{2k}$$
such that the following hold.
\begin{enumerate}[(i)]
\item For each $i=1,2,\ldots, k$, we have $1\ne w_{2i-1}\preccurlyeq (\gamma\beta\alpha)^\infty$.
\item For each $i=1,2,\ldots, k$, the word $w_{2i}$ is a positive power of a word in $\{\alpha\beta,\beta\gamma,\gamma\alpha\}$.
\item For each $i=1,2,\ldots, 2k$, if $x$ is the last letter of $w_i$ and $y$ is the first letter of $w_{i+1}$ then $x = \tau(y)$. 
\item
Denote by $w'=w_1w_3 \cdots w_{2k-1}$ the collection of odd terms of the word $w$.
Then, $w'$ is a subword of $(\gamma \beta \alpha)^\infty$, and its word length is a multiple of 3.
\end{enumerate}
\end{lemma}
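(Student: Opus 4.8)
\textbf{Proof plan for Lemma~\ref{lem:standard}.}

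The plan is to analyze the boundary of a polygon $U$ for the potential by following $\partial U$ in the positive direction and decomposing it into maximal arcs lying in a single Seidel Lagrangian, separated by the corners. Whenever $\partial U$ travels along a single Seidel Lagrangian in $E$ without turning, the label-reading is a subword of $(\gamma\beta\alpha)^{\infty}$, by the computation recorded right after Definition~\ref{def:labelreading}; such an arc must be nonempty because $U$ has Maslov index two and hence at least three corners, none of which can be adjacent along a zero-length arc. This gives the odd blocks $w_{2i-1}$ and establishes property (i), once we also invoke Lemma~\ref{lem:nontrivial} to see these blocks are nontrivial in $G$ (they are automatically nontrivial as words since they are nonempty subwords of $(\gamma\beta\alpha)^\infty$, but we phrase it as $1\ne w_{2i-1}$ to record nontriviality in $G$). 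For property (ii), at a corner of $U$ the boundary turns at an immersed generator; from the label-reading rule, turning at $x$, $y$, or $z$ inserts the letter pair $\beta\gamma$, $\gamma\alpha$, or $\alpha\beta$ respectively, and if the polygon turns at several \emph{consecutive} corners (passing through a chain of self-intersection points of $L_E$ without traversing a full edge of $Z$ in between; see Figure~\ref{fig:consecutiverefl}) the inserted block is a positive power of one of $\alpha\beta$, $\beta\gamma$, $\gamma\alpha$. By the previous lemma all corners of $U$ are locally contained in base triangles of the same color, which forces a consecutive run of corners to use the \emph{same} generator among $x,y,z$, hence the block is a pure power of a single word in $\{\alpha\beta,\beta\gamma,\gamma\alpha\}$, giving (ii).

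Next I would verify the matching condition (iii). This is a local compatibility at the junctions between an even block and an adjacent odd block: the letter just before a turn and the letter just after must be related by the cyclic shift $\tau$ because, in the hexagon tessellation $Z$, the edge crossed immediately before reaching an immersed point and the edge crossed immediately after are the two sides of $Z$ that meet at a colored vertex adjacent to that immersed point, and reading around that vertex cycles $\alpha\to\beta\to\gamma\to\alpha$. Concretely: if $x$ is the last letter of $w_i$ then the last edge crossed before the corner has that label, and the first edge crossed after has label $\tau(x)$, so the first letter $y$ of $w_{i+1}$ satisfies $x=\tau(y)$ after recording. (One must be slightly careful with the orientation conventions on the transverse orientations of the $\alpha,\beta,\gamma$-edges fixed in the paragraph before Definition~\ref{def:hexZ}, so that the relation comes out as $\tau$ and not $\tau^{-1}$; this is where the convention that the positive orientation on a white base triangle reads $(A,B,C)$ is used.) The same compatibility between the end of an even block and the start of the next odd block explains why the concatenation of the odd blocks, read cyclically, forms one unbroken subword of $(\gamma\beta\alpha)^\infty$: the turns only insert $\alpha\beta$, $\beta\gamma$, $\gamma\alpha$-type detours whose net effect, after free reduction at the junctions, is to continue the periodic pattern $\gamma\beta\alpha$. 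For (iv), having noted that $w'=w_1w_3\cdots w_{2k-1}$ is a subword of $(\gamma\beta\alpha)^\infty$, I would argue that its length is a multiple of $3$ by a monodromy/holonomy argument: $w(\partial U)$ is trivial in $G$ by Lemma~\ref{lem:trivial}, and each even block $w_{2i}$, being a positive power of $\alpha\beta$, $\beta\gamma$ or $\gamma\alpha$, contributes an element that is a power of one of $\gamma^{-1}$, $\alpha^{-1}$, $\beta^{-1}$ in $G$ up to the relations $\alpha\beta\gamma=1$; tracking the image in the abelianization (or better, in the quotient $G\to\Z/3$ sending $\alpha,\beta,\gamma\mapsto 1$) shows the even blocks contribute a total of $0$, forcing $|w'|\equiv 0 \pmod 3$. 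Equivalently, one can say: the Seidel Lagrangian in $E$ closes up only after reading $(\gamma\beta\alpha)^k$, so any arc-system of $\partial U$ that returns to a Seidel Lagrangian with consistent color of corners must accumulate a length divisible by $3$.

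I expect the main obstacle to be a clean bookkeeping of property (iii) and the claim in (iv) that the odd blocks assemble into a single honest subword of $(\gamma\beta\alpha)^\infty$ of length divisible by $3$. The subtlety is that the decomposition $w=w_1w_2\cdots w_{2k}$ is not just syntactic: it must be shown that \emph{after free reduction} the even-indexed ``turn'' blocks do not cancel into the odd blocks in a way that destroys the alternating structure, and that the color constraint from Lemma~\ref{lem:nontrivial}'s preceding lemma is exactly what rules out the bad cancellations. Once the local matching $x=\tau(y)$ is nailed down at every junction, properties (i), (ii), (iv) follow by gluing together the local descriptions of arcs-along-$L_E$ and turns-at-corners, together with Lemma~\ref{lem:trivial} and the identification of the $\Z/3$-quotient of $G$. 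I would present (iii) first, since it is the structural backbone, then (i) and (ii) as essentially immediate from the label-reading rules, and finally (iv) as the arithmetic consequence.
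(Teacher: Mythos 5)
Your overall decomposition is exactly the paper's: the boundary reading splits into ``travel along a branch'' phases, giving the odd blocks $w_{2i-1}\preccurlyeq(\gamma\beta\alpha)^\infty$, and ``(consecutive) turn'' phases, giving even blocks that are positive powers of $\alpha\beta$, $\beta\gamma$, $\gamma\alpha$; the paper then dismisses (iii)--(iv) with ``the rest can be checked easily,'' while you spell them out. Most of what you add is fine, and your geometric explanation of the junction condition (iii) is in the right spirit.

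However, your main argument for the ``multiple of $3$'' claim in (iv) is flawed: the map $G\to\Z/3$ sending $\alpha,\beta,\gamma\mapsto 1$ is not a well-defined homomorphism in general, because the relations $\alpha^a=\beta^b=\gamma^c=1$ force $a\equiv b\equiv c\equiv 0\pmod 3$; for, say, $(a,b,c)=(3,4,5)$ or $(2,3,7)$ no such quotient exists, and passing instead to the abelianization of $G$ does not obviously isolate the odd-block length modulo $3$. In fact no appeal to triviality in $G$ is needed: once (iii) is established at every junction, including the wrap-around junction of the cyclic word, the cyclic concatenation of the odd blocks is locally consistent with the $3$-periodic pattern $\gamma\beta\alpha$, and a cyclically closed word that is everywhere consistent with a period-$3$ pattern must have length divisible by $3$ (the pattern shifts by the length modulo $3$ upon going once around). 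You essentially state this mechanism when you observe that the odd blocks splice into one unbroken subword of $(\gamma\beta\alpha)^\infty$; that observation, made cyclic, is the whole of (iv), and the $\Z/3$-quotient should be dropped. A second, smaller point: nonemptiness of the odd blocks does not follow from ``Maslov index two, hence at least three corners'' (the minimal $xyz$ triangle has Maslov index two and three corners, yet its boundary word $\alpha\beta\gamma$ is degenerate for this decomposition, which is why the paper sets $p=-1$ there); the correct reason is that after a maximal run of consecutive turns around one colored vertex, the boundary must cross at least one edge of $Z$ before the next turn, so each odd block between distinct maximal runs is a nonempty subword of $(\gamma\beta\alpha)^\infty$.
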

\begin{proof}
Label reading of the boundary of a  polygon can be decomposed into the following two phases. In one phase,
the boundary of a polygon turns at immersed points of $L_E$, and the word  from the turning at an immersed point
is given by one of  $\{\alpha\beta, \beta \gamma, \gamma \alpha \}$, or more generally the word
from  consecutive turns as in Figure \ref{fig:consecutiverefl}  is given by positive powers of 
$\{\alpha\beta, \beta \gamma, \gamma \alpha \}$. These give even numbered words $w_{2i}$'s. In the other phase,
 the boundary of a polygon travels along a branch $\WT{L}$ without turning and  the word is given by a finite sub-word of  $(\gamma\beta\alpha)^\infty$. These give odd numbered words $w_{2i-1}$'s. The rest of the assertion can be checked easily.
\end{proof}

The form $w=w_1 w_2 w_3 w_4\cdots w_{2k}$ in Lemma~\ref{lem:standard} is called the \emph{standard representative} (of the boundary word of a polygon for the potential in $E$).

\begin{definition}
Let $w=w_1 w_2 w_3 w_4\cdots w_{2k}$ be the standard representative of a boundary word of a polygon for the potential in $E$.  Let $w\rq{}$ be defined as in Lemma \ref{lem:standard}.

Define $[w]_0$ to be the number of $\alpha$ in $w'$, which equals the number of $\beta$ in $w'$, or the number of $\gamma$ in $w'$.  Then the word-length of $w\rq{}$ is $3[w]_0$.

Define $[w]_1$ to be the number of appearances of $\beta\gamma$ in $w$.
$[w]_1$ the number of $x$-corners of $U$.
We similarly define $[w]_2$ and $[w]_3$ as the counts of $\gamma\alpha$ and $\alpha\beta$ respectively, which are the numbers of $y$- and $z$-corners of $U$.

The number $k$ is called \emph{the number of runs} in $w$. We define $[w]_\rho=k$.
\end{definition}

For example, the following word is a standard representative of a polygon $U$ for the potential  coming from $\mathbb{P}^1_{3,4,5}$.
\[
w=\alpha \gamma (\beta\gamma)\beta (\alpha\beta)^{3}\alpha \gamma (\beta\gamma)\beta\alpha (\gamma\alpha)^{2}\gamma (\beta\gamma)\beta (\alpha\beta).\]
We see that $w\rq{} = (\alpha\gamma\beta)^3$, $[w]_0=3, [w]_1=3, [w]_2=2, [w]_3=4$
and $[w]_\rho=6$.


\section{Holomorphic discs and  $(a,b,c)$-diagram}\label{sec:abcdia}
In this section, we give a combinatorial description of a polygon for the potential, in terms of its $(a,b,c)$-diagram.
Recall that we defined  the hexagon tessellation $Z$ in $E$ in Definition \ref{def:hexZ}.
\begin{definition}\label{defn:diagram}
Let $U$ be a polygon for the potential in $E$.
The \emph{$(a,b,c)$-diagram for $U$} (or simply, a \emph{diagram}) is the cellulation of $U$ obtained by taking the intersection of $Z$ with $U$.
\end{definition}

In a diagram $X$, the interior vertices of $X$ inherit labels by $\{A,B,C,W\}$.
Moreover, the edges not on $\partial X$ inherit labels by $\CA=\{\alpha,\beta,\gamma\}$ and transverse orientations from $Z$.  The \emph{basic $(a,b,c)$-diagram} is an $(a,b,c)$-diagram with exactly one interior vertex; see Figure~\ref{fig:basic} (a).
The basic diagram corresponds to the minimal $xyz$ triangle.  A \emph{$0$-th generation $(a,b,c)$-diagram} is an $(a,b,c)$-diagram with exactly one colored vertex (and all other vertices are white). There are exactly three diagrams of the $0$-th generation depending on whether the unique colored vertex is an $A, B$ or $C$-vertex. Figure~\ref{fig:basic} (b) shows a $0$-th generation diagram with an $A$-vertex when $a=4$.

\begin{figure}[htb]
  \tikzstyle {av}=[red,draw,shape=circle,fill=red,inner sep=2pt]
  \tikzstyle {bv}=[blue,draw,shape=circle,fill=blue,inner sep=2pt]
  \tikzstyle {cv}=[orange,draw,shape=circle,fill=orange,inner sep=2pt]
  \tikzstyle {wv}=[black,draw,shape=circle,fill=white,inner sep=2.5pt]  
  \tikzstyle {gv}=[inner sep=0pt]
  \tikzstyle {pv}=[black,draw,shape=rectangle,fill=black,inner sep=2pt]  
\subfloat[(a)]{
	\begin{tikzpicture}[scale=0.6,thick]
   	\node [gv] at (0,0) (c) {};
   	\node [gv] at (120:2) (pp) {};
   	\node [gv] at (240:2) (qp) {};
   	\node [gv] at (0:2) (rp) {};

	\draw [black,ultra thick] (c) circle [radius=2];
	\draw [red,ultra thick] (pp) -- (c);
	\draw [blue,ultra thick] (qp) -- (c);
	\draw [orange,ultra thick] (rp) -- (c);

	\draw (pp) node [av] {};
	\draw (rp) node [cv] {};
	\draw (qp) node [bv] {};
	\draw (c) node [wv] {};
	\node  [inner sep=0.9pt] at (-2.5,0) {}; 	
	\node  [inner sep=0.9pt] at (2.5,0) {}; 			
	\end{tikzpicture}}
$\qquad\qquad\qquad$
\subfloat[(b)]{	
	\begin{tikzpicture}[scale=0.6,thick]
   	\node [gv] at (0,0) (c) {};
   	\node [gv] at (0:1.1) (w1) {};
   	\node [gv] at (90:1.1) (w2) {};
   	\node [gv] at (180:1.1) (w3) {};
   	\node [gv] at (270:1.1) (w4) {};
	\node [gv] at (20:2.05) (p1) {};
	\node [gv] at (-20:2.05) (q1) {};
	\node [gv] at (110:2.05) (p2) {};
	\node [gv] at (70:2.05) (q2) {};
	\node [gv] at (200:2.05) (p3) {};
	\node [gv] at (160:2.05) (q3) {};
	\node [gv] at (290:2.05) (p4) {};
	\node [gv] at (250:2.05) (q4) {};

	\draw [red,ultra thick] (w1) -- (c);
	\draw [red,ultra thick] (w2) -- (c);
	\draw [red,ultra thick] (w3) -- (c);
	\draw [red,ultra thick] (w4) -- (c);			
	\draw [orange,ultra thick] (p1) -- (w1);			
	\draw [blue,ultra thick] (q1) -- (w1);			
	\draw [orange,ultra thick] (p2) -- (w2);			
	\draw [blue,ultra thick] (q2) -- (w2);			
	\draw [orange,ultra thick] (p3) -- (w3);			
	\draw [blue,ultra thick] (q3) -- (w3);			
	\draw [orange,ultra thick] (p4) -- (w4);			
	\draw [blue,ultra thick] (q4) -- (w4);			
	\draw [black,ultra thick] (c) circle [radius=2];


	\draw (c) node [av] {};
	\draw (w1) node [wv] {};	
	\draw (w2) node [wv] {};	
	\draw (w3) node [wv] {};	
	\draw (w4) node [wv] {};	
	\node  [inner sep=0.9pt] at (-2.5,0) {}; 	
	\node  [inner sep=0.9pt] at (2.5,0) {}; 			
	\end{tikzpicture}}
\caption{The basic and the $0$-th generation $(a,b,c)$-diagrams.}\label{fig:basic}
\end{figure}
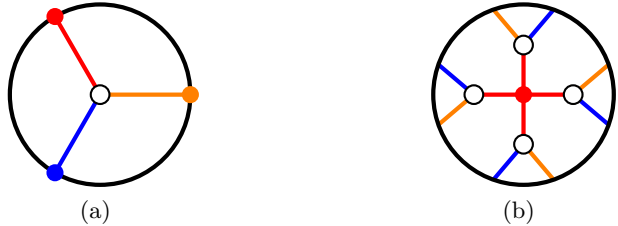

First we introduce some general terminologies. Figure~\ref{fig:tip} shows the $(4,4,4)$ diagram corresponding to the polygon in Figure~\ref{fig:area}.

\begin{definition}
Let $X'$ be an arbitrary cellulation of a closed disk.
We call a vertex on $\partial X'$ as a \emph{tip} and an edge containing a tip as a \emph{leaf}.
A non-tip vertex of a leaf is called a \emph{joint}. See Figure~\ref{fig:tip}.
A face $F$ is \emph{outer} if $F$ intersects $\partial X'$.
A face that is not outer is called \emph{inner}.
An edge contained in $\partial X'$ is called a \emph{boundary edge}.
An \emph{interior edge} means an edge that is neither a boundary edge nor a leaf.
A \emph{single joint} is a joint with exactly one leaf.
A \emph{double joint} is the one with two leaves.
The unique colored neighbor of a double joint is called a \emph{pivot}.
\end{definition}

\begin{figure}[htb]
\includegraphics[height=.16\textheight]{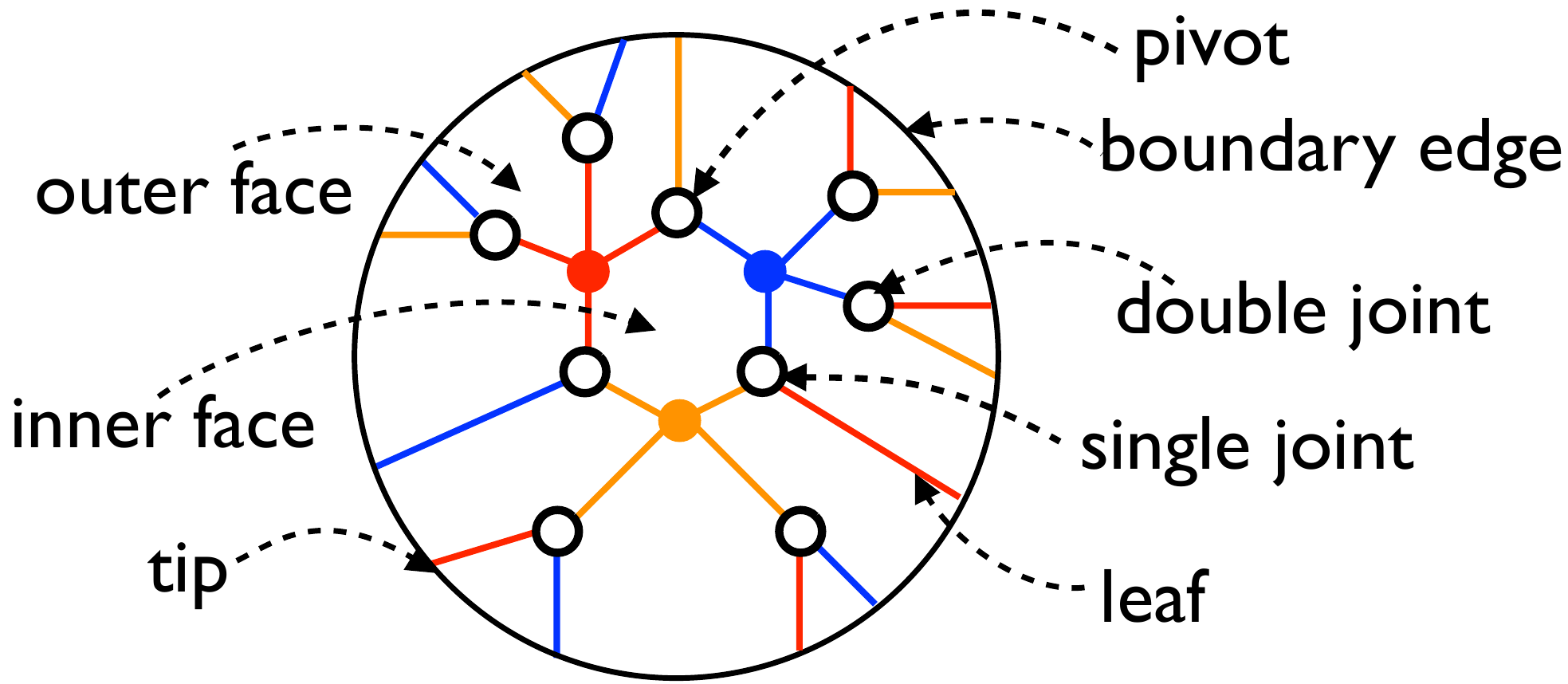}
\caption{Terminology on a cellulation.
Red (Apple), blue (Blueberry) and orange (Cantaloup) colors represent
$A,B$ and $C$ labels. }\label{fig:tip}
\end{figure}

We recall a well-known fact regarding piecewise Euclidean or hyperbolic cell complexes.
One can find a proof in~\cite[Proposition 4.14]{BH1999}, for example.

\begin{lemma}\label{lem:BH}
Suppose $E=\mathbb{R}^2$ or $\mathbb{H}^2$.
Let $S$ be a simplicial complex, which is homeomorphic to a possibly non-compact simply connected surface.
Suppose $S$ is equipped with a metric, such that each triangle is isometric to a geodesic triangle in $E$
and there are finitely many isometry types of triangles in $S$.
Assume that the cone angle is $2\pi$ at each interior vertex, and no greater than $\pi$ at each boundary vertex.
If $f:S\to E$ is a locally injective map which restricts to the isometry on each cell,
then $f$ is globally an isometric embedding.
Moreover, such $f$ always exists.
\end{lemma}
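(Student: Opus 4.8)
The plan is to equip $S$ with its intrinsic path metric, recognize it as a $\mathrm{CAT}(0)$ space, and then exploit that $f$ must carry geodesics of $S$ to geodesics of $E$; the existence statement will come from the classical developing‑map construction.

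First I would record that, since $S$ has only finitely many isometry types of triangles, it is a complete geodesic metric space in the path metric induced by its geodesic triangles (see \cite[Ch.~I.7]{BH1999}). Next I would check that $S$ is locally $\mathrm{CAT}(0)$. Away from the vertices this is clear, as there $S$ is locally isometric to $E$, which is $\mathrm{CAT}(0)$ (Euclidean or hyperbolic). At an interior vertex $v$ the link is a metric circle whose length equals the cone angle $2\pi$, hence contains no closed geodesic shorter than $2\pi$ and is $\mathrm{CAT}(1)$; Gromov's link condition (\cite[II.5]{BH1999}) then yields the $\mathrm{CAT}(0)$ inequality near $v$. At a boundary vertex the link is an arc, which is automatically $\mathrm{CAT}(1)$, so $S$ is locally $\mathrm{CAT}(0)$ there as well. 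Being complete, simply connected and locally $\mathrm{CAT}(0)$, $S$ is globally $\mathrm{CAT}(0)$ by the Cartan--Hadamard theorem (\cite[II.4.1]{BH1999}); in particular $S$ is uniquely geodesic. The point of the bound $\le\pi$ on boundary cone angles is that it makes the star of each boundary vertex develop onto a \emph{convex} sector of $E$ (a sector of angle $\le\pi$), whereas a boundary vertex of angle $>\pi$ would develop onto a non‑convex region and $f$ would fail to be a metric local isometry there.

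Granting this, I would argue that $f$ is a local isometry. It restricts to an isometry onto a cell of $E$ on each closed cell; local injectivity forces the two cells across an interior edge to be laid out on opposite sides of the image edge, and the cone‑angle conditions ($=2\pi$ at interior vertices, $\le\pi$ on $\partial S$) force the cells around any vertex to be developed around its image without overlap and, at boundary vertices, onto a convex half‑disk. Hence every point of $S$ has a neighbourhood that $f$ maps isometrically onto a ball (or convex half‑ball) of $E$. Consequently $f$ is $1$‑Lipschitz and sends local geodesics of $S$ to local geodesics of $E$; since $E$ is $\RR^2$ or $\HH$, every local geodesic there is a global one and $E$ is uniquely geodesic. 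Thus, for $x,y\in S$ joined by the geodesic $\sigma$ of $S$, the path $f\circ\sigma$ is the geodesic of $E$ from $f(x)$ to $f(y)$, of the same length as $\sigma$, so that $d_E(f(x),f(y))=d_S(x,y)$: $f$ is an isometric embedding, and its image is convex because it contains $f\circ\sigma$ for all $x,y$. For the ``moreover'' part I would build $f$ as a developing map: fix a cell, embed it isometrically in $E$, and extend inductively over cells sharing an edge; consistency around a loop in the dual $1$‑skeleton follows from simple connectivity together with the vanishing of local monodromy at vertices guaranteed by the angle hypotheses, and the resulting $f$ is locally injective by the same angle bookkeeping.

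The hard part will be the boundary, not the interior. In the interior everything is the standard local‑to‑global principle for non‑positively curved complexes. Near the boundary one must show cleanly that the $\le\pi$ hypothesis upgrades ``restricts to an isometry on each cell and is locally injective'' to the genuine metric statement that $f$ is a local isometry (equivalently, that $\partial S$ is locally convex in $S$); this is exactly the step that fails without the hypothesis, as the inclusion into $\RR^2$ of a cellulated planar sector of angle $3\pi/2$ shows — it restricts to isometries on cells and is locally injective, yet is not an isometric embedding because an $E$‑geodesic between two of its points can leave the sector.
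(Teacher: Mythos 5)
The paper does not prove this lemma itself; it only cites \cite[Proposition 4.14]{BH1999}, and your argument---showing $S$ is a complete, simply connected, locally CAT(0) space, upgrading cell-wise isometry plus local injectivity (via the $2\pi$ interior and $\le\pi$ boundary angle conditions) to a genuine local isometry, then applying the Cartan--Hadamard/local-isometry machinery and a developing-map construction for existence---is precisely the standard proof underlying that citation. So your proposal is correct and takes essentially the same approach as the paper, and your closing remark correctly isolates the boundary-convexity role of the $\le\pi$ hypothesis.
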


In the proof of the following lemma, we will show that every joint in an $(a,b,c)$ diagram is white.

\begin{lemma}\label{lem:diagram}
(1) 
Let $X$ be an $(a,b,c)$-diagram. Then
we have a partition $X^{(0)}\setminus\partial X = W\coprod A \coprod B \coprod C$
satisfying the following.
\begin{enumerate}[(i)]
\item
Each tip has valence three.
\item
Every interior edge joins a white vertex in $W$ to a colored vertex.
\item
For each white vertex $v$ there exist exactly three incident edges which are labeled as $(\alpha,\beta,\gamma)$ in the positive orientation around $v$.
\item
For each $A$-vertex ($B$-vertex or $C$-vertex) there exist exactly $a$ ($b$ or $c$ resp.) incident edges, and they are all labeled by $\alpha$ ($\beta$ or $\gamma$ resp.).
\item Each inner face is a hexagon.
\item\label{lem:diagram:outer} Each outer face is one of the following two types:
\begin{enumerate}[(A)]
\item a triangle formed by a boundary edge and two leaves emanating from a white joint.
\item a pentagon formed by a boundary edge, two leaves from two distinct white joints, and a length-two interior path joining those two joints.
\end{enumerate}
\end{enumerate}
(2) Conversely, suppose $X'$ is a cellulation of a disk equipped with a partition $X'^{(0)}\setminus \partial X' = W\coprod A\coprod B\coprod C$ such that the conditions from {(i)} through {(vi)} are satisfied. 
If $1/a+1/b+1/c\le1$, then $X'$ is an $(a,b,c)$-diagram and $X'^{(1)}\setminus\partial X'$ is connected.
\end{lemma}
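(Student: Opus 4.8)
The plan is to prove the two directions by opposite methods: part~(1) by unwinding the definition $X=Z\cap U$ together with the embeddedness of $U$ (Lemma~\ref{lem:lift}), the topology of Seidel Lagrangians (Corollary~\ref{cor:bigon_no}), and the structure of boundary words (Lemma~\ref{lem:standard}); part~(2) by placing a piecewise-Euclidean or piecewise-hyperbolic metric on $X'$ and developing it isometrically into $E$ via Lemma~\ref{lem:BH}.

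To prove~(1), one first notes that since $U$ is embedded, $X=Z\cap U$ is an honest cellulation of a closed disk, and every interior vertex or edge of $X$ is an interior vertex or edge of $Z$; this gives the partition $X^{(0)}\setminus\partial X=W\coprod A\coprod B\coprod C$ together with the labels and transverse orientations. Conditions (ii), (iii), (iv) then follow from the fact that the star of an interior vertex of $X$ is a full star of $Z$, which has exactly the prescribed shape by the definition of $Z$; (v) is the same remark applied to an inner face, which is a hexagon of $Z$. For~(i) one arranges, as in the construction of the immersed generators, that $\partial U$ meets the $1$-skeleton of $Z$ transversally and away from its vertices, so that every tip is a transverse crossing of an arc of $L_E$ with a $Z$-edge and hence has valence three. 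The heart of~(1) is the claim, announced just before the lemma, that \emph{every joint is white}. Here I would argue by contradiction: if a colored vertex $v$, say an $A$-vertex, lies in $\operatorname{int}U$, then the arcs of $L_E$ cutting the corner at $v$ in the $2a$ base triangles around $v$ close up into a simple closed curve $\Gamma_v$ bounding a disk $D_v\ni v$ whose interior meets no other point of $L_E$; since $\partial U\subseteq L_E$, either $\partial U\cap D_v=\varnothing$—in which case every $Z$-edge at $v$ runs entirely inside $U$ to its white endpoint and $v$ lies on no leaf—or $\partial U$ contains an arc of $\Gamma_v$, and then the convexity and orientation of the corners of $U$ (exactly as the three orbifold points lie \emph{outside} the minimal $xyz$ triangle) force $v$ to lie outside $U$, a contradiction. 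Granting this, (vi) follows by inspecting the standard representative $w=w_1w_2\cdots w_{2k}$ of $\partial U$: along a maximal straight run $w_{2i-1}$, in which $\partial U$ follows a single Seidel Lagrangian, each traversed hexagon contributes a type-(A) triangle whose apex is a white joint; and around the colored vertex at a corner of $U$, two consecutive runs cut off the complementary type-(B) pentagon, whose middle interior vertex is that colored vertex (the pivot).

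To prove~(2), I would first metrize $X'$ by realizing each inner hexagon as the standard geodesic hexagon of $Z$ in $E$—with interior angles $2\pi/3$ at white vertices and $2\pi/a,\,2\pi/b,\,2\pi/c$ at $A$-, $B$-, $C$-vertices—which exists precisely when $1/a+1/b+1/c\le1$, since the hexagon angle sum $2\pi+2\pi(1/a+1/b+1/c)$ equals $4\pi$ in the Euclidean case and is strictly less than $4\pi$ in the hyperbolic case; and by realizing each type-(A) and type-(B) outer face as the geodesic polygon with the same white-vertex and pivot angles. Conditions (iii) and (iv) then make the cone angle exactly $2\pi$ at every interior vertex (white: $3\cdot2\pi/3$; $A$-vertex: $a\cdot2\pi/a$, and likewise for $B,C$), and the two shapes in (vi) make the cone angle $<\pi$ at every tip. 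Lemma~\ref{lem:BH} produces a global isometric embedding $f\colon X'\hookrightarrow E$ whose image is a union of hexagons and hexagon pieces of $Z$; since the labels and transverse orientations along the interior edges agree with those of $Z$, the map $f$ carries $X'^{(1)}\setminus\partial X'$ into the $1$-skeleton of $Z$ and identifies the inner part of $X'$ with a subcomplex of the $Z$-tessellation. Reading labels along $\partial f(X')$ yields a standard representative in the sense of Lemma~\ref{lem:standard}, and the polygon $U$ for the potential whose boundary realizes this word—routed through the $x,y,z$ self-intersections inside the appropriate hexagons—satisfies $Z\cap U=X'$, so $X'$ is an $(a,b,c)$-diagram. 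Finally, $X'^{(1)}\setminus\partial X'$ is connected because its trace in each face is connected (the three edges through the white joint of a type-(A) triangle; the two leaves together with the length-two interior path through the pivot of a type-(B) pentagon; the entire boundary cycle of an inner hexagon), and consecutive faces of the disk $X'$ share an interior vertex or edge, so the union over all faces is connected.

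The hard part, I expect, will be the last step of~(2): verifying that the developed boundary $\partial f(X')$ is \emph{genuinely} the boundary of a polygon for the potential—that its corners can be placed at $x,y,z$ self-intersections of $L_E$, that they are convex, and that the Maslov index equals two—and not merely an abstract loop with the correct label-reading. This is exactly where the hypothesis $1/a+1/b+1/c\le1$ is used twice over: to place the model hexagon in a $\mathrm{CAT}(0)$ space so that Lemma~\ref{lem:BH} applies and $\partial f(X')$ cannot self-intersect, and to invoke the non-triviality of nonempty subwords of $(\gamma\beta\alpha)^\infty$ (Lemma~\ref{lem:nontrivial}) to rule out spurious closings of the boundary word. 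A secondary technicality is the explicit check of the cone-angle bound $\le\pi$ at tips lying inside a type-(B) pentagon, where the angles at the neighbours of the pivot depend on $a,b,c$.
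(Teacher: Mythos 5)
Your overall strategy for part (2) is the same as the paper's (metrize the abstract cellulation and develop it into $E$ via Lemma~\ref{lem:BH}), but the step you dismiss as a ``secondary technicality'' is in fact the crux, and as written it is a genuine gap. Lemma~\ref{lem:BH} requires the cone angle to be at most $\pi$ at \emph{every} boundary vertex, i.e.\ at every tip, and in your hexagon-by-hexagon metrization the angle at a tip is the sum of the angles of two outer faces lying in two different hexagons; this sum is not controlled by conditions (i)--(vi) alone, and your assertion that ``the two shapes in (vi) make the cone angle $<\pi$ at every tip'' is never verified. This is precisely where the paper has to work: it replaces the hexagon structure by a triangulation $X''$ into copies of the base triangle $\Delta_{a,b,c}$ (assigning angle $\pi/a$ etc.\ also to the tips in $A'$), and when $a=2$ the boundary-angle condition genuinely fails at certain tips, forcing the modification of gluing one extra base triangle (and three more when $b=3$) before Lemma~\ref{lem:BH} can be applied. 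Without an analogous verification (or modification) your development $f$ need not be injective, so part (2) is unproven exactly in the cases $a=2$ that the lemma is careful about; you also need to triangulate your hexagons/pentagons before invoking Lemma~\ref{lem:BH}, which is stated for simplicial complexes, and to argue (by induction over adjacent faces, using the labels and transverse orientations) that the developed image is a subcomplex of $Z$ rather than merely isometric to one.

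In part (1) your justification of (vi) has the combinatorics backwards. When $\partial U$ passes \emph{straight} through a hexagon of $Z$ it enters and exits through two edges that do not share a vertex, and (since every joint is white) the outer face it cuts off is a type-(B) \emph{pentagon} with two single joints, whose middle colored vertex is not a pivot; the type-(A) \emph{triangles} arise exactly at the corners of $U$, where two consecutive tips share a double joint, and the pentagons with double joints (whose colored neighbour is the pivot) also sit at corners. This is consistent with Lemma~\ref{lem:relations}: $f_3=S$ (one triangle per corner) and $f_5=3p+S$, where $3p$ counts the straight letters. Your description (``each traversed hexagon along a straight run contributes a type-(A) triangle; the pentagon appears at the colored corner vertex'') would give the wrong face counts, so the derivation of (vi) from the standard representative needs to be redone with the correct local model of $L_E$ inside a hexagon (the black minimal triangle plus six whiskers through the immersed points). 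Your argument that every joint is white, via the curve $\Gamma_v$ bounding the $2a$-gon around a colored vertex and the fact that corners of $U$ occupy only the odd-degree quadrants, is a reasonable (if loosely phrased) expansion of the paper's figure-based claim; but you should separate the two subcases ($U$ locally on the $v$-side of an arc of $\Gamma_v$, which forces an even-degree corner or $\partial U=\Gamma_v$, versus $U$ on the far side, which forces $v\notin U$) rather than asserting both conclusions at once.
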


\begin{proof}[Proof of Lemma~\ref{lem:diagram}]
(1) Suppose $X$ is an $(a,b,c)$-diagram for a polygon $U$ for the potential.
Since Seidel Lagrangians are transverse to the edges of $Z$, we have \textit{(i)}. The conditions from \textit{(ii)} through \textit{(v)} are clear from the construction of $Z$.
Each tip of $X$ is the intersection of $\partial U$ with an edge of $Z$.
We see that every joint is white.
In Figure~\ref{fig:diagram} we draw the white base triangle containing a tip and two neighboring black triangles, depending on whether the joint is single or double.
Now suppose an outer face $F$ has a boundary edge whose endpoints are $t_1$ and $t_2$ in this order. If $t_1$ and $t_2$ share a joint then $F$ is a triangle (Figure~\ref{fig:diagram} (b)). 
Otherwise, $F$ is a pentagon.

(2) 
We may assume that $a\le b\le c$. 
Intuitively, we will find a smallest convex collection $X''$ of base triangles which contains a holomorphic polygon corresponding to $X'$. For this, we let $A'$ be the set of tips which are incident with $\alpha$-leaves, and similarly define $B'$ and $C'$. 
We further cellulate $X'$ by joining each pair $\{x,y\}$ of neighbors of a $W$-vertex $v$ in such a way that the triangle $\{v,x,y\}$ contains no vertices in its interior.
By forgetting $W$-vertices, we obtain a combinatorial triangulation $X''$ from $X'$.
We metrize $X''$ by substituting a base triangle for each triangle in $X''$ such that each $A-$ or $A'$-vertex corresponds to the corner with dihedral angle $\pi/a$ and similar conditions hold for the other vertices. 

If $a=2$, we make further modification to $X''$ as follows. Suppose $x\in A'$ belongs to three triangles in $X''$. Let $y$ and $z$ be the neighbors of $x$ in $B'$ and $C'$, respectively; see Figure~\ref{fig:diagram} (c) and (d).
If $b=3$ and $y$ belongs to three triangles in $X''$, then we glue three additional base triangles about $y$ by reflection, as shown in Figure~\ref{fig:diagram} (c).
Otherwise, we glue one base triangle along $\{x,y,z\}$ as in Figure~\ref{fig:diagram} (d).
Let us still call the resulting piecewise hyperbolic or Euclidean 2-complex as $X''$.

From the valence condition, we have a well-defined map $f$ from $X''$ to $\mathbb{H}^2$ or $\mathbb{R}^2$ such that 
locally, $f$ is an isometric embedding and has a convex image.
By Lemma~\ref{lem:BH}, $f$ is globally an isometric embedding.
In particular, $f(X'')$ contains a holomorphic polygon $U$ for which $X'$ is an $(a,b,c)$-diagram.
Note that the condition \textit{(vi)} prohibits an outer face from having more than one boundary edges. This implies that $X^{(1)}\setminus \partial X$ is connected. 
\end{proof}

\begin{figure}[htb]
  \tikzstyle {av}=[red,draw,shape=circle,fill=red,inner sep=2pt]
  \tikzstyle {bv}=[blue,draw,shape=circle,fill=blue,inner sep=2pt]
  \tikzstyle {cv}=[orange,draw,shape=circle,fill=orange,inner sep=2pt]
  \tikzstyle {wv}=[black,draw,shape=circle,fill=white,inner sep=2pt]  
  \tikzstyle {gv}=[inner sep=0pt]
  \tikzstyle{every edge}=[-,draw]
\subfloat[(a) Single joint]{
	\begin{tikzpicture}[scale=0.75,thick]
	\draw (-2,0) node  [gv] (p)  {};
	\draw (0,0) node  [gv] (q)  {};
	\draw (2,0) node  [gv] (r)  {};
	\draw (-1,1.73) node  [gv] (pp)  {};
	\draw (1,1.73) node  [gv] (qq)  {};
	\draw (0,1.14) node [gv] (w) {};
	\draw (qq) -- (q) -- (p) -- (pp) -- (qq) -- (r) -- (q) -- (pp);
	\draw [red] (pp) -- (w);
	\draw [blue] (q) -- (w);
	\draw [orange] (qq) -- (w);

	\draw [dashed, teal,ultra thick,->] (-2,0.7) -- (2.2,0.7);
	\draw (1.8,1) node [above] {$U$};
	\draw [teal] (2.2,0.7) node [right] {$\partial U$};	
	\draw (pp) node [av] {};
	\draw (r) node [av] {};
	\draw (q) node [bv] {};	
	\draw (p) node [cv] {};
	\draw (qq) node [cv] {};
	\draw (w) node [wv] {};
	\node  [inner sep=0.9pt] at (-2.5,0) {}; 	
	\node  [inner sep=0.9pt] at (2.5,0) {}; 			
	\end{tikzpicture}}
$\qquad\qquad$
\subfloat[(b) Double joint]{
	\begin{tikzpicture}[scale=0.75,thick]
	\draw (-2,1.73) node  [gv] (p)  {};
	\draw (0,1.73) node  [gv] (q)  {};
	\draw (2,1.73) node  [gv] (r)  {};
	\draw (-1,0) node  [gv] (pp)  {};
	\draw (1,0) node  [gv] (qq)  {};
	\draw (0,0.8) node [gv] (w) {};
	\draw (qq) -- (q) -- (p) -- (pp) -- (qq) -- (r) -- (q) -- (pp);
	\draw [red] (pp) -- (w);
	\draw [blue] (q) -- (w);
	\draw [orange] (qq) -- (w);

	\draw [dashed, teal,ultra thick,->] (-1.2,2.078) -- (0,0) -- (1.2,2.078);
	\draw (0,2) node [above] {$U$};	
	\draw[teal] (1.2,2) node [above] {$\partial U$};	
	\draw (pp) node [av] {};
	\draw (r) node [av] {};
	\draw (q) node [cv] {};	
	\draw (p) node [bv] {};
	\draw (qq) node [bv] {};
	\draw (w) node [wv] {};
	\node  [inner sep=0.9pt] at (-2.5,0) {}; 	
	\node  [inner sep=0.9pt] at (2.5,0) {}; 				
	\end{tikzpicture}
}
\\
\vspace{.1in}
\subfloat[(c) Adding three triangles]{
	\begin{tikzpicture}[scale=0.75,thick]
	\draw (-3,0) node  [gv] (zzz)  {}  (-1,0) node [gv] (y) {}  (0,0) node [gv] (x) {} (1,0) node [gv] (yy) {};
	\draw (-1.5,-0.865) node [gv] (xp) {}  (-1.5,0.865) node [gv] (xpp) {};
	\draw (0,-1.73) node [gv] (z) {}  (0,1.73) node [gv] (zz) {} ;
	\draw (zzz) -- (yy) -- (zz) -- (zzz);
	\draw (xpp) -- (y) -- (zz) -- (z)  -- (yy);
	\draw [dashed] (zzz) -- (z) -- (y) -- (xp);

	\draw (.4,0) node [above] {$x$};
	\draw (-1,.2) node [above] {$y$};
	\draw (.4,-1.73) node [] {$z$};
	\draw (z) node [cv] {};
	\draw (zz) node [cv] {};
	\draw (zzz) node [cv] {};	
	\draw (x) node [av] {};
	\draw (xpp) node [av] {};	
	\draw (y) node [bv] {};	
	\draw (yy) node [bv] {};	
	\node  [inner sep=0.9pt] at (-4,0) {}; 	
	\node  [inner sep=0.9pt] at (2,0) {}; 		
	\end{tikzpicture}
}
$\qquad\qquad$
\subfloat[(d) Adding one triangle]{
	\begin{tikzpicture}[scale=0.75,thick]
	\draw (-1,0) node [gv] (y) {}  (0,0) node [gv] (x) {} (1,0) node [gv] (yy) {};
	\draw (0,-1.73) node [gv] (z) {}  (0,1.73) node [gv] (zz) {} ;
	\draw (zz) -- (z) -- (yy) -- (y) -- (zz) -- (yy);
	\draw [dashed] (z) -- (y);
	\draw (.4,0) node [above] {$x$};
	\draw (-1,.2) node [above] {$y$};
	\draw (.4,-1.73) node [] {$z$};
	\draw (z) node [cv] {};
	\draw (zz) node [cv] {};
	\draw (x) node [av] {};
	\draw (y) node [bv] {};	
	\draw (yy) node [bv] {};	
	\node  [inner sep=0.9pt] at (-2,0) {}; 	
	\node  [inner sep=0.9pt] at (2,0) {}; 			
	\end{tikzpicture}
}%
\caption{Lemma~\ref{lem:diagram}.}\label{fig:diagram}
\end{figure}

\begin{remark}
In part (\ref{lem:diagram:outer}) of the above lemma, 
outer pentagons can be further classified into three types.
They are pentagons with two single joints, pentagons with two double joints and pentagons with one single and one double joints.
\end{remark}

The following is immediate from Lemma~\ref{lem:diagram}.

\begin{lemma}\label{lem:diagram2}
There is a one-to-one correspondence between the set of  polygons for the potential, up to label-preserving automorphisms of $Z$
($G$-action) and the set of cellulations of a closed disk satisfying the conditions in Lemma~\ref{lem:diagram} (1), up to label-preserving combinatorial isomorphisms.
\end{lemma}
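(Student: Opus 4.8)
The plan is to construct explicit inverse maps between the two sets, using Lemma~\ref{lem:diagram}(1) for the forward direction and Lemma~\ref{lem:diagram}(2) for the backward direction, together with a rigidity argument to reconcile the two quotients (``up to $G$-action'' on one side, ``up to label-preserving isomorphism'' on the other). Throughout I assume $1/a+1/b+1/c\le 1$; in the spherical cases the correspondence is checked by the direct enumeration of polygons for the potential carried out elsewhere in the paper.

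First I would define the forward map $\Phi$. Given a polygon $U$ for the potential in $E$, Definition~\ref{defn:diagram} assigns to it the $(a,b,c)$-diagram $X_U=Z\cap U$, and Lemma~\ref{lem:diagram}(1) guarantees that $X_U$ is a cellulation of a closed disk satisfying conditions \textit{(i)}--\textit{(vi)}, carrying the induced partition of interior vertices into $W\sqcup A\sqcup B\sqcup C$ and the induced labels and transverse orientations on interior edges. Since the decorated tessellation $Z$ (vertices labeled by $\{A,B,C,W\}$, interior edges labeled by $\{\alpha,\beta,\gamma\}$ with transverse orientations) is $G$-invariant, for $g\in G$ we have $X_{gU}=g(X_U)$, which is label-preservingly isomorphic to $X_U$. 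Hence $[U]\mapsto[X_U]$ descends to a well-defined map $\Phi$ from $G$-orbits of polygons for the potential to isomorphism classes of admissible cellulations. Next I would define the backward map $\Psi$: given an admissible cellulation $X'$, the proof of Lemma~\ref{lem:diagram}(2) produces a canonical piecewise-Euclidean-or-hyperbolic refinement $X''$ of $X'$ and a locally isometric embedding $f\colon X''\hookrightarrow E$ whose image contains a polygon $U$ for the potential with $Z\cap U\cong X'$; I set $\Psi([X'])=[U]$. The identity $\Phi\circ\Psi=\mathrm{id}$ is then immediate from the construction, since by design the $(a,b,c)$-diagram of the output polygon is combinatorially identified with $X'$.

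The remaining task is to prove $\Psi\circ\Phi=\mathrm{id}$, i.e.\ that a polygon for the potential is recovered from its $(a,b,c)$-diagram up to the $G$-action; this is equivalently the statement that $\Psi$ is well-defined (independent of the choice of $f$) and that $\Phi$ is injective. The point is that the locally isometric, label-respecting map $f$ supplied by Lemma~\ref{lem:BH} out of $X''$ is unique up to post-composition with a label- and orientation-preserving isometry of $(E,Z)$, and the group of such isometries is precisely $G$: each hexagon of $Z$ is a fundamental domain for the $G$-action and admits no nontrivial label- and transverse-orientation-preserving self-symmetry, so $G$ acts simply transitively on the decorated hexagons of $Z$. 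Therefore any two embeddings realizing a given $X_U$ differ by an element of $G$, so the $G$-orbit of $U$ is determined by $X_U$, giving $\Psi(\Phi([U]))=[U]$. Together with the previous paragraph, $\Phi$ and $\Psi$ are mutually inverse bijections, which is the assertion of the lemma; Lemma~\ref{lem:diagram2} also records that $X^{(1)}\setminus\partial X$ is connected, which is already part of the conclusion of Lemma~\ref{lem:diagram}(2).

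I expect the rigidity step in the last paragraph to be the only real obstacle: one must pin down that the automorphism group of the decorated tessellation $Z$ equals $G$ and then feed this into the uniqueness clause of Lemma~\ref{lem:BH} to conclude that a polygon for the potential is determined by its $(a,b,c)$-diagram up to deck transformation. Everything else is straightforward bookkeeping built directly on Lemma~\ref{lem:diagram}, which is why the statement can fairly be called immediate from it.
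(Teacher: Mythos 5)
Your proposal is correct and is essentially the argument the paper intends but leaves unwritten: the paper states the lemma as immediate from Lemma~\ref{lem:diagram}, with part (1) furnishing the map $U\mapsto Z\cap U$ and part (2) furnishing surjectivity via the embedding of Lemma~\ref{lem:BH}. The only content you add is to make the injectivity/well-definedness step explicit, and your rigidity argument (the embedding is unique up to an isometry respecting the decorated tessellation, and $G$ acts simply transitively on the decorated hexagons, so two polygons with label-isomorphic diagrams differ by an element of $G$) is the correct way to do it.
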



\section{Combinatorial Gauss-Bonnet formula}\label{s:area}
Let us fix positive integers $a,b,c\ge 2$ and consider the orbifold $\PO$ as in Figure~\ref{fig:orbifold}.
Recall that the Seidel Lagrangian $L_E$ divides each base triangle into four smaller triangles. 
We assume that these four triangles have the same area $\sigma$. In this section, we will find area formulas of a  polygon for the potential in terms of the following combinatorial data.
\begin{definition}\label{def:p}
Suppose $U$ is a polygon for the potential $E$. 
Denote by $v_W$ the number of white vertices contained in $U$.  Denote the number of $x$-, $y$-, $z$-corners of $U$ as $P, Q, R$ respectively.  
Moreover define $S = P+Q+R$.

If $U$ is the minimal $xyz$ triangle, we set $p = -1$.
Otherwise, we choose $p$ such that the word-length of $[w(\partial U)]$ is $2S+3p$.
i.e. $p = [\partial U]_0$.
\end{definition}
Note that for $[w] = [w(\partial(U)]$, we have $P = [w]_1, Q=[w]_2$ and $R=[w]_3$.

The above combinatorial data determine the area of a polygon for the potential, as stated in the lemma below.
The proof will be given at the end of this section.
\begin{lemma}\label{lem:area}
For  a polygon $U$ for the potential, its area is given by
$\area(U)=(8 v_W - 5S-8p)\sigma$.
\end{lemma}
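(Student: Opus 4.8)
The plan is to compute the area of $U$ by a combinatorial Gauss--Bonnet argument applied to the $(a,b,c)$-diagram $X$ of $U$, using the structure theorem Lemma~\ref{lem:diagram}(1). First I would set up the bookkeeping: let $V,E,F$ denote the numbers of vertices, edges, and faces of the cellulation $X$, and split $V$ according to the partition $X^{(0)}\setminus\partial X = W\coprod A\coprod B\coprod C$ together with the set $T$ of tips on $\partial X$, so $V = v_W + v_A + v_B + v_C + |T|$. Every face of $X$ is a geodesic polygon whose area (by the ordinary Gauss--Bonnet theorem on the space-form) is $(\text{angle defect or excess})$; since each of the four sub-triangles of a base triangle has area $\sigma$, an inner hexagon has area $4\sigma$ (it is made of four such small triangles meeting at a white vertex), an outer triangle of type (A) has area $2\sigma$, and an outer pentagon of type (B) has area $3\sigma$. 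Summing over all faces gives $\area(U)$ in terms of the counts of each face type, and the goal is to re-express those counts via $v_W,S,p$.

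Next I would extract the needed face counts from the combinatorial data. By Lemma~\ref{lem:standard} and the definitions preceding Lemma~\ref{lem:area}, the boundary word of $U$ has word-length $2S+3p$, which records $|T|$ (each tip is one letter-crossing), and the number of outer triangles equals the number of single joints while the number of outer pentagons equals the number of double joints; the three corner-counts $P,Q,R$ with $S=P+Q+R$ govern how tips are grouped into joints. A short local analysis of the two pictures in Figure~\ref{fig:diagram}(a),(b) shows that each $xyz$-corner of $U$ corresponds to a double joint carrying a pivot, while the remaining tips pair up into single joints or attach to the pentagons; this lets me write the number of outer triangles and outer pentagons as explicit affine functions of $S$ and $p$. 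Then an Euler-characteristic computation $V - E + F = 1$ for the disk, combined with the valence conditions (iii), (iv) of Lemma~\ref{lem:diagram}(1) — each white vertex has exactly $3$ incident edges, each colored vertex of type $A,B,C$ has $a,b,c$ incident edges all of the same label, each tip has valence $3$ — yields linear relations among $v_W, v_A, v_B, v_C, |T|, E$ and the face counts. Solving this linear system for $\area(U)/\sigma$ should collapse the dependence on $a,b,c$ and on $v_A,v_B,v_C$, leaving precisely $8v_W - 5S - 8p$.

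I would then check the base case directly: for the minimal $xyz$ triangle $U_{xyz}$ one has $v_W = 1$, $S = 3$, $p = -1$, and the formula gives $(8 - 15 + 8)\sigma = \sigma = \area(U_{xyz})$, consistent with the normalization; the $0$-th generation diagrams (one colored vertex, Figure~\ref{fig:basic}(b)) furnish a second sanity check. Finally, to handle the cases where one of $a,b,c$ equals $2$ cleanly, I would note that the component-cell modifications in the proof of Lemma~\ref{lem:diagram}(2) (Figure~\ref{fig:diagram}(c),(d)) do not change the area of $U$ itself — they only enlarge the ambient convex collection of base triangles — so the face-by-face area count inside $X$ is unaffected and the same linear algebra applies.

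The main obstacle I anticipate is the precise combinatorial accounting in the second step: correctly relating the number of single joints, double joints (equivalently outer triangles and pentagons) and the tip count $|T|$ to $S$ and $p$, keeping careful track of the three sub-types of outer pentagons noted in the Remark after Lemma~\ref{lem:diagram} and of how corners at $\Z/2$-orbifold points behave. Once those identities are pinned down, the Gauss--Bonnet sum plus Euler's formula is a routine (if slightly lengthy) linear computation, and the coefficients $8$, $-5$, $-8$ emerge from it.
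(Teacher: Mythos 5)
Your overall strategy (count faces of the $(a,b,c)$-diagram, then use the valence conditions of Lemma~\ref{lem:diagram} and Euler's formula to eliminate everything except $v_W$, $S$, $p$) is close in spirit to the paper's bookkeeping in Lemma~\ref{lem:relations}, but the area-decomposition step on which your whole computation rests is wrong. The inner faces of the diagram are entire faces of the tessellation $Z$, and each face of $Z$ is a fundamental domain for the $G$-action, hence has area $8\sigma$ (one black base triangle plus three pieces of white base triangles), not $4\sigma$. More seriously, the outer faces do \emph{not} have areas determined by their combinatorial type: the basic diagram consists of exactly three type-(A) outer triangles whose areas sum to $\sigma$ (they are the three pieces into which the leaves cut the minimal $xyz$ triangle), so they are certainly not $2\sigma$ each, and they need not even be equal to one another, since $W$ is only required to lie in the interior of the middle triangle and the pieces depend on where the Seidel Lagrangian (which is not a geodesic) crosses the $\alpha,\beta,\gamma$ edges. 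So a sum of the form $\alpha_6 f_6+\alpha_3 f_3+\alpha_5 f_5$ with universal constants cannot be justified face by face; your base-case ``check'' only verifies the target formula, not your decomposition, which already fails for the minimal triangle. A smaller but related slip: the identifications you propose are backwards/incorrect — by Lemma~\ref{lem:relations} one has $f_3=S$ (triangles sit at the \emph{double} joints, i.e.\ at corners) and $f_5=3p+S$, while $3p$ is the number of single joints.

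The paper sidesteps this by decomposing $U$ not into diagram faces but into the regions cut out by the Seidel Lagrangians themselves: the $2a$-, $2b$-, $2c$-gons surrounding the interior $A$-, $B$-, $C$-vertices and the minimal triangles (one for each white vertex, in white base triangles, and one for each inner hexagon $f_6$, in black base triangles). These pieces have areas $2a\sigma$, $2b\sigma$, $2c\sigma$ and $\sigma$ exactly, giving $\area(U)=(2a v_A+2b v_B+2c v_C+v_W+f_6)\sigma$, after which the relations of Lemma~\ref{lem:relations} (including the Euler-characteristic identity you also invoke, together with the coin-counting identities $a v_A+S-P=v_W-p$ etc.\ and $f_6=v_W-2p-S$) collapse the expression to $(8v_W-5S-8p)\sigma$. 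If you want to salvage your plan, you must either switch to this Lagrangian-complement decomposition or group the diagram faces into units (e.g.\ whole base triangles or whole fundamental domains) whose areas genuinely are combinatorial invariants; as written, the key step would not survive the base case.
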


\begin{theorem}\label{thm:area}
\begin{enumerate}
\item
If $\PO$ is spherical or hyperbolic, then the area of the polygon $U$ for the potential is given by
\[
\area(U) = \left(3(P+Q+R)+8 \frac{P/a+Q/b+R/c-1}{1-1/a-1/b-1/c}\right)\sigma.\]
\item
If $\PO$ is elliptic, then $P/a+Q/b+R/c=1$. In particular, $P=Q=R=1$ or $PQR=0$.
\end{enumerate}
\end{theorem}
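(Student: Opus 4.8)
The plan is to deduce Theorem~\ref{thm:area} from Lemma~\ref{lem:area} by combining three elementary incidence counts with one Euler-characteristic identity for the $(a,b,c)$-diagram $X$ of $U$.

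First I would record the elementary counts on $X$. Write $\lambda_A,\lambda_B,\lambda_C$ for the numbers of interior $A$-, $B$-, $C$-vertices and $\tau_A,\tau_B,\tau_C$ for the numbers of tips lying on an $\alpha$-, $\beta$-, $\gamma$-leaf. By Lemma~\ref{lem:diagram}, every labelled edge has exactly one white endpoint and every white vertex meets exactly one $\alpha$-edge, so $X$ has $v_W$ edges labelled $\alpha$; counting them from the other end, each interior $A$-vertex meets $a$ of them and each $\alpha$-tip meets exactly one (its leaf), whence $v_W=a\lambda_A+\tau_A$, and cyclically for $B$ and $C$. Reading the standard representative $w=w_1w_2\cdots w_{2k}$ of $[w(\partial U)]$ from Lemma~\ref{lem:standard}, the traversal runs $w_1,w_3,\dots$ contain $p$ letters $\alpha$ in total, while a turning run contains an $\alpha$ precisely when it is a power of $\gamma\alpha$ (a $y$-corner) or of $\alpha\beta$ (a $z$-corner); hence $\tau_A=p+Q+R$, and cyclically $\tau_B=p+R+P$, $\tau_C=p+P+Q$ (consistently, $\tau_A+\tau_B+\tau_C=2S+3p$, the length of $[w(\partial U)]$). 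Thus $\lambda_A=(v_W-p-Q-R)/a$, and similarly for $\lambda_B,\lambda_C$.

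The geometric heart of the argument is the identity
\begin{equation}\label{eq:keycount}
v_W+1=S+p+\lambda_A+\lambda_B+\lambda_C.
\end{equation}
I would prove it by Euler's formula $V-E+F=1$ for the disk $X$: from the counts above, $V=v_W+\lambda_A+\lambda_B+\lambda_C+(2S+3p)$ and $E=3v_W+(2S+3p)$ (the $3v_W$ labelled edges together with one boundary edge between consecutive tips), so $F=1+2v_W-\lambda_A-\lambda_B-\lambda_C$. By Lemma~\ref{lem:diagram}\,(\ref{lem:diagram:outer}) each outer face carries exactly one boundary edge, so there are $2S+3p$ outer faces and hence $f_{\mathrm{hex}}=F-(2S+3p)$ inner hexagonal faces; thus \eqref{eq:keycount} is equivalent to the assertion that the number of hexagons of $Z$ lying entirely inside $U$ equals $v_W-S-2p$. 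This last count is the crux, and I expect it to be the main obstacle: one must classify the outer faces (the triangle type, and, by the Remark after Lemma~\ref{lem:diagram}, the three pentagon types), verify the contribution of each, and treat the modified component cells when one of $a,b,c$ equals $2$. In practice this is exactly the hexagon/small-triangle bookkeeping underlying Lemma~\ref{lem:area}, so it is natural to establish \eqref{eq:keycount} in tandem with that lemma, or else to obtain it from a combinatorial Gauss--Bonnet count for $Z$ with cone angle $2\pi/3$ at white vertices and $2\pi/a,2\pi/b,2\pi/c$ at colored vertices.

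Given \eqref{eq:keycount}, the rest is algebra: substituting $\lambda_A=(v_W-p-Q-R)/a$, etc., and $Q+R=S-P$, etc., into \eqref{eq:keycount} yields
\[
(v_W-S-p)\Bigl(1-\tfrac1a-\tfrac1b-\tfrac1c\Bigr)=\tfrac Pa+\tfrac Qb+\tfrac Rc-1,
\]
and feeding this into $\area(U)=(8v_W-5S-8p)\sigma=\bigl(3S+8(v_W-S-p)\bigr)\sigma$ from Lemma~\ref{lem:area} gives part~(1). For part~(2), the elliptic case is exactly $1-1/a-1/b-1/c=0$, so the displayed identity forces $P/a+Q/b+R/c=1$; since the elliptic triples are precisely $(3,3,3),(2,4,4),(2,3,6)$, solving $P/a+Q/b+R/c=1$ in nonnegative integers in each of these three cases shows that the only solution with $P,Q,R\ge1$ is $P=Q=R=1$, and every other solution has $PQR=0$.
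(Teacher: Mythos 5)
Your strategy is in substance the paper's own: your incidence count $v_W=a\lambda_A+\tau_A$ with $\tau_A=p+Q+R$ is relation (1) of the paper's Lemma~\ref{lem:relations} (proved there by the same propagation/double-count, with $\lambda_A,\lambda_B,\lambda_C$ being the paper's $v_A,v_B,v_C$), your Euler-characteristic bookkeeping is its relation (6), and the closing algebra --- deriving $(v_W-S-p)\bigl(1-\tfrac1a-\tfrac1b-\tfrac1c\bigr)=\tfrac Pa+\tfrac Qb+\tfrac Rc-1$ and feeding it into $\area(U)=(8v_W-5S-8p)\sigma=(3S+8(v_W-S-p))\sigma$ --- is exactly how the paper concludes both parts. (For part (2) your case-by-case check is fine; a uniform shortcut is that $P,Q,R\ge1$ forces $P/a+Q/b+R/c\ge 1/a+1/b+1/c=1$, with equality only at $P=Q=R=1$.)

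The genuine gap is the one you flag but do not close: the identity $v_W+1=S+p+\lambda_A+\lambda_B+\lambda_C$, equivalently the count $f_6=v_W-2p-S$ of inner hexagons, is the load-bearing step, and it does not follow from the statement of Lemma~\ref{lem:area} alone nor from the Euler formula you already used (your Euler computation merely shows the two assertions are equivalent). The paper supplies precisely this step via relations (2), (4), (5) of Lemma~\ref{lem:relations}: every white vertex carries exactly three non-boundary edges, so $3v_W=e_x+e_m+e_d+3p+2S$; classifying the $3p+S$ outer pentagons by the number of neighbouring triangles ($S-\rho$ with two, $2\rho$ with one, $3p-\rho$ with none, where $\rho$ is the number of runs) gives $e_x=6p$, and $e_d=S$ because dangling edges correspond to double joints; finally each hexagon side is an immersed edge counted twice or an exposed edge counted once, so $6f_6=e_x+2e_m$. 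These three lines give $f_6=v_W-2p-S$, hence your key identity, and they are where the structure of the diagram from Lemma~\ref{lem:diagram} genuinely enters (including the $a=2$ cases you worry about). Alternatively, in the spirit of your ``in tandem'' remark, one can compute the area a second way by decomposing $U$ into the $2a$-, $2b$-, $2c$-gons around colored vertices plus middle triangles, $\area(U)=(2av_A+2bv_B+2cv_C+v_W+f_6)\sigma$, and compare with Lemma~\ref{lem:area} and relation (1) to solve for $f_6$; but one of these arguments must actually be carried out, and as written your proposal only conjectures the count rather than proving it.
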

\begin{remark}
If $\PO$ is given a hyperbolic metric,  then we would have
$\sigma = \pi(1-1/a-1/b-1/c)/4$. If $\PO$ is given a spherical metric, we have $\sigma=-\pi(1/a+1/b+1/c-1)/4$.
\end{remark}
%
%
%
%

 \begin{example}\label{ex:count}
\begin{enumerate}
\item
If $U$ is the minimal $xyz$ triangle, then we have $P=Q=R=1$.
In this case, Lemma~\ref{lem:area} recovers $\area(U)=\sigma$.
\item
Consider the holomorphic hexagon $U$ bounding the shaded hexagon in Figure~\ref{fig:area}.
We have 
\[[w(\partial U)] = [(\beta\gamma)^{2}\beta(\alpha\beta)^{2}\alpha(\gamma\alpha)^{2}\gamma].\]
The corresponding $(a,b,c)$ diagram is Figure~\ref{fig:tip}. 
We have $p=1,P=Q=R=2$ and $v_W=9$.
Lemma~\ref{lem:area} and Theorem~\ref{thm:area} both imply that $\area(U) =34\sigma$,
which can be checked by direct counting.
\end{enumerate}
\end{example}

%

For the rest of this section, we let $X$ be the $(a,b,c)$-diagram corresponding to a polygon $U$ for the potential and $w$ is a standard representative for $[w(\partial U)]$.
Lemma~\ref{lem:area} holds obviously for the basic diagram $X$.   
So we assume $X$ is not basic.

We will use the following notations.
\begin{enumerate}[(i)]
\item
$v_A,v_B,v_C,v_W$: $A,B,C,W$-vertices in $X$, respectively.
\item
We divide the set of non-leaf edges into three types.
An edge is \emph{exposed} if it is shared by an outer face and an inner face, 
An edge is \emph{immersed} if it is shared by two inner faces.
A non-leaf edge is \emph{dangling} if it is shared by two outer faces. Every dangling edge joins a pivot to a double joint.

 We denote by $e_x, e_m, e_d$ the number of  \emph{exposed},  \emph{immersed}, 
\emph{dangling} edges in $X$ respectively.
 \item
$f_i$: $i$-gon faces in $X$ for $i=3,5,6$.
\end{enumerate}
We put $\rho=[w]_\rho$, and note that we have $\rho$ pivots in $X$.
Also $3p$ and $S$ are the numbers of single and double joints, respectively.
The number of tips
is $3p+2S$, and this number coincides with that of leaves, that of the boundary edges and that of outer faces.

\begin{lemma}\label{lem:relations}
We have the following equations.
\begin{enumerate}
\item
$v_W-p =av_A+S-P=bv_B+S-Q=cv_C+S-R$.
\item
$3v_W =e_x+e_m+e_d+3p+2S$.
\item
$f_3 = S,f_5 = 3p+S$.
\item
$e_x =6p$, $e_d=S$.
\item
$6f_6 = e_x+2e_m$.
\item
$v_A+v_B+v_C-2v_W
+   f_3+f_5+f_6 =1$.
\end{enumerate}
\end{lemma}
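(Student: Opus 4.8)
The plan is to read off all six identities from three inputs already in place: the local structure of an $(a,b,c)$-diagram from Lemma~\ref{lem:diagram}(1); the boundary bookkeeping recorded just above the lemma (the numbers of tips, leaves, boundary edges and outer faces all equal $3p+2S$, there are $3p$ single joints and $S$ double joints, $f_3+f_5$ is the number of outer faces and $f_6$ the number of inner faces, all hexagons by (v)); and elementary counting, namely double-counting of incidences together with the Euler relation $V-E+F=1$ for the disk $X$. Throughout I fix the diagram $X$ of $U$ and a standard representative $w=w_1w_2\cdots w_{2k}$ of $[w(\partial U)]$, recalling $P=[w]_1$, $Q=[w]_2$, $R=[w]_3$, $S=P+Q+R$, $p=[w]_0$, and that (as shown inside the proof of Lemma~\ref{lem:diagram}) the tips of $X$ are exactly the transverse intersections of $\partial U$ with the $1$-skeleton of $Z$, one for each letter of $w$, the label of a tip's leaf being that letter.

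I would first dispose of (1) and (2). For (2): by (iii) each white vertex has exactly three incident edges, and the edges incident to white vertices are precisely the leaves together with the exposed, immersed and dangling edges (a leaf's joint is white, an interior edge joins a white to a colored vertex by (ii), and a dangling edge joins a double joint to its pivot); since each such edge has a \emph{unique} white endpoint, $3v_W=(3p+2S)+e_x+e_m+e_d$. For (1): every $\alpha$-edge has a unique white endpoint, and by (iii) every white vertex has exactly one incident $\alpha$-edge, so there are $v_W$ edges labelled $\alpha$; of these the interior ones are the $\alpha$-edges at $A$-vertices, numbering $av_A$ by (iv), so there are $v_W-av_A$ $\alpha$-leaves. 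Counting $\alpha$-leaves from $w$ instead: the odd runs form $w'\preccurlyeq(\gamma\beta\alpha)^\infty$ of length $3p$, hence contain each of $\alpha,\beta,\gamma$ exactly $p$ times, while the even runs are powers of $\alpha\beta,\beta\gamma,\gamma\alpha$ and contribute an $\alpha$ only through the $\alpha\beta$- and $\gamma\alpha$-letters, of which there are $[w]_3+[w]_2=R+Q$; hence $v_W-av_A=p+R+Q=p+S-P$, which is (1), the other two equalities following by cyclically permuting $\alpha,\beta,\gamma$.

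Next I would analyze the outer faces to obtain (3) and (4). The two leaf-sides of a triangular outer face emanate from one white vertex, which therefore has two leaves and is a double joint; conversely the three edges at a double joint are its two leaves $\ell_1,\ell_2$ and its dangling edge $d$ to its pivot, and since three edges around an interior vertex are pairwise cyclically adjacent, its three sectors are bounded by $\{\ell_1,\ell_2\}$ (a triangle) and by $\{\ell_1,d\}$ and $\{\ell_2,d\}$ (each an outer face that is not a triangle, hence a pentagon). This gives a bijection between triangles and double joints, so $f_3=S$, and at the same time $e_d=S$, since each double joint supplies exactly one dangling edge and, by hypothesis, every dangling edge arises this way. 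The unique leaf of a single joint borders two outer faces, neither a triangle, hence two pentagons; writing $P_{ss},P_{sd},P_{dd}$ for the numbers of pentagons with two single, one of each, and two double joints (the three types from the Remark after Lemma~\ref{lem:diagram}), counting (joint, pentagon) incidences from the single- and double-joint sides yields $2P_{ss}+P_{sd}=6p$ and $P_{sd}+2P_{dd}=2S$, and adding these gives $f_5=P_{ss}+P_{sd}+P_{dd}=3p+S$. For $e_x$, the only edges of a pentagon that can be exposed are its two ``path'' edges, and a path edge from a double joint is precisely its dangling edge while a path edge from a single joint is exposed; hence pentagons of the three types carry $2,1,0$ exposed edges respectively, so $e_x=2P_{ss}+P_{sd}=6p$. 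Identity (5) is another incidence count: each side of a hexagon is immersed (shared with a second hexagon) or exposed (shared with an outer face), so summing sides over the $f_6$ hexagons gives $6f_6=e_x+2e_m$. Finally (6) is Euler's formula: with $V=(3p+2S)+v_A+v_B+v_C+v_W$, $E=2(3p+2S)+e_x+e_m+e_d$ and $F=(3p+2S)+f_6$, the identity $V-E+F=1$ becomes $(v_A+v_B+v_C+v_W)-(e_x+e_m+e_d)+f_6=1$, and substituting $e_x+e_m+e_d=3v_W-(3p+2S)=3v_W-f_3-f_5$ from (2) and (3) gives exactly $v_A+v_B+v_C-2v_W+f_3+f_5+f_6=1$.

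I expect the main obstacle to be the local analysis underlying (3) and (4): one must carefully determine the edges and incident faces at single and double joints, confirm the shapes of all outer faces touching them, and correctly decide which path edges of the three pentagon types are dangling versus exposed; once the two incidence identities $2P_{ss}+P_{sd}=6p$ and $P_{sd}+2P_{dd}=2S$ are in hand, the rest is routine. A small additional point to settle is that distinct letters of $w$ correspond to distinct tips — equivalently, $\partial U$ meets each edge of $Z$ at most once — which is forced by Lemma~\ref{lem:diagram}(1), in particular by condition (ii), since a subsegment of an edge of $Z$ with two tips as endpoints could not occur in an $(a,b,c)$-diagram.
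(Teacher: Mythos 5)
Your proof is correct and follows essentially the same route as the paper: the same double-counting arguments for (1)--(5) (the paper phrases (1) and (2) as ``coin propagation'' from the $A$- and $W$-vertices) and the same Euler-characteristic computation for (6), all under the standing assumption that $X$ is not the basic diagram. The only cosmetic difference is in (4), where you get $e_x=2P_{ss}+P_{sd}=6p$ from the joint--pentagon incidence identities $2P_{ss}+P_{sd}=6p$ and $P_{sd}+2P_{dd}=2S$, whereas the paper tallies the pentagon types explicitly as $P_{dd}=S-\rho$, $P_{sd}=2\rho$, $P_{ss}=3p-\rho$ using the number of runs $\rho$; both yield the same count.
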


\begin{proof}
(1) Place $a$ coins on each of the $A$-vertices in $X$, and uniformly propagate the coins into every neighbor of $A$-vertices. 
Then we will have one coin 
on each of the white vertices that do not intersect $\alpha$-leaves.
The number of the tips intersecting $\alpha$-leaves is $p + Q+R=p+S-P$.
Hence $a v_A+p+S-P = v_W$.

(2) Similarly to (1), place three coins on each of the white vertices and uniformly propagate to the adjacent \emph{edges}. 
Then every non-boundary edge will carry a coin.

(3) 
Each triangle of $X$ corresponds to a subword $\alpha\beta,\beta\gamma$ or $\gamma\alpha$ in $w$, and these subwords appear $S$ times. 
Recall that the number of outer faces is $3p+2S$.

(4)
There are $S-\rho$ pentagons which neighbors two triangles.
The number of pentagons which is adjacent to exactly one triangle is $2\rho$.
By (3) the number of pentagons which are not adjacent to a triangle is $3p-\rho$.
These three types of pentagons have $0,1$ and $2$ exposed edges, respectively.
Hence $e_x = 6p$. Note that $e_d$ is the number of double joints.

(5)
The number $6 f_6$ counts each immersed edge twice and each exposed edge once.

(6)
The number of vertices in $X$ is $v_A+v_B+v_C+v_W+3p+2S$,
that of edges is $e_x+e_m+e_d+6p+4S$
and that of faces is $f_3+f_5+f_6$. We conclude by the Euler characteristic formula $\chi(X)=1$.
\end{proof}

\begin{proof}[Proof of Lemma~\ref{lem:area}]
The Seidel Lagrangians divides  universal cover $E$ into $2a$-, $2b$-, $2c$-gons and triangles
having area $2a\sigma, 2b\sigma, 2c\sigma$ and $\sigma$, respectively.
Note that $U$ has $v_A$ copies of $2a$-gons, $v_B$ copies of $2b$-gons and $v_C$-copies of $2c$-gons. 
We find $v_W$ copies of a middle triangle inside white base triangles and $f_6$ copies inside black base triangles. So we have
\[\area(U) = (2a v_A + 2b v_B + 2c v_C + v_W+f_6)\sigma.\]
Lemma~\ref{lem:relations} (1) implies that 
$\area(U) = (7 v_W - 4S-6p+f_6)\sigma$.
From (2), (4) and (5) of the same lemma, we have
$ f_6 = v_W-2p-S$.
It follows that
$\area(U) = (8 v_W - 5S-8p)\sigma$.
\end{proof}
\begin{proof}[Proof of Theorem~\ref{thm:area}]
From (3) and (6) of Lemma~\ref{lem:relations} and $ f_6 = v_W-2p-S$,  we see
\[
1 = v_A+v_B+v_C-v_W +p+S.
\]
From (1) of the same lemma, we have
$v_A = (v_W+P-S-p)/a,v_B = (v_W+Q-S-p)/b$ and $v_C = (v_W+R-S-p)/c$.
Plugging in, we have
\[\left(1-\frac1a-\frac1b-\frac1c\right)v_W = \frac{P}a+\frac{Q}b+\frac{R}c+
\left(1-\frac1a-\frac1b-\frac1c\right)(p+S)
-1.\]
Let us first assume that $\PO$ is elliptic. 
From $1/a+1/b+1/c=1$,
we see the desired relation $P/a+Q/b+R/c=1$.
Moreover, we have either $P=Q=R=1$ or at least one of $P,Q$ or $R$ is zero.

Now assume $\PO$ is non-elliptic. We eliminate $v_W$ to see
\[
\area(U) = \left(8 \frac{P/a+Q/b+R/c-1}{1-1/a-1/b-1/c}+3S\right)\sigma.\qedhere\]
\end{proof}

The below is a consequence of the proof.
\begin{corollary}\label{cor:counts}
If $\PO$ is not an elliptic curve quotient and $U$ is a polygon for the potential,
then the following numbers corresponding to $U$ can be written as linear functions of $p,P,Q$ and $R$.
\begin{enumerate}
\item
The numbers of the $W$, $A$, $B$, $C$-vertices in $X$, respectively.
\item
The numbers of the exposed edges, the immersed edges, the dangling edges and the boundary edges, respectively.
\item
The number of triangles, pentagons and hexagons, respectively.
\end{enumerate}
\end{corollary}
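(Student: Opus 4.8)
The plan is to read everything off the computations already done in the proofs of Lemma~\ref{lem:area} and Theorem~\ref{thm:area}, combined with the linear relations collected in Lemma~\ref{lem:relations}. First I would dispose of the basic $(a,b,c)$-diagram (the minimal $xyz$ triangle) separately: there is a single such diagram, with $p=-1$ and $P=Q=R=1$, and all the quantities listed in the corollary then take fixed values, so on this one-point set they are trivially affine-linear in $(p,P,Q,R)$. For the remainder of the argument I assume $X$ is not basic, exactly as in the proof of Theorem~\ref{thm:area}.

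The key point is that in the non-elliptic case the coefficient $1-\frac1a-\frac1b-\frac1c$ is nonzero, so the identity
\[
\left(1-\frac1a-\frac1b-\frac1c\right)v_W = \frac{P}{a}+\frac{Q}{b}+\frac{R}{c}+\left(1-\frac1a-\frac1b-\frac1c\right)(p+S)-1
\]
obtained in the proof of Theorem~\ref{thm:area} can be solved to express $v_W$ as an affine-linear function of $P,Q,R$ and $p$, using $S=P+Q+R$. This is the only place the non-elliptic hypothesis enters (in the elliptic case the same equation degenerates and $v_W$ is a genuine free parameter, which is precisely why the corollary excludes elliptic curve quotients). Once $v_W$ is known to be linear in $(p,P,Q,R)$, I feed it back into Lemma~\ref{lem:relations}: part~(1) gives $v_A=(v_W+P-S-p)/a$, and likewise for $v_B,v_C$, all affine-linear; part~(4) gives $e_x=6p$ and $e_d=S$ directly; the number of boundary edges equals the number of tips, which is $3p+2S$; part~(2) then yields $e_m=3v_W-e_x-e_d-3p-2S$; part~(3) gives $f_3=S$ and $f_5=3p+S$; and the identity $f_6=v_W-2p-S$ established inside the proof of Lemma~\ref{lem:area} handles the hexagons. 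Collecting these, every quantity in items (1)--(3) of the corollary is an affine-linear function of $p,P,Q,R$.

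I do not anticipate a real obstacle: the entire content is bookkeeping, and each equation needed has already been proved. The only mild subtlety is checking that the divisions by $a,b,c$ and by $1-\frac1a-\frac1b-\frac1c$ produce bona fide rational-coefficient linear functions rather than merely formal expressions, but this is automatic, since all of $v_W,v_A,\dots,f_6$ are non-negative integers and the relations used are exact equalities, so the resulting affine functions are well defined and consistent on the set of admissible tuples $(p,P,Q,R)$.
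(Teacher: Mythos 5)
Your proposal is correct and follows essentially the same route as the paper, which presents the corollary precisely as "a consequence of the proof" of Theorem \ref{thm:area}: solving the displayed identity for $v_W$ (using $1-\frac1a-\frac1b-\frac1c\ne 0$) and then substituting back into the relations of Lemma \ref{lem:relations} together with $f_6=v_W-2p-S$. Your separate treatment of the basic diagram is a harmless extra check (and in fact the same linear formulas already give the correct values at $(p,P,Q,R)=(-1,1,1,1)$), so nothing further is needed.
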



\section{Elementary move}\label{s:move}
In order to compute the open Gromov-Witten potential in the elliptic or hyperbolic case, we need to enumerate infinitely many polygons for the potential.
As we have see in Section~\ref{sec:abcdia}, these polygons are in one-to-one correspondence with $(a,b,c)$ diagrams.
The purpose of this section is to describe an algorithm to generate all the $(a,b,c)$ diagrams.
The spherical case has finitely many polygons and will be treated in Section \ref{sect:S2}.
So throughout this section, we assume $1/a+1/b+1/c\le 1$.

We will see that all $(a,b,c)$ diagrams can be generated from the basic one by a sequence of  \emph{elementary moves}. The description of an elementary move for the case $a=2$ will require extra consideration, compared to the case $a,b,c\ge3$.

Recall that a Seidel Lagrangian in $E$ is an oriented curve reading the bi-infinite word $(\gamma\beta\alpha)^\infty$. Among the two closed half-spaces of $E$ that are cut by a Seidel Lagrangian $ L$, we call the one on the left of $ L$ as the \emph{left side} of $ L$. Given an $(a,b,c)$-diagram $X\subseteq E$ and a Lagrangian $ L$, the intersection of $X$ and the left side of $ L$ will be called as the \emph{left side of $ L$ in $X$}. We similarly define the \emph{right side of $ L$ (in $X$)}. Note that the left side of $ L$ in $X$ is also an $(a,b,c)$-diagram.

\begin{definition}\label{defn:move}
Suppose $X$ is an $(a,b,c)$-diagram and $ L$ is a Seidel Lagrangian intersecting the interior of $X$.
Let $X_0$ be the left side of $ L$ in $X$.
If there does not exist an $(a,b,c)$-diagram $X'$ satisfying $X_0\subsetneq X'\subsetneq X$,
then we say $X$ is obtained from $X_0$ by an \emph{elementary move along $ L$}.
\end{definition}

See Figure~\ref{fig:generic_point} for an example. Note that the three $0$-th generation diagrams are obtained from the basic diagram by elementary moves. Also, Figure~\ref{fig:tip} shows a diagram that is obtained from a $0$-th generation diagram by one elementary move.

\begin{figure}[htb]
\includegraphics[width=.5\textheight]{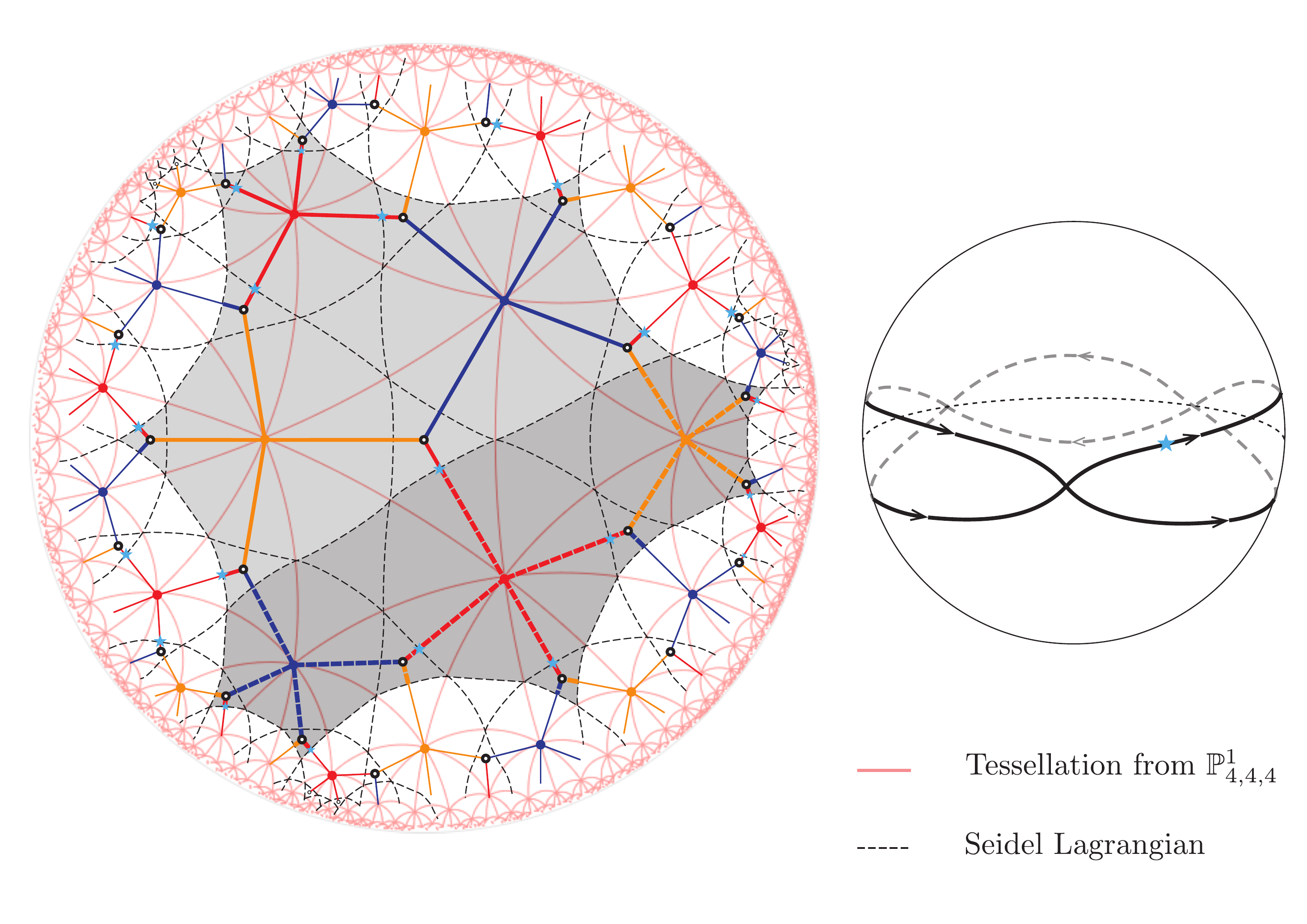}
\caption{An elementary move from a hexagon in a $(4,4,4)$-diagram to a 9-gon.}\label{fig:generic_point}
\end{figure}

To describe an elementary move in a more constructive way, let us define terminology regarding faces ``near'' from a Lagrangian. Let $ L$ be a Seidel Lagrangian in $E$. A face $F$ in $Z$ is called a \emph{Type I face along $ L$} if its interior intersects $ L$; see Figure~\ref{fig:types} (a). 

Now assume $a=2$. If a pair of Type I faces $F_1$ and $F_1'$ along $ L$ share an $A$-vertex, there uniquely exists a face $F_2$ in $Z$ on the right side of $ L$ such that $F_2$ intersects both $F_1$ and $F_1'$. Such a face $F_2$ is called a \emph{Type II face along $ L$}. See Figure~\ref{fig:types} (b) and (c). Let us further assume $b=3$. For each Type II face $F_2$, there uniquely exists a face $F_3$ on the right side of $ L$ sharing an $A$-vertex with $F_2$ and such a face $F_3$ is called a \emph{Type III face along $ L$}. Lastly, a \emph{Type IV face along $ L$} is a face on the right side of $ L$ that shares a white vertex with a Type II face and also with a Type III face along $ L$. A Type I, II, III or IV face is collectively called a \emph{typed face along $ L$}.

\begin{figure}[htb]
  \tikzstyle {av}=[red,draw,shape=circle,fill=red,inner sep=1pt]
  \tikzstyle {bv}=[blue,draw,shape=circle,fill=blue,inner sep=1pt]
  \tikzstyle {cv}=[orange,draw,shape=circle,fill=orange,inner sep=1pt]
  \tikzstyle {wv}=[black,draw,shape=circle,fill=white,inner sep=1pt]  
  \tikzstyle {gv}=[inner sep=0pt]
  \tikzstyle {v}=[gray,fill=gray,draw,shape=rectangle,inner sep=1pt]    
  \tikzstyle{every edge}=[-,draw]
\subfloat[(a) $a,b,c\ge3$]{
	\begin{tikzpicture}[scale=0.5,thick]

	\foreach \x in {-8,-2} {\draw [blue] (\x,3) -- (\x+2,3); }
	\foreach \x in {-6} {\draw [red] (\x,3) -- (\x+2,3); }	
	\foreach \x in {0} {\draw [red] (\x,3) -- (\x+1,3); }		
	\foreach \x in {-4} {\draw [orange] (\x,3) -- (\x+2,3); }		
	\foreach \x in {-9} {\draw [orange] (\x,3) -- (\x+1,3); }		
	
	\foreach \x in {-6,0} {\draw [orange] (\x,1) -- (\x,3); }		
	\foreach \x in {-4} {\draw [blue] (\x,1) -- (\x,3); }		
	\foreach \x in {-8,-2} {\draw [red] (\x,1) -- (\x,3); }		

	\foreach \x in {-9,-3} {\draw [red] (\x,1) -- (\x+2,1); }
	\foreach \x in {-5} {\draw [blue] (\x,1) -- (\x+2,1); }	
	\foreach \x in {-7,-1} {\draw [orange] (\x,1) -- (\x+2,1); }	


	\foreach \x in {-8,-6,-4,-2,0} {\draw (\x,3) node [wv] {}; }
	\foreach \x in {-7,-5,-3,-1} {\draw (\x,1) node [wv] {}; }	

	\foreach \x in {-5} {\draw (\x,3) node [av] {}; }	
	\foreach \x in {-7,-1} {\draw (\x,3) node [bv] {}; }	
	\foreach \x in {-3} {\draw (\x,3) node [cv] {}; }	
	\foreach \x in {-6,0} {\draw (\x,1) node [cv] {}; }	
	\foreach \x in {-4} {\draw (\x,1) node [bv] {}; }	
	\foreach \x in {-8,-2} {\draw (\x,1) node [av] {}; }		
	\draw [dashed, teal,ultra thick,->] (-9,2) -- (1,2) node [teal,right] {\small $ L$};

	\foreach \x in {-7,-5,-3,-1} {\draw (\x,2.5) node [] {\tiny I}; }

	\node  [inner sep=0.9pt] at (-2.5,0) {}; 	
	\node  [inner sep=0.9pt] at (2.5,0) {}; 			
	\end{tikzpicture}
}
$\quad$ 
\subfloat[(b) $a=2,b,c\ge4$]{
	\begin{tikzpicture}[scale=0.5,thick]
	\draw [blue]  (-2,4)--(0,4) (4,4)--(6,4);
	\draw [blue] (-4,1) -- (-4,4) (2,1)--(2,4);
	\draw [blue] (0,1)--(0,0) (6,0)--(6,1);

	\foreach \x in {-5,1} {	\draw [blue] (\x,2) -- (\x+3,2);}

	\foreach \x in {0} {	\draw [red] (\x,4) -- (\x+2,4);}
	\foreach \x in {6} {	\draw [red] (\x,4) -- (\x+1,4);}	
	\foreach \x in {-2,4} {	\draw [red] (\x,2) -- (\x,4);}
	
	\draw [red] (-5,4)--(-4,4) (-4,1)--(0,1) (1,0)--(1,2) (2,1)--(6,1);	

	\foreach \x in {0,6} {\draw [orange] (\x,1) -- (\x,4); }
	\foreach \x in {-4,2} {\draw [orange] (\x,0) -- (\x,1); }
	\foreach \x in {-4,2} {\draw [orange] (\x,4) -- (\x+2,4); }

	\foreach \x in {-2,4} {\draw [orange] (\x,2) -- (\x+3,2); }
	
	\foreach \x in {-4,-2,0,2,4,6} {\draw (\x,4) node [wv] {}; }
	\foreach \x in {-2,1,4} {\draw (\x,2) node [wv] {}; }	
	\foreach \x in {-4,0,2,6} {\draw (\x,1) node [wv] {}; }	
	\foreach \x in {1} {\draw (\x,4) node [av] {}; }	
	\foreach \x in {-2,4} {\draw (\x,2.5) node [av] {}; }	
	\foreach \x in {-2,4} {\draw (\x,1) node [av] {}; }	

	\foreach \x in {-1,5} {\draw (\x,4) node [bv] {}; }	
	\foreach \x in {-4,2} {\draw (\x,2) node [bv] {}; }	

	\foreach \x in {-3,3} {\draw (\x,4) node [cv] {}; }	
	\foreach \x in {0,6} {\draw (\x,2) node [cv] {}; }	

	\draw [dashed, teal,ultra thick,->] (-5,3) -- (7,3) node [teal,right] {\small $ L$};

	\foreach \x in {-3,-1,1,3,5} {\draw [below] (\x,4) node [] {\tiny I}; }
	\foreach \x in {-2,4} {\draw (\x,1.5) node [] {\tiny II}; }
	\foreach \x in {-4.3,.3,1.7,6.3} {\draw (\x,1.7) node [] {\tiny ...}; }

	\node  [inner sep=0.9pt] at (-2.5,0) {}; 	
	\node  [inner sep=0.9pt] at (2.5,0) {}; 			
	\end{tikzpicture}
}
\\
\subfloat[(c) $a=2,b=3,c\ge6$]{
	\begin{tikzpicture}[scale=0.5,thick]
	\draw (0,0) node [gv] (p00) {};
	\draw (0,1) node [gv] (p01) {};
	\draw (0,2) node [gv] (p02) {};	
	\draw (0,3) node [gv] (p03) {};
	\draw (0,4) node [gv] (p04) {};
	\draw (0,6) node [gv] (p06) {};	
	\draw (1,1) node [gv] (p11) {};
	\draw (1,2) node [gv] (p12) {};	
	\draw (1,3) node [gv] (p13) {};
	\draw (1,4) node [gv] (p14) {};
	\draw (1,6) node [gv] (p16) {};	
	\draw (2,0) node [gv] (p20) {};
	\draw (2,1) node [gv] (p21) {};
	\draw (2,4) node [gv] (p24) {};
	\draw (2,6) node [gv] (p26) {};	
	\draw (3,6) node [gv] (p36) {};		
	\draw (4,4) node [gv] (p44) {};
	\draw (4,4.5) node [gv] (p45) {};
	\draw (4,6) node [gv] (p46) {};	
	\draw (5,0) node [gv] (p50) {};
	\draw (5,6) node [gv] (p56) {};	
	\draw (6,0) node [gv] (p60) {};	
	\draw (6,3) node [gv] (p63) {};
	\draw (6,4) node [gv] (p64) {};
	\draw (6,6) node [gv] (p66) {};	
	\draw (7,4) node [gv] (p74) {};

	\draw (-6,0) node [gv] (m60) {};
	\draw (-6,1) node [gv] (m61) {};
	\draw (-6,2) node [gv] (m62) {};	
	\draw (-6,3) node [gv] (m63) {};
	\draw (-6,4) node [gv] (m64) {};
	\draw (-6,6) node [gv] (m66) {};	
	\draw (-5,1) node [gv] (m51) {};
	\draw (-5,2) node [gv] (m52) {};	
	\draw (-5,3) node [gv] (m53) {};
	\draw (-5,4) node [gv] (m54) {};
	\draw (-5,6) node [gv] (m56) {};	
	\draw (-4,0) node [gv] (m40) {};
	\draw (-4,1) node [gv] (m41) {};
	\draw (-4,4) node [gv] (m44) {};
	\draw (-4,6) node [gv] (m46) {};	
	\draw (-3,6) node [gv] (m36) {};		
	\draw (-2,4) node [gv] (m24) {};
	\draw (-2,4.5) node [gv] (m25) {};
	\draw (-2,6) node [gv] (m26) {};	
	\draw (-1,0) node [gv] (m10) {};
	\draw (-1,6) node [gv] (m16) {};	
	\draw (-7,0) node [gv] (m70) {};
	\draw (-7,2) node [gv] (m72) {};
	\draw (-7,3) node [gv] (m73) {};
	\draw (-7,4) node [gv] (m74) {};

	\foreach \x in {-2,4} {\draw [red] (\x,4) -- (\x,6); }

	\draw [red] (-7,0) edge (m63);	
	\draw [red] (m66) -- (m46) (p06) -- (p26) (p66) -- (7,6);
	\draw [red] (m52) -- (m54) (m61) -- (m41) (-1,0) -- (p03) (p01) -- (p21) (p12) -- (p14) (5,0)--(p63);	

	\draw [red] (6,1)--(7,1);
	\draw [blue] (6,2)--(7,2);

	\draw [blue] (-7,6) -- (m66);
	\draw [blue] (m63) -- (m61);

	\draw [blue] (m62) -- (m52) (m54) -- (m24) (m44) -- (m46) (m41) -- (-4,0) (p01) -- (p03) (p02)--(p12) (p14)--(p44) (p24)--(p26) (p21)--(2,0) (p46) -- (p66) (6,1)--(p63);
	\draw [blue] (m26) -- (p06);
	\draw [blue] (-4,0) -- (m41);
	
	\foreach \x in {-2,4} {\draw [orange] (\x,4) -- (\x+3,4); }
	\foreach \x in {-7} {\draw [orange] (\x,4) -- (\x+2,4); }
	\foreach \x in {-4,2} {\draw [orange] (\x,6) -- (\x+2,6); }
	\draw [orange] (-6,3) edge (-6,6) ;
	\draw [orange] (-6,1) edge (-6,0);
	\draw [orange] (6,1) edge (6,0);
	
	\draw [orange] (-6,4) edge (-7,3.4) edge (-7,3) ;
	\draw [orange] (p66)--(p63);
	\draw [orange] (p64) edge (p12) edge (p21);
	\draw [orange] (p04) edge (m52) edge (m41);
	\draw [orange] (p06) -- (p03);
	\draw [orange] (0,0) -- (p01);
	\draw (-.5,3) node [] {\tiny ...};
	\draw (5.5,3) node [] {\tiny ...};
	\draw (-6.5,2.5) node [] {\tiny ...};

	\foreach \x in {-6,-4,-2,0,2,4,6} {\draw (\x,6) node [wv] {}; }	
	\foreach \x in {-5,-2,1,4} {\draw (\x,4) node [wv] {}; }	
	\foreach \x in {-6,0,6} {\draw (\x,3) node [wv] {}; }	
	\foreach \x in {-5,1} {\draw (\x,2) node [wv] {}; }	
	\foreach \x in {-6,-4,0,2,6} {\draw (\x,1) node [wv] {}; }	

	\foreach \x in {-2,4} {\draw (\x,4.5) node [av] {}; }	

	\foreach \x in {-5,1} {\draw (\x,6) node [av] {}; }
	\foreach \x in {-5,1} {\draw (\x,3) node [av] {}; }
	\foreach \x in {-5,1} {\draw (\x,1) node [av] {}; }

	\foreach \x in {-1,5} {\draw (\x,6) node [bv] {}; }
	\foreach \x in {-4,2} {\draw (\x,4) node [bv] {}; }	
	\foreach \x in {-6,0,6} {\draw (\x,2) node [bv] {}; }

	\draw (m64) node [cv] {};
	\foreach \x in {-3,3} {\draw (\x,6) node [cv] {}; }
	\foreach \x in {0,6} {\draw (\x,4) node [cv] {}; }
	
	\draw [dashed, teal,ultra thick,->] (-7,5) -- (7,5) node [teal,right] {\small $ L$};

	\foreach \x in {-5,-3,-1,1,3,5} {\draw [below] (\x,6) node [] {\tiny I}; }
	\foreach \x in {-4,2} {\draw (\x,3) node [] {\tiny II}; }

	\foreach \x in {-5.5,.5} {\draw (\x,3) node [] {\tiny III}; }
	\foreach \x in {-4,2} {\draw (\x,1.8) node [] {\tiny IV}; }
						
	\node  [inner sep=0.9pt] at (-2.5,0) {}; 	
	\node  [inner sep=0.9pt] at (2.5,0) {}; 			
	\end{tikzpicture}
}
\caption{Three types of faces in $Z$ with respect to a Seidel Lagrangian $L$.}\label{fig:types}
\end{figure}
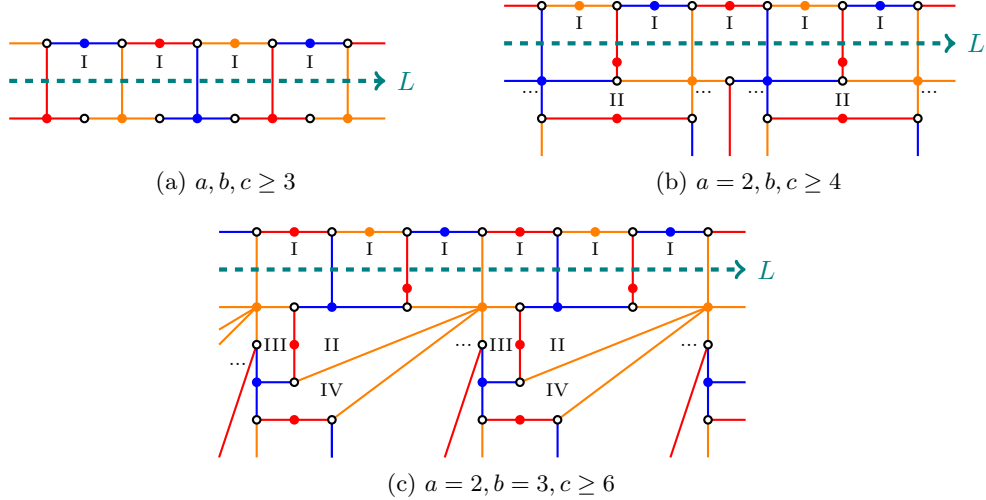

The following is a crucial inductive step for Thoerem~\ref{thm:move}.

\begin{lemma}\label{lem:hexs}
Let $ L$ be a Seidel Lagrangian and $F'$ be a face in the right side of $ L$
such that $F'$ is not typed along $ L$.
If $F'$ intersects a face which is typed along $ L$, then there exists a Seidel Lagrangian $ L'$ intersecting $F'$ such that the right side of $ L$ strictly contains the right side of $ L'$.
\end{lemma}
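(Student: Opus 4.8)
The plan is to produce $L'$ explicitly as the Seidel Lagrangian lying ``one step to the right of $L$ at $F'$,'' and first I would reduce the assertion to a cleaner one. I claim it suffices to find a Seidel Lagrangian $L'$ such that $(a)$ $L'$ crosses the interior of $F'$, $(b)$ $L'\cap L=\emptyset$, and $(c)$ $L$ lies in the open left side of $L'$. Indeed, write $H_R(L')$ for the closed right side of $L'$; it is connected, and by $(c)$ it is disjoint from $L$, so it lies in a single connected component of $E\setminus L$ (recall $L$ is a properly embedded line, by Lemma~\ref{lem:real line}). Since $F'$ is contained in the right side of $L$ and, by $(a)$, $L'$ crosses $\mathrm{int}(F')$, there is a point of $\mathrm{int}(F')$ lying in $H_R(L')$ and in the right side of $L$; hence $H_R(L')$ is contained in the right side of $L$. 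The containment is proper because $L\subseteq H_R(L)\setminus H_R(L')$ (using $(c)$ again). So it remains to construct such an $L'$.

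\textbf{Construction of $L'$.} Recall the combinatorial picture of a Seidel Lagrangian inside $Z$: it reads $(\gamma\beta\alpha)^\infty$, so it threads through a bi-infinite sequence of hexagons of $Z$ — these are exactly its Type~I faces — consecutive ones glued along edges whose labels cycle through $\gamma,\beta,\alpha$; when $a=2$ the Type~II, III, IV faces fill in the indentations of this row at $A$- (and $B$-)vertices, as in Figure~\ref{fig:types}. Thus the typed faces along $L$ form a connected neighborhood of $L$, and the hypothesis ``$F'$ is not typed but meets a typed face'' says precisely that $F'$ is a hexagon of $Z$ attached to the right-hand frontier of this neighborhood along a boundary edge or vertex. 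Now, the cellulation of $E$ cut out by $L_E$ consists of $2a$-, $2b$-, $2c$-gons and triangles (Section~\ref{s:area}); since none of these cells contains a whole hexagon of $Z$, every hexagon of $Z$ — in particular $F'$ — meets $L_E$ in its interior. I would take $L'$ to be the branch of $L_E$ crossing $\mathrm{int}(F')$ that lies on the far side, from $L$, of the edge or vertex by which $F'$ is attached to the typed neighborhood of $L$; equivalently, $L'$ is read off directly from the local pictures in Figure~\ref{fig:types} as the Seidel Lagrangian obtained by ``pushing $L$ one step to the right.''

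\textbf{Verification of $(b)$ and $(c)$.} For $(b)$: by Lemma~\ref{lem:nbhd}, $L\subseteq N(L)$ and $L'\subseteq N(L')$, and each of $L,L'$ stays off the boundary of its own combinatorial neighborhood (Lemma~\ref{lem:sector}); by construction $N(L)$ and $N(L')$ sit on opposite sides of a common boundary edge or vertex, hence meet only along it, so $L\cap L'=\emptyset$. The sole exception is the configuration in which the attaching vertex is a $\mathbb{Z}/2$-point, where the naive neighbor would share an edge with $L$ and bound the exceptional bigon of Corollary~\ref{cor:bigon_no}; there one instead takes the \emph{next} Seidel Lagrangian, which is again disjoint from $L$. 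For $(c)$: the labels and transverse orientations of $Z$ are $\pi_1^{\mathrm{orb}}(\PO)$-invariant and the reading $(\gamma\beta\alpha)^\infty$ is fixed, so a single local computation near one attaching edge shows that ``pushing one step to the right'' produces an $L'$ oriented with $L$ on its left; every other case reduces to this model case by $G$-equivariance, noting that distinct lifts remain distinct by Lemma~\ref{lem:nontrivial}. Together with the reduction above, this proves the lemma.

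\textbf{Main obstacle.} The genuine difficulty is that there is no slick uniform argument for $(b)$ and $(c)$: one has to enumerate the finitely many ways in which a non-typed hexagon $F'$ can be attached to the typed neighborhood of $L$ in each of the cases $a\ge 3$, $a=2\le b-1$, and $a=2,b=3$ — corresponding to parts $(a)$, $(b)$, $(c)$ of Figure~\ref{fig:types} — and check in each local configuration that the ``push to the right'' really yields a bona fide Seidel Lagrangian that is disjoint from $L$ and correctly oriented. The case $a=2,b=3$ (with Type~IV faces, $\mathbb{Z}/2$-points, and the exceptional bigon of Corollary~\ref{cor:bigon_no}) is where this bookkeeping is most delicate.
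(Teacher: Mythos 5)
Your overall scheme is the same as the paper's: reduce to producing a Seidel Lagrangian $L'$ crossing the interior of $F'$, disjoint from $L$, with $L$ on its left (your reduction step is sound), and take $L'$ to be the branch ``one step to the right'' of $L$ at $F'$ --- in the paper this is pinned down as the unique Seidel Lagrangian through $F'$ having the colored vertex $v$ of $F\cap F'$ on its left. The gap is in your verification of (b) and (c), which is the actual content of the lemma. You justify disjointness by asserting that ``by construction $N(L)$ and $N(L')$ sit on opposite sides of a common boundary edge or vertex, hence meet only along it,'' but that is exactly what needs proof, and as a blanket statement it is not true: distinct Seidel Lagrangians in $E$ can have overlapping combinatorial neighborhoods (for instance the three branches bounding a minimal $xyz$ triangle all meet the same white base triangle), so ``opposite sides of a shared vertex'' does not by itself preclude overlap or intersection. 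You also misquote Lemma~\ref{lem:sector}: it does not say $L$ stays off $\partial N(L)$; it says the geodesic half-rays extending two sides of $N(L)$ at a boundary vertex are disjoint from $L$. The paper's proof uses precisely this: at a suitably chosen colored vertex $u$ (which for Type II/III/IV attachments is \emph{not} the vertex $v$ of $F\cap F'$, but a nearby $C$-vertex), convexity of $N(L)$ and $N(L')$ (Lemma~\ref{lem:nbhd}) plus Lemma~\ref{lem:sector} confine $L$ and $L'$ to wedges bounded by $g_0\cup g_1$ and $g_0'\cup g_1'$, and the explicit configurations of base triangles around $u$ (the dual pictures in Figure~\ref{fig:hexs}) show these wedges separate the two Lagrangians. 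None of this separation argument appears in your proposal.

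Your appeal to $G$-equivariance for (c) does not close the case analysis either: equivariance identifies translates of one local configuration, not the genuinely distinct attachment types ($a\ge3$ with $F$ of Type I; $a=2, b\ge4$ with $F$ of Type I or II; $a=2, b=3$ with $F$ of Type I, III or IV), which is exactly why the paper runs five separate cases. Likewise your ad hoc fix at a $\mathbb{Z}/2$-point (``take the next Seidel Lagrangian,'' invoking Corollary~\ref{cor:bigon_no}) is unverified and is not how the paper handles the $a=2$ cases. You correctly flag this enumeration as the ``main obstacle,'' but it is the substance of the proof, and your proposal defers it rather than carrying it out; as written, the argument has a genuine gap at the disjointness/orientation step.
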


\begin{proof}
From the assumption and Figure~\ref{fig:move}, we observe that if $F'$ intersects a typed face $F$ then $F\cap F'$ contains a unique colored vertex $v$. First note that we have five possible cases as described below. In each of the cases,  we let $ L'$ be the unique Seidel Lagrangian intersecting $F'$ such that $v$ is in the left side of of $ L'$. 

\textbf{Case 1.} $a\ge3$ and $F'$ intersects with a Type I face $F$.

By symmetry, we may assume $F\cap F'$ contains a $C$-vertex $v$.
We let $k_1$ (or $k_2$) be the number of the faces sharing $v$ on a region between $F$ and $F'$ as shown in the Figure 18(a).
Since at least one of $k_1$ and $k_2$ is non-zero, we may assume $k_1>0$.
We let $F_2$ be the Type I face along $ L$ such that $F\cap F_2$ is an edge containing $v$.
A (part of) dual tessellation by real equators is shown in Figure~\ref{fig:hexs} (d) where $F_1=F$ and $u=v$.

\textbf{Case 2.} $a=2,b\ge4$ and $F'$ intersects with a Type I face $F$.

We may again assume $F\cap F'$ contains a $C$-vertex $v$.
Figures~\ref{fig:hexs} (a) and (e) illustrate this situation as well as the dual picture, with $u=v, F_0=F,F_0'=F'$ and $k_1,k_2\ge1$. 

\textbf{Case 3.} $a=2,b\ge4$ and $F'$ intersects with a Type II face $F$.

We can assume Case 2 did not occur, so $F\cap F'$ contains an $A$-vertex $v$ as in Figure~\ref{fig:hexs} (b). We let $u$ be the unique $C$-vertex of $F$. The dual picture is Figure~\ref{fig:hexs} (e) where $F_1=F$ and $F_1'=F'$. 

\textbf{Case 4.} $a=2,b=3$ and $F'$ intersects with a Type I face $F$.

Note that $F\cap F'$ contains a $C$-vertex $v$ as in Figure~\ref{fig:hexs} (a) with $k_1,k_2\ge 2$.
The dual figure is (f) with $u=v,F_0'=F',F_0=F$ and $k'\ge0$.

\textbf{Case 5.} $a=2,b=3$ and $F'$ intersects with a Type II, III or IV face $F$.

We assume Case 4 did not occur. Since every non-typed face intersecting a Type II face also intersects a Type I face, we see that $F$ is Type III or IV.
In Figure~\ref{fig:hexs} (d) and its dual (f), $F$ can be either $F_1$ or $F_2$, and $F'$ can be either $F_1'$ or $F_2'$. Let us mark the $C$-vertex of $F_1$ as $u$.
Note that we have $k'=0$ in (f).

In all cases, we let $g_0$ and $g_1$ be the two geodesic half-rays at $u$ which extend sides of the combinatorial neighborhood $N( L)$, as shown in the figures. Let us similarly define $g_0'$ and $g_1'$. Lemma~\ref{lem:sector} implies that the interiors of the combinatorial neighborhoods of $ L$ and $ L'$ are separated by $g_0\cup g_1$ and $g_0'\cup g_1'$.
\end{proof}

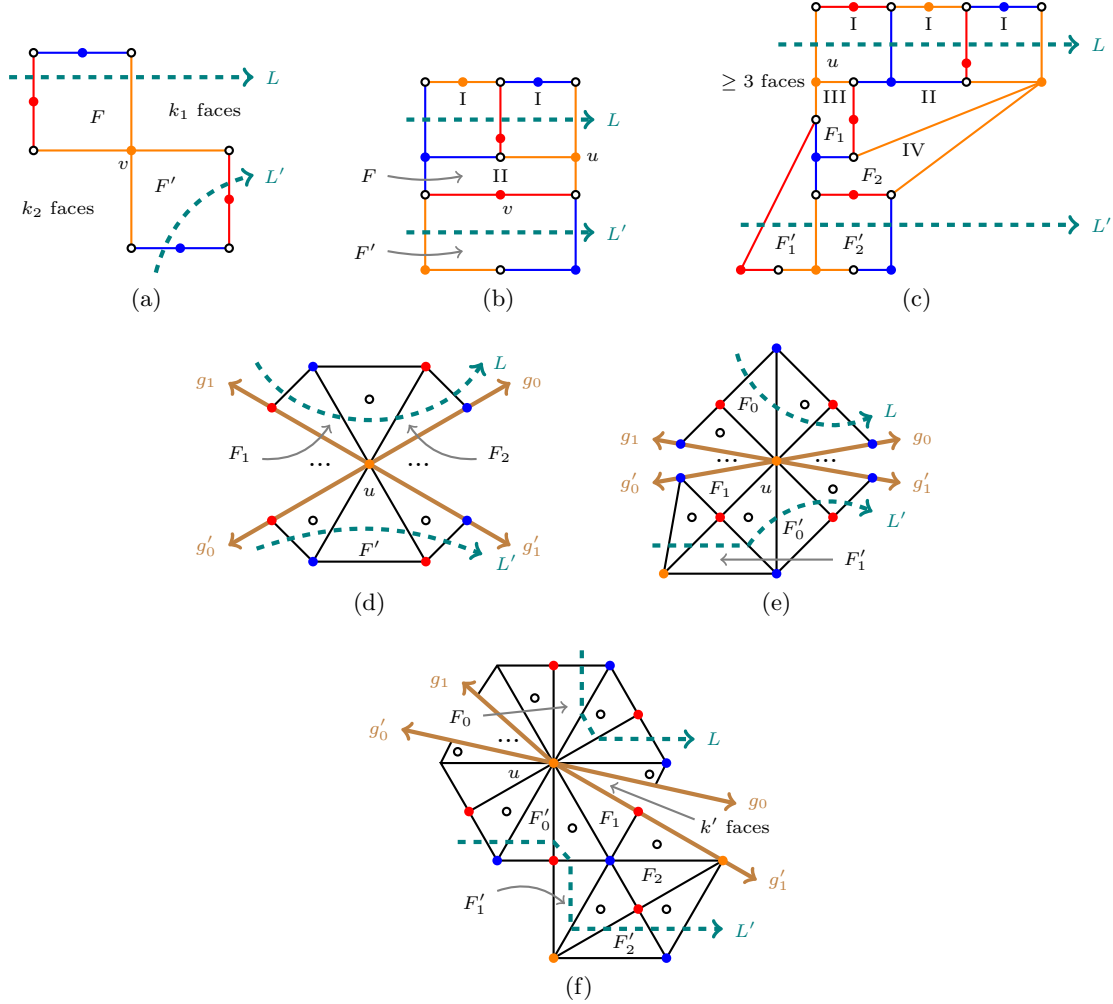
\begin{figure}[htb]
  \tikzstyle {av}=[red,draw,shape=circle,fill=red,inner sep=1pt]
  \tikzstyle {bv}=[blue,draw,shape=circle,fill=blue,inner sep=1pt]
  \tikzstyle {cv}=[orange,draw,shape=circle,fill=orange,inner sep=1pt]
  \tikzstyle {wv}=[black,draw,shape=circle,fill=white,inner sep=1pt]  
  \tikzstyle {gv}=[inner sep=0pt]
  \tikzstyle{every edge}=[-,draw]
\subfloat[(a)]{
	\begin{tikzpicture}[scale=0.65,thick]
	\draw [red] (-2,0)--(-2,2) (2,0)--(2,-2);
	\draw [blue] (-2,2)--(0,2) (0,-2)--(2,-2);
	\draw [orange] (0,0) edge (0,2) edge (-2,0) edge (2,0) edge (0,-2);
	
	\draw [dashed, teal,ultra thick,->] (-2.5,1.5) -- (2.5,1.5) node [right] {\tiny $ L$};	
 	\draw (.5,-2.5) edge [bend left,dashed, teal,ultra thick,->] (2.5,-.5);
	\draw (2.5,-.5) node [right,teal] {\tiny $ L'$};

	\draw (-.7,.7) node [] {\tiny $F$};
	\draw (1.5,.8) node [] {\tiny $k_1$ faces};
	\draw (-1.5,-1.2) node [] {\tiny $k_2$ faces};

	\draw (.7,-.7) node [] {\tiny $F'$};

	\draw (0,0) node [left=3,below] {\tiny $v$};
	\foreach \x in {-2,0} {\draw (\x,2) node [wv] {}; }	
	\foreach \x in {-2,2} {\draw (\x,0) node [wv] {}; }	
	\foreach \x in {0,2} {\draw (\x,-2) node [wv] {}; }	

	\draw (0,0) node [cv] {};	
 	\draw (2,-1) node [av] {};		
 	\draw (-2,1) node [av] {};		
 	\draw (1,-2) node [bv] {};					
 	\draw (-1,2) node [bv] {};					

	\node  [inner sep=0.9pt] at (-2.5,0) {}; 	
	\node  [inner sep=0.9pt] at (2.5,0) {}; 		
	\end{tikzpicture}}
$\quad$
\subfloat[(b)]{
	\begin{tikzpicture}[scale=0.5,thick]
	\draw [blue] (4,4)--(6,4);
	\draw [blue] (2,1)--(2,4);
	\draw [blue] (2,2) -- (4,2) (4,-1)--(6,-1)--(6,1);

	\draw [red] (4,2)--(4,4);
	\draw [red] (2,1)--(6,1);	

	\foreach \x in {6} {\draw [orange] (\x,1) -- (\x,4); }
	\foreach \x in {2} {\draw [orange] (\x,-1) -- (\x,1); }
	\draw [orange] (4,2)--(6,2)  (2,4)--(4,4) (2,-1) -- (4,-1);
	
	\foreach \x in {2,4,6} {\draw (\x,4) node [wv] {}; }
	\foreach \x in {4} {\draw (\x,2) node [wv] {}; }	
	\foreach \x in {2,6} {\draw (\x,1) node [wv] {}; }	
	\foreach \x in {4} {\draw (\x,2.5) node [av] {}; }	
	\foreach \x in {4} {\draw (\x,1) node [av] {}; }	
	\draw (4,-1) node [wv] {};
	\draw (2,-1) node [cv] {};
	\draw (6,-1) node [bv] {};
	
	\foreach \x in {5} {\draw (\x,4) node [bv] {}; }	
	\foreach \x in {2} {\draw (\x,2) node [bv] {}; }	

	\foreach \x in {3} {\draw (\x,4) node [cv] {}; }	
	\foreach \x in {6} {\draw (\x,2) node [cv] {}; }	

	\draw [dashed, teal,ultra thick,->] (1.5,3) -- (6.5,3) node [teal,right] {\tiny $ L$};
	\draw [dashed, teal,ultra thick,->] (1.5,0) -- (6.5,0) node [teal,right] {\tiny $ L'$};
	
	\draw (1,-.5) node [left] {\tiny $F'$};
	\draw (1,1.5) node [left] {\tiny $F$};
	\draw (6,2) node [right] {\tiny $u$};		
	\draw (1,-.5) edge [gray,->,bend left=-10] (3,-.5);		
	\draw (1,1.5) edge [gray,->,bend left=-10] (3,1.5);	

	\foreach \x in {3,5} {\draw [below] (\x,4) node [] {\tiny I}; }
	\foreach \x in {4} {\draw (\x,1.5) node [] {\tiny II}; }
	\foreach \x in {4.2} {\draw (\x,.6) node [] {\tiny $v$}; }
	\node  [inner sep=0.9pt] at (0,-1) {}; 	
	\node  [inner sep=0.9pt] at (8,-1) {}; 			
	\end{tikzpicture}
}	
$\quad$
\subfloat[(c)]{
	\begin{tikzpicture}[scale=0.5,thick]
	\draw (0,-1) node [gv] (p00) {};
	\draw (0,1) node [gv] (p01) {};
	\draw (0,2) node [gv] (p02) {};	
	\draw (0,3) node [gv] (p03) {};
	\draw (0,4) node [gv] (p04) {};
	\draw (0,6) node [gv] (p06) {};	
	\draw (1,1) node [gv] (p11) {};
	\draw (1,2) node [gv] (p12) {};	
	\draw (1,3) node [gv] (p13) {};
	\draw (1,4) node [gv] (p14) {};
	\draw (1,6) node [gv] (p16) {};	
	\draw (2,-1) node [gv] (p20) {};
	\draw (2,1) node [gv] (p21) {};
	\draw (2,4) node [gv] (p24) {};
	\draw (2,6) node [gv] (p26) {};	
	\draw (3,6) node [gv] (p36) {};		
	\draw (4,4) node [gv] (p44) {};
	\draw (4,4.5) node [gv] (p45) {};
	\draw (4,6) node [gv] (p46) {};	
	\draw (5,-1) node [gv] (p50) {};
	\draw (5,6) node [gv] (p56) {};	
	\draw (6,-1) node [gv] (p60) {};	
	\draw (6,3) node [gv] (p63) {};
	\draw (6,4) node [gv] (p64) {};
	\draw (6,6) node [gv] (p66) {};	
	\draw (7,4) node [gv] (p74) {};
	\draw (8,-1) node [gv] (p80) {};
	\draw (8,4) node [gv] (p84) {};
	\draw (8,4.5) node [gv] (p85) {};
	\draw (8,6) node [gv] (p86) {};
	\draw (8,7) node [gv] (p87) {};

	\draw [red] (p06) -- (p26);
	\draw [red] (-1,-1) -- (-2,-1) -- (p03) (p01) -- (p21) (p12) -- (p14);	
	\draw [red] (p44)--(p46);
	
	\draw [blue]  (p01) -- (p03) (p02)--(p12) (p14)--(p44) (p24)--(p26) (p21)--(2,-1)--(1,-1) (p46) -- (p66);

	\draw [orange] (p26)--(p46) (0,4)--(p14) (p44)--(6,4) -- (6,6);
	\draw [orange] (p64) edge (p12) edge (p21);
	\draw [orange] (p06) -- (p03);
	\draw [orange] (0,-1) -- (p01);
	\draw [orange] (-1,-1) -- 	(1,-1);		
	\foreach \x in {1,3,6} {\draw (0,\x) node [wv] {}; }
	\draw (-1,-1) node [wv] {};
	\draw (1,-1) node [wv] {};
	
	\foreach \x in {2,4} {\draw (1,\x) node [wv] {}; }	
	\foreach \x in {1,6} {\draw (2,\x) node [wv] {}; }	
	\foreach \x in {4,6} {\draw (4,\x) node [wv] {}; }		
	\foreach \x in {6} {\draw (6,\x) node [wv] {}; }

	\draw (p45) node [av] {};
	\foreach \x in {1} {\draw (\x,6) node [av] {}; }
	\foreach \x in {1} {\draw (\x,3) node [av] {}; }
	\foreach \x in {1} {\draw (\x,1) node [av] {}; }
	\draw (-2,-1) node [av] {};
	\draw (0,-1) node [cv] {};
	\draw (2,-1) node [bv] {};	
	\foreach \x in {5} {\draw (\x,6) node [bv] {}; }
	\foreach \x in {2} {\draw (\x,4) node [bv] {}; }	
	\foreach \x in {0} {\draw (\x,2) node [bv] {}; }

	\foreach \x in {3} {\draw (\x,6) node [cv] {}; }
	\foreach \x in {0,6} {\draw (\x,4) node [cv] {}; }
	\draw (.45,4.5) node [] {\tiny $u$}; 
	\draw (0,4) node [left] {\tiny $\ge 3$ faces}; 	
	\draw [dashed, teal,ultra thick,->] (-1,5) -- (7,5) node [teal,right] {\tiny $ L$};
	\draw [dashed, teal,ultra thick,->] (-2,.2) -- (7,.2) node [teal,right] {\tiny $ L'$};
	
	\draw (-.8,-.3) node [] {\tiny $F'_1$};
	\draw (1,-.3) node [] {\tiny $F'_2$};	
	
	\foreach \x in {1,3,5} {\draw [below] (\x,6) node [] {\tiny I}; }

	\foreach \x in {3} {\draw (\x,3.6) node [] {\tiny II}; }
	\foreach \x in {.5} {\draw (\x,3.6) node [] {\tiny III}; }
	\foreach \x in {.5} {\draw (\x,2.6) node [] {\tiny $F_1$}; }
	\draw (1.5,1.5) node [] {\tiny $F_2$};

	\draw (2.6,2.2) node [] {\tiny IV};
						
	\node  [inner sep=0.9pt] at (-1,0) {}; 	
	\node  [inner sep=0.9pt] at (7,0) {}; 			
	\end{tikzpicture}
}
$\quad$
\subfloat[(d)]{
	\begin{tikzpicture}[scale=.75,thick]
	\draw (0,0) node  [gv] (q)  {} node [below=5] {\tiny $u$};
	\draw (-1,1.73) node  [gv] (pu) {};
	\draw (-1.73,1) node  [gv] (pup)  {};
	\draw (1,1.73) node  [gv] (qu)  {};
	\draw (1.73,1) node  [gv] (qup)  {};
	\draw (0,1.15) node [gv] (m) {};
	\draw (-1,-1.73) node  [gv] (pd)  {};
	\draw (-1.73,-1) node  [gv] (pdp)  {};
	\draw (1,-1.73) node  [gv] (qd)  {};
	\draw (1.73,-1) node  [gv] (qdp)  {};
	\draw (-1,-1) node [gv] (pm) {};
	\draw (1,-1) node [gv] (qm) {};
		\draw (pu) -- (pup) -- (q) -- (pu) -- (qu) -- (q) -- (qup) -- (qu);
	\draw (pd) -- (pdp) -- (q) -- (pd) -- (qd) -- (q) -- (qdp) -- (qd);
	
	\draw [brown,ultra thick,->] (q) -- (-2.5,2.5/1.73) node [left] {\tiny $g_1$};
	\draw [brown,ultra thick,->] (q) -- (2.5,2.5/1.73) node [right] {\tiny $g_0$};

	\draw [brown,ultra thick,->] (q) -- (-2.5,-2.5/1.73) node [left] {\tiny $g_0'$};
	\draw [brown,ultra thick,->] (q) -- (2.5,-2.5/1.73) node [right] {\tiny $g_1'$};
	
	\draw (q) node [cv] {};	
	\draw (pdp) node [av] {};
	\draw (pup) node [av] {};
	\draw (qu) node [av] {};
	\draw (qd) node [av] {};
	\draw (pu) node [bv] {};	
	\draw (pd) node [bv] {};	
	\draw (qup) node [bv] {};	
	\draw (qdp) node [bv] {};	
	
	\draw (m) node [wv] {};
	\draw (pm) node [wv] {};
	\draw (qm) node [wv] {};		
	\draw (0,0) node [left=10] {$...$};
	\draw (0,0) node [right=10] {$...$};
	\draw (-2,1.8) edge [dashed, teal,ultra thick,->,bend right=60] (2,1.8);
	\draw (-2,-1.5) edge [dashed, teal,ultra thick,->,bend left=20] (2,-1.6);
	\draw (0,-1.5) node [] {\tiny $F'$};
	\draw [teal] (2,1.8) node [right] {\tiny $ L$};	
	\draw [teal] (2,-1.6) node [below=3,right=1] {\tiny $ L'$};	
	\draw (-2.3,.5) node [left,below] {\tiny $F_1$} edge [gray,->,bend right=30] (-.7,.7);	
	\draw (2.3,.5) node [right,below] {\tiny $F_2$} edge [gray,->,bend left=30] (.7,.7);	
		
	\node  [inner sep=0.9pt] at (-2.5,0) {}; 	
	\node  [inner sep=0.9pt] at (2.5,0) {}; 			
	\end{tikzpicture}
}
$\quad$
\subfloat[(e)]{
	\begin{tikzpicture}[scale=.75,thick]
	
	\draw (0,0) -- (0,2) -- (-1.7,.3) -- (0,0) -- (1.7,.3) -- (0,2);
	\draw (-1.7,-.3) -- (-2,-2) -- (1,1);

	\draw (0,0) -- (0,-2) -- (-1.7,-.3) -- (0,0) -- (1.7,-.3) -- (0,-2) -- (-2,-2);
	\draw (-1,1) -- (1,-1);
	
	\draw (0,0) node [left=4,below=5] {\tiny $u$};
	\draw (0,0) node [left=10] {$...$};
	\draw (0,0) node [right=10] {$...$};
	
	\draw [brown,ultra thick,->] (0,0) -- (-2.2,2.2*.3/1.7) node [left] {\tiny $g_1$};
	\draw [brown,ultra thick,->] (0,0) -- (2.2,2.2*.3/1.7) node [right] {\tiny $g_0$};

	\draw [brown,ultra thick,->] (0,0) -- (-2.2,-2.2*.3/1.7) node [left] {\tiny $g_0'$};
	\draw [brown,ultra thick,->] (0,0) -- (2.2,-2.2*.3/1.7) node [right] {\tiny $g_1'$};

	\draw (.5,1) node [wv] {};
	\draw (-1,.5) node [wv] {};	
	\foreach \x in {-1.5,-.5} {\draw (\x,-1) node [wv] {}; }	

	\draw (1,-.5) node [wv] {};
	\draw (0,0) node [cv] {};
	\draw (-2,-2) node [cv] {};	
	
	\foreach \x in {-1.7,1.7} {\draw (\x,.3) node [bv] {}; }	
	\foreach \x in {-1.7,1.7} {\draw (\x,-.3) node [bv] {}; }	
	\foreach \x in {-2,2} {\draw (0,\x) node [bv] {}; }	

	\foreach \x in {-1,1} {\draw (\x,1) node [av] {}; }	
	\foreach \x in {-1,1} {\draw (\x,-1) node [av] {}; }

	\draw (-.7,1.9) edge [dashed, teal,ultra thick,->,bend right=50] (1.7,.8);
	\draw [dashed, teal,ultra thick]  (-2.2,-1.5) -- (-.5,-1.5);
	\draw (-.5,-1.5) edge [dashed, teal,ultra thick,->,bend left=40]  (1.7,-.9);
	\draw [teal] (1.7,.7) node [above=4,right] {\tiny $ L$};	
	\draw [teal]   (1.7,-.9) node [below=3,right=1] {\tiny $ L'$};	
	\draw (-.5,1) node [] {\tiny $F_0$};
	\draw (.3,-1.2) node [] {\tiny $F_0'$};
	\draw (-1,-.5) node [] {\tiny $F_1$};
	\draw (1,-1.75) edge [gray,->] (-1,-1.75) node [right] {\tiny $F_1'$};

	\node  [inner sep=0.9pt] at (-2.5,0) {}; 	
	\node  [inner sep=0.9pt] at (2.5,0) {}; 			
	\end{tikzpicture}
}
$\quad$
\subfloat[(f)]{
	\begin{tikzpicture}[scale=0.75,thick]
	\draw (-1,-1.73) -- (-1.5,-.86) -- (1.5,.86) -- (1,1.73) -- (-1,-1.73) -- (0,-1.73) -- (0,1.73) -- (1,1.73);

	\draw (0,-1.73) -- (3,-1.73)  (2,0)--(1.5,.86);
	\draw (-1.5,-.86)--(-2,0)--(0,0) -- (1,-1.73) -- (1.5,-1.73/2) (2,0) -- (0,0) -- (3,-1.73);
	\draw (0,-1.73) -- (0,-3.46) -- (2,-3.46) -- (3,-1.73) -- (0,-3.46) -- (1,-1.73)--(2,-3.46);
	\draw (-2,0) -- (-1.8,.4) -- (0,0) -- (-1.35,1.2) -- (-1,1.73) -- (0,1.73) ;
	\draw (0,0) -- (-1,1.73);	
	\draw (2,0)--(1.8,-.4) -- (0,0);

	\draw (1.5,1.73/2);
	
	\draw [brown,ultra thick,->] (0,0) -- (1.8*1.8,-.4*1.8) node [right] {\tiny $g_0$};
	\draw [brown,ultra thick,->] (0,0) -- (-1.35*1.2,1.2*1.2) node [left] {\tiny $g_1$};

	\draw [brown,ultra thick,->] (0,0) -- (-1.8*1.5,.4*1.5) node [left] {\tiny $g_0'$};
	\draw [brown,ultra thick,->] (0,0) -- (3*1.2,-1.73*1.2) node [right] {\tiny $g_1'$};

	\draw [dashed, teal,ultra thick,->] (-1.7,-1.4) -- (0,-1.4) -- (.3,-1.73) -- 
	(.3,-1.7*1.73) -- (3,-1.7*1.73) node [teal,right] {\tiny $ L'$};

	\draw [dashed, teal,ultra thick,->] (.5,2) -- (.5,1.73/2) -- (1.5/2,.86/2) -- (2.5,.85/2) node [teal,right] {\tiny $ L$};

	\foreach \x in {1.73,-1.73} {\draw (0,\x) node [av] {}; }	
	\foreach \x in {0.86,-0.86,-2.59} {\draw (1.5,\x) node [av] {}; }		
	\foreach \x in {0.86} {\draw (-1.5,-\x) node [av] {}; }		
	\foreach \x in {0,-1.73*2} {\draw (0,\x) node [cv] {}; }

	\draw (3,-1.73) node [cv] {};
	\draw (1,-1.73) node [bv] {};
	\draw (-1,-1.73) node [bv] {};
	\draw (1,1.73) node [bv] {};
	\draw (2,-1.73*2) node [bv] {};	
	\draw (2,0) node [bv] {};

	\draw (-2.5/3,-.86) node [wv] {};
	\draw (1/3,-1.73*2/3) node [wv] {};
	\draw (-1/3,1.73*2/3) node [wv] {};

	\draw (5.5/3,-1.73*2.5/3) node [wv] {};	

	\draw (1.7,-.2) node [wv] {};	
	\draw (-1.7,.2) node [wv] {};	

	\draw (2.5/3,1.73/2) node [wv] {};
	
	\draw (2.5/3,-3.46/3-1.73/3-2.59/3) node [wv] {};

	\draw (2,-3.46/3-1.73/3-2.59/3) node [wv] {};

	\draw (1,-1) node [] {\tiny $F_1$};
	\draw (1.75,-2) node [] {\tiny $F_2$};
	\draw (1.25,-3.2) node [] {\tiny $F'_2$};	
	\draw (-1,-2.5)  node [left] {\tiny $F'_1$} edge [gray,->,bend left] (.2,-2.5);

	\draw (-.4,-.2) node [left] {\tiny $u$};
	\draw (-.8,.4) node [] {...};
	\draw (2.4,-1.1) node [right] {\tiny $k'$ faces};
	\draw (2.5,-1) edge [gray,->] (1,-.4);

	\draw (-.25,-1) node [] {\tiny $F'_0$};
	\draw (-1.25,.8)  node [left] {\tiny $F_0$} edge [gray,->] (.25,1);

	\node  [inner sep=0.9pt] at (-1,0) {}; 	
	\node  [inner sep=0.9pt] at (4,0) {}; 			
	\end{tikzpicture}
	}
\caption{Lemma~\ref{lem:hexs} and their dual pictures.}\label{fig:hexs}
\end{figure}

Let us now describe an algorithmic construction of an elementary move.
Informally speaking, an elementary move is a natural way to extend a given polygon for the potential to a strictly bigger polygon, along a given side.

Let $X_0\subseteq E$ be a given $(a,b,c)$-diagram.
Regard $X_0$ as a polygon for the potential, and choose an arbitrary side $\ell$ of the polygon.
The side $\ell$ intersects with two tips $t, t'$ of the $(a,b,c)$ diagram $X$ near the ends of $\ell$.
In other words, we let $t$ and $t'$ be the tips joined to double joints in $X_0$ such that the interval between them on $\ell$ is contained in a Seidel Lagrangian $L$. 
Let us assume that $ L$ hits $t$ before $t'$ with respect to the orientation of $ L$. Enumerate the tips intersecting $[t,t']$ as $t_0=t,t_1,t_2,\ldots,t_k=t'$ in this order.
For $i=1,2,\ldots,k-1$, each $t_i$ is joined to a single joint and contained in exactly two outer pentagons. 
We complete these pentagons to hexagons $P_1, P_2,\ldots,P_{k-1}$; these will be Type I faces along $ L$ as shown in Figure~\ref{fig:move}.
We add suitably many colored edges (ending at white vertices) to the newly added colored vertices such that all of the colored vertices have the correct valencies according to their colors. We add leaves to the white vertices so that every white vertex has valency three.  For the case $a,b,c \geq 3$, this already gives an $(a,b,c)$ diagram $X$; see Figure~\ref{fig:move} (a).
Figure~\ref{fig:move_ex} illustrates an example when $a=b=3$ and $c=4$.

Now assume $a=2$.
After completing the above process, some outer polygons may have more than five edges. More precisely, we may see outer heptagons having two leaves.
We complete each of these heptagons to hexagons by extending the two leaves to a single vertex. These hexagons are of Type II along $ L$; see Figure~\ref{fig:move} (b) and (c).
If $b,c\ge4$, then the process stops here and the result is an $(a,b,c)$-diagram $X$. 
Assume $b=3$ so that $c\ge6$. Then we again extend two leaves of each outer heptagon to a single vertex and add appropriate number of edges and vertices, so that every vertex has the correct valency. By repeating this process once more, we add suitably many Type III and IV faces along $ L$ and then obtain an $(a,b,c)$-diagram $X$. See Figure~\ref{fig:move} (c) to see the last two steps of this process.

\begin{remark}\label{rem:betagamma}
Suppose $a=2$. In the above construction, the first tip $t$ is at a $\gamma$-leaf if and only if a boundary word for $X_0$ is $(\beta\gamma)^2$, and this occurs if and only if the last tip $t'$ is at a $\beta$-leaf. 
\end{remark}

In all cases, we see that for every $(a,b,c)$-diagram $X'$ satisfying $X_0\subseteq X'$ and $\mathrm{int}(X')\cap ( L\cap X_0)\ne\varnothing$, we have $X\subseteq X'$.
In accordance with Definition~\ref{defn:move}, we say $X$ is obtained from $X_0$ by an elementary move \emph{along $ L$}. We will call the inner faces of $X$ not contained in $X_0$ as \emph{Type 1, 2, 3 or 4 faces} of this elementary move, according to the marking shown in Figure~\ref{fig:move}.

We claim that every elementary move is constructed in the above mentioned way. For this, suppose a diagram $X_1$ is obtained from a diagram $X_0$ by an elementary move such that $X_0$ is the left side of a Lagrangian $ L$ in $X_1$. By Lemma~\ref{lem:double} below, the tips of $X_0$ on $ L\cap X_1$ can be enumerated as $t_0=t,t_1,\ldots,t_{k-1},t_k=t'$ in this order such that $t$ and $t'$ are adjacent to double joints and the rests are adjacent to single joints. So the above construction is well-defined with respect to $X_0$ and $ L$, resulting in a diagram $X$ such that $X_0 \subsetneq X\subseteq X_1$. By the definition of an elementary diagram, we have $X=X_1$ and the claim is proved.

\begin{lemma}\label{lem:double}
Suppose $X$ is an $(a,b,c)$ diagram and $ L$ is a Lagrangian intersecting the interior of $X$.
If $X_0$ is the left side of $ L$ in $X$, then the first and the last leaves of $X_0$ that $ L$ hits are connected to double joints of $X_0$.
\end{lemma}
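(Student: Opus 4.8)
The plan is to localise the statement at the two corners of the polygon $X_0$ that bound the side running along $L$, and to read off the double-joint property from the shape of the hexagon tessellation $Z$ near an immersed point, using only the structure theorem for $(a,b,c)$-diagrams (Lemma~\ref{lem:diagram}). As noted just before Definition~\ref{defn:move}, $X_0$ is itself an $(a,b,c)$-diagram, so $\partial X_0$ is a simple closed curve which splits as $\ell\cup\gamma_1$, where $\ell=L\cap X_0$ is a single side of the polygon $X_0$ (it carries no immersed point in its interior, being an arc of the Seidel Lagrangian $L$) and $\gamma_1\subseteq\partial X$. The two endpoints of $\ell$ are corners $c_0,c_k$ of $X_0$; since $X_0$ is a polygon for the potential these are convex corners mapped to the immersed generators, hence each of $c_0,c_k$ is an $x$-, $y$- or $z$-corner. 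Let $t_0$ be the first tip of $X_0$ on $\ell$ in the direction of $L$ starting from $c_0$, and $t_k$ the last. We may assume $X_0$ is not the basic diagram (that case being immediate, the single tip on a side being attached to the unique white vertex). By Lemma~\ref{lem:diagram} every joint of $X_0$ is white and carries one or two leaves, so it suffices to show that the joint of $t_0$ carries two leaves, the argument for $t_k$ being symmetric.

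Next I would analyse $c_0$. By the label-reading rule at immersed points, the turn of $\partial X_0$ at $c_0$ reads one of $\alpha\beta,\beta\gamma,\gamma\alpha$; say it reads $\beta\gamma$ (an $x$-corner), the remaining two cases being identical after cyclic relabelling. By the construction of $Z$ (cf. Figure~\ref{fig:orbifold}), this turn crosses, in this order, the $\beta$-edge $B_0W_0$ and the $\gamma$-edge $C_0W_0$ of one and the same white vertex $W_0$, namely the centre of the white base triangle containing the immersed point $c_0$. The crossing on $C_0W_0$ is the $Z$-edge crossing of $\partial X_0$ immediately after $c_0$ along $L$, hence equals $t_0$, while the crossing on $B_0W_0$ is a further tip $s$ of $X_0$ lying on $\gamma_1$, on the other side of $c_0$. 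Since every joint of $X_0$ is white, the leaf of $X_0$ at $t_0$ is the subarc $[t_0,W_0]$ of $C_0W_0$, so the joint of $t_0$ is $W_0$ and $W_0\in\mathrm{int}(X_0)$; but then $[s,W_0]\subseteq B_0W_0$ is a second leaf of $X_0$ at $W_0$. Thus the white joint $W_0$ carries at least two leaves, and as $X_0$ is not basic Lemma~\ref{lem:diagram} forces $W_0$ to be a double joint. Running the same argument at $c_k$ — where $t_k$ is now the first of the two edge-crossings of the turn at $c_k$ and its partner crossing lies on $\gamma_1$ — shows that the joint of $t_k$ is a double joint as well, which proves the lemma. (The intermediate tips $t_1,\dots,t_{k-1}$ are each attached to single joints, since such a tip lies in the interior of the travel phase along $L$ and only one of the three edges of its incident white vertex is met by $\partial X_0$; this is the natural complement of the above, though not needed here.)

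The step requiring the most care is the geometric assertion that the turn of $\partial X_0$ at an immersed-point corner crosses precisely two of the three edges of a single white vertex of $Z$, and that the first of these crossings lying on $L$ is exactly the tip $t_0$; establishing this means unwinding the position of $Z$ relative to the triangular tessellation by real equators near an immersed point, and checking it uniformly for all $a,b,c\ge2$, including the case $a=2$ where $A$-vertices have valence two and component cells are no longer triangles. A secondary, more routine point is the global claim that $L\cap X$ is a single arc, so that $\ell$ is one side of $X_0$ with the two adjacent corners $c_0,c_k$; this follows from $X_0$ being a disk together with the embeddedness of Seidel Lagrangians in $E$ (Corollary~\ref{cor:bigon_no}), but should be spelled out, as should the convention adopted for the basic diagram.
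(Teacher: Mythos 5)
Your proof is correct in substance, but it takes a different route from the paper's. The paper argues at the level of faces: $L$ enters and leaves $X$ through outer faces of $X$, the pieces cut off by $L$ are the first and last outer faces of $X_0$ adjacent to $L$, and since an outer face of a diagram is a triangle or a pentagon while the cut-off piece has too few sides to be a pentagon, these faces must be type-(A) triangles, whose two leaves by definition emanate from a double joint. You instead localize at the two corners $c_0,c_k$ of $X_0$ lying on $L\cap\partial X$ and identify the double joint explicitly: the two $Z$-crossings flanking such a corner lie on two of the three edges of the white centre $W_0$ of the base triangle containing the corner wedge, one being the first (resp.\ last) tip on $\ell$ and the other a tip on $\partial X$, so $W_0$ carries two leaves. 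This is essentially a direct verification of the local picture behind Lemma~\ref{lem:diagram} (the double-joint configuration of Figure~\ref{fig:diagram}(b)), whereas the paper deduces it indirectly from the outer-face classification; your version is more geometric and pinpoints which white vertex is the double joint, while the paper's is shorter and avoids any corner-level analysis.

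Two caveats on the step you yourself flag as delicate. First, the claim that the two flanking crossings share a single white vertex is exactly where the orientation convention enters: it holds because the corners of $X_0$ are ``positive'' corners (the boundary orientation of $X_0$ agrees with that of $L_E$, equivalently the turn reads a positive word $\alpha\beta$, $\beta\gamma$ or $\gamma\alpha$), so the corner wedge sits in the white base triangle containing $W_0$; for the reflected ($U^{\mathrm{op}}$-type) corners, whose turns read inverse letters, the two crossings lie at two \emph{different} white vertices and the argument would fail. Since $X_0$ is the left side of $L$ inside a diagram $X$, this positivity is automatic, but it should be said. Second, Lemma~\ref{lem:diagram} does not literally rule out a white vertex with three leaves; to conclude ``double'' rather than ``at least two'' one should note that if all three edges at $W_0$ were crossed by $\partial X_0$ then embeddedness of $\partial X_0$ would force $X_0$ to be the basic diagram, which you have excluded. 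Neither point affects the correctness of your argument.
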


\begin{proof}
The Seidel Lagrangian segment $ L$ starts and ends at outer pentagons of $X$.  It cuts the outer pentagons in two polygons with fewer edges.  Since $X_0$ is also a diagram, its outer faces must be either pentagons or triangles.  Thus the first and last outer faces of $X_0$ adjacent to $ L$ can only be triangles.  This implies the first and last leaves of $X_0$ that $ L$ hits are connected to double joints of $X_0$.
\end{proof}

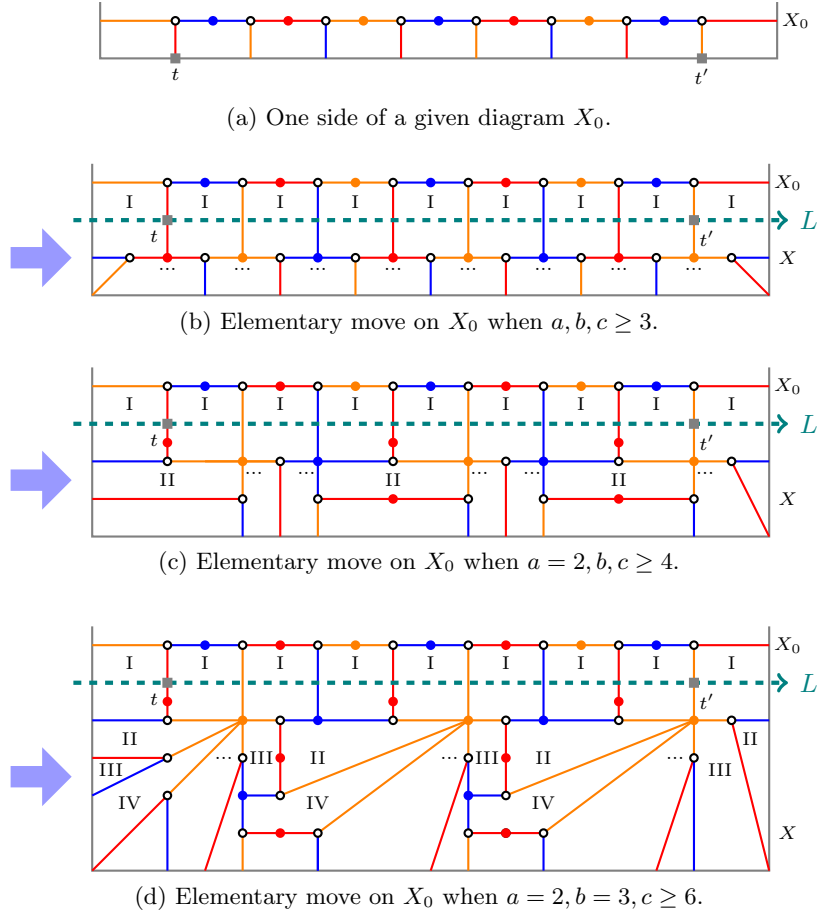
\begin{figure}[htb]
  \tikzstyle {av}=[red,draw,shape=circle,fill=red,inner sep=1pt]
  \tikzstyle {bv}=[blue,draw,shape=circle,fill=blue,inner sep=1pt]
  \tikzstyle {cv}=[orange,draw,shape=circle,fill=orange,inner sep=1pt]
  \tikzstyle {wv}=[black,draw,shape=circle,fill=white,inner sep=1pt]  
  \tikzstyle {gv}=[inner sep=0pt]
  \tikzstyle {v}=[gray,fill=gray,draw,shape=rectangle,inner sep=1.5pt]    
  \tikzstyle{every edge}=[-,draw]
\subfloat[(a) One side of a given diagram $X_0$.]{
	\begin{tikzpicture}[scale=0.5,thick]
    \node [fill=white,single arrow, draw=none, rotate=0] at (-11.5,2.5) {$\ \ \ $};

	\draw [gray] (-10,3.5) -- (-10,2) -- (8,2) -- (8,3.5);	
	\foreach \x in {-8,-2,4} {\draw [blue] (\x,3) -- (\x+2,3); }
	\foreach \x in {-6,0,6} {\draw [red] (\x,3) -- (\x+2,3); }	
	\foreach \x in {-10,-4,2} {\draw [orange] (\x,3) -- (\x+2,3); }		

	\foreach \x in {-6,0,6} {\draw [orange] (\x,2) -- (\x,3); }		
	\foreach \x in {-4,2} {\draw [blue] (\x,2) -- (\x,3); }		
	\foreach \x in {-8,-2,4} {\draw [red] (\x,2) -- (\x,3); }

	\foreach \x in {-8,-6,-4,-2,0,2,4,6} {\draw (\x,3) node [wv] {}; }

	\foreach \x in {-5,1} {\draw (\x,3) node [av] {}; }	
	\foreach \x in {-7,-1,5} {\draw (\x,3) node [bv] {}; }	
	\foreach \x in {-3,3} {\draw (\x,3) node [cv] {}; }	

	\draw (-8,2) node [v] {} node [below] {\tiny $t$};
	\draw (6,2) node [v] {} node [below] {\tiny $t'$};
	\draw (8.5,3) node [] {\tiny $X_0$};

	\end{tikzpicture}
}
\\  
\subfloat[(b) Elementary move on $X_0$ when $a,b,c\ge3$. ]{
	\begin{tikzpicture}[scale=0.5,thick]
    \node [fill=blue!40,single arrow, draw=none, rotate=0] at (-11.5,1) {$\ \ \ $};
	
	\draw [gray] (-10,3.5) -- (-10,0) -- (8,0) -- (8,3.5);	
	\foreach \x in {-8,-2,4} {\draw [blue] (\x,3) -- (\x+2,3); }
	\foreach \x in {-6,0,6} {\draw [red] (\x,3) -- (\x+2,3); }	
	\foreach \x in {-10,-4,2} {\draw [orange] (\x,3) -- (\x+2,3); }		

	\foreach \x in {-6,0,6} {\draw [orange] (\x,1) -- (\x,3); }		
	\foreach \x in {-4,2} {\draw [blue] (\x,1) -- (\x,3); }		
	\foreach \x in {-8,-2,4} {\draw [red] (\x,1) -- (\x,3); }		

	\foreach \x in {-9,-3,3} {\draw [red] (\x,1) -- (\x+2,1); }
	\draw [blue] (7,1) -- (8,1);	
	\foreach \x in {-10} {\draw [blue] (\x,1) -- (\x+1,1); }		
	\foreach \x in {-5,1} {\draw [blue] (\x,1) -- (\x+2,1); }	
	\foreach \x in {-7,-1,5} {\draw [orange] (\x,1) -- (\x+2,1); }	

	\draw [orange] (-9,1) -- (-10,0);	
	\draw [red] (7,1) -- (8,0);	
	\foreach \x in {-5,1} {\draw [red] (\x,1) -- (\x,0); }
	\foreach \x in {-7,-1,5} {\draw [blue] (\x,1) -- (\x,0); }	
	\foreach \x in {-3,3} {\draw [orange] (\x,1) -- (\x,0); }		

	\foreach \x in {-8,-6,-4,-2,0,2,4,6} {\draw (\x,3) node [wv] {}; }
	\foreach \x in {-9,-7,-5,-3,-1,1,3,5,7} {\draw (\x,1) node [wv] {}; }	

	\foreach \x in {-5,1} {\draw (\x,3) node [av] {}; }	
	\foreach \x in {-7,-1,5} {\draw (\x,3) node [bv] {}; }	
	\foreach \x in {-3,3} {\draw (\x,3) node [cv] {}; }	
	\foreach \x in {-6,0,6} {\draw (\x,1) node [cv] {}; }	
	\foreach \x in {-4,2} {\draw (\x,1) node [bv] {}; }	
	\foreach \x in {-8,-2,4} {\draw (\x,1) node [av] {}; }		
	\draw [dashed, teal,ultra thick,->] (-10.5,2) -- (8.5,2) node [teal,right] {\small $ L$};
	\draw (-8,2) node [v] {} node [left=5,below] {\tiny $t$};
	\draw (6,2) node [v] {} node [right=5,below] {\tiny $t'$};
	\draw (8.5,3) node [] {\tiny $X_0$};
	\draw (8.5,1) node [] {\tiny $X$};

	\foreach \x in {-9,-7,-5,-3,-1,1,3,5,7} {\draw (\x,2.5) node [] {\tiny I}; }
	\foreach \x in {-8,-6,-4,-2,0,2,4,6} {\draw (\x,.7) node [] {\tiny ...}; }

	\node  [inner sep=0.9pt] at (-2.5,0) {}; 	
	\node  [inner sep=0.9pt] at (2.5,0) {}; 			
	\end{tikzpicture}
}
\\ 
\subfloat[(c) Elementary move on $X_0$ when  $a=2,b,c\ge4$.]{
	\begin{tikzpicture}[scale=0.5,thick]
    \node [fill=blue!40,single arrow, draw=none, rotate=0] at (-11.5,1.5) {$\ \ \ $};	
	\draw [gray] (-10,4.5) -- (-10,0) -- (8,0) -- (8,4.5);	
	\draw [blue] (-8,4) -- (-6,4) (-2,4)--(0,4) (4,4)--(6,4);
	\draw [blue] (-4,1) -- (-4,4) (2,1)--(2,4) (7,2)--(8,2);
	\draw [blue] (-6,1)--(-6,0) (0,1)--(0,0) (6,0)--(6,1);
	\draw [blue] (-10,2)--(-8,2);

	\foreach \x in {-5,1} {	\draw [blue] (\x,2) -- (\x+3,2);}

	\foreach \x in {-6,0,6} {	\draw [red] (\x,4) -- (\x+2,4);}
	\foreach \x in {-8,-2,4} {	\draw [red] (\x,2) -- (\x,4);}
	
	\draw [red]  (7,2)--(8,0);
	\draw [red] (-10,1)--(-6,1) (-5,0)--(-5,2) (-4,1)--(0,1) (1,0)--(1,2) (2,1)--(6,1);	

	\foreach \x in {-6,0,6} {\draw [orange] (\x,1) -- (\x,4); }
	\foreach \x in {-4,2} {\draw [orange] (\x,0) -- (\x,1); }
	\foreach \x in {-10,-4,2} {\draw [orange] (\x,4) -- (\x+2,4); }
	
	\foreach \x in {-8,-2,4} {\draw [orange] (\x,2) -- (\x+3,2); }
	
	\draw [orange] (-7,2)--(-5,2);	
	\foreach \x in {-8,-6,-4,-2,0,2,4,6} {\draw (\x,4) node [wv] {}; }
	\foreach \x in {-8,-5,-2,1,4,7} {\draw (\x,2) node [wv] {}; }	
	\foreach \x in {-6,-4,0,2,6} {\draw (\x,1) node [wv] {}; }	
	\foreach \x in {-5,1} {\draw (\x,4) node [av] {}; }	
	\foreach \x in {-8,-2,4} {\draw (\x,2.5) node [av] {}; }	
	\foreach \x in {-2,4} {\draw (\x,1) node [av] {}; }	

	\foreach \x in {-7,-1,5} {\draw (\x,4) node [bv] {}; }	
	\foreach \x in {-4,2} {\draw (\x,2) node [bv] {}; }	

	\foreach \x in {-3,3} {\draw (\x,4) node [cv] {}; }	
	\foreach \x in {-6,0,6} {\draw (\x,2) node [cv] {}; }	

	\draw [dashed, teal,ultra thick,->] (-10.5,3) -- (8.5,3) node [teal,right] {\small $ L$};
	\draw (-8,3) node [v] {} node [left=5,below] {\tiny $t$};
	\draw (6,3) node [v] {} node [right=5,below] {\tiny $t'$};
	\draw (8.5,4) node [] {\tiny $X_0$};
	\draw (8.5,1) node [] {\tiny $X$};

	\foreach \x in {-9,-7,-5,-3,-1,1,3,5,7} {\draw [below] (\x,4) node [] {\tiny I}; }
	\foreach \x in {-8,-2,4} {\draw (\x,1.5) node [] {\tiny II}; }
	\foreach \x in {-5.7,-4.3,.3,1.7,6.3} {\draw (\x,1.7) node [] {\tiny ...}; }
	\end{tikzpicture}
}
\\
\subfloat[(d) Elementary move on $X_0$ when  $a=2,b=3,c\ge6$.]{
	\begin{tikzpicture}[scale=0.5,thick]
    \node [fill=blue!40,single arrow, draw=none, rotate=0] at (-11.5,2.5) {$\ \ \ $};
	
	\draw (0,0) node [gv] (p00) {};
	\draw (0,1) node [gv] (p01) {};
	\draw (0,2) node [gv] (p02) {};	
	\draw (0,3) node [gv] (p03) {};
	\draw (0,4) node [gv] (p04) {};
	\draw (0,6) node [gv] (p06) {};	
	\draw (1,1) node [gv] (p11) {};
	\draw (1,2) node [gv] (p12) {};	
	\draw (1,3) node [gv] (p13) {};
	\draw (1,4) node [gv] (p14) {};
	\draw (1,6) node [gv] (p16) {};	
	\draw (2,0) node [gv] (p20) {};
	\draw (2,1) node [gv] (p21) {};
	\draw (2,4) node [gv] (p24) {};
	\draw (2,6) node [gv] (p26) {};	
	\draw (3,6) node [gv] (p36) {};		
	\draw (4,4) node [gv] (p44) {};
	\draw (4,4.5) node [gv] (p45) {};
	\draw (4,6) node [gv] (p46) {};	
	\draw (5,0) node [gv] (p50) {};
	\draw (5,6) node [gv] (p56) {};	
	\draw (6,0) node [gv] (p60) {};	
	\draw (6,3) node [gv] (p63) {};
	\draw (6,4) node [gv] (p64) {};
	\draw (6,6) node [gv] (p66) {};	
	\draw (7,4) node [gv] (p74) {};
	\draw (8,0) node [gv] (p80) {};
	\draw (8,4) node [gv] (p84) {};
	\draw (8,4.5) node [gv] (p85) {};
	\draw (8,6) node [gv] (p86) {};
	\draw (8,7) node [gv] (p87) {};
		
	\draw (-6,0) node [gv] (m60) {};
	\draw (-6,1) node [gv] (m61) {};
	\draw (-6,2) node [gv] (m62) {};	
	\draw (-6,3) node [gv] (m63) {};
	\draw (-6,4) node [gv] (m64) {};
	\draw (-6,6) node [gv] (m66) {};	
	\draw (-5,1) node [gv] (m51) {};
	\draw (-5,2) node [gv] (m52) {};	
	\draw (-5,3) node [gv] (m53) {};
	\draw (-5,4) node [gv] (m54) {};
	\draw (-5,6) node [gv] (m56) {};	
	\draw (-4,0) node [gv] (m40) {};
	\draw (-4,1) node [gv] (m41) {};
	\draw (-4,4) node [gv] (m44) {};
	\draw (-4,6) node [gv] (m46) {};	
	\draw (-3,6) node [gv] (m36) {};		
	\draw (-2,4) node [gv] (m24) {};
	\draw (-2,4.5) node [gv] (m25) {};
	\draw (-2,6) node [gv] (m26) {};	
	\draw (-1,0) node [gv] (m10) {};
	\draw (-1,6) node [gv] (m16) {};	
	\draw (-7,0) node [gv] (m70) {};
	\draw (-7,2) node [gv] (m72) {};
	\draw (-7,3) node [gv] (m73) {};
	\draw (-7,4) node [gv] (m74) {};
			
	\draw (-8,0) node [gv] (m80) {};
	\draw (-8,1) node [gv] (m81) {};
	\draw (-8,2) node [gv] (m82) {};			
	\draw (-8,3) node [gv] (m83) {};
	\draw (-8,4) node [gv] (m84) {};			
	\draw (-8,4.5) node [gv] (m85) {};
	\draw (-8,6) node [gv] (m86) {};			
	\draw (-8,7) node [gv] (m87) {};			
	
	\draw [gray] (-10,6.5) -- (-10,0) -- (8,0) -- (8,6.5);
	\foreach \x in {-8,-2,4} {\draw [red] (\x,4) -- (\x,6); }

	\draw [red] (-7,0) edge (m63);	
	\draw [red] (m66) -- (m46) (p06) -- (p26) (p66) -- (8,6) (p74) -- (8,0);
	\draw [red] (m52) -- (m54) (m61) -- (m41) (-1,0) -- (p03) (p01) -- (p21) (p12) -- (p14) (5,0)--(p63);

	\draw [blue] (-8,6) -- (m66);
	\draw [blue] (-8,4) -- (-10,4);
	\draw [blue] (m63) -- (m61);

	\draw [blue] (m62) -- (m52) (m54) -- (m24) (m44) -- (m46) (m41) -- (-4,0) (p01) -- (p03) (p02)--(p12) (p14)--(p44) (p24)--(p26) (p21)--(2,0) (p46) -- (p66) (6,0)--(p63) (p74)--(8,4);
	\draw [blue] (m26) -- (p06);
	\draw [blue] (-4,0) -- (m41);
	\draw [blue] (-8,3) -- (-10,2);
	\draw [blue] (-8,2) -- (-8,0);
	\draw [red] (-8,3) -- (-10,3);
	\draw [red] (-8,2) -- (-10,0);
	
	\foreach \x in {-8,-2,4} {\draw [orange] (\x,4) -- (\x+3,4); }

	\foreach \x in {-10,-4,2} {\draw [orange] (\x,6) -- (\x+2,6); }
	\draw [orange] (m63) edge (m66) ;
	\draw [orange] (m61) edge (-6,0);
	
	\draw [orange] (-6,4) edge (-8,3) edge (-8,2) ;
	\draw [orange] (p66)--(p63);
	\draw [orange] (p64) edge (p12) edge (p21);
	\draw [orange] (p04) edge (m52) edge (m41);
	\draw [orange] (p06) -- (p03);
	\draw [orange] (0,0) -- (p01);
	\foreach \x in {-6.5,5.5,-.5} {\draw (\x,3) node [] {\tiny ...};}

	\foreach \x in {-8,-6,-4,-2,0,2,4,6} {\draw (\x,6) node [wv] {}; }	
	\foreach \x in {-8,-5,-2,1,4,7} {\draw (\x,4) node [wv] {}; }	
	\foreach \x in {-8,-6,0,6} {\draw (\x,3) node [wv] {}; }	
	\foreach \x in {-8,-5,1} {\draw (\x,2) node [wv] {}; }	
	\foreach \x in {-6,-4,0,2,} {\draw (\x,1) node [wv] {}; }	

	\foreach \x in {-8,-2,4} {\draw (\x,4.5) node [av] {}; }	

	\foreach \x in {-5,1} {\draw (\x,6) node [av] {}; }
	\foreach \x in {-5,1} {\draw (\x,3) node [av] {}; }
	\foreach \x in {-5,1} {\draw (\x,1) node [av] {}; }

	\foreach \x in {-7,-1,5} {\draw (\x,6) node [bv] {}; }
	\foreach \x in {-4,2} {\draw (\x,4) node [bv] {}; }	
	\foreach \x in {-6,0} {\draw (\x,2) node [bv] {}; }

	\draw (m64) node [cv] {};
	\foreach \x in {-3,3} {\draw (\x,6) node [cv] {}; }
	\foreach \x in {0,6} {\draw (\x,4) node [cv] {}; }
	
	\draw [dashed, teal,ultra thick,->] (-10.5,5) -- (8.5,5) node [teal,right] {\small $ L$};
	
	\draw (-8,5) node [v] {} node [left=5,below] {\tiny $t$};
	\draw (6,5) node [v] {} node [right=5,below] {\tiny $t'$};
		
	\draw (8.5,6) node [] {\tiny $X_0$};
	\draw (8.5,1) node [] {\tiny $X$};

	\foreach \x in {-9,-7,-5,-3,-1,1,3,5,7} {\draw [below] (\x,6) node [] {\tiny I}; }
	\foreach \x in {-4,2} {\draw (\x,3) node [] {\tiny II}; }
	\foreach \x in {-9,7.5} {\draw (\x,3.5) node [] {\tiny II}; }
	\foreach \x in {-9.5,6.7} {\draw (\x,2.7) node [] {\tiny III}; }

	\foreach \x in {-5.5,.5} {\draw (\x,3) node [] {\tiny III}; }
	\foreach \x in {-9,-4,2} {\draw (\x,1.8) node [] {\tiny IV}; }
						
	\node  [inner sep=0.9pt] at (-2.5,0) {}; 	
	\node  [inner sep=0.9pt] at (2.5,0) {}; 			
	\end{tikzpicture}
}
\caption{Three cases of elementary moves. The regions marked with dots may contain extra double joints, but neither single joints nor inner faces.}\label{fig:move}
\end{figure}

\begin{figure}[htb]
  \tikzstyle {av}=[red,draw,shape=circle,fill=red,inner sep=1pt]
  \tikzstyle {bv}=[blue,draw,shape=circle,fill=blue,inner sep=1pt]
  \tikzstyle {cv}=[orange,draw,shape=circle,fill=orange,inner sep=1pt]
  \tikzstyle {wv}=[black,draw,shape=circle,fill=white,inner sep=1pt]  
  \tikzstyle {gv}=[inner sep=0pt]
  \tikzstyle {v}=[gray,fill=gray,draw,shape=rectangle,inner sep=1.5pt]    
  \tikzstyle{every edge}=[-,draw]
	\begin{tikzpicture}[scale=0.5,thick]
    \node [fill=blue!40,single arrow, draw=none, rotate=-90] at (-1,4.8) {$\ \ \ $};	
	\draw [gray] (-10,7.5) -- (-10,6) -- (8,6) -- (8,7.5);	
	\foreach \x in {-8,-2,4} {\draw [blue] (\x,7) -- (\x+2,7); }
	\foreach \x in {-6,0,6} {\draw [red] (\x,7) -- (\x+2,7); }	
	\foreach \x in {-10,-4,2} {\draw [orange] (\x,7) -- (\x+2,7); }		
	\foreach \x in {-7,-1,5} {\draw [blue] (\x,7) -- (\x,7.5); }
	\foreach \x in {-5,1} {\draw [red] (\x,7) -- (\x,7.5); }
	\foreach \x in {-3,3} {\draw [orange] (\x,7) -- (\x-.4,7.5); }
	\foreach \x in {-3,3} {\draw [orange] (\x,7) -- (\x+.4,7.5); }
	\foreach \x in {-6,0,6} {\draw [orange] (\x,6) -- (\x,7); }		
	\foreach \x in {-4,2} {\draw [blue] (\x,6) -- (\x,7); }		
	\foreach \x in {-8,-2,4} {\draw [red] (\x,6) -- (\x,7); }		
	\foreach \x in {-8,-6,-4,-2,0,2,4,6} {\draw (\x,7) node [wv] {}; }

	\foreach \x in {-5,1} {\draw (\x,7) node [av] {}; }	
	\foreach \x in {-7,-1,5} {\draw (\x,7) node [bv] {}; }	
	\foreach \x in {-3,3} {\draw (\x,7) node [cv] {}; }	
	\draw (-8,6) node [v] {} node [left=5,below] {\tiny $t$};
	\draw (6,6) node [v] {} node [right=5,below] {\tiny $t'$};
	\draw (8.5,7) node [] {\tiny $X_0$};

	\draw [gray] (-10,3.5) -- (-10,-1) -- (8,-1) -- (8,3.5);	
	\foreach \x in {-8,-2,4} {\draw [blue] (\x,3) -- (\x+2,3); }
	\foreach \x in {-6,0,6} {\draw [red] (\x,3) -- (\x+2,3); }	
	\foreach \x in {-10,-4,2} {\draw [orange] (\x,3) -- (\x+2,3); }		

	\foreach \x in {-7,-1,5} {\draw [blue] (\x,3) -- (\x,3.5); }
	\foreach \x in {-5,1} {\draw [red] (\x,3) -- (\x,3.5); }
	\foreach \x in {-3,3} {\draw [orange] (\x,3) -- (\x-.4,3.5); }
	\foreach \x in {-3,3} {\draw [orange] (\x,3) -- (\x+.4,3.5); }

	\foreach \x in {-6,0,6} {\draw [orange] (\x,0) -- (\x,3); }		
	\foreach \x in {-4,2} {\draw [blue] (\x,1) -- (\x,3); }		
	\foreach \x in {-8,-2,4} {\draw [red] (\x,1) -- (\x,3); }		

	\foreach \x in {-9,-3,3} {\draw [red] (\x,1) -- (\x+2,1); }
	\draw [blue] (7,1) -- (8,1);	
	\foreach \x in {-10} {\draw [blue] (\x,1) -- (\x+1,1); }		
	\foreach \x in {-5,1} {\draw [blue] (\x,1) -- (\x+2,1); }	
	\foreach \x in {-7,-1,5} {\draw [orange] (\x,1) -- (\x+2,1); }	

	\draw [orange] (-9,1) -- (-10,-1);	
	\draw [red] (7,1) -- (8,-1);	
	\foreach \x in {-5,1} {\draw [red] (\x,1) -- (\x,-1); }
	\foreach \x in {-7,-1,5} {\draw [blue] (\x,1) -- (\x,-1); }	
	\foreach \x in {-3,3} {\draw [orange] (\x,1) -- (\x,-1); }		
	\foreach \x in {-6,0,6} {\draw [red] (\x,0) -- (\x-.5,-1);}
	\foreach \x in {-6,0,6} {\draw [blue] (\x,0) -- (\x+.5,-1);}

	\foreach \x in {-8,-6,-4,-2,0,2,4,6} {\draw (\x,3) node [wv] {}; }
	\foreach \x in {-9,-7,-5,-3,-1,1,3,5,7} {\draw (\x,1) node [wv] {}; }	
	\foreach \x in {-6,0,6} {\draw (\x,0) node [wv] {};}

	\foreach \x in {-5,1} {\draw (\x,3) node [av] {}; }	
	\foreach \x in {-7,-1,5} {\draw (\x,3) node [bv] {}; }	
	\foreach \x in {-3,3} {\draw (\x,3) node [cv] {}; }	
	\foreach \x in {-6,0,6} {\draw (\x,1) node [cv] {}; }	
	\foreach \x in {-4,2} {\draw (\x,1) node [bv] {}; }	
	\foreach \x in {-8,-2,4} {\draw (\x,1) node [av] {}; }		
	\draw [dashed, teal,ultra thick,->] (-10.5,2) -- (8.5,2) node [teal,right] {\small $ L$};
	\draw (-8,2) node [v] {} node [left=5,below] {\tiny $t$};
	\draw (6,2) node [v] {} node [right=5,below] {\tiny $t'$};
	\draw (8.5,1) node [] {\tiny $X$};

	\node  [inner sep=0.9pt] at (-2.5,0) {}; 	
	\node  [inner sep=0.9pt] at (2.5,0) {}; 			
	\end{tikzpicture}
\caption{An example of an elementary move when $(a,b,c)=(3,3,4)$.}\label{fig:move_ex}
\end{figure}
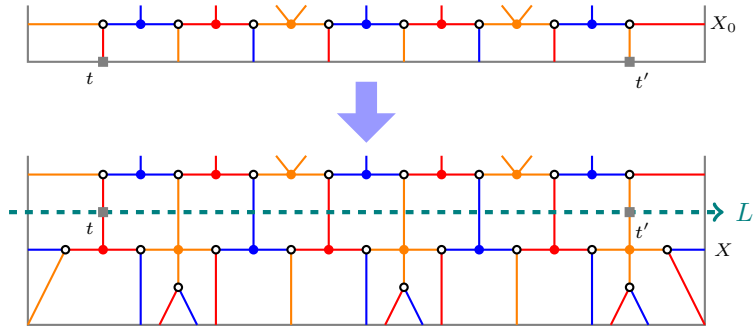

We now prove the main result of this section.

\begin{theorem}\label{thm:move}
Every non-empty $(a,b,c)$-diagram can be obtained from the basic $(a,b,c)$-diagram by a finite sequence of elementary moves.
\end{theorem}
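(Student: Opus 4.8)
The plan is to induct on the area $\area(X)$ of the $(a,b,c)$-diagram $X$ (equivalently, by Lemma~\ref{lem:area}, on the number $v_W$ of white vertices, or on the parameter $p$ of Definition~\ref{def:p}), with the basic $(a,b,c)$-diagram as the base case. The inductive step reduces to the following assertion: if $X$ is not basic, then $X$ is obtained from some strictly smaller $(a,b,c)$-diagram $X_0$ by a single elementary move. Granting this, $\area(X_0)<\area(X)$, so the inductive hypothesis produces $X_0$ from the basic diagram by finitely many elementary moves, and one more move yields $X$. (As a consistency check, the three $0$-th generation diagrams are obtained from the basic diagram by one move, as noted after Definition~\ref{defn:move}.)

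To produce the Lagrangian, let $\mathcal{L}$ be the set of Seidel Lagrangians $L$ in $E$ with $L\cap\mathrm{int}(X)\ne\varnothing$; for such $L$ the left side $X_0^L$ of $L$ in $X$ is a nonempty proper sub-diagram of $X$. The set $\mathcal{L}$ is nonempty whenever $X$ is not basic: a non-basic diagram is a union of at least two of the building blocks (middle triangles and $2a$-, $2b$-, $2c$-gons) appearing in the proof of Lemma~\ref{lem:area}, and a Seidel Lagrangian containing an edge shared by two adjacent blocks meets $\mathrm{int}(X)$. Since $X$ has only finitely many sub-diagrams, we may choose $L\in\mathcal{L}$ for which $\area(X_0^L)$ is maximal; put $X_0:=X_0^L$, so $\area(X_0)<\area(X)$. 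I claim that $X$ is the elementary move of $X_0$ along $L$.

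By Lemma~\ref{lem:double}, the side $\ell=L\cap X_0$ of the polygon $X_0$ has its two end tips attached to double joints, so the constructive description of an elementary move applies to the pair $(X_0,L)$ and yields a diagram $X^\ast$ with $X_0\subsetneq X^\ast\subseteq X$ (the last inclusion by the minimality property recorded after that construction, since $X$ is an $(a,b,c)$-diagram containing $X_0$ with $\mathrm{int}(X)\cap(L\cap X_0)\ne\varnothing$). To finish, it suffices to show $X^\ast=X$, i.e.\ that every inner face of $X$ lying in the right side of $L$ is typed along $L$; for then these faces are exactly the Type~$1$--$4$ faces added by the move, and the outer faces are forced by Lemma~\ref{lem:diagram}\,(\ref{lem:diagram:outer}). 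Suppose not, and among the inner faces of $X$ in the right side of $L$ that are not typed along $L$ pick one, say $F'$, of minimal combinatorial distance to $L$; then $F'$ is adjacent to a face $F$ of $X$ that is typed along $L$. Lemma~\ref{lem:hexs} supplies a Seidel Lagrangian $L'$ meeting $\mathrm{int}(F')$ whose right side in $E$ is strictly contained in that of $L$. Hence $L'\in\mathcal{L}$, and since the left side of $L'$ in $E$ strictly contains that of $L$ while the intermediate strip contains part of the face $F\subseteq X$, we get $\area(X_0^{L'})>\area(X_0^L)$, contradicting the choice of $L$. This proves the claim, and hence the theorem.

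The main obstacle is exactly this last strict inequality $\area(X_0^{L'})>\area(X_0^L)$: one must confirm, running through the five configurations of Lemma~\ref{lem:hexs} (including the Type~II, III and IV faces occurring when $a=2$, respectively $a=2,\ b=3$), that the region between $L$ and $L'$ genuinely meets $X$ with positive area — which is where the separating half-rays $g_0\cup g_1$, $g_0'\cup g_1'$ and the convexity of combinatorial neighborhoods (Lemmas~\ref{lem:sector} and~\ref{lem:nbhd}) enter. A secondary point needing care is the bookkeeping at $\partial X_0$: one should check that the degenerate pieces (double joints, pivots, outer faces of $X$ abutting $L$) are compatible with Lemma~\ref{lem:double} and with the three case-distinguished versions of the elementary-move construction, so that $X^\ast$ is formed exactly as described and the inclusion $X^\ast\subseteq X$ becomes an equality once all inner faces on the right of $L$ are known to be typed.
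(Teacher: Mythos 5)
Your overall strategy is the same as the paper's: induct on size, realize the predecessor as the left side of an extremally chosen Seidel Lagrangian $L$, invoke Lemma~\ref{lem:double} and the constructive description of the move to get $X_0\subsetneq X^\ast\subseteq X$, and use Lemma~\ref{lem:hexs} to contradict extremality if $X^\ast\ne X$. The differences are the extremal quantity (you maximize $\area(X_0^L)$; the paper minimizes the number of inner faces of $X\setminus X'$) and, crucially, how the offending face $F'$ is chosen — and it is at this second point that your argument has a genuine gap. To apply Lemma~\ref{lem:hexs} you must verify its hypothesis that $F'$ intersects a face which is typed along $L$. You try to secure this by taking $F'$ to be a non-typed inner face of $X$ on the right of $L$ at minimal combinatorial distance from $L$, and then asserting it is adjacent to a typed face. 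That does not follow: the neighbor of $F'$ on a shortest path toward $L$ need not be an inner face of $X$ at all — it can be an outer face of $X$ (or a face of $Z$ only partially meeting $X$) that is also not typed, and your minimality hypothesis, which quantifies only over non-typed \emph{inner} faces, says nothing about such faces. So the contradiction machine never starts in that configuration. The paper avoids this by a different selection: it takes $F'$ to be a hexagon of $X$ whose intersection with the intermediate diagram $X'$ is an \emph{outer} face of $X'$, and uses that $X'$ is not a $0$-th generation diagram (guaranteed by routing $L$ through an inner face of $X$) to force $F'$ to meet one of the Type I--IV faces created by the move; only then is Lemma~\ref{lem:hexs} applicable. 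Some substitute for this step is needed in your write-up; without it the key claim ``every inner face of $X$ on the right of $L$ is typed'' is not established.

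Two smaller remarks. First, you locate ``the main obstacle'' in the wrong place: the strict inequality $\area(X_0^{L'})>\area(X_0^L)$ is immediate once Lemma~\ref{lem:hexs} applies, because $L'$ crosses the interior of $F'$, $F'\subseteq X$ lies in the right side of $L$, and the right side of $L'$ is strictly contained in that of $L$, so the left-of-$L'$ portion of $F'$ (not of $F$, as you wrote) contributes positive area to $X_0^{L'}\setminus X_0^L$; no case analysis or half-ray argument is needed there. Second, your sufficiency claim that ``all inner faces on the right of $L$ typed'' forces $X^\ast=X$ also needs a word: one must rule out typed faces along $L$ lying beyond the segment $[t,t']$ being inner in $X$, and then recover the outer faces of $X$ from Lemma~\ref{lem:diagram}; the paper's bookkeeping via the outer faces of $X'$ handles exactly this. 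These last two points are repairable details, but the adjacency-to-a-typed-face step is the substantive missing idea.
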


\begin{proof}
We use induction on the number of inner faces in a given $(a,b,c)$-diagram $X$.
Suppose first that $X$ has no inner faces and is not a basic diagram.
Then $X$ has at least one colored vertex $v$.
Since every face containing $v$ is outer, it follows that $v$ is the only colored vertex
and $X$ is a $0$-th generation diagram.

Assume that $X$ has at least one inner face.
There exists a Seidel Lagrangian $ L$ passing through this inner face.
We let $X_0$ be the left side of $ L$.
Denote as $X'$ the $(a,b,c)$-diagram obtained from $X_0$ by an elementary move along $ L$.
By definition, we have $X_0\subsetneq X'\subseteq X$.
Since $X'$ has an inner face, it is neither basic nor of the $0$-th generation.
Let us choose $ L$ so that the number of inner faces in $X\setminus X'$ is minimal.
We claim that this number is zero.

Enumerate the Type I faces along $ L$ that intersects $X_0$ as $(F_0,F_1,\ldots,F_k)$ 
so that $F_0$ and $F_k$ are outer in $X_0$.
If all the outer faces of $X'$ in the right side of $ L$ are still outer in $X$,
then $X=X'$ and the claim is proved.
So let us assume to have a hexagon face $F'\subseteq X\setminus X_0$ such that $F'$ is in the right side of $ L$ and $F'\cap X'$ is an outer face in $X'$.
If $F'\cap X'$ is a triangle, then the neighboring pentagons of $F'\cap X'$ in $X'$ become inner in $X$ as well. Hence we may assume $F'\cap X'$ to be an outer pentagon of $X'$.
Since $X'$ is not a 0-th generation diagram, $F'$ intersects with at least one of Type I, II, III or IV faces of the elementary move from $X_0$ to $X'$. See Figure~\ref{fig:move}.
By Lemma~\ref{lem:hexs}, we have a Lagrangian $ L'$ which intersects with $F'$ such that the right side of $ L'$ is strictly smaller than that of $ L$. This is a contradiction to the minimality assumption.
\end{proof}


\section{Potential in the hyperbolic case}\label{s:hyp}
Throughout this section, we assume $1/a+1/b+1/c<1$ and $a\le b\le c$.
Since $G=\pi_1^{\mathrm{orb}}(\PO)$ acts transitively on the hexagons of $Z$, we have the following.
\begin{lemma}\label{lem:unique}
Given a cyclic word $[w]\in\fc$ there exists at most one polygon $U$ for the potential up to the action of $G$
such that $[w(\partial(U)]=[w]$.
\end{lemma}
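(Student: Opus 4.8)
The plan is to show that a polygon $U$ for the potential in $E$ is recovered, up to the $G$-action, from its boundary cyclic word $[w(\partial U)]$ by tracing out the boundary curve in $E$; so if $U$ and $U'$ both have boundary word $[w]$, I will produce $g\in G$ with $U=g\cdot U'$. First I would pass to the standard representative $w=w_1w_2\cdots w_{2k}$ of $[w]$ from Lemma~\ref{lem:standard}, and note that this decomposition is actually forced by $[w]$ itself: among the length-two subwords of $(\gamma\beta\alpha)^\infty$ only $\gamma\beta,\beta\alpha,\alpha\gamma$ occur, whereas the turning factors are powers of words in $\{\alpha\beta,\beta\gamma,\gamma\alpha\}$, so the ``Lagrangian runs'' $w_{2i-1}$ and the ``turning runs'' $w_{2i}$ are unambiguously located in $w$. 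Thus from $[w]$ alone we read off the itinerary of $\partial U$: travel along a Seidel Lagrangian, turn at an immersed generator of a prescribed type ($x$, $y$, or $z$), travel again, and so on.

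Next I would set up the normalization. Cyclically rotate so that both $\partial U$ and $\partial U'$ begin with the run $w_1$, and let $e_0,e_0'$ be the first edges of $Z$ crossed by $\partial U$ and $\partial U'$. They carry the same label (the first letter of $w_1$) and are crossed with the same orientation relative to the $\pi_1^{\mathrm{orb}}(\PO)$-invariant transverse orientations of $Z$. Since $G$ acts transitively on the edges of $Z$ carrying a fixed label (a colored vertex has stabilizer acting transitively on its incident edges, and $G$ is transitive on the vertices of each colour), there is $g\in G$ with $g\cdot e_0'=e_0$, and $g$ automatically matches the crossing orientation because it preserves the labels and transverse orientations of $Z$. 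Replacing $U'$ by $g\cdot U'$, I may assume $\partial U$ and $\partial U'$ begin with the same oriented edge-crossing.

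The heart of the argument is to check that $\partial U$ is determined in $E$ by this initial oriented crossing together with the word $w$. By Corollary~\ref{cor:bigon_no}, each branch of $L_E$ is an embedded line, uniquely determined by any one of its oriented edge-crossings, and it crosses a fixed bi-infinite sequence of edges of $Z$ spelling $(\gamma\beta\alpha)^\infty$; hence $w_1$ only records how many crossings occur before the first turn, and the portion of $\partial U$ reading $w_1$ is determined. At the end of $w_1$ the curve turns at an immersed generator; the self-intersection point in question is determined because we know exactly which edges $w_1$ crossed, and its type together with the requirement that we turn to the \emph{odd-degree} generator fixes the outgoing branch. The factor $w_{2}$ then determines the turn (a single turn, or the consecutive turns of Figure~\ref{fig:consecutiverefl} for a higher power) and the first crossing of the next run. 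Iterating along $w$, the closed curve $\partial U$ is pinned down; since $\partial U'$ begins with the same oriented crossing and reads the same word, $\partial U=\partial U'$ as oriented simple closed curves (both are embedded by Lemma~\ref{lem:lift}). A simple closed curve in $E\cong\mathbb{R}^2$ bounds a unique compact region, so $U=U'$, and the original $U,U'$ differ by $g^{-1}\in G$.

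The main obstacle I anticipate is the bookkeeping at the turns: making precise that a turning factor such as $(\alpha\beta)^m$ and its cyclic variants corresponds to a uniquely determined local arc of $\partial U$, i.e.\ that ``turning to the odd generator'' plus convexity plus the recorded letters leave no freedom, including the consecutive-turn configurations. I would dispatch this by a local analysis at a self-intersection point of $L_E$ exactly as in Figure~\ref{fig:consecutiverefl}, using that the ordered pair of branches defining the odd generator $X$ (resp.\ $Y$, $Z$) is fixed, so both the outgoing branch and the short boundary arc through the turn are forced. (Alternatively, one could run the entire argument through Lemma~\ref{lem:diagram2}, reconstructing the $(a,b,c)$-diagram of $U$ from its boundary cycle of leaf-labels via the rigidity of Lemma~\ref{lem:diagram}; the same local turn analysis reappears there as the identification of the outer faces along the boundary.)
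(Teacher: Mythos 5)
Your argument is correct and is in essence the paper's own proof: the paper disposes of this lemma with the single observation that $G$ acts transitively on the hexagons of $Z$ (each being a fundamental domain), so that after normalizing by the $G$-action the boundary word determines the polygon. Your version — normalizing instead via transitivity of $G$ on edges of a fixed label and then developing $\partial U$ deterministically along $L_E$, with the local analysis at the immersed corners — is just the detailed elaboration of that one-line argument, and the subtlety you flag (uniqueness of the turning point and outgoing branch) is handled exactly as you propose.
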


So we can compute the potential for $\PO$ by counting boundary words of polygons for the potential.
We have seen that we can generate polygons for the potential (equivalently, $(a,b,c)$ diagrams) by applying elementary moves. 
This elementary move on a diagram will correspond to a transformation called \emph{cut-glue operation} on its boundary word.

\subsection{Cut--glue operations}\label{ss:cutglue}
For $t\in\CA$ we denote by $o(t)$ the order of $t$ in $G$.
Recall that we denote $\tau(\alpha) = \beta, \tau(\beta) =\gamma, \tau(\gamma) =\alpha$. 

\begin{definition}\label{defn:cut}
Given $\theta \in \CA$ and $k\ge0$,
we let $w$ is the length-$(k+2)$ subword of $(\gamma\beta\alpha)^\infty$ starting with the letter $\tau(\theta)$.
If $\theta'$ is the last letter of $w$, then we define 
\[\cut(\theta,k)=\theta w \tau(\theta') \in F\]
and call $\cut(\theta,k)$ as a \emph{cut word}.
If $a=2$, then we further assume that $\theta\ne\beta$ and that the last letter of $\cut(\theta,k)$ is not $\gamma$.
\end{definition}

Note that given a cut word $\theta_1 w_1w_2 \cdots w_{k+2} \tau(w_{k+2})$, only the first  pair
$\theta_1 w_1$ and the last pair $w_{k+2}\tau(w_{k+2})$ are not in the order of $(\gamma\beta\alpha)^\infty$.
One can think of a cut word as the label-reading of a path starting from a corner to the next corner of a polygon for the potential.
Hence, cut word arises as the label-reading  of the interval on a boundary of an $(a,b,c)$-diagram
to which an elementary move applies; see Figure~\ref{fig:move}. 

The reason why we pose the restriction is for $a=2$ case is that, the only boundary words of holomorphic polygons which contain $\beta\gamma$ are $\alpha\beta\gamma$ and $(\beta\gamma)^2$. As we will see later, the whole set of polygons for the potential  can be generated without considering elementary moves for these two specific polygons; see Theorem~\ref{thm:enum}.
When $a=2$, we have four types of cut words. Namely for $k\ge0$, we have:
\[
\cut(\alpha,3k) = \alpha\cdots\beta, \cut(\alpha,3k+1) = \alpha\cdots \alpha, \cut(\gamma,3k-1)=\gamma\cdots\beta, \cut(\gamma,3k)=\gamma\cdots\alpha.\]

Now let us define a glue word case-by-case.

\textbf{Case 1.} $a\ge3$.

Given $\theta\in \CA$ and $k\ge0$, we let $\theta_1\theta_2\cdots$ be the infinite subword of $(\gamma\beta\alpha)^\infty$ starting with $\theta_1=\theta$ and define
\[\glue(\theta,k)=
\theta_1 (\theta_2\theta_1) \left(\prod_{i=1}^{k+2} (\theta_{i+1}\theta_i)^{o(\theta_{i+2})-3} \right)(\theta_{k+3}\theta_{k+2})\theta_{k+3}. \]

\textbf{Case 2.} $a\ge2,b\ge4$.

We have four subcases. For $k\ge0$, we define:

\begin{eqnarray*}
\glue(\alpha,3k) &=& \alpha (\gamma\alpha)^{b-3} \left( \gamma\beta (\alpha\beta)^{c-4} \alpha (\gamma\alpha)^{b-4}\right)^{k} (\gamma\alpha) \gamma\beta \\
 \glue(\alpha,3k+1) &=& \alpha (\gamma\alpha)^{b-3}  \gamma\beta (\alpha\beta)^{c-4} \left(\alpha (\gamma\alpha)^{b-4}\gamma\beta (\alpha\beta)^{c-4}   \right)^{k} (\alpha\beta) \alpha \\ 
 \glue(\gamma,3k-1)&=& \gamma \beta (\alpha \beta)^{c-3} \left(\alpha (\gamma\alpha)^{b-4} \gamma\beta (\alpha\beta)^{c-4} \right)^{k-1} \alpha (\gamma\alpha)^{b-3} \gamma\beta \\ 
\glue(\gamma,3k)&=&\gamma \beta (\alpha \beta)^{c-3} \left(\alpha (\gamma\alpha)^{b-4} \gamma\beta (\alpha\beta)^{c-4} \right)^{k} (\alpha\beta) \alpha
\end{eqnarray*}

\textbf{Case 3.} $a\ge2,b=3$.

We again have four subcases. For $k\ge0$, we define:

\begin{eqnarray*}
\glue(\alpha,3k) &=& \alpha\gamma\beta (\alpha\beta)^{c-5} (\alpha\gamma\beta(\alpha\beta)^{c-6})^{k-1} (\alpha\beta) \alpha\gamma\beta  \\
 \glue(\alpha,3k+1) &=& \alpha\gamma\beta (\alpha\beta) ((\alpha\beta)^{c-6} \alpha\gamma\beta)^k (\alpha\beta)^{c-4} \alpha \\ 
 \glue(\gamma,3k-1)&=& \gamma\beta (\alpha\beta)^{c-4} (\alpha\gamma\beta(\alpha\beta)^{c-6} )^{k-1} (\alpha\beta) \alpha\gamma\beta \\ 
\glue(\gamma,3k)&=&\gamma\beta (\alpha\beta)^{c-4} (\alpha\gamma\beta(\alpha\beta)^{c-6} )^{k} (\alpha\beta)^2 \alpha
\end{eqnarray*}

In each of the cases, we call the word $\glue(\theta,k)\in F$ as a \emph{glue word}. 
A glue word is the label-reading of the newly generated boundary interval of an $(a,b,c)$-diagram from an elementary move. So an elementary move does a cut--glue operation on the boundary word:

\begin{definition}
Suppose $[w],[w']\in\fc$ and
$w'$ is obtained by replacing a subword $\cut(\theta, k )$ of $w$ by
$\glue(\theta, k )$ for some $\theta \in\CA$ and $ k \ge0$.
Then we say $[w']\in\fc$ is obtained by a \emph{cut--glue operation} on $[w]\in \fc$.
\end{definition}

\begin{lemma}\label{lem:pcount}
Suppose that we obtain polygon $U_{k+1}$ from a single cut--glue operation of the polygon $U_{k}$. 
Then, we have $$p(U_{k+1}) = p(U_k) +1.$$ 
\end{lemma}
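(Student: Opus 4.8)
The plan is to compute $p$ directly from the definition $p(U)=[w(\partial U)]_0$ by tracking how the word-length of the odd part $w'$ of the boundary word changes under a single cut--glue operation. Recall that a cut--glue operation replaces one occurrence of a cut word $\cut(\theta,k)$ in $w$ by the corresponding glue word $\glue(\theta,k)$, and that $[w]_0$ is one third of the word-length of $w'=w_1w_3\cdots w_{2k-1}$, the concatenation of the runs of $w$ lying on branches of Seidel Lagrangians (equivalently, the number of $\alpha$'s, or of $\beta$'s, or of $\gamma$'s, appearing in these runs). So it suffices to show that, for each cut word $\cut(\theta,k)$ and its glue word $\glue(\theta,k)$, the number of ``equator-direction'' letters (those in the pattern $(\gamma\beta\alpha)^\infty$, as opposed to the corner pairs $\alpha\beta,\beta\gamma,\gamma\alpha$) increases by exactly $3$ — so that $[w']_0$ increases by $1$ — while verifying that no cancellation or merging occurs at the two ends when the glue word is substituted back into $w$.

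First I would set up the bookkeeping precisely. A cut word $\cut(\theta,k)=\theta w\tau(\theta')$ of length $k+4$ contributes, after insertion into the standard representative, a single run of length $k+2$ to $w'$ (the middle part $w$), since its first pair $\theta w_1$ and last pair $w_{k+2}\tau(\theta')$ are corner pairs (two of the three $\alpha\beta,\beta\gamma,\gamma\alpha$). After the elementary move, the replaced boundary interval is read off as $\glue(\theta,k)$; by Lemma \ref{lem:standard} its standard decomposition interleaves runs along branches with corner pairs. The key computation is: in $\glue(\theta,k)$ the total number of letters lying on Seidel-Lagrangian branches (i.e. appearing inside the $w_{2i-1}$ blocks), counted as (number of such letters)/$3 = $ (number of $\alpha$'s among them) and so on, exceeds the corresponding count for $\cut(\theta,k)$ by exactly $1$. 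I would do this case by case following the three cases ($a\ge3$; $a\ge2,b\ge4$; $a\ge2,b=3$) in the definitions of $\cut$ and $\glue$ above, extracting from each explicit formula the sub-blocks that are powers of $\{\alpha\beta,\beta\gamma,\gamma\alpha\}$ (these are the corner pairs, contributing to $[w]_1,[w]_2,[w]_3$ but not to $[w]_0$) versus the blocks that are subwords of $(\gamma\beta\alpha)^\infty$ — the latter, summed over the $k+2$ (or $k$, resp.) factors of the product together with the head and tail, give precisely $3$ more equator-direction letters than $\cut(\theta,k)$ did. Equivalently, one can argue geometrically via Corollary \ref{cor:counts} and Lemma \ref{lem:relations}: an elementary move along $L$ adds exactly one new strip of Type I faces (plus, when $a=2$, the forced Type II/III/IV faces), and the combinatorial count of new white vertices lying between consecutive corners on the new boundary arc increases $[\partial U]_0$ by $1$; but I expect the word-level computation to be cleaner and self-contained.

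Second I would check that substitution does not disturb the count at the seams. Since $[w']$ and $[w]$ are cyclic words obtained by replacing one contiguous block, and the cut word starts with $\theta$ and ends with $\tau(\theta')$ while the glue word — by inspection of the formulas — also starts with $\theta$ and ends with the letter forced by condition (iii) of Lemma \ref{lem:standard}, the neighbouring runs of $w$ are unaffected and no free reduction crosses the boundary of the replaced block; this uses Lemma \ref{lem:nontrivial} to rule out spurious cancellations. Hence $[w']_0 = [w]_0 + 1$, i.e. $p(U_{k+1}) = p(U_k)+1$.

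The main obstacle I anticipate is purely the case analysis: the glue words for $a=2$ (Cases 2 and 3) are given by four separate formulas each, with exponents like $c-3,c-4,c-5,c-6$ and $b-3,b-4$, and one has to correctly parse which sub-blocks are corner pairs and which are equator runs, being careful about the $k=0$ and small-$k$ degeneracies (where a factor $(\cdots)^{k-1}$ is empty) and about the special role of $\beta\gamma$ excluded by Definition \ref{defn:cut}. Keeping the accounting uniform across these subcases — ideally by isolating the single invariant ``(number of equator-direction letters in $\glue$) $-$ (that in $\cut$) $= 3$'' and proving it once per case rather than chasing $[w]_0$ through each — is where the care is needed; the rest is mechanical.
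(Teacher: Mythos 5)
Your proposal follows essentially the same route as the paper's proof: both compute $p=[w(\partial U)]_0$ by comparing the number of run (equator-direction) letters contributed by $\cut(\theta,k)$ with that contributed by $\glue(\theta,k)$, observing that this count grows by exactly $3$ and hence $p$ by $1$ (the paper does this in one stroke, noting the glue word's run letters are $\theta_1\cdots\theta_{k+3}$, while you plan the same verification case by case together with a seam-cancellation check). One bookkeeping slip to fix: since $\theta w_1$ and $w_{k+2}\tau(\theta')$ are corner pairs, the run contributed by the cut word is $w_2\cdots w_{k+1}$, of length $k$ rather than $k+2$, which is what makes the difference against the glue word's $k+3$ run letters equal to the $3$ you assert.
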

\begin{proof}
Recall that $p(U_k) = [\partial U_k]_0$, which was obtained from the number of odd terms in $[\partial U_k]$ in Lemma \ref{lem:standard}
divided by 3. Note that in the cut-word, the odd terms are $w_2 \cdots w_{k+1}$ since $\theta_1 w_1$, and $w_{k+2} \tau(w_{k+2})$
give words in $\{\alpha \beta, \beta \gamma, \gamma \alpha\}$.
And in the glue-word, the odd terms are $\theta_1\theta_2\cdots \theta_{k+3}$. Hence
the number of odd terms increase by $3$, and hence $p$ increase by $1$.
\end{proof}

Therefore, we may use $p$ to denote the generation number of cut--glue operation.
Let us define $\mathcal{S}^{-1} =\{[\alpha\beta\gamma]\}$
and $\CS^{p}\subseteq\fc$ be the set of cyclic words obtained by applying cut--glue operation once on cyclic words on $\CS^{p-1}$. We call the cyclic words in $\CS^p$ as \emph{$p$-th generation words}.
It is immediate to see that $\CS^{0}$ is the boundary words of the three $0$-the generation diagrams.
We also see that $\CS^{1}$ is the boundary word of the unique first generation diagram, which contains precisely one interior face. So we have:
\[
\CS^0 = \{ 
[(\alpha\beta)^c], [(\beta\gamma)^a], [(\gamma\alpha)^b]
\},\quad
\mathcal{S}^1 = \{ [\alpha (\gamma\alpha)^{b-2}\gamma(\beta\gamma)^{a-2}\beta(\alpha\beta)^{c-2}]\}.\]

We let $\mathcal{S}=\cup_{p\ge-1}\CS^p$. 

\begin{theorem}\label{thm:enum}
There is a one-to-one correspondence $\Phi$ between
the set of polygons for the potential up to $G$-action
and the set $\CS$, defined by $\Phi(U) = [w(\partial U)]$.
\end{theorem}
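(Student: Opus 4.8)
The plan is to establish the bijection $\Phi$ by combining the two enumeration results already available: Theorem~\ref{thm:move}, which says every $(a,b,c)$-diagram arises from the basic one by a finite sequence of elementary moves, and Lemma~\ref{lem:diagram2} (together with Lemma~\ref{lem:unique}), which identifies polygons for the potential up to $G$-action with $(a,b,c)$-diagrams and with their boundary cyclic words. So at the level of objects there is essentially nothing new to prove; the content of the theorem is that the bookkeeping device $\CS=\bigcup_p\CS^p$, built purely combinatorially out of cut--glue operations on cyclic words, reproduces exactly the set of boundary words of polygons for the potential.

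First I would show $\Phi$ is well-defined and injective. Well-definedness: for a polygon $U$ for the potential, $[w(\partial U)]$ is a genuine cyclic word by Lemma~\ref{lem:standard}, and it lies in $\CS$ because, by Theorem~\ref{thm:move}, the associated $(a,b,c)$-diagram is obtained from the basic diagram by a sequence of elementary moves; each elementary move is, by the construction in Section~\ref{s:hyp} (the discussion following Lemma~\ref{lem:double}), realized on boundary words by a cut--glue operation replacing some $\cut(\theta,k)$ by $\glue(\theta,k)$. Hence $[w(\partial U)]\in\CS^p$ where $p+1$ is the number of moves, consistent with Lemma~\ref{lem:pcount}. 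Injectivity of $\Phi$ is exactly Lemma~\ref{lem:unique}: two polygons for the potential with the same boundary cyclic word coincide up to $G$-action.

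Next I would show $\Phi$ is surjective, i.e.\ every $[w]\in\CS$ is $[w(\partial U)]$ for some polygon $U$ for the potential. This is an induction on the generation number $p$. The base cases $\CS^{-1}=\{[\alpha\beta\gamma]\}$, $\CS^0$, and $\CS^1$ correspond to the basic diagram, the three $0$-th generation diagrams, and the unique first generation diagram respectively, as noted explicitly before the theorem statement. For the inductive step, suppose $[w']\in\CS^{p+1}$ is obtained from $[w]\in\CS^p$ by replacing a subword $\cut(\theta,k)$ by $\glue(\theta,k)$. By induction $[w]=[w(\partial U_0)]$ for a polygon $U_0$ with diagram $X_0$. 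The subword $\cut(\theta,k)$ is the label-reading of a boundary interval of $X_0$ lying on a single Seidel Lagrangian $L$ between two tips attached to double joints (this matches the shape of $\cut$ in Definition~\ref{defn:cut} and the picture in Figure~\ref{fig:move}); such an interval is precisely the data to which the constructive elementary move of Section~\ref{s:move} applies. The elementary move along $L$ produces a genuine $(a,b,c)$-diagram $X_1$ (it satisfies conditions (i)--(vi) of Lemma~\ref{lem:diagram}, hence is a diagram of an honest polygon $U_1$ for the potential by the converse part of that lemma), and by the case analysis matching each of the three glue-word formulas against the three cases of Figure~\ref{fig:move}, the new boundary interval reads exactly $\glue(\theta,k)$. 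Therefore $[w(\partial U_1)] = [w']$, completing the induction. One must also check the $a=2$ caveat: the restriction in Definition~\ref{defn:cut} (that $\theta\ne\beta$ and the last letter is not $\gamma$) excludes exactly the elementary moves on the two exceptional polygons with boundary words $\alpha\beta\gamma$ and $(\beta\gamma)^2$, and Remark~\ref{rem:betagamma} guarantees these are the only cases excluded; since those polygons themselves already appear in $\CS^{-1}$ and $\CS^0$ (or as earlier-generation diagrams) and every other diagram is still reachable via Theorem~\ref{thm:move} through moves of the permitted type, surjectivity is not affected.

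The main obstacle I expect is the bookkeeping in the surjectivity step for the $a=2$ cases: verifying that the four explicit glue-word formulas in Cases 2 and 3 literally match the label-readings of the Type I/II/III/IV faces added in Figure~\ref{fig:move}~(c),(d), and simultaneously confirming, via Remark~\ref{rem:betagamma} and Theorem~\ref{thm:move}, that disallowing the cut words ending in $\gamma$ (equivalently, starting a move at a $\beta$-leaf) loses no diagram except the two exceptional polygons that are already in $\CS$. Once this matching is pinned down, the rest is a clean induction glued from results already proved.
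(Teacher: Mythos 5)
Your proposal is correct and follows essentially the same route as the paper: the paper's (much terser) proof likewise combines Theorem~\ref{thm:move}, Lemma~\ref{lem:unique}, the identification of elementary moves with cut--glue operations on boundary words established in Section~\ref{ss:cutglue}, and the observation that the $a=2$ restrictions on cut words only affect the exceptional words $[\alpha\beta\gamma]$ and $[(\beta\gamma)^2]$, with $\CS^1$ still reachable from $\CS^0$. You have simply made explicit the well-definedness, injectivity and inductive surjectivity steps that the paper leaves implicit.
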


\begin{proof}
Note first that $\CS^1$ is obtained by applying a cut-glue operation on some word in $\CS^0$, even in the case when $a=2$.
Theorem~\ref{thm:move} and Lemma~\ref{lem:unique} completes the proof.
\end{proof}

\subsection{Computation of the potential}\label{ss:eval}

Since each $\CS^p$ is finite and every cyclic word in $\CS^p$ has length greater than $p$, we clearly have an algorithmic enumeration of $\CS$ and hence the enumeration of the terms in the potential. 

Suppose $[w]\in \mathcal{S}^p$ where $p\ge-1$.
If $p\ge0$, we see that $p=[w]_0$.
Motivated by Lemma~\ref{lem:area}, we define for $P,Q,R\in\mathbb{Z}$:
\begin{equation}\label{eq:areapqr1}
\area(P,Q,R) =3(P+Q+R)+8 \frac{P/a+Q/b+R/c-1}{1-1/a-1/b-1/c}.
\end{equation}
Given $[w]\in\fc$, there exists $u\in F$ such that $u$ is root-free and $w = u^n$ for some $n>0$.
We denote this unique number $n$ as $\eta([w])$.

\begin{theorem}\label{thm:term}
Let $U\subseteq\mathbb{H}^2$( or $ S^2$) be a polygon for the potential such that the label-reading $[w]$ of $\partial U$ 
belongs to $\CS^p$ for some $p\ge-1$.
Then for $S=[w]_1+[w]_2+[w]_3$
and
$A=\area([w]_1,[w]_2,[w]_3)$,
the term of the disk potential corresponding to $U$ is given by
\[
(-1)^{p+[w]_2+[w]_3}
\frac{2p+[w]_1+[w]_2+[w]_3}{\eta([w])}
q^{\area([w]_1,[w]_2,[w]_3)} x^{[w]_1}y^{[w]_2}z^{[w]_3}.
\]
\end{theorem}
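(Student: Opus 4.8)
The plan is to assemble the formula from the general potential expression \eqref{eq:formulaW} together with the combinatorial dictionary established so far. Recall \eqref{eq:formulaW} says that the term of $W$ contributed by a polygon $U$ for the potential is
\[
(-1)^{s(U)}\,\frac{s(U)+s^{\mathrm{op}}(U)}{\eta(U)}\,T^{\omega(U)}\,x^P y^Q z^R,
\]
where $P,Q,R$ are the numbers of $x$-, $y$-, $z$-corners. So I need to identify, for $U$ with boundary cyclic word $[w]\in\CS^p$, the four quantities $s(U)$, $s^{\mathrm{op}}(U)$, $\eta(U)$ and $\omega(U)$ in terms of $p$ and $[w]_1,[w]_2,[w]_3$. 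The exponents $x^{[w]_1}y^{[w]_2}z^{[w]_3}$ are immediate: by the definition following Lemma~\ref{lem:standard}, $[w]_1,[w]_2,[w]_3$ are precisely the numbers of $\beta\gamma,\gamma\alpha,\alpha\beta$ subwords, which are the numbers of $x$-, $y$-, $z$-corners $P,Q,R$. The area exponent is also immediate: by Theorem~\ref{thm:area}(1), $\omega(U)=\area(P,Q,R)\,\sigma = \area([w]_1,[w]_2,[w]_3)\,\sigma$, and since $q=T^{-\sigma}$ this gives $T^{\omega(U)}=q^{\area([w]_1,[w]_2,[w]_3)}$, using that this number is a nonnegative integer (which follows from Lemma~\ref{lem:area}, as $\area(U)/\sigma = 8v_W-5S-8p\in\Z$). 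The multiplicity $\eta(U)$ is the order of the cyclic symmetry of $U$ under $G$; by Lemma~\ref{lem:unique} together with the remark that $G$ acts transitively on the hexagons of $Z$, this symmetry is detected on the boundary word, so $\eta(U)=\eta([w])$, the largest $n$ with $w=u^n$ for root-free $u$.

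The two remaining quantities are $s(U)$ and $s^{\mathrm{op}}(U)$, and their identification is the substantive step. I will show $s(U)+s^{\mathrm{op}}(U) = 2p + [w]_1+[w]_2+[w]_3 = 2p+S$. Geometrically, $s(U)$ counts the lifts of $e$ on $\partial U$ and $s^{\mathrm{op}}(U)$ counts the lifts of $e'$ (the reflection of $e$ across the equator) on $\partial U$; recall from the setup that $e$ can be chosen to be the basepoint $x_0$, placed on the arc near an orbifold vertex. Together, the points $e$ and $e'$ are exactly the points where $\partial U$ crosses an edge of the hexagon tessellation $Z$ — equivalently, $s(U)+s^{\mathrm{op}}(U)$ equals the total number of tips of the $(a,b,c)$-diagram $X$ of $U$. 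From the combinatorial analysis preceding Lemma~\ref{lem:relations} (the paragraph beginning ``The number of tips''), the number of tips of $X$ equals $3p + 2S$, where $3p$ is the number of single joints and $S$ the number of double joints. Here I must be careful: the stated count $3p+2S$ is the number of tips, but a single joint carries one tip on its unique leaf while actually each tip sits on exactly one leaf, so the tip count is (number of single joints)$\cdot 1$ + (number of double joints)$\cdot 2 = 3p + 2S$... and I need to reconcile this with $2p+S$. The resolution is the degree-two/reflection bookkeeping: $e$ and $e'$ are two of the \emph{three} midpoints on the three edges of each base triangle cut by $L$, and a careful count — using that each run contributes predictably and that the basic $xyz$-triangle ($p=-1$) gives $s=1$, $s^{\mathrm{op}}=0$, so $s+s^{\mathrm{op}}=1=2(-1)+3$ — shows the sum is $2p+S$. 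I will prove this by induction on $p$ using the cut--glue operation: by Lemma~\ref{lem:pcount}, one cut--glue step raises $p$ by $1$; inspecting the cut word $\cut(\theta,k)$ and glue word $\glue(\theta,k)$ (Definition~\ref{defn:cut} and the subsequent case list), I count how many new tips (equivalently, new $e$'s and $e'$'s on the boundary) are created and how many are destroyed, verifying the net change in $s(U)+s^{\mathrm{op}}(U)$ is $+2$, consistent with $2p+S$ once one also tracks the change $\Delta S$ in the number of corners coming from the glue word.

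Finally, the sign. From \eqref{eq:formulaW} the sign is $(-1)^{s(U)}$; I claim $(-1)^{s(U)} = (-1)^{p+[w]_2+[w]_3}$. Again I argue by induction along cut--glue operations starting from the basic triangle, where $s=1$ and $p+Q+R = -1+1+1 = 1$, so the parities agree. For the inductive step I must show that a single cut--glue operation changes $s(U)$ and $p+[w]_2+[w]_3 = p+Q+R$ by the same parity. The change in $p$ is $+1$; the change in $Q+R$ (number of $y$- plus $z$-corners) is read off from the glue word case list — e.g.\ in Case~1, $\glue(\theta,k)$ introduces a specific number of new $\alpha\beta$ and $\gamma\alpha$ subwords determined by $k$ and the orders $o(\theta_i)$; and the change in $s(U)$ (number of new $e$'s) is read off from the same picture (Figure~\ref{fig:move}). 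The bookkeeping is somewhat involved because of the three separate cases $a\ge 3$, $a=2,b\ge 4$, $a=2,b=3$, each with four subcases for the value of $k\bmod 3$, but in each the verification is a direct, finite check. I expect this sign bookkeeping — keeping the parities of $s(U)$ and of $p+Q+R$ synchronized across all the cut--glue subcases, and correctly locating $e$ versus $e'$ among the tips — to be the main obstacle; everything else follows by plugging the identified quantities $s+s^{\mathrm{op}}=2p+S$, $\eta(U)=\eta([w])$, $\omega(U)=\area([w]_1,[w]_2,[w]_3)\sigma$ and $(P,Q,R)=([w]_1,[w]_2,[w]_3)$ into \eqref{eq:formulaW}.
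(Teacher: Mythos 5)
Your framing—plugging the four quantities $s(U)$, $s^{\mathrm{op}}(U)$, $\eta(U)$, $\omega(U)$ into \eqref{eq:formulaW}, with the exponents from Lemma~\ref{lem:standard} and the area from Theorem~\ref{thm:area}—is the right skeleton, and your treatment of $\eta$ and of the area exponent is fine. But the substantive step, the determination of $s(U)$ and $s^{\mathrm{op}}(U)$, is where your proposal has a genuine gap. Your geometric identification is wrong: the lifts of $e$ and $e'$ are \emph{not} ``exactly the points where $\partial U$ crosses an edge of $Z$.'' The point $e=x_0$ lies on the $yz$-arc of $L$ inside the white base triangle, at its unique crossing with an $\alpha$-edge $AW$; so lifts of $e$ on $\partial U$ are precisely the $\alpha$-crossings, not all tips. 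The reflected point $e'$ lies on the $yz$-arc inside the black triangle and crosses \emph{no} edge of $Z$ at all (edges of $Z$ only join colored vertices to centers of \emph{white} triangles), so lifts of $e'$ are not among the tips in any sense. This is why your tip count gives $3p+2S$ rather than the needed $2p+S$: the discrepancy is not a ``degree-two/reflection bookkeeping'' issue to be absorbed later, it signals that the objects being counted were misidentified. Your fallback—an induction over all cut--glue subcases tracking ``new $e$'s and $e'$'s among the tips''—is left unexecuted and, as described, still rests on that mislocation, so it would not produce the correct increments; the same problem infects your separate inductive claim $(-1)^{s(U)}=(-1)^{p+[w]_2+[w]_3}$, which you never actually verify beyond the base case.

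The paper's proof avoids all of this with a direct count that you are missing: since $s(U)$ is the number of $yz$-arcs of $\partial U$ through lifts of $e$, i.e.\ the number of $\alpha$-crossings, it equals the number of $\alpha$-leaves at single joints (there are $p$ of these, since the odd runs form a length-$3p$ subword of $(\gamma\beta\alpha)^\infty$) plus the number of double joints whose dangling edge is $\beta$ or $\gamma$ (the $y$- and $z$-corners), giving $s(U)=p+[w]_2+[w]_3$; analogously $s^{\mathrm{op}}(U)=p+[w]_1$. These two identities yield simultaneously the sign $(-1)^{s(U)}$ and the coefficient $\bigl(s(U)+s^{\mathrm{op}}(U)\bigr)/\eta = (2p+S)/\eta([w])$, with no induction over cut--glue cases needed. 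If you want to salvage your inductive route, you would first have to replace your characterization of $e$ and $e'$ by the correct one (lifts of $e$ are the $\alpha$-edge crossings; lifts of $e'$ sit on black-triangle $yz$-arcs, detected in the word by the pattern giving $p+[w]_1$), at which point the induction becomes unnecessary.
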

\begin{proof}
Since
$s(U)$
is the number of $yz$ arcs on $\partial U$, 
we see that $s(U)$ is the sum of the number of $\alpha$-leaves from single joints
and the numbers of double joints with dangling $\beta$- or $\gamma$-edges
in the corresponding $(a,b,c)$-diagram.
So we have
$s(U) = p+[w]_2+[w]_3$. We analogously get $s^{\mathrm{op}}(U) = p+[w]_1$.
\end{proof}

Now, the potential $W$ can be found in the following way.
We can find words (holomorphic polygons) in each generation $ \mathcal{S}^p$ inductively, by considering
all possible cut--glue operations on the words in  $ \mathcal{S}^{p-1}$.
The above theorem gives a  combinatorial way to write down a potential term from a given word.
Note that given any polygon $U$ for the potential $W$, the index $p$ (which is a finite number) of the boundary word of a polygon $U$ tells us that $U$ is counted at the generation $ \mathcal{S}^p$.
Hence, this provides an algorithm to compute the potential $W$. 

\begin{example}\label{ex:enum}
Let $a=b=c=4$. Then 
$\CS^{-1}=\{[w^{-1}_1=\alpha\beta\gamma]$,
$\CS^0=\{[w^0_1=(\beta\gamma)^{4}],[w^0_2=(\gamma\alpha)^{4}],[w^0_3=(\alpha\beta)^{4}]\}$.
By applying a cut--glue operation on $[(\beta\gamma)^{4}]$ we get
\[
w^1_1=\lefteqn{\overbrace{\phantom{(\beta\gamma)\beta(\alpha\beta)}}^{c_1}}
		(\beta\gamma)\beta
		\lefteqn{\underbrace{\phantom{(\alpha\beta) (\alpha\beta)}}_{c_2}}
		(\alpha\beta)
		\lefteqn{\overbrace{\phantom{(\alpha\beta)\alpha\beta^{-1}}}^{c_3}}
		(\alpha\beta) \alpha
		\lefteqn{\underbrace{\phantom{(\gamma\alpha)(\gamma\alpha)}}_{c_4}}
		(\gamma\alpha)
		\lefteqn{\overbrace{\phantom{(\gamma\alpha) \gamma(\beta\gamma)}}^{c_5}}
		(\gamma\alpha) \gamma		(\beta\gamma)
\]
Since $[w^1_1]$ is invariant under the cycle permutation
$\alpha\to \beta\to\gamma\to\alpha$, we see that $\CS^1=\{[w_1^1]\}$.
In the above equality, we see that $c_1,\ldots,c_5$ are cut subwords of $w_1^1$. We let $c_6$ be the cut subword $(\beta\gamma)$
formed by the last and the first two letters $w_1^1$.
There are six different cut--glue operations on $[w_1]$ corresponding $c_1,\ldots,c_6$. 
Among these, $c_1,c_3$ and $c_5$ result in the same cyclic word
\[
[w^2_1]=[\beta(\alpha\beta)^{2}\alpha(\gamma\alpha)\gamma(\beta\gamma)^2\beta\cdot(\alpha\beta)\alpha(\gamma\alpha)^2\gamma(\beta\gamma)].
\]
The three cut--glue operations $c_2,c_4$ and $c_6$ yield distinct cyclic words
\begin{eqnarray*}
[w^2_2]&=&[(\beta\gamma)\beta\cdot\alpha(\gamma\alpha)^2\gamma(\beta\gamma)^2\beta\cdot\alpha\beta^{-2}\gamma(\beta\gamma)], \\
{[w^2_4]}&=&[(\beta\gamma)\beta(\alpha\beta)^2\alpha\cdot\gamma(\beta\gamma)^2\beta(\alpha\beta)^2\alpha\cdot\gamma(\beta\gamma)], \\
{[w^2_6]}&=&[(\beta\gamma)\beta(\alpha\beta)^2\alpha(\gamma\alpha)^2\gamma\cdot\beta(\alpha\beta)^2\alpha(\gamma\alpha)^2\gamma].
\end{eqnarray*}
Using Theorem~\ref{thm:term} we have the potential of $\mathbb{P}_{4,4,4}$ (shown up to the fifth generation):
\tiny
\begin{eqnarray*}
W&=&
-xyzq+x^{4}q^{12}+y^{4}q^{12}+z^{4}q^{12}-8x^{2}y^{2}z^{2}q^{34}
+6x^{4}z^{4}q^{56}+13x^{3}y^{3}z^{3}q^{67}+6y^{4}z^{4}q^{56}+6x^{4}y^{4}q^{56}
\\ &&
-32x^{2}y^{2}z^{6}q^{78}-34x^{5}yz^{5}q^{89}-32x^{6}y^{2}z^{2}q^{78}-28x^{4}y^{4}z^{4}q^{100}-34xy^{5}z^{5}q^{89}-34x^{5}y^{5}zq^{89}
\\ &&
-32x^{2}y^{6}z^{2}q^{78}
+63x^{3}y^{3}z^{7}q^{111}+10x^{4}z^{8}q^{100}-237x^{6}y^{2}z^{6}q^{122}-237x^{2}y^{6}z^{6}q^{122}+10y^{4}z^{8}q^{100}
\\ &&
-795x^{5}y^{5}z^{5}q^{133}+63x^{7}y^{3}z^{3}q^{111}
+10x^{8}z^{4}q^{100}+10x^{8}y^{4}q^{100}-261x^{6}y^{6}z^{2}q^{122}-1086x^{4}y^{8}z^{4}q^{144}
\\ &&
+63x^{3}y^{7}z^{3}q^{111}-1164x^{4}y^{4}z^{8}q^{144}-1112x^{8}y^{4}z^{4}q^{144}
+10y^{8}z^{4}q^{100}+10x^{4}y^{8}q^{100}-2604x^{6}y^{6}z^{6}q^{166}
\\ &&
-125x^{5}yz^{9}q^{133}-1458x^{7}y^{3}z^{7}q^{155}-1485x^{3}y^{7}z^{7}q^{155}-150xy^{5}z^{9}q^{133}-72x^{2}y^{2}z^{10}q^{122}
-203x^{9}y^{5}z^{5}q^{177}
\\ &&
-1350x^{7}y^{7}z^{3}q^{155}-261x^{5}y^{9}z^{5}q^{177}
-125x^{9}yz^{5}q^{133}-150x^{9}y^{5}zq^{133}-78x^{8}z^{8}q^{144}-232x^{5}y^{5}z^{9}q^{177}
\\ && -72x^{10}y^{2}z^{2}q^{122} -125xy^{9}z^{5}q^{133}-125x^{5}y^{9}zq^{133}-52y^{8}z^{8}q^{144}-78x^{8}y^{8}q^{144}-72x^{2}y^{10}z^{2}q^{122}\cdots
\end{eqnarray*}
\normalsize
\end{example}

\begin{example}\label{ex:enum2}
We use Theorem~\ref{thm:term} to write down the disk potential of
$\mathbb{P}_{3,4,5}$ up to the fifth generation as
\tiny
\begin{eqnarray*}
 W&=&-xyzq +x^{3}q^{9} +y^{4}q^{12} -z^{5}q^{15} 
+8xy^{2}z^{3}q^{34} 
 +6x^{2}z^{6}q^{56} +13xy^{3}z^{5}q^{67} -11x^{2}y^{4}zq^{53} 
\\ &&+34x^{2}yz^{8}q^{89} -60x^{3}y^{2}z^{4}q^{75} 
 +72xy^{4}z^{7}q^{100} -96x^{2}y^{5}z^{3}q^{86} -154x^{3}y^{3}z^{6}q^{108} +77x^{2}y^{2}z^{10}q^{122} \\&&+92xy^{5}z^{9}q^{133} 
 -2331x^{3}y^{4}z^{8}q^{141} -19x^{4}z^{7}q^{97} -210x^{2}y^{6}z^{5}q^{119} +36x^{4}y^{4}z^{2}q^{94} +6y^{8}z^{8}q^{144} 
\\&& +1175x^{4}y^{5}z^{4}q^{127} +38x^{3}y^{7}zq^{105} -312x^{4}yz^{9}q^{130} -2106x^{2}y^{7}z^{7}q^{152} +1120x^{3}y^{5}z^{10}q^{174} 
\\&&+702x^{2}y^{3}z^{12}q^{155} -858x^{4}y^{6}z^{6}q^{160} +78x^{3}z^{13}q^{144} +135x^{4}y^{2}z^{11}q^{163} +644xy^{6}z^{11}q^{166} +87x^{2}y^{8}z^{9}q^{185} 
\\&&+56x^{5}y^{4}z^{9}q^{182} +242x^{5}y^{2}z^{5}q^{116} -75x^{5}y^{3}z^{7}q^{149} +888x^{3}y^{8}z^{3}q^{138} -81xy^{10}z^{6}q^{163} +87y^{9}z^{10}q^{177}
\\&& -6y^{5}z^{15}q^{180} +48x^{5}y^{7}z^{2}q^{146}+\cdots
\end{eqnarray*}
\normalsize
\end{example}


\section{Potential in the (2,3,6)-case}\label{sec:236}
In this section we compute the potential for the elliptic case $(a,b,c) = (2,3,6)$.  We will see in Section \ref{sec:mir236} that the countings of polygons in this section give an enumerative meaning of the mirror map.  The triangle and hexagon tessellations are shown in Figure \ref{fig:236tes}.

\begin{figure}[htb!]
\includegraphics[scale = 0.5]{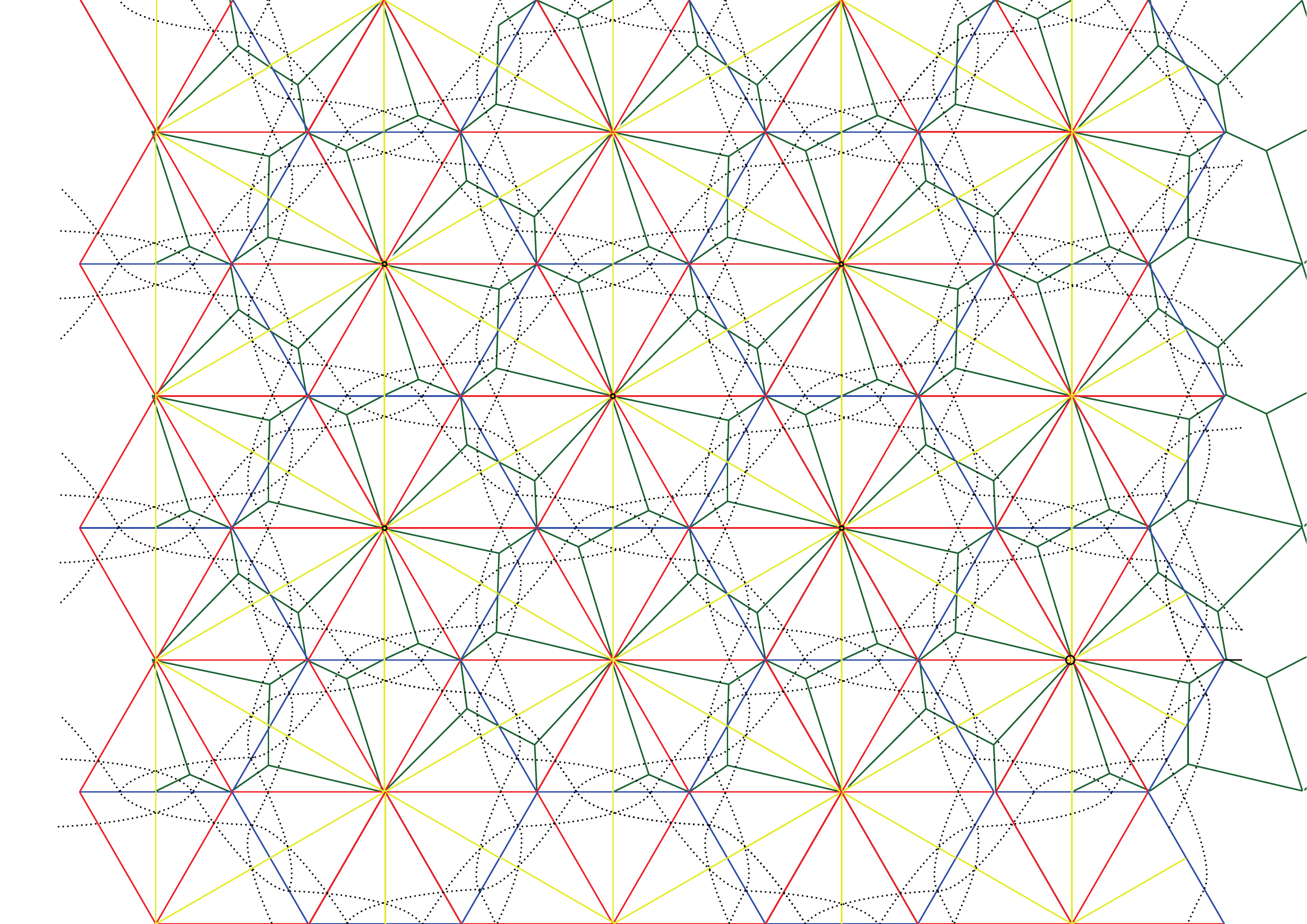}
\caption{The triangle and hexagon tessellations for the elliptic curve quotient $\bP^1_{2,3,6} = E / \Z_6$.  The Seidel Lagrangians are shown by dotted curves.}\label{fig:236tes}
\end{figure}

\begin{theorem}\label{thm:W236}
The potential $W$ for $(a,b,c)= (2,3,6)$ is
$$W= q^6 x^2 - q xyz + c_y(q) y^3 + c_z(q) z^6  + c_{yz2}(q) y^2z^2 + c_{yz4}(q) yz^4,
$$
where 
\begin{align*}
c_y(q) &= \sum_{a\ge0}(-1)^{a+1} (2a+1) q^{48A(a-1,0,0,0) + 9};\\
c_{yz2}(q) &= \sum_{n\ge a\ge0} \big(
(-1)^{n-a}(6n-2a+8) q^{48A(n,a,0,0)-4} + (2n+4) q^{48 A(n,a,n-a,0)- 4} \big);\\
c_{yz4}(q) &= 
\sum_{a,b\ge0, n\ge a+b} (-1)^{n-a-b} (6n-2a-2b+7) q^{48 A(n,a,b,0)-17};\\
c_z(q) &= \sum (-1)^{n-a-b-c}\left({6n - 2a - 2b - 2c + 6}\over{\eta(n,a,b,c)}\right)
\cdot q^{48 A(n,a,b,c)-30}
\end{align*}
where 
$$A(n,a,b,c) :=  {n+2\choose 2} - {a+1 \choose 2} - {b+1 \choose 2} - {c+1 \choose 2},$$
the summation in the expression of $c_z(q)$ is taken over
$(n,a,b,c)\in T_1\coprod T_2\coprod T_3\coprod T_6$,
\begin{align*}
T_6 =& \{(3a, a, a, a) \; : a\ge0\}, \\ 
T_3 =& \{(n,a,a,a)\;: n>3a\ge0\}, \\
T_2 =& \{(a+b+c,a,b,c) \;: a,b,c\ge0\text{ such that }a<\min(b,c)\text{ or }a=c< b \}, \\
T_1 =& \{ (a+b+c+k,a,b,c) \;: k \in \Z_{>0}, a,b,c\text{ are distinct non-negative integers such that } \\
& a<\min(b,c) \text{ or }a=c< b\},
\end{align*}
and $\eta(n,a,b,c)=i$ for $(n,a,b,c)\in T_i$.
\end{theorem}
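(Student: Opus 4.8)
The plan is to use the algorithm developed in the preceding sections---Theorem~\ref{thm:enum} (the bijection $\Phi$ between polygons for the potential and the set $\CS=\bigcup_{p\ge-1}\CS^p$ of cut--glue generations) together with the term formula of Theorem~\ref{thm:term} and the area formula of Theorem~\ref{thm:area}---and simply to carry it out explicitly in the special case $(a,b,c)=(2,3,6)$. Since this is the elliptic case $1/a+1/b+1/c=1$, Theorem~\ref{thm:area}(2) forces $P/2+Q/3+R/6=1$ for every polygon for the potential, so the monomial $x^Py^Qz^R$ is constrained to the finite list $x^2,\;xyz,\;y^3,\;z^6,\;y^2z^2,\;yz^4$ (the only non-negative integer solutions). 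This already pins down the \emph{shape} of $W$: it is a weighted-homogeneous polynomial with exactly these monomials, and the content of the theorem is the identification of the coefficient series. So the first step is to record this finite classification of $(P,Q,R)$ and observe that the $x^2$- and $xyz$-coefficients come only from the basic and $0$-th generation diagrams, giving $-qxyz+q^6x^2$ with no higher corrections (any larger polygon contributing to $x^2$ would violate the area/combinatorics constraints; this needs a short check using Lemma~\ref{lem:standard} and Corollary~\ref{cor:bigon_no}).

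The main work is then a generation-by-generation enumeration of the boundary words $[w]\in\CS$ whose corner data $([w]_1,[w]_2,[w]_3)$ equals one of the four remaining triples $(0,3,0),(0,0,6),(0,2,2),(0,1,4)$, keeping track of $p=[w]_0$ and $\eta([w])$. Because $a=2$ and $b=3$, I would use the Case~3 glue words of Section~\ref{ss:cutglue} and the elementary-move description of Figure~\ref{fig:move}(d); the $y^3$ and $yz^4$ and $y^2z^2$ families should be ``linear'' chains of cut--glue operations (each polygon obtained essentially uniquely from the previous one by enlarging along a canonical side), which is why their coefficients $c_y,c_{yz4},c_{yz2}$ are single or double sums with multiplicity $(-1)^{p+Q+R}(2p+S)$ and $\eta=1$ (except for a handful of symmetric polygons, which explains the second sum in $c_{yz2}$ with $\eta=2$). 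For $c_z(q)$, the polygons with $R=6$ corners have richer combinatorics---they can carry a $\Z/2$-, $\Z/3$- or $\Z/6$-symmetry from $G$---and the index set $T_1\sqcup T_2\sqcup T_3\sqcup T_6$ with $\eta=i$ on $T_i$ is precisely the bookkeeping of which orbits of $z$-corner placements occur and with which stabilizer. The parameters $(n,a,b,c)$ should be read off as: $n$ controls the generation ($p$, hence the area via $A(n,a,b,c)=\binom{n+2}{2}-\binom{a+1}{2}-\binom{b+1}{2}-\binom{c+1}{2}$, which is the closed form of the linear function of $p,P,Q,R$ produced by Corollary~\ref{cor:counts} and Lemma~\ref{lem:area}), and $a,b,c$ record how the six $z$-corners are distributed among three ``sectors'' determined by the order-$6$ cone point; the $q$-exponent $48A-30$ then comes from $\area([w]_1,[w]_2,[w]_3)$ of Theorem~\ref{thm:term} after substituting $Q=0,R=6$ and the relation $Q=q^8$, $\tilde Q=q^{48}$ noted in the introduction.

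Concretely the steps are: (i) solve $P/2+Q/3+R/6=1$ to get the six monomials and dispose of $x^2,xyz$; (ii) for each of the four remaining monomial types, set up the recursion on $\CS^p$ using the Case~3 cut--glue rules, describe the unique (or finitely many) ways to extend a polygon of that type, and thereby index the polygons by natural-number tuples; (iii) for each indexed family compute $s(U),s^{\mathrm{op}}(U),\eta(U)$ from the boundary word via Theorem~\ref{thm:term} and the area from Theorem~\ref{thm:area}, obtaining the stated sign $(-1)^{n-a-b-c}$ (resp.\ $(-1)^{a+1}$, etc.), the stated multiplicity, and the exponent $48A(\dots)+\text{const}$; (iv) assemble and simplify the binomial sums into the form $A(n,a,b,c)=\binom{n+2}{2}-\sum\binom{\cdot+1}{2}$, and separate the symmetric cases into $T_2,T_3,T_6$. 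The hard part will be step (ii)--(iii) for $c_z$: correctly enumerating the $z$-corner configurations without over- or under-counting, and matching the $G$-stabilizers to the partition $T_1\sqcup T_2\sqcup T_3\sqcup T_6$---in particular verifying that a polygon with all six $z$-corners ``balanced'' ($a=b=c$, $n=3a$) genuinely has a $\Z/6$-symmetry while the off-balance cases drop to $\Z/3$, $\Z/2$, or trivial. This is a finite but intricate bookkeeping argument; the remaining substitutions (the $q$-exponent arithmetic and binomial identities) are routine and would be deferred to a computer check, consistent with the Mathematica verification mentioned in the introduction.
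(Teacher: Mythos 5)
Your skeleton starts the same way the paper does: Theorem \ref{thm:area}(2) cuts the monomials down to $x^2, xyz, y^3, z^6, y^2z^2, yz^4$, the $x^2$ and $xyz$ terms are handled by the unique bigon of Corollary \ref{cor:bigon_no}, and each remaining polygon contributes $\pm(2p+S)/\eta$ times a power of $q$. The genuine gap is in your mechanism for the $q$-exponents. You claim the exponent $48A(n,a,b,c)-30$ (etc.) ``comes from $\area([w]_1,[w]_2,[w]_3)$ of Theorem \ref{thm:term} after substituting $Q=0,R=6$'' and that $A(n,a,b,c)$ is ``the closed form of the linear function of $p,P,Q,R$ produced by Corollary \ref{cor:counts} and Lemma \ref{lem:area}.'' Neither statement is available here: the formula \eqref{eq:areapqr1} has $1-1/a-1/b-1/c=0$ in its denominator for $(2,3,6)$, Theorem \ref{thm:term} is stated for $\mathbb{H}^2$ or $S^2$, and Corollary \ref{cor:counts} explicitly excludes elliptic curve quotients. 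More importantly, the area is genuinely not a function of $(p,P,Q,R)$ in this case: the two $z^6$-polygons with parameters $(n,a,b,c)=(2,0,0,0)$ and $(3,1,1,1)$ both have $(P,Q,R)=(0,0,6)$ and the same generation $p=\sum_j r_j=6$, yet $A=6$ and $A=7$, so their areas $48A-30$ differ. Hence a cut--glue recursion that only tracks the generation and the corner data, which is what your step (ii)--(iii) records, cannot output the exponents appearing in the theorem; this is not a bookkeeping detail but the heart of the statement.

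What is missing is the extra lattice datum the paper introduces: in $\mathbb{P}^1_{2,3,6}$ the $C$-vertices form an Eisenstein (triangular) lattice, and for each polygon $U$ the convex hull $U_0$ of the $C$-vertices in $U$ is a lattice hexagon, degenerating to a triangle, trapezoid, parallelogram or pentagon for the $y^3$, $y^2z^2$ and $yz^4$ families; its side data give the parameters $(n,a,b,c)$, the count $v_C=A(n,a,b,c)$, and then Lemma \ref{lem:area} combined with Lemma \ref{lem:relations}(1) yields $\area(U)=(48v_C+3S-8R)\sigma$, which is exactly where $48A-30,\,-17,\,-4,\,+9$ come from. The paper then writes the boundary word of each lattice polygon explicitly, reading $p$ off the word length $2S+3p$ and $\eta$ off the cyclic symmetry of the side-length sequence (this is also where $T_1,\dots,T_6$ come from), rather than running the algorithm of Section \ref{s:hyp}, which is developed under the hypothesis $1/a+1/b+1/c<1$; if you insist on the $\CS^p$ route you would additionally have to justify that extension, and your picture of the $y^2z^2$ and $yz^4$ families as ``linear chains'' of elementary moves is too coarse (they are two- and three-parameter families). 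Your sign and multiplicity bookkeeping is in the right spirit, but without the $C$-vertex lattice-polygon classification, or an equivalent substitute for it, the proposal cannot reach the stated coefficient series.
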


The $C$-vertices form a tessellation of $\mathbb{R}^2$ by equilateral triangles.
Given a polygon $U$ for the potential, the convex hull $U_0$ of the $C$-vertices contained in $U$  is a (possibly degenerate) hexagon; such a hexagon is called an \emph{Eisenstein lattice hexagon}.
As was noticed in \cite{Th}, the hexagon $U_0$ is obtained from an equilateral lattice triangle of side-length $n$ by cutting off three equilateral lattice triangles of side-lengths $a,b$ and $c$; see Figure \ref{fig:eisenstein_hexa}. 
We require $n \ge \max(a+b, b+c, c+a)$ and $a,b,c\ge0$.
The expression
\[A(n,a,b,c) =  {n+2\choose 2} - {a+1 \choose 2} - {b+1 \choose 2} - {c+1 \choose 2}\]
is the number of $C$-vertices in $U_0$.
From Theorem \ref{thm:area} (2), we have the relation $P/2+Q/3+R/6 = 1$. Since each $P, Q,R$ of a polygon gives th (non-negative) exponents of $x,y,z$ in $W$,  we see that the potential has only
$xyz, x^2, z^6, y^2 z^2, y^3$ and $yz^4$ terms.

Knowing the number $v_C$ of $C$-vertices in a polygon $U$ for the potential, we  obtain its area
as $\area(U) = 48v_C +3S-8R$ from the area formula in Lemma \ref{lem:area}.

\begin{figure}[htb!]
\includegraphics[width=.3\textheight]{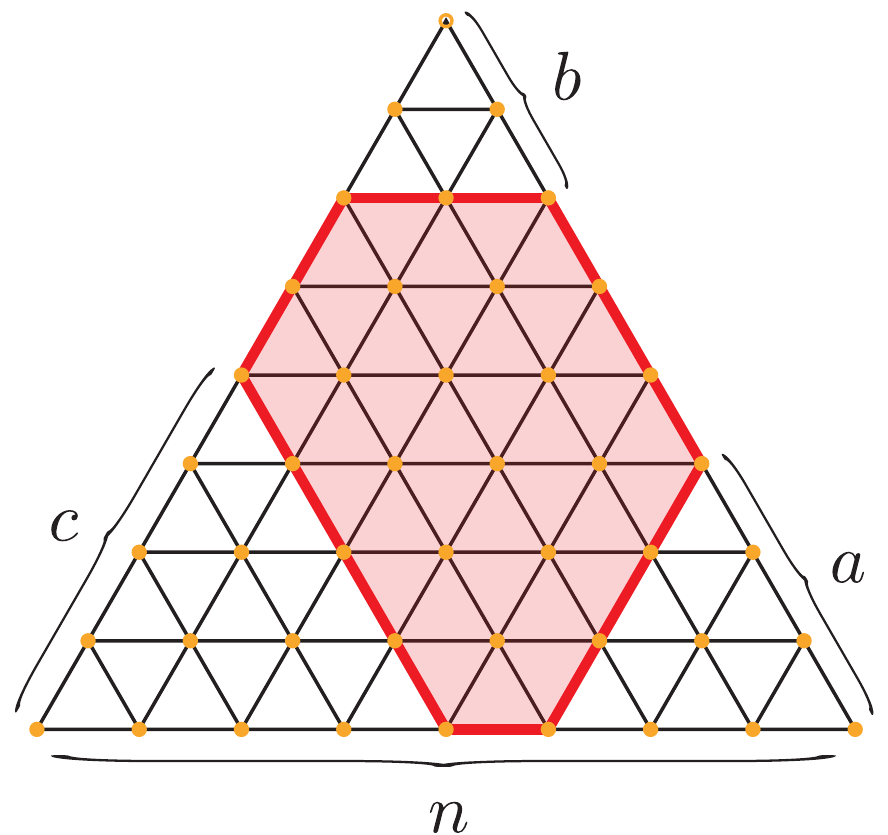}
\caption{The Eisenstein lattice hexagon}\label{fig:eisenstein_hexa}
\end{figure}

The following two terms $c_{x} x^2 + c_{xyz} xyz$ can be calculated easily:
\begin{equation}\label{eq:236x}
q^6 x^2- qxyz
\end{equation}
Here, the coefficient of $x^2$ is given by the unique  bigon with two $x$-corners
passing through $e$ at two points, which has area $\sigma^6$.
Due to $\Z/2$-symmetry, we count it as $2 \times \frac{1}{2} = 1$.
Now, the coefficient of $xyz$ is also given by a single term. Starting from $x$-corner, one note
that any $xyz$-polygon (contributing to the potential) should lie inside the bigon with two $x$-corners. Hence only the minimal triangle is possible for $xyz$.

\subsection{The coefficient of $z^6$ term}
We have $P=Q=0$ and $R=S=6$
and $U$ has six $z$-corners,
Let us denote the side-lengths of $U_0$ as $(r_0, r_1, \ldots, r_5) = ( a , n - a - b, b, n - b - c, c, n - c - a)$ for $r_j\ge0$ and $a,b,c\ge0$. (See Figure \ref{fig:z6hexa}.)
We have $v_C =A(n,a,b,c)$.
The boundary word of $U$ is
\[w = \prod_{j=0}^5 \left( \beta( \alpha \gamma\beta)^{r_j} \alpha \right).\]
The exponents $r_j$ in $w$ can be verified by observing that the side of $U_0$ with length $r_j$ yields $(r_j+1)$ copies of subwords $\beta\alpha$ in the boundary cyclic word $[w] = [\partial U]$. 
\begin{figure}[htb!]
\includegraphics[width=.4\textheight]{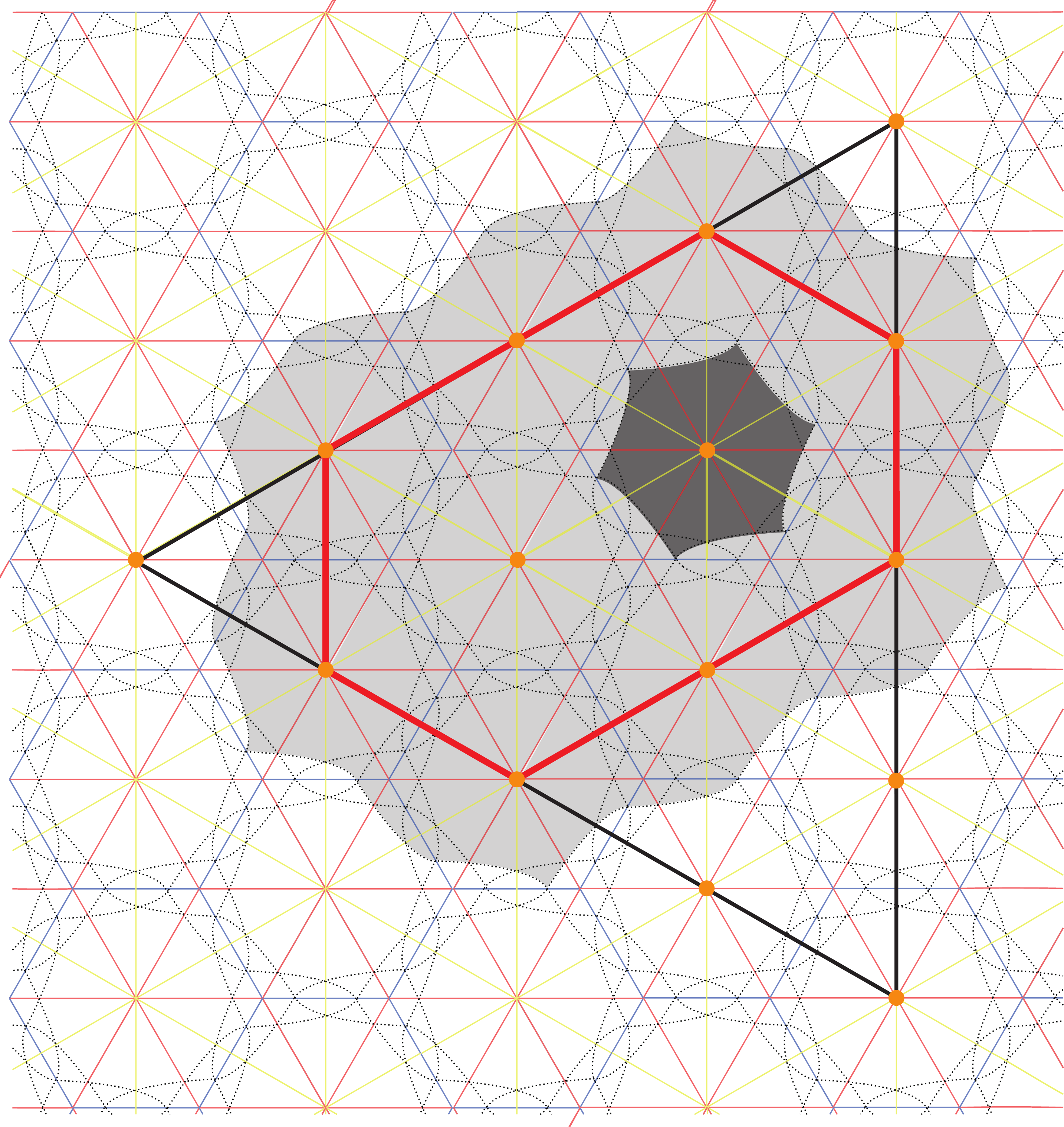}
\caption{$z^6$-hexagon with $(n,a,b,c) = (5,2,1,1)$}\label{fig:z6hexa}
\end{figure}

Note that the above multiplication is grouped so that each $j$-th term corresponds to the $j$-th side of $U_0$.
To enumerate  polygons for the potential,
we need to consider the parameters $(n,a,b,c)\in \Z_{\ge0}^4$ so that each cyclic word is counted exactly once.
For example, the quadruples $(5,2,2,2)$ and $(4,1,1,1)$ both yield the same cyclic word
$[((\beta \alpha \gamma)^2\beta\alpha(\beta \alpha \gamma)\beta\alpha)^3]$.
Let us denote by $\eta(n,a,b,c)$ the maximal period of the cyclic word that comes from the parameter $(n,a,b,c)$.
In other words, $\eta(n,a,b,c)$ is the period of cyclic symmetry of $(r_0, r_1, \ldots, r_5)$
and this is equal to $\eta$-value of the polygon $U$.
Note that
\begin{equation*}
\eta(n,a,b,c) :=  \left\{
\begin{array}{lll}
6 & \mbox{if} \,\,\, a=b=c=n/3 &\\
3 & \mbox{if} \,\,\, a=b=c  & (\mbox{but, not} \,\,\, a=n/3) \\
2 & \mbox{if} \,\,\, a+b+c=n & (\mbox{but, not} \,\,\, a=b=c)   \\ 
\end{array}\right.
\end{equation*}
and $\eta(n,a,b,c)=1$ in all the remaining cases.
In order to avoid duplicate counting, we define 
\begin{eqnarray*}
T_6&=& \{(3a, a, a, a) \; : a\ge0\}, \\ 
T_3 &=& \{(n,a,a,a)\;: n>3a\ge0\}, \\
T_2 &=& \{(a+b+c,a,b,c) \;: a,b,c\ge0\text{ such that }a<\min(b,c)\text{ or }a=c< b \} \\
T_1 &=& \{ (a+b+c+k,a,b,c) \;: k \in \Z_{>0}, a,b,c\text{ are distinct non-negative integers such that } \\
& & a<\min(b,c) \text{ or }a=c< b\}.
\end{eqnarray*}

From the relation 
\[(\text{the word length of }w) = 2p + 3S\]
we have $ p = \sum_j r_j = 3n - a - b - c$.
Using $\area(U) = 48 v_C - 30$, we obtain the $c_z(q) z^6$-term in the potential  where
$c_z(q)$ is given as follows:
\begin{equation}\label{eq:236z}
\sum(-1)^{n-a-b-c}\left({6n - 2a - 2b - 2c + 6}\over{\eta(n,a,b,c)}\right)
\cdot q^{48 A(n,a,b,c)-30}
\end{equation}
where the sum is taken over
$(n,a,b,c)\in T_1\coprod T_2\coprod T_3\coprod T_6$.
Note that $\eta(n,a,b,c)=i$ for $(n,a,b,c)\in T_i$.

\subsection{$y^2 z^2$-term}
The computation is similar to the $z^6$-term. 
We have $P=0, Q=R=2$ and $S=4$
and the convex hull $U_0$ of the $C$-vertices contained in such a polygon $U$ for the potential  has two $y$-corners and two $z$-corners.
The $y$-corners correspond to the angles $\pi/3$ and the $z$-corners correspond to the angles $2\pi/3$ in $U_0$.
Hence $U_0$ is either a trapezoid or a parallelogram, depending on whether or not the two $y$-corners are adjacent.

\emph{Trapezoid case.}
Let the side-lengths be given as $(n,n-a, a, n-a)$ for $n\ge a$ and $a\ge0$. (See Figure \ref{fig:y2z2trape}.)
\begin{figure}[htb!]
\includegraphics[width=.4\textheight]{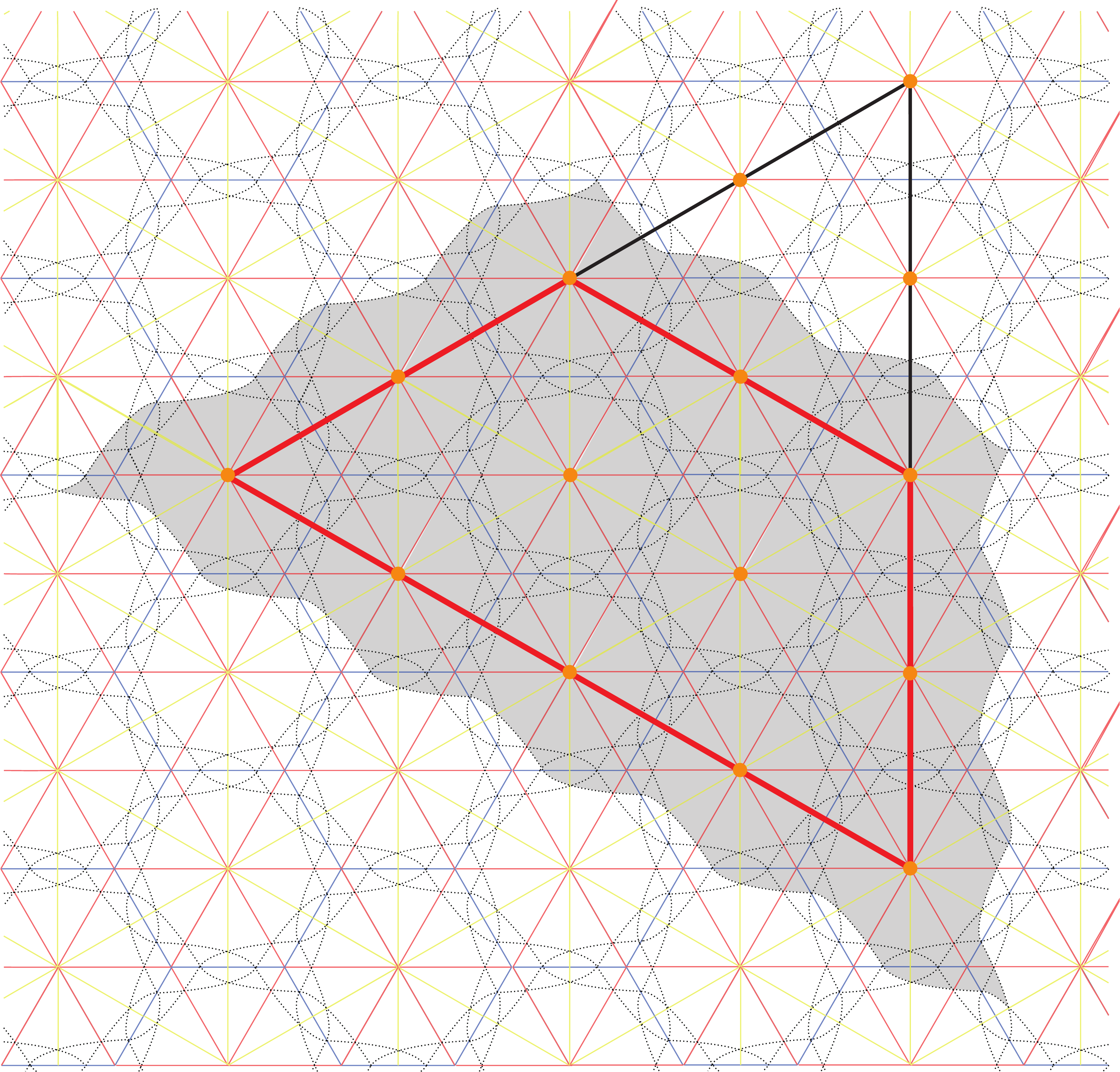}
\caption{$y^2 z^2$-trapezoid with $n=5$ and $a=2$}\label{fig:y2z2trape}
\end{figure}
This corresponds to the words of the form:
\[w =(\alpha (\gamma\beta\alpha)^{n-a} \gamma\beta\alpha)
\cdot(
\beta
(\alpha\gamma\beta)^a \alpha)
\cdot 
(\beta (\alpha\gamma\beta)^{n-a} \alpha\gamma)
\cdot
(\alpha
(\gamma\beta\alpha)^{n+1}\gamma).\]
The above expression is grouped so that each term corresponds to a side of $U_0$.
We have $p=3n-a+2, \eta=1$
and $\area(U) = 48 v_C +3S - 8R = 48A(n,a,0,0)-4$.
So the potential has the terms as follows:
\[
\sum_{n\ge a\ge0}
(-1)^{n-a}(6n-2a+8) q^{48A(n,a,0,0)-4}y^2z^2.\]
\emph{Parallelogram case.}
Let the side-lengths be given as $(a,n-a, a, n-a)$ for $n\ge a\ge0$.
(See Figure \ref{fig:y2z2para}.)
\begin{figure}[htb!]
\includegraphics[width=.5\textheight]{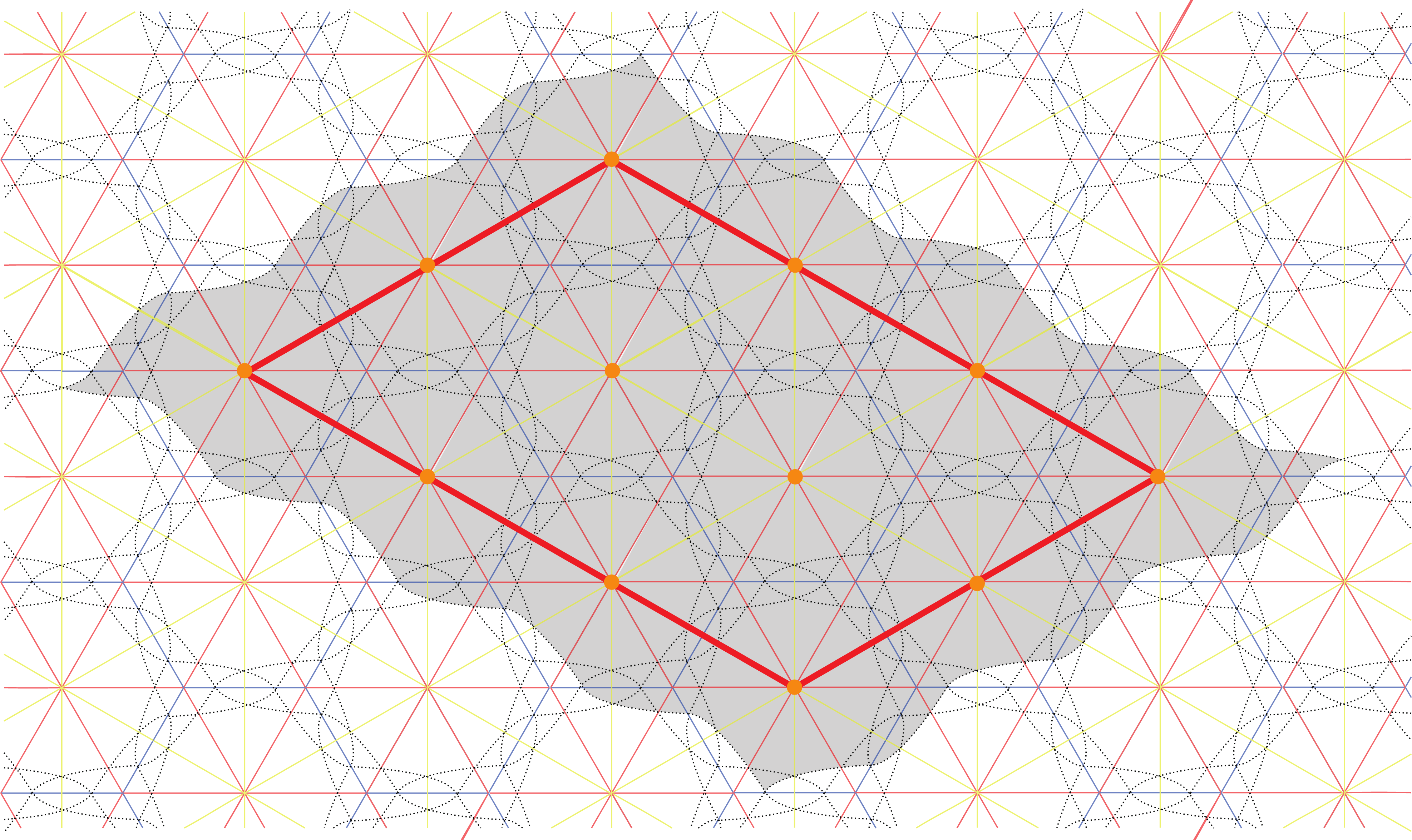}
\caption{$y^2 z^2$-parallelogram with $n=7$ and $a=3$}\label{fig:y2z2para}
\end{figure}

This corresponds to the words:
\[w =(\alpha(\gamma\beta \alpha)^{a+1}) ( \beta\alpha \gamma)^{n-a+1})^2.\]
We have $p=2n+2, \eta=2$
and $\area(U) =48 A(n,a,n-a,0)- 4$.
The corresponding term in the potential is
$$
\sum_{n\ge a\ge0}
(2n+4) q^{48 A(n,a,n-a,0)- 4}y^2z^2.$$

Hence the term $c_{yz2} y^2z^2$ in the potential $W$ is given as
\begin{equation}\label{eq:236yz2}
\sum_{n\ge a\ge0} \big(
(-1)^{n-a}(6n-2a+8) q^{48A(n,a,0,0)-4} + (2n+4) q^{48 A(n,a,n-a,0)- 4} \big) 
y^2z^2.
\end{equation}

\subsection{$y^3$-term}
Similarly, the $C$-vertices in the corresponding polygon for the potential
form an equilateral lattice triangle such that the number of $C$-vertices on one side is $a\ge0$. We have $v_C = A(a-1,0,0,0)$ and $\area(U) = 48 v_C +3S - 8R = 48 A(a-1,0,0,0) + 9$.
The boundary word will then be:
\[
w = (\alpha( \gamma\beta\alpha)^{a} \gamma)^3.\]
From $p=3a$ and $\eta = 3$, we obtain the potential terms
\begin{equation}\label{eq:236y}
\sum_{a\ge0}(-1)^{a+1} (2a+1) q^{48A(a-1,0,0,0) + 9} y^3.
\end{equation}

An example of a triangle for $y^3$-term is drawn in Figure \ref{fig:y3tri}.
\begin{figure}[htb!]
\includegraphics[width=.35\textheight]{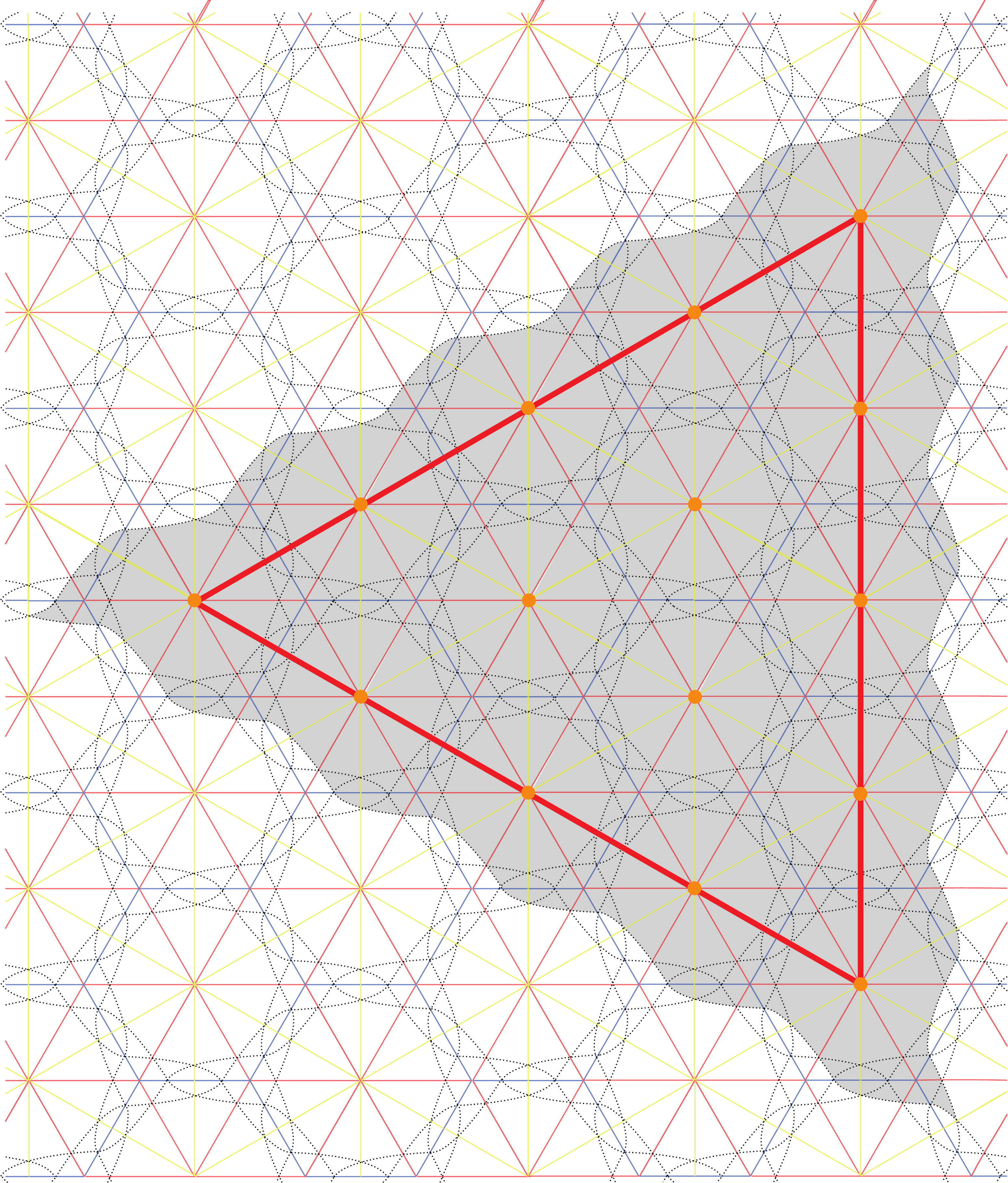}
\caption{$y^3$-triangle with $a=5$}\label{fig:y3tri}
\end{figure}

\subsection{$yz^4$-term}
The $C$-vertices in the polygon for the potential form a lattice pentagon
of side-lengths, say $(n-a,a,n-a-b,b,n-b)$ 
for $a,b\ge0$ and $n\ge a+b$.
This polygon is obtained from an equilateral lattice triangle of side-length $n$
by cutting off lattice triangles of side-lengths $a$ and $b$ at corners. (See Figure \ref{fig:yz4pent}.)
\begin{figure}[htb!]
\includegraphics[width=.35\textheight]{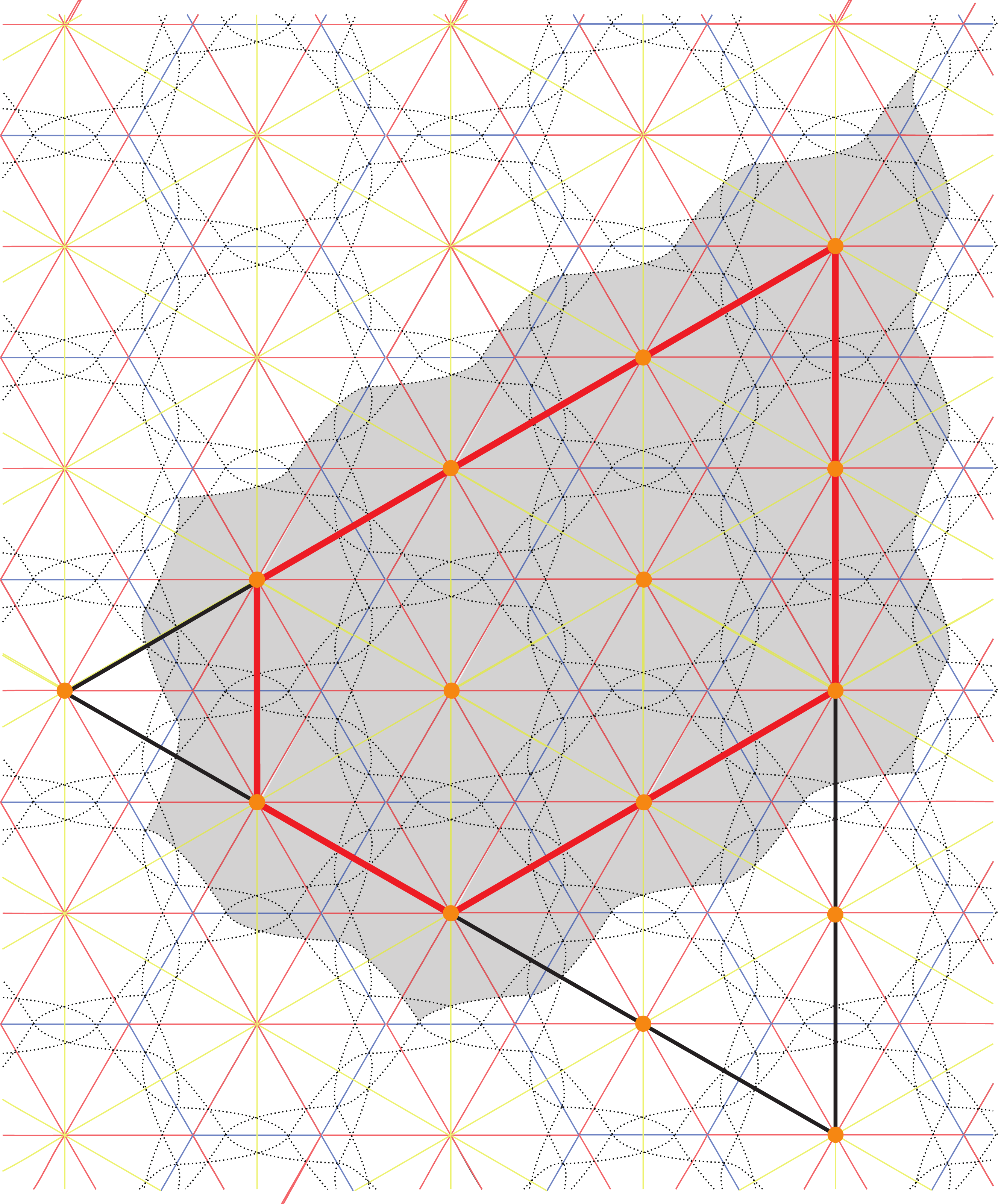}
\caption{$y z^4$-pentagon with $(n,a,b)=(5,2,1)$}\label{fig:yz4pent}
\end{figure}
The boundary word is of the form:
\[w =(\alpha(\gamma\beta \alpha)^{n-b+1})
(\beta ( \alpha \gamma\beta)^{b}\alpha)
(\beta ( \alpha \gamma\beta)^{n-a-b}\alpha)
(\beta ( \alpha \gamma\beta)^{a}\alpha)
(\beta ( \alpha \gamma\beta)^{n-a}\alpha\gamma)
.\]
We have $p=3n-a-b+1, \eta = 1$
and
$\area(U) = 48 v_C + 3S - 8R = 48 A(n,a,b,0)-17$.
So we get the potential terms
\begin{equation}\label{eq:236yz4}
\sum_{a,b\ge0, n\ge a+b}
(-1)^{n-a-b}
(6n-2a-2b+7) q^{48 A(n,a,b,0)-17}yz^4.
\end{equation}


\section{Potential in the (2,4,4)-case}\label{sec:244}

In this section, we compute the potential for the elliptic case $(a,b,c) = (2,4,4)$.  We will see in Section \ref{sec:mir244} that the countings of polygons in this section give an enumerative meaning of the mirror map.

\begin{theorem}\label{thm:W244}
The potential $W$ for $(a,b,c)= (2,4,4)$ is
$$W= q^6 x^2 - q xyz + d_y(q) y^4 + d_z(q) z^4 + d_{yz}(q) y^2z^2,
$$
where 
\begin{align*}
d_y(q) = d_z(q) &= \sum_{0 \le r} (2r+1) q^{16 (2r+1)^2-4}
+\sum_{0\le r < s} (2r+2s+2) q^{16 (2r+1)(2s+1)-4}, \\
d_{yz}(q) &= \sum_{r\ge1, s\ge1} \big( - (4r+4s-2) q^{16(2r-1)2s-4} +  (2r+2s) q^{64rs-4} \big).
\end{align*}
\end{theorem}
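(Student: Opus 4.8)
\section{Proof strategy for Theorem~\ref{thm:W244}}

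The plan is to mirror, almost verbatim, the computation carried out for $\bP^1_{2,3,6}$ in Section~\ref{sec:236}, with the Eisenstein (triangular) lattice replaced by a square lattice. First I would invoke Theorem~\ref{thm:area}(2): for a polygon $U$ for the potential in the elliptic case $(a,b,c)=(2,4,4)$, the numbers $P,Q,R$ of $x$-, $y$- and $z$-corners of $U$ satisfy $2P+Q+R=4$. Exactly as for the $x^2$- and $xyz$-terms of $\bP^1_{2,3,6}$, Corollary~\ref{cor:bigon_no} shows that any polygon with $P\ge1$ must lie inside the unique bigon with two $x$-corners, hence is either that bigon --- contributing $q^6x^2$, counted with multiplicity $2\cdot\tfrac12=1$ because of its $\Z/2$-symmetry --- or the minimal $xyz$-triangle, contributing $-qxyz$. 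This already rules out the corner-types $(1,2,0)$ and $(1,0,2)$, so $W = q^6x^2 - qxyz + c_{40}y^4 + c_{04}z^4 + c_{31}y^3z + c_{13}yz^3 + c_{22}y^2z^2$ for some power series in $q$. Since $b=c=4$, the potential is symmetric under exchanging $y$ and $z$ (this is built into the labelled combinatorial model), so $c_{40}=c_{04}=:d_y=d_z$, $c_{13}=c_{31}$, and it suffices to compute $d_y$ and $c_{22}=:d_{yz}$ and to show $c_{31}=0$.

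Next I would introduce the lattice model. Let $\Lambda\subseteq E=\RR^2$ be the union of the lifts of the two order-$4$ orbifold points; it is a disjoint union of two interleaved square lattices, the $B$-vertices and the $C$-vertices. For a polygon $U$ for the potential with $P=0$, put $U_0:=\mathrm{conv}(\Lambda\cap U)$. As in Section~\ref{sec:236} one checks that the corners of $U_0$ are $B$- or $C$-vertices and that at each such corner $U$ turns by a multiple of $2\pi/4$; convexity then forces every interior angle of $U_0$ to be exactly $\pi/2$, so $U_0$ is a (possibly degenerate) lattice rectangle, and the $y$-corners (resp.\ $z$-corners) of $U$ are precisely the $B$-corners (resp.\ $C$-corners) of $U_0$. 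Because a parallelogram having three vertices in one of the two square sublattices has its fourth vertex there as well, the multiset of corner-colours of $U_0$ is $\{B,B,B,B\}$, $\{C,C,C,C\}$, or two opposite $B$'s together with two opposite $C$'s. In particular the configuration with three $B$-corners and one $C$-corner never occurs, which proves $c_{31}=c_{13}=0$; the three surviving cases are the $y^4$-squares (four $B$-corners), the $z^4$-squares (four $C$-corners), and the $y^2z^2$-rectangles.

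I would then enumerate each of the three cases by the side-lengths $(m,n)$ of $U_0$, with parity constraints on $m,n$ dictated by which of the two sublattices a given corner must lie in (a square with $B$-corners forces both sides odd, and so on). For each such $U$ one writes the boundary cyclic word $[w(\partial U)]$ as a product grouped by the sides of $U_0$ --- just as in the $z^6$-, $y^3$- and $y^2z^2$-cases of Section~\ref{sec:236} --- and reads off $S$, $p=[w]_0$ and the cyclic-symmetry order $\eta([w])$ (which is $1$ unless $U_0$ is a square or an otherwise symmetric rectangle); one then computes the number $v_B$ of $B$-vertices in $U_0$, a quadratic function of $m,n$, obtains $\area(U)$ from Lemma~\ref{lem:area} (equivalently $\area(U)=32\,v_B+3S-8Q$, derived from Lemma~\ref{lem:relations}(1)), and finally assembles the corresponding term of $W$ from Equation~\eqref{eq:formulaW} using $s(U)=p+\#\{y\text{-corners}\}+\#\{z\text{-corners}\}$ and $s^{\mathrm{op}}(U)=p+\#\{x\text{-corners}\}$ as in the proof of Theorem~\ref{thm:term}. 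Squares of odd side $2r+1$ then yield the diagonal terms $(2r+1)q^{16(2r+1)^2-4}$ of $d_y$, rectangles with distinct odd sides $2r+1<2s+1$ yield the off-diagonal terms $(2r+2s+2)q^{16(2r+1)(2s+1)-4}$, and the $(\mathrm{odd},\mathrm{even})$ and $(\mathrm{even},\mathrm{even})$ rectangles yield the two families in $d_{yz}$, with the sign $(-1)^{p+Q+R}$ producing the minus sign on the first family; the summation ranges $0\le r<s$ and $r,s\ge1$ are exactly those needed so that each cyclic word, hence each polygon up to the $G$-action, is counted once.

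The step I expect to be the main obstacle is precisely this last piece of bookkeeping: correctly identifying the parity conditions that separate $B$- from $C$-corners, computing $v_B$ so that the area exponents come out as $16(2r+1)^2-4$, $16(2r+1)(2s+1)-4$, $16(2r-1)(2s)-4$ and $64rs-4$ exactly, and --- most delicately --- pinning down the integer multiplicity $(2p+S)/\eta([w])$ together with its sign so that the coefficients $2r+1$, $2r+2s+2$, $-(4r+4s-2)$ and $2r+2s$ appear, while making sure that no rectangle is counted under two different parametrisations.
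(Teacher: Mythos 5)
Your strategy is essentially the paper's own proof: the $x^2$ and $xyz$ terms are handled via the unique bigon exactly as in the $(2,3,6)$ case, and for $P=0$ the paper likewise reduces to a checkerboard square-lattice picture in which every polygon for the potential is a lattice rectangle (ruling out $y^3z$ and $yz^3$ by the parity of side lengths, equivalent to your sublattice-closure argument), then enumerates rectangles by side lengths with parity constraints and assembles each term from the area formula, $p$, $\eta$, and Theorem \ref{thm:term}. The only cosmetic difference is that you take the convex hull of the $B$- and $C$-vertices (as in the Eisenstein-hexagon treatment of Section \ref{sec:236}) while the paper works directly with the rectangle of unit cells, so your side-length parameters are shifted by one relative to the paper's $(2r+1)\times(2s+1)$ bookkeeping — exactly the off-by-one issue you flag yourself.
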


From Theorem \ref{thm:area} (2), we have $P/2 + Q/4 + R/4=1$.
The following two terms when $P \geq 1$, can be computed as in $(2,3,6)$ case:
\begin{equation}\label{eq:244x}
q^6 x^2- qxyz
\end{equation}

So let us now assume $P=0$ and $Q+R = 4 = S$.
Let $U$ be a polygon for the potential.
It will be useful to recall from Lemma \ref{lem:relations} that
$4 v_B + 4 - Q = 4 v_C + 4 - R$ and hence, $v_C = (v_B+v_C - (Q-R)/4)/2$.
It follows from the area formula in Lemma \ref{lem:area}
\[\area(U)= 8v_W  - 5S - 8p= 32 v_C + 12 - 8R
= 16(v_B+v_C)-4.\]
Schematically, the Seidel Lagrangians give a tessellation of $\mathbb{R}^2$ by unit squares
and each square contains either a $B$-vertex or a $C$-vertex, in the checkerboard pattern. (See Figure \ref{fig:sch_4gon}.) A polygon for the potential is a rectangle formed by these unit squares.
The types ($y$- or $z$-) of the corners on a side of this rectangle coincide
if and only if the length of that side is odd.
From this, we see that the potential has $y^4, z^4$ and $y^2z^2$ terms other than $xyz$ and $x^2$ terms.
\begin{figure}[htb!]
\includegraphics[width=.25\textheight]{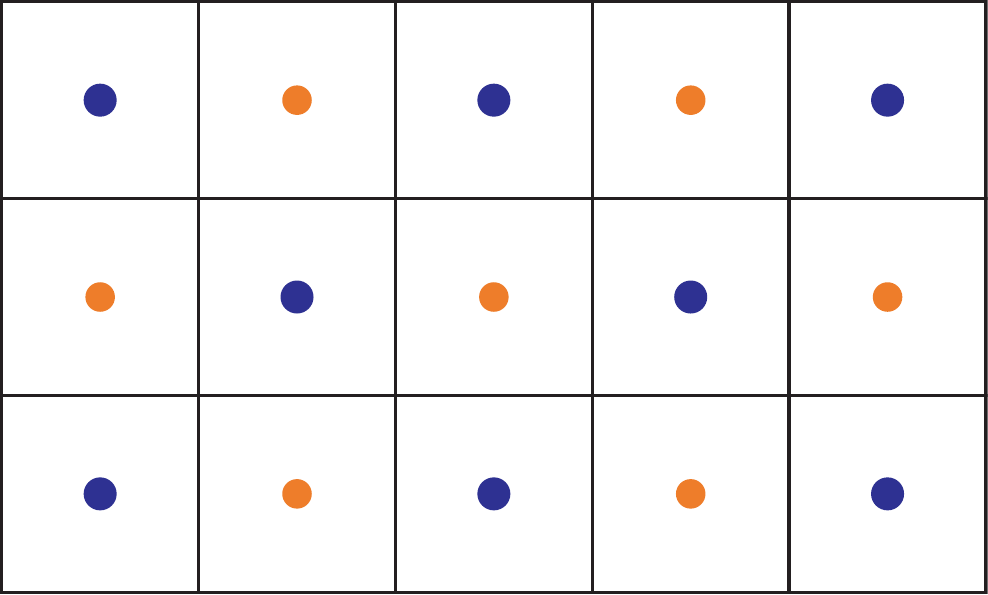}
\caption{A schematic rectangle : a unit rectangle with $B$(resp. $C$)-vertex has four $y$-(resp. $z$-)corners.}\label{fig:sch_4gon}
\end{figure}

\subsection{$y^4$-terms}
By the discussion above, the side-lengths of the corresponding holomorphic rectangle are all odd, say $2r+1$ and $2s+1$ for some $r,s\ge0$.
The boundary word will be:
\[w =((\alpha\gamma\beta)^r \alpha\gamma (\alpha\gamma\beta)^s \alpha\gamma)^2.\]
Note $\eta(U) = 2$ if $r\ne s$, and $\eta(U) = 4$ if $r=s$.
We have $v_B + v_C = (2r+1)(2s+1)$, and so,
$\area(U) = 16 (2r+1)(2s+1)-4$. 
From $p = 2(r+s)$, we obtain the potential terms for $d_{y}(q) y^4$:
\begin{eqnarray}\label{eq:244y}
&\sum_{0\le r} (2r+1) q^{16 (2r+1)^2-4} y^4  \nonumber \\
&+\sum_{0\le r < s} (2r+2s+2) q^{16 (2r+1)(2s+1)-4} y^4.
\end{eqnarray}
\begin{figure}[htb!]
\includegraphics[width=.4\textheight]{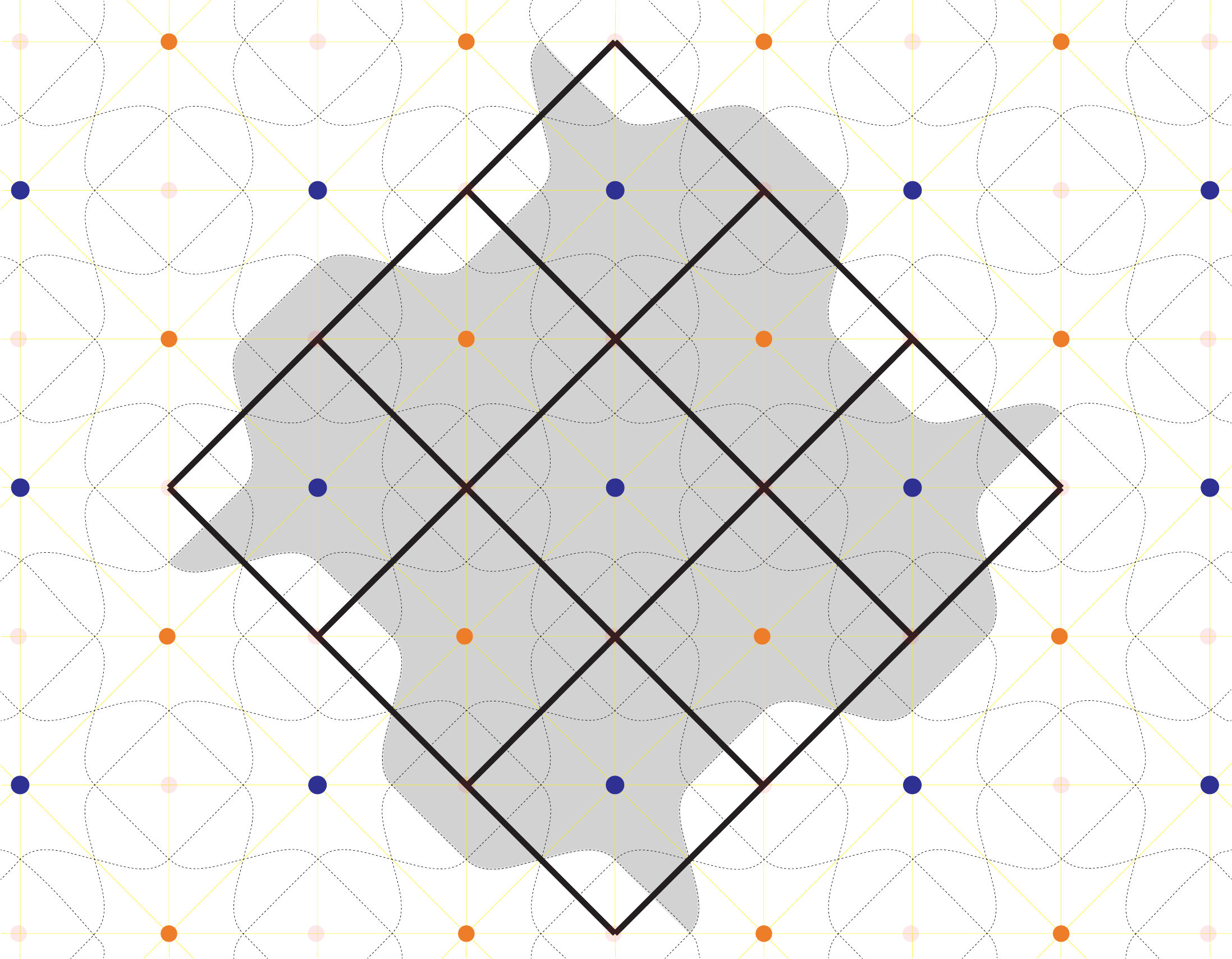}
\caption{$y^4$-rectangle with $r=s=1$}\label{fig:real_y4}
\end{figure}

\subsection{$z^4$-terms}
The computation is very similar to $y^4$-terms.
The boundary word is:
\[w =((\beta\alpha\gamma)^r \beta\alpha (\beta\alpha\gamma)^s \beta\alpha)^2\]
for $r,s\ge0$.
The potential terms  for $d_{z}(q) z^4$ are:
\begin{eqnarray}\label{eq:244z}
&\sum_{0\le r} (2r+1) q^{16 (2r+1)^2-4} z^4  \nonumber \\
&+\sum_{0\le r < s} (2r+2s+2) q^{16 (2r+1)(2s+1)-4} z^4.
\end{eqnarray}

 \subsection{$y^2 z^2$-terms}
 \emph{Case 1.} The two $y$-corners are adjacent. (Figure \ref{fig:realyz}.)
 \begin{figure}[htb!]
\includegraphics[width=.3\textheight]{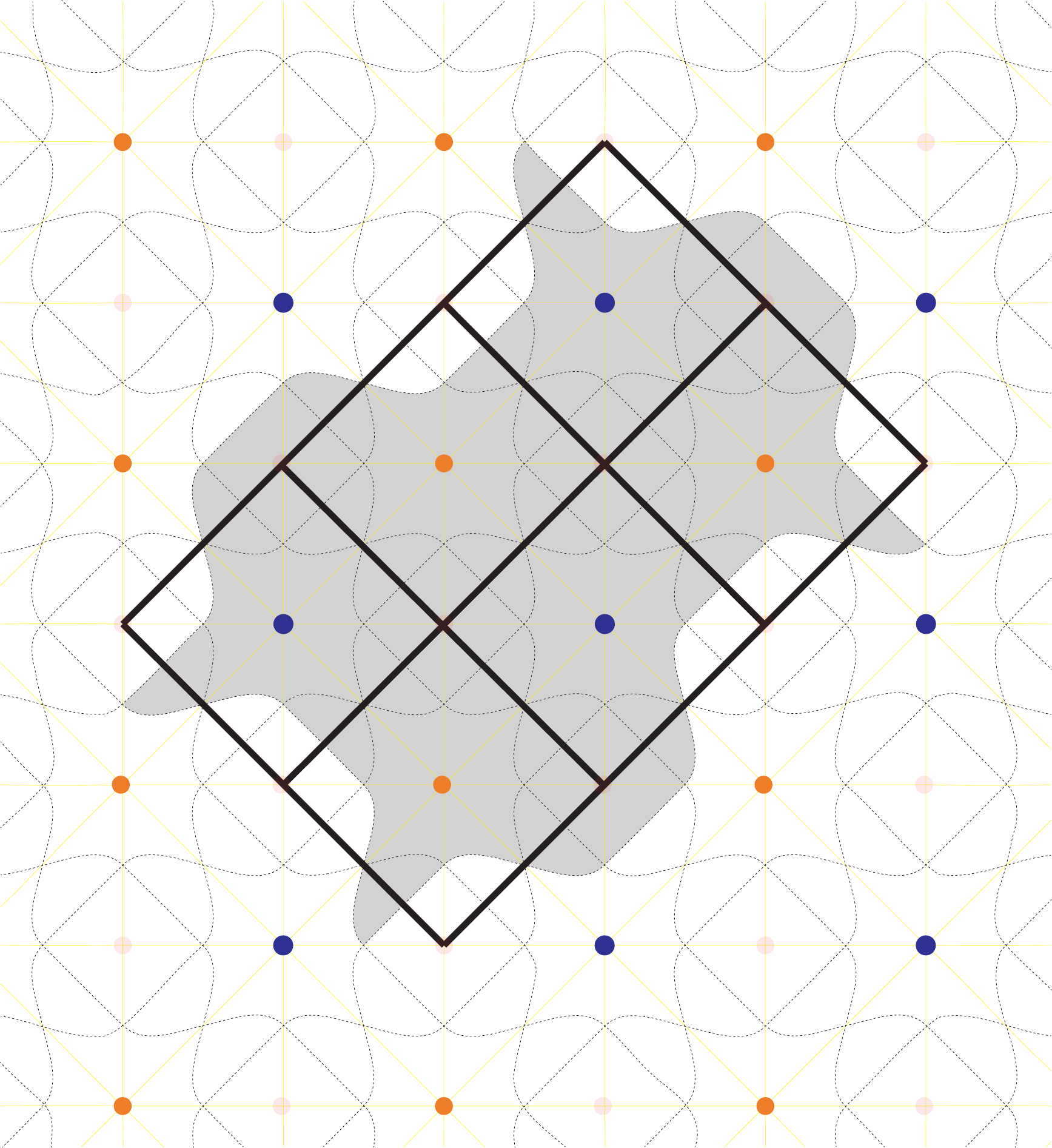}
\caption{$y^2z^2$-rectangle with $r=s=1$}\label{fig:realyz}
\end{figure}

 In this case, one side-length of the corresponding lattice rectangle is $(2r+1)$
 and a neighboring side-length is $2s$ for some $r\ge0$ and $s\ge1$.
 We have $v_B + v_C = (2r+1)2s$.
The boundary word is 
\[w =(\alpha\gamma\beta)^r\alpha\gamma (\alpha\gamma\beta)^s \alpha
 (\beta\alpha\gamma)^r \beta\alpha(\beta\alpha\gamma)^s\]
and $\eta = 1$. From $p = 2r + 2s - 1$, we have the potential terms:
\[
\sum_{r\ge0, s\ge1} - (4r+4s+2) q^{16(2r+1)2s-4} y^2z^2.
\]

 \emph{Case 2.} The two $y$-corners are not adjacent.
The side-lengths of the corresponding lattice rectangle can be written as
$(2r, 2s, 2r, 2s)$ for some $r,s\ge1$.
We have $v_B + v_C = 4rs$, $\eta = 2$
and 
\[
w = 
((\alpha\gamma\beta)^r\alpha (\beta\alpha\gamma)^s)^2.\]
Since $p = 2r+2s-2$, we have the potential terms:
\[
\sum_{r\ge1, s\ge1} (2r+2s) q^{64rs-4} y^2z^2.
\]

Hence, the term $d_{yz}(q) y^2z^2$ of $W$ is given as
\begin{equation}\label{eq:244yz}
\sum_{r\ge1, s\ge1} \big( - (4r+4s-2) q^{16(2r-1)2s-4} +  (2r+2s) q^{64rs-4} \big) y^2z^2.
\end{equation}


\section{Enumerative meaning of mirror maps of elliptic curve quotients}
In Section \ref{sec:236} and \ref{sec:244}, we computed the potentials for $\bP^1_{a,b,c}$ when $(a,b,c) = (2,3,6)$ and $(2,4,4)$.  Together with the case $(a,b,c) = (3,3,3)$, these are all the cases where $\bP^1_{a,b,c}$ is a global quotient of an elliptic curve.
To consider the K\"{a}hler parameter of the orbi-sphere  $\bP^1_{a,b,c}$,  we define $Q$ with $Q = q^8$.
Since $q=e^{-\sigma}$ , we have  $Q = e^{-8\sigma}$, where $\sigma$ is the minimal $XYZ$ triangle,
and $8\sigma$ is the area of  $\bP^1_{a,b,c}$.

In \cite[Section 6.1 and 6.2]{CHL} the potential was computed for the case $(a,b,c) = (3,3,3)$.  Moreover it was proved that the coefficients agrees with the inverse mirror map $\check{q}(Q)$.  Namely, the potential of $\bP^1_{3,3,3}$ takes the form
$$ W = \phi(q) (x^3 + y^3 + z^3) - \psi(q) xyz $$
where $\phi(q)$ and $\psi(q)$ are series in $q$, and $- \log q$ is the area of the minimal $XYZ$ triangle (see \cite[Section 6.1]{CHL}, where $q_\alpha$ there is denoted by $q$ here).  By a change of coordinates on $(x,y,z)$, $W$ can be rewritten as
$$ (x^3 + y^3 + z^3) - \frac{\psi(q)}{\phi(q)} xyz.$$
It was shown that 
$$\check{q}(Q = q^8) = - \psi(q) / \phi(q) $$
and this gives an enumerative meaning of the inverse mirror map $\check{q}(Q)$. 

In this section, we do numerical verifications that our polygon countings in Section \ref{sec:236} and \ref{sec:244} give the inverse mirror maps of $\bP^1_{2,3,6}$ and $\bP^1_{2,4,4}$ respectively.

\subsection{The (2,4,4) case} \label{sec:mir244}
Recall that the potential takes the form
$$W= q^6 x^2 + d_y(q) y^4 + d_y(q) z^4 - q xyz + d_{yz}(q) y^2z^2.
$$
First we eliminate the $xyz$ term by coordinate changes.
Take the change of coordinates $x \mapsto x / q^3$ (and $y,z$ remain unchanged), and we obtain
$$ x^2 + d_y(q) y^4 + d_y(q) z^4 - \frac{xyz}{q^2} + d_{yz}(q) y^2z^2.  $$
Then take the change of coordinates $x \mapsto x + \frac{yz}{2q^2}$ and the potential becomes
$$ x^2 + d_y(q) y^4 + d_y(q) z^4 + \left(d_{yz}(q) - \frac{1}{4q^4} \right) y^2z^2.  $$
Finally, take the change $y \mapsto y / d_y^{\frac{1}{4}}$, $z \mapsto z / d_y^{\frac{1}{4}}$, we get
\begin{equation}\label{eq:244co}
 x^2 + y^4 + z^4 + \frac{d_{yz}(q) - (4q^4)^{-1}}{d_y} y^2z^2.  
\end{equation}

\begin{equation} \label{eq:sigma244}
\sigma(q) := \frac{d_{yz}(q) - (4q^4)^{-1}}{d_y}
\end{equation}
takes the form
$$-\frac{1}{4 q^{16}} - 5 q^{16} + \frac{31 q^{48}}{2} - 54 q^{80} + \ldots $$
and so $\sigma = \infty$ when $q = 0$.

We conjecture that $\sigma(q)$ equals to the inverse mirror map $\check{q}(Q)$ under the relation $Q = q^8$.

The mirror map $Q(\check{q})$ takes the form
$$ Q(x) = \frac{\pi_B(x)}{\pi_A(x)} $$
where $x = \frac{\check{q}^2}{4}$,
$\pi_A(x), \pi_B(x)$ satisfies the Picard-Fuchs equation
$$ x(1-x) \frac{\der^2 Q}{\der x^2} + \left( \frac{1-3x}{2} \right) \frac{\der Q}{\der x} - \frac{Q}{16} = 0. $$


Let
$$ i_{244}(\check{q}) := \frac{16 (\check{q}^2+12)^3}{(\check{q}^2-4)^2}.  $$
By Saito\rq{}s theory on simple elliptic singularities \cite{Saito}, the inverse mirror map $\check{q}(Q)$ satisfies
$$ i_{244}(\check{q}(Q)) = j(Q^4) $$
where $j(x)$ is the $j$-function for elliptic curves, which takes the form
$$ \frac{1}{x} + 744 + 196884 x + 21493760 x^2 + 864299970 x^3 + 20245856256x^4 + 333202640600 x^5 + \ldots $$
The power of $Q^4$ comes from the geometric fact that $\bP^1_{2,4,4} = E / \Z_4$, and hence the elliptic curve $E$ has area four times $\bP^1_{2,4,4}$.

By using Mathematica, we have verified that
\begin{align*}
i_{244} \left( \frac{d_{yz}(q) - (4q^4)^{-1}}{d_y} \right) =& \frac{1}{q^{32}} + 744 + 196884 q^{32} + 21493760 q^{64} + 864299970 q^{96} \\ &+ 20245856256 q^{128} + 333202640600 q^{160} + \ldots
\end{align*}
This gives a strong evidence that the inverse mirror map can be obtained from polygon countings:
$$\check{q}(Q = q^8) = \frac{d_{yz}(q) - (4q^4)^{-1}}{d_y}.$$

\subsection{The (2,3,6) case} \label{sec:mir236}
Recall that the potential takes the form
$$W= q^6 x^2 + c_y(q) y^3 + c_z(q) z^6 - q xyz + c_{yz2}(q) y^2z^2 + c_{yz4} yz^4.$$

We do changes of coordinates to simplify the expression.  As in the $(2,4,4)$ case, we can do change of coordinates on $x$ to eliminate the term $xyz$ and obtain
$$ x^2 + c_y(q) y^3 + \left( c_{yz2}(q) - (4q^4)^{-1}\right)y^2z^2 +  c_{yz4} yz^4 + c_z(q) z^6.$$

To eliminate the term $y^2 z^2$, take the coordinate change
$$ y \mapsto y - \frac{c_{yz2}(q) - (4q^4)^{-1}}{3 c_y(q)} z^2. $$
The potential becomes
\begin{multline*}
x^2 + c_y(q) y^3 + \left(c_{yz4}(q) - \frac{c_{yz2}^2(q)}{3 c_y(q)} - (48 q^8 c_y(q))^{-1} + \frac{c_{yz2}(q)}{6 q^4 c_y(q)} \right)yz^4 \\ + \left(c_z(q) + \frac{2 c_{yz2}^3(q)}{27 c_y^2(q)} - \frac{c_{yz2}(q) c_{yz4}(q)}{3 c_y(q)} - (864 q^{12} c_y^2(q))^{-1} + \frac{c_{yz2}(q)}{72 q^8 c_y^2(q)} - \frac{c_{yz2}^2(q)}{18 q^4 c_y^2(q)} + \frac{c_{yz4}(q)}{12 q^4 c_y(q)} \right)z^6.
\end{multline*}
Finally, take $y \mapsto y / c_y^{\frac{1}{3}}(q)$ and
$$ z \mapsto z \left(c_z(q) + \frac{2 c_{yz2}^3(q)}{27 c_y^2(q)} - \frac{c_{yz2}(q) c_{yz4}(q)}{3 c_y(q)} - (864 q^{12} c_y^2(q))^{-1} + \frac{c_{yz2}(q)}{72 q^8 c_y^2(q)} - \frac{c_{yz2}^2(q)}{18 q^4 c_y^2(q)} + \frac{c_{yz4}(q)}{12 q^4 c_y(q)} \right)^{-\frac{1}{6}}$$
one obtains
\begin{equation}\label{eq:236co}
 x^2 + y^3 + z^6 + \sigma(q) yz^4
\end{equation}
where
\begin{multline} \label{eq:sigma236}
\sigma(q) = \left(c_{yz4}(q) - \frac{c_{yz2}^2(q)}{3 c_y(q)} - (48 q^8 c_y(q))^{-1} + \frac{c_{yz2}(q)}{6 q^4 c_y(q)} \right) c_y^{-\frac{1}{3}}(q)\\
\cdot \left(c_z(q) + \frac{2 c_{yz2}^3(q)}{27 c_y^2(q)} - \frac{c_{yz2}(q) c_{yz4}(q)}{3 c_y(q)} - (864 q^{12} c_y^2(q))^{-1} + \frac{c_{yz2}(q)}{72 q^8 c_y^2(q)} - \frac{c_{yz2}^2(q)}{18 q^4 c_y^2(q)} + \frac{c_{yz4}(q)}{12 q^4 c_y(q)} \right)^{-\frac{2}{3}}.
\end{multline}

$\sigma(q)$ takes the form
$$ - \frac{3}{2^{2/3}} - 864 \cdot 2^{1/3} q^{48} - 352512 \cdot 2^{1/3} q^{96} + \ldots $$
and so $\sigma(q) = - \frac{3}{2^{2/3}}$ at $q=0$.

We conjecture that $\sigma(q)$ equals to the inverse mirror map $\check{q}(Q)$ under the equality $Q = q^8$.
 The mirror map $Q(\check{q})$ takes the form
$$ Q(x) = \frac{\pi_B(x)}{\pi_A(x)} $$
where $x = -\frac{4\check{q}^3}{27}$,
$\pi_A(x), \pi_B(x)$ satisfies the Picard-Fuchs equation
$$ x(1-x) \frac{\der^2 Q}{\der x^2} + \frac{2-5x}{3} \cdot \frac{\der Q}{\der x} - \frac{7 Q}{144} = 0. $$
Let
$$ i_{236}(\check{q}) := \frac{1728 (4 \check{q}^3)}{27 + 4 \check{q}^3}.  $$
By Saito\rq{}s theory on simple elliptic singularities, the mirror map $\check{q}(Q)$ satisfies
$$ i_{236}(\check{q}(Q)) = j(Q^6) $$
where $j(x)$ is the $j$-function for elliptic curves, which takes the form
$$ \frac{1}{x} + 744 + 196884 x + 21493760 x^2 + 864299970 x^3 + 20245856256x^4 + 333202640600 x^5 + \ldots $$
The power of $Q^6$ comes from the geometric fact that $\bP^1_{2,3,6} = E / \Z_6$, and hence the elliptic curve $E$ has area six times $\bP^1_{2,3,6}$.

By using Mathematica, we have verified that
$$ i_{236} \left( \sigma(q) \right) = \frac{1}{q^{48}} + 744 + 196884 q^{48} + 21493760 q^{96} + 864299970 q^{144} + 20245856256 q^{192} + 333202640600 q^{240} + \ldots $$
This gives a strong evidence that the inverse mirror map can be obtained from polygon countings:
$$\check{q}(Q = q^8) = \sigma (q).$$


\section{Spherical cases} \label{sect:S2}
We compute open Gromov-Witten potentials for the spherical case, where 
$\frac{1}{a} + \frac{1}{b} + \frac{1}{c} >1$.  In this case the orbifold $\mathbb{P}^1_{a,b.c}$ has a universal cover $\mathbb{S}^2$.
Spherical cases can be divided into the following types.
\begin{enumerate}
 \item[(i)] $(1,a,b)$ for any $a,b \geq 1$,
 \item[(ii)]   $(2,2,r)$ with $r \geq 2$,
 \item[(iii)] $(2,3,4)$, $(2,3,5)$, $(2,3,6)$.
\end{enumerate}
They are known as the $A,D,E$-types respectively.  
Since the open Gromov-Witten potential has only finitely many terms, we will find polygons for the potential directly without using the algorithm introduced before.

We will use elementary move given in Definition \ref{defn:move}.  One can check that if $V$ is obtained by applying elementary move to $U$ (i.e. $\nexists V'$ with $U \subsetneq V' \subsetneq V$), then the value $p$ of $V$ is greater by one than that of $U$. Thus, the generation of a polygon makes sense even in the spherical case.
Note also that the area formula and the formula for the number of generic points on the boundary of polygons are still valid for spherical cases. These spherical cases are known to have close relationships with $ADE$-singularities,

The case of $(2,2,r)$ for odd $r$ is rather special in the sense that a lift of Seidel Lagrangian in the universal cover $E=S^2$ is still immersed. This is the only case that Seidel Lagrangian in $E$ is not embedded. In particular, we have a 1-gon, which
gives a linear term in the corresponding potential $W$. Hence the origin is not a critical point of $W$ in the $(2,2, odd)$ case.

For the spherical cases except $(2,2,\textrm{odd})$, Seidel Lagrangian in the universal cover $E$ becomes a (topological) circle.
\begin{lemma}\label{lem:sphdiss}
In the spherical cases except $(2,2,\textrm{odd})$-case, any Seidel Lagrangian in $E$ bisects the area of the sphere.
\end{lemma}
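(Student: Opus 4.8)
The goal is to show that in all spherical cases except $(2,2,\mathrm{odd})$, a Seidel Lagrangian $\tilde L \subseteq E = S^2$ (which by the earlier case-by-case analysis is an embedded topological circle) divides the sphere into two regions of equal area. The plan is to exploit the reflective symmetry of the configuration together with the tessellation structure already set up in the paper. First I would recall that $\PO$ carries the anti-symplectic (orientation preserving on $L$) involution $\iota$ given by reflection across the equator through the three orbifold points, and that $\iota$ lifts to an isometric involution $\tilde\iota$ of $E = S^2$ which preserves $L_E$ and hence permutes the branches. The key observation is that each branch $\tilde L$ is preserved by some lift of $\iota$: indeed $\tilde L$ ``fellow-travels'' the pre-image of the equator, and a suitable reflection of $E$ fixing the relevant great circle setwise carries $\tilde L$ to itself while swapping its two complementary regions. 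Since this reflection is an isometry of $S^2$, the two regions cut out by $\tilde L$ have equal area.

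The steps, in order, would be: (1) Identify the reflection group $H = \langle \tau_a,\tau_b,\tau_c\rangle$ acting on $E$ (defined in Section 2), and note that $\tilde L$, being a branch of $L_E$, lies in a ``strip'' of base triangles as in Lemma~\ref{lem:component} / the spherical analogue; in the spherical non-$(2,2,\mathrm{odd})$ cases $\tilde L$ is an embedded circle that bounds this strip symmetrically. (2) Produce an explicit element $h \in H$ (a reflection, i.e.\ a conjugate of one of $\tau_a,\tau_b,\tau_c$, or a composition giving a reflection of $S^2$) such that $h(\tilde L) = \tilde L$ and $h$ interchanges the two open discs $D_1, D_2$ bounded by $\tilde L$. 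Concretely, the minimal $xyz$ triangles whose boundary edges make up $\tilde L$ are arranged symmetrically about $\tilde L$: for each such triangle $T \subseteq D_1$ adjacent to $\tilde L$ along an edge, its mirror image across the equatorial geodesic extending that edge is a triangle in $D_2$, and the composition of these local reflections is realized by a single global isometry because the labels and the transverse orientations along $\tilde L$ read the periodic word $(\gamma\beta\alpha)^\infty$ in a symmetric pattern. (3) Conclude $\area(D_1) = \area(h(D_1)) = \area(D_2)$, and since $\area(D_1) + \area(D_2) = \area(S^2)$, each equals half.

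Alternatively — and this may be cleaner to write — one can argue combinatorially: by the area formula of Lemma~\ref{lem:area} / Theorem~\ref{thm:area} applied on $S^2$, the area of the region on one side of $\tilde L$ is determined by the counts $v_W, v_A, v_B, v_C$ of white and colored vertices on that side, together with boundary data that is the same for both sides (the boundary word of each side is the half-infinite word $(\gamma\beta\alpha)^\infty$ truncated to the relevant arc, contributing equally). One then checks that in each of the finitely many spherical types $(2,2,r)$ with $r$ even, $(2,3,4)$, $(2,3,5)$, $(2,3,6)$, the pre-images of $A$, $B$, $C$ and the white vertices are distributed equally on the two sides of $\tilde L$ — this is forced by the order-two symmetry of the Cayley graph of $H$ quotiented appropriately, and can be verified directly since $d\cdot\chi(\PO) = 2$ gives a small deck group. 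Either route works; I would present the reflection-symmetry argument as the main proof and remark that the $(2,2,\mathrm{odd})$ case is excluded precisely because there $\tilde L$ is immersed (it wraps twice around the cylinder $S^2 \setminus \{p_N, p_S\}$ as noted in the index computation), so it does not embed as a circle and the notion of ``the two sides'' degenerates.

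\textbf{Main obstacle.} The delicate point is step (2): exhibiting the global isometry of $S^2$ that preserves $\tilde L$ and swaps its two sides, rather than just a collection of local reflections. One must verify that the local reflections across the equatorial geodesics meeting $\tilde L$ are all restrictions of one and the same element of the isometry group — equivalently, that $\tilde L$ is a geodesic circle, or at least invariant under a reflection whose axis is a great circle. Since $\tilde L$ itself is \emph{not} a geodesic (as the paper explicitly warns), one cannot take the reflection across $\tilde L$; instead the symmetry is the reflection across the unique great circle equidistant-in-the-combinatorial-sense from the two sides, and checking this great circle is genuinely $\tilde\iota$-related to $\tilde L$ requires using the periodicity of the label-reading $(\gamma\beta\alpha)^\infty$ and the equal-area subdivision of base triangles into four pieces. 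Handling the $a=2$ component cells (where the local picture near an $A$-vertex is a quadrilateral rather than a triangle) needs the separate bookkeeping already introduced in Lemma~\ref{lem:nbhd}, but presents no new difficulty.
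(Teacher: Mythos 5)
Your main route hinges on step (2), which you never actually establish, and the mechanism you propose for it cannot work. First, the reflection you have in mind — across the great circle that $\tilde L$ fellow-travels (the lifted $3$-periodic billiard geodesic of Lemma~\ref{lem:component}) — does not preserve $\tilde L$: that geodesic meets the equatorial edges at the billiard reflection points, while $\tilde L$ crosses them at the midpoints, and this reflection is not a lift of $\iota$ (it is not even a symmetry of the triangle tessellation), so it sends $\tilde L$ to a different curve in general. Second, and more decisively, \emph{no} reflection of $S^2$ across a great circle can do what you want: if a reflection fixes a great circle $m$ pointwise and preserves the embedded circle $\tilde L$, then $m$ has points in at least one of the two complementary discs (in both, if $m$ meets $\tilde L$), and those fixed points force the reflection to preserve each disc rather than exchange them. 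The only orientation-reversing isometries that could swap the two sides are fixed-point-free improper ones (rotary reflections), so the ``composition of local reflections across the equatorial geodesics'' cannot be ``realized by a single global isometry'' in the sense you describe. A symmetry argument can in fact be repaired — $G$ acts transitively on branches, so the stabilizer of $\tilde L$ in $H$ meets $H\setminus G$, and since the equatorial involution preserves the orientation of $L$, such an element must reverse the coorientation of $\tilde L$ and hence swap the two discs — but that argument (orbit–stabilizer plus the orientation of $L$) is not the one you wrote down, and without it your proof has a genuine hole.

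Your fallback combinatorial argument also contains a false claim: it is not true that the white vertices are distributed equally on the two sides of $\tilde L$. For $(2,3,5)$, for instance, the corner-free disc bounded by $\tilde L$ contains $35$ white vertices while its complement contains $25$ (only the $A$-, $B$-, $C$-vertices split evenly); moreover the complementary disc is not itself a polygon for the potential (its boundary orientation/word is not of the standard form), so one cannot simply apply Theorem~\ref{thm:area} to both sides and compare. The paper's proof is much more direct and avoids comparing the two sides at all: the disc $D$ bounded by $\tilde L$ is a $0$-gon, so Theorem~\ref{thm:area} with $P=Q=R=0$ gives $\area(D)=8\sigma/\chi$, while $S^2\to\PO$ is a $(2/\chi)$-fold cover of an orbifold of area $8\sigma$, so $\area(S^2)=16\sigma/\chi$; hence $D$ is exactly half the sphere, with no symmetry needed.
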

\begin{proof}
This can be proved by counting the number of middle triangles. There is another proof using the area formula
in Theorem \ref{thm:area}. We take the disc $D$ bounded by a lift of Seidel Lagrangian and apply the area formula.
We have $P=Q=R=0$ as there are no corners, and hence the area of $D$ is given by
$\frac{8\sigma}{\chi}$ for  $\chi = \frac{1}{a} + \frac{1}{b} + \frac{1}{c}-1$. Since $S^2 \to \PO$ is
a $\frac{2}{\chi}$-fold cover, and hence the area of $S^2$ is $\frac{2 \cdot 8 \sigma}{\chi}$, this proves the claim.
\end{proof}
For the spherical cases except $(2,2,\textrm{odd})$, there are holomorphic discs (0-gons) with boundary on Seidel Lagrangians in $E$ which contributes to the constant term in the potential $W$. One can find that the words corresponding to these discs are given as:
\begin{equation}
\begin{array}{rl}
(2,2,r=even) :& (\gamma \beta \alpha)^{r/2} \\
(2,3,3) :& (\gamma \beta \alpha)^2 \\
(2,3,4) :& (\gamma \beta \alpha)^3 \\
(2,3,5) :& (\gamma \beta \alpha)^5, \\
\end{array}
\end{equation}
which shows the rotational symmetries (i.e. $\eta \neq 1$).

\subsection{$(2,2,r)$-case}
The universal cover $S^2$ is tessellated by preimages of upper and lower hemispheres in $\mathbb{P}^1_{2,2,r}$ as drawn in (a) of Figure \ref{fig:tess225}.  The preimage in $S^2$ of the Seidel Lagrangian is drawn as dashed curves in (b) of Figure \ref{fig:tess225}. To read off information of polygons for the potential effectively, we again use $(a,b,c)$-diagram constructed in Section \ref{sec:abcdia}. See Figure \ref{fig:abc225}. As before, subwords $(\beta \gamma)$, $(\gamma \alpha)$ and $(\alpha \beta)$ represent $x$, $y$ and $z$ respectively.

\begin{figure}[htb]
\begin{center}
\subfloat[(a)]{
\includegraphics[height=.24\textheight]{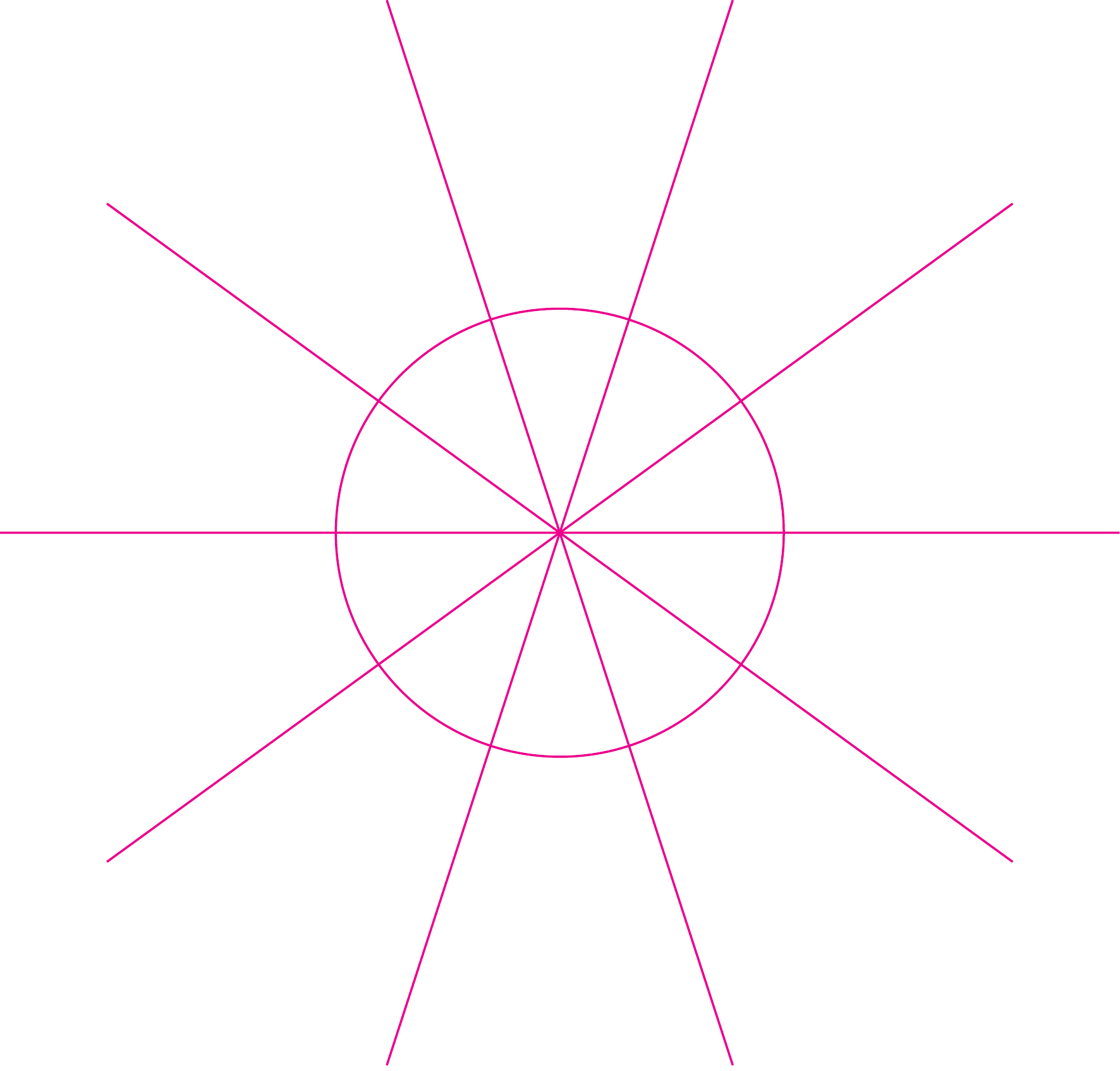}
}
$\qquad\qquad$
\subfloat[(b)]{
\includegraphics[height=.24\textheight]{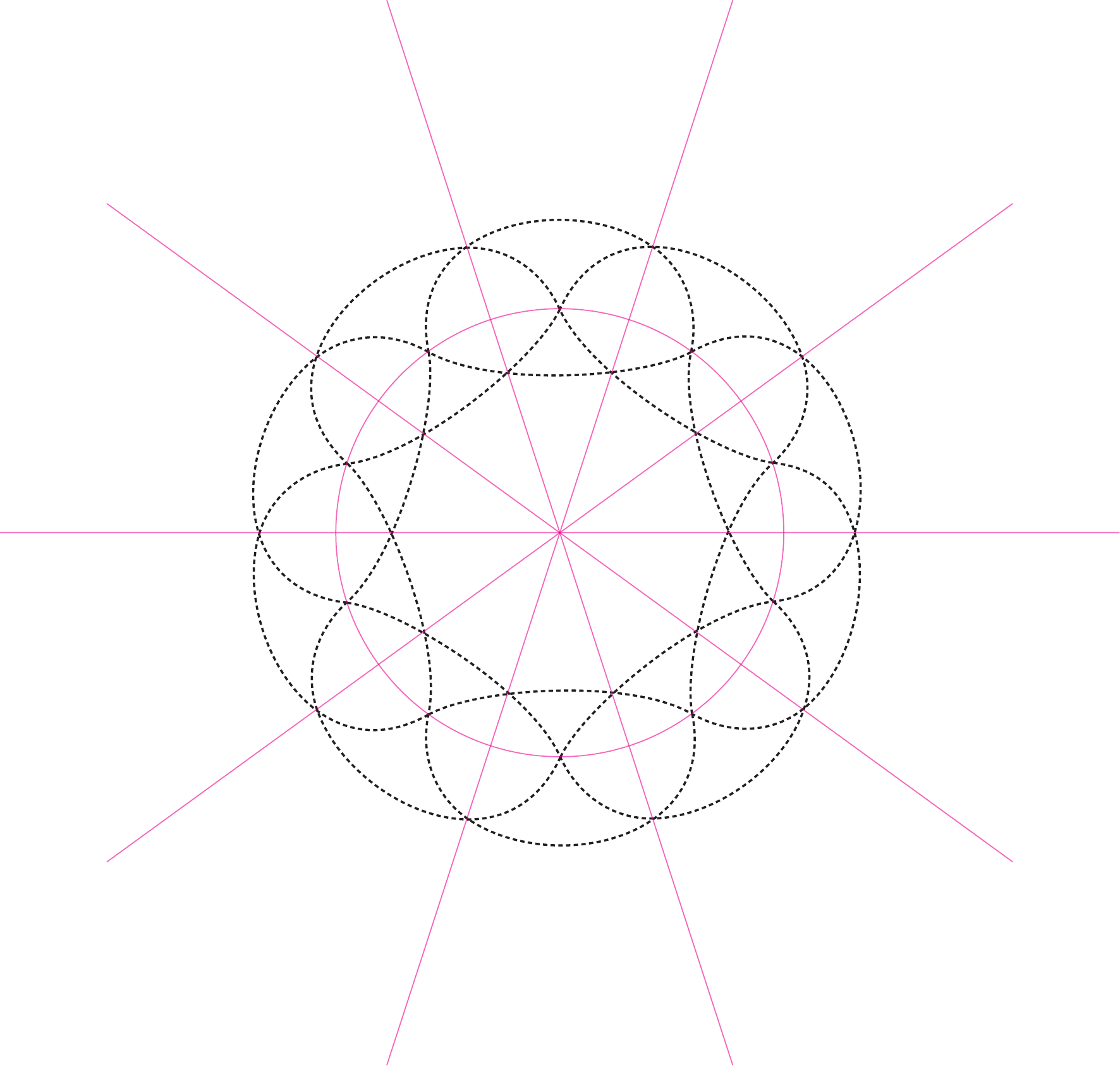}
}
\end{center}
\caption{Tessellation of $S^2$ induced by $\mathbb{P}^1_{2,2,5}$ and the preimage of the Seidel Lagrangian (one of vertices with weight $r(=5)$ lies at infinity.)}\label{fig:tess225} 
\end{figure}

\begin{figure}[ht]
\begin{center}
\includegraphics[height=3in]{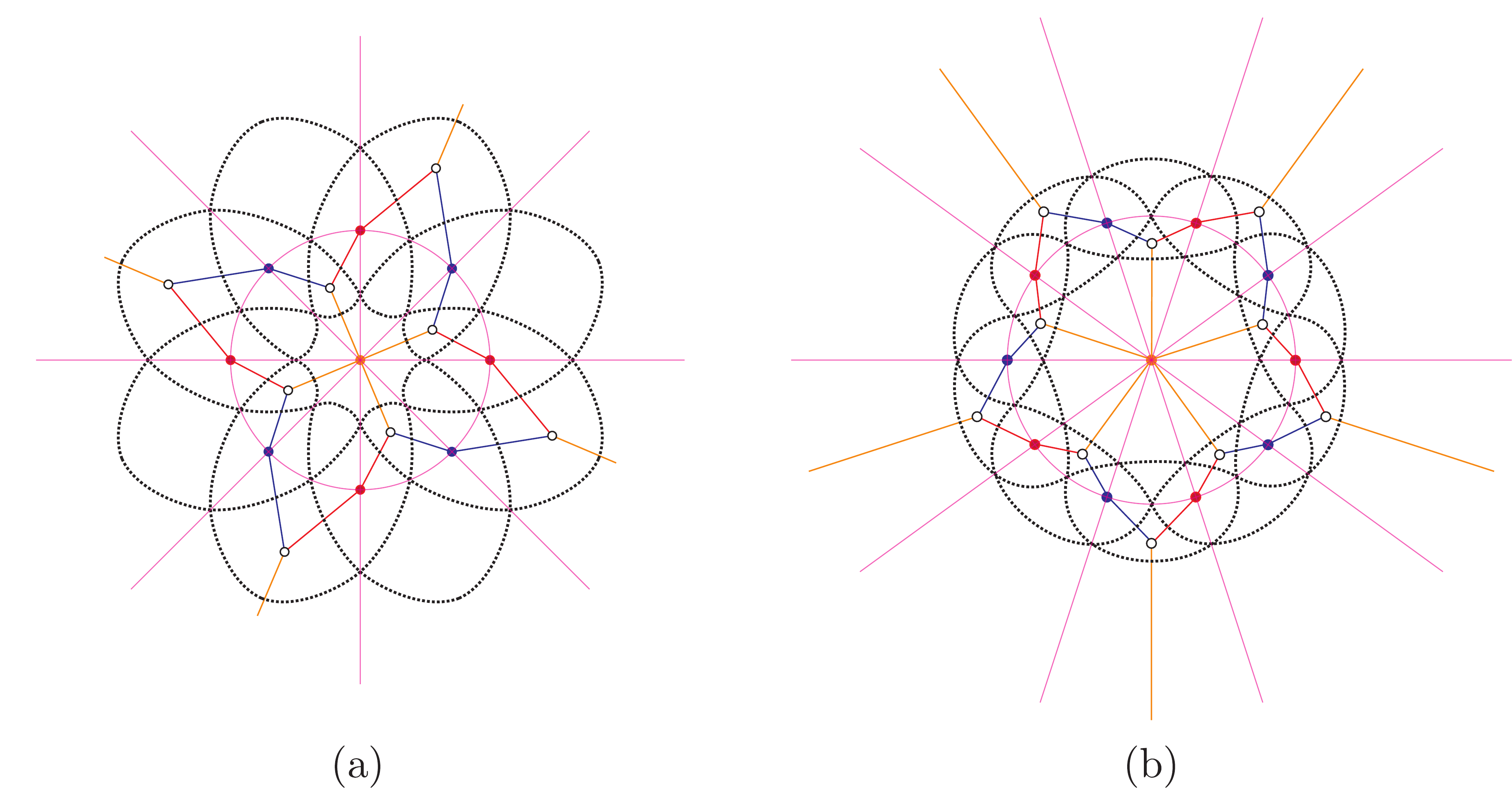}
\caption{$(a,b,c)$-diagram for (a) $(2,2,4)$ and (b) $(2,2,5)$ }\label{fig:abc225}
\end{center}
\end{figure}

As usual, we have the term $-qxyz$ from the minimal $xyz$-triangle. 
Also note that  the total area of $E$ is $16r\sigma$ since $S^2 \to \mathbb{P}^1_{2,2,r}$ is $2r$-sheeted.\\

\noindent{\bf (i)}
From the picture, there are only two $2$-gons which give $x^2$-terms, $U_{x^2}$ corresponding to the word $(\gamma \alpha)^2$ and its reflection image. The total number of contribution from this pair is $1$ because of $\Z_2$-symmetry (i.e. $\eta (U_{x^2}) = 2$). One can also check that the sign factor is given by $s(U_{x^2}) =2$, and the area of $U_{x^2}$ is $q^6$. In conclusion, we get the term $q^6 x^2$. Computation for $y^2$-term is essentially the same and we get $q^6 y^2$. Observe that if a polygon contains a $x$-corner, then it should be either the basic triangle (for $xyz$) or the 2-gon $U_{x^2}$.\\

\noindent{\bf (ii)}
Observe that we have two $r$-gons corresponding to $z^r$ one of which has the word expression $(\alpha \beta)^r$ and lies at the center of the diagram (Figure \ref{fig:abc225}) containing $0$. Denote this polygon by $U_{z^r}$. Its reflection image, then, includes the $c$-vertex at infinity. With help of the area formula and the formula for $s(U_{z^r})$, we obtain the term $(-1)^r q^{3r} z^r$. Note that the number of $\alpha$-leaves for $U_{z^r}$ is $r$ but $U_{z^r}$ admits $\Z_r$-symmetry and hence, $\eta(U_{z^r})=r$.

Now, we can choose a side $e$ in $U_{z^r}$ and perform the elementary move with respect to $e$ to get $z^{r-2}$ term as Figure \ref{fig:move225} illustrates. Namely, if we try to extend the polygon through the side $e$, the only possible move adds
a bigon to the image, removing two corners of the polygon. 
 Let us call this process {\em resolving $e$}. After this process, one can resolve another side $e'$ if it is not adjacent to $e$.
Repeating this process, we get
$$z^r, z^{r-2}, z^{r-4}, \cdots,z^{r-2k},\cdots, z^{r-\lfloor r/2\rfloor}$$
where $\lfloor r/2\rfloor$ is the greatest integer which is not bigger than $r/2$. For example, the polygon obtained by resolving $k$-sides (with no two sides in succession) of the original $r$-gon simultaneously will give $z^{r-2k}$-term.
(See Figure \ref{fig:move225} for $z^{r-4}$-polygons obtained by elementary moves associated with two sides of the central $r$-gon.)
If a polygon $U_k$ gives the term $z^{r-2k}$ (hence, $U_0=U_{z^r}$), then the formula in Theorem \ref{thm:term} tells us that
the area of $U_k$ is $q^{3k +10}$ and
$$s(U_k) + s^{\rm op} (U_k) =  (r-k) + (k) = r.$$
For $r$ is odd, $k = \lfloor r/2\rfloor$, then $U_k$ gives $1$-gon, contributing to the linear term of $W$.
\begin{figure}[ht]
\begin{center}
\includegraphics[height=4in]{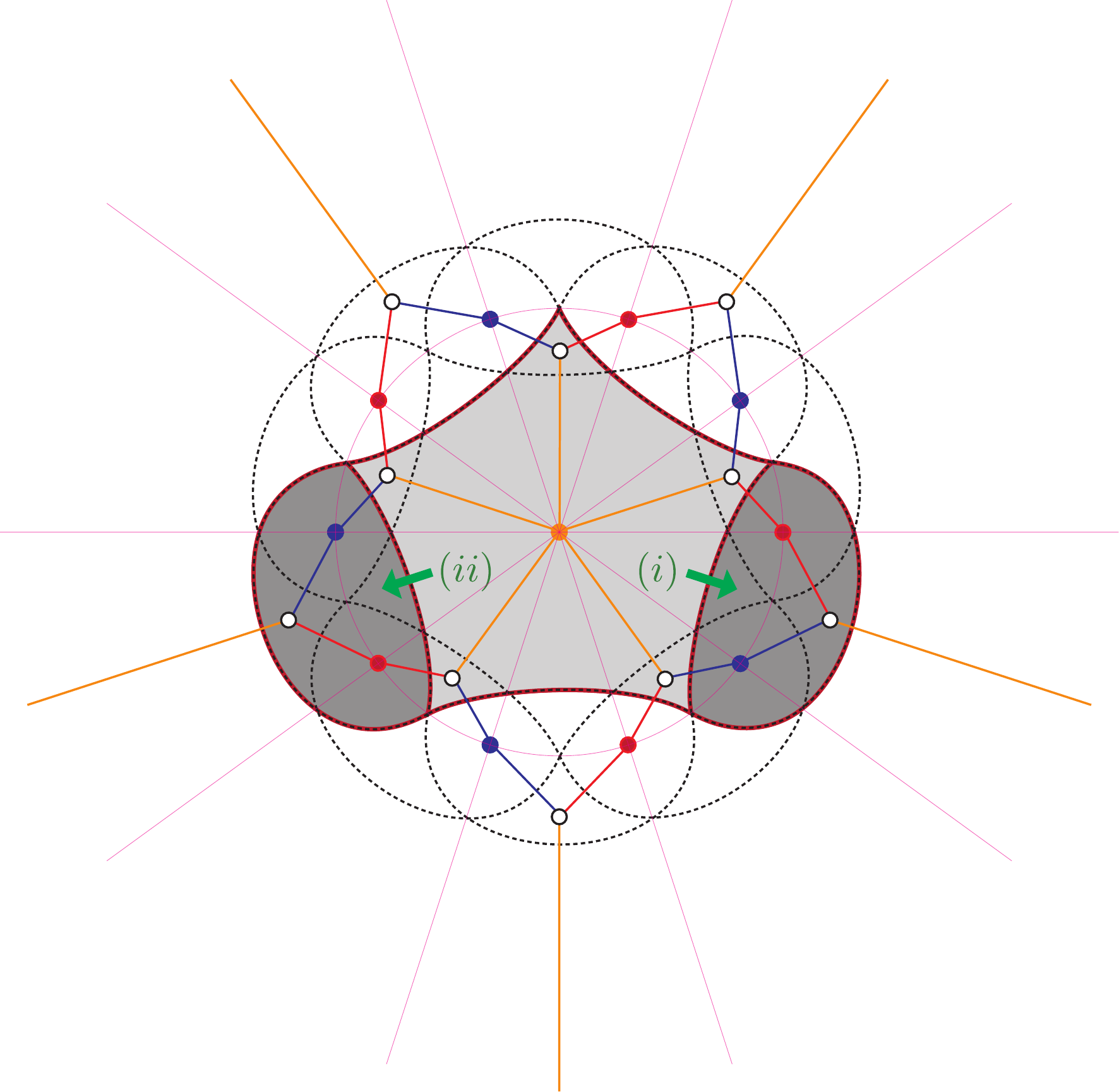}
\caption{Polygons contributing to $z^{r-2k}$}\label{fig:move225}
\end{center}
\end{figure}

To count them properly, let us discuss these polygons in more systematic way. For an (abstract) $r$-gon with sides $\{a_1, \cdots, a_r\}$ arranged in counter-clockwise order, we choose a sub-collection of $k$-sides $\{a_{i_1}, a_{i_2}, \cdots, a_{i_k}\}$ with $i_1 < i_2 < \cdots < i_k$ and $ i_{j+1} - i_{j} >1$ for $j \neq r$ and $(i_1+r) - i_{k} >0$. (This sub-collection represents the set of sides which are to be resolved.) The last condition simply means that the sub-collection does not contain a pair of consecutive sides. 
\begin{figure}[ht]
\begin{center}
\includegraphics[height=2in]{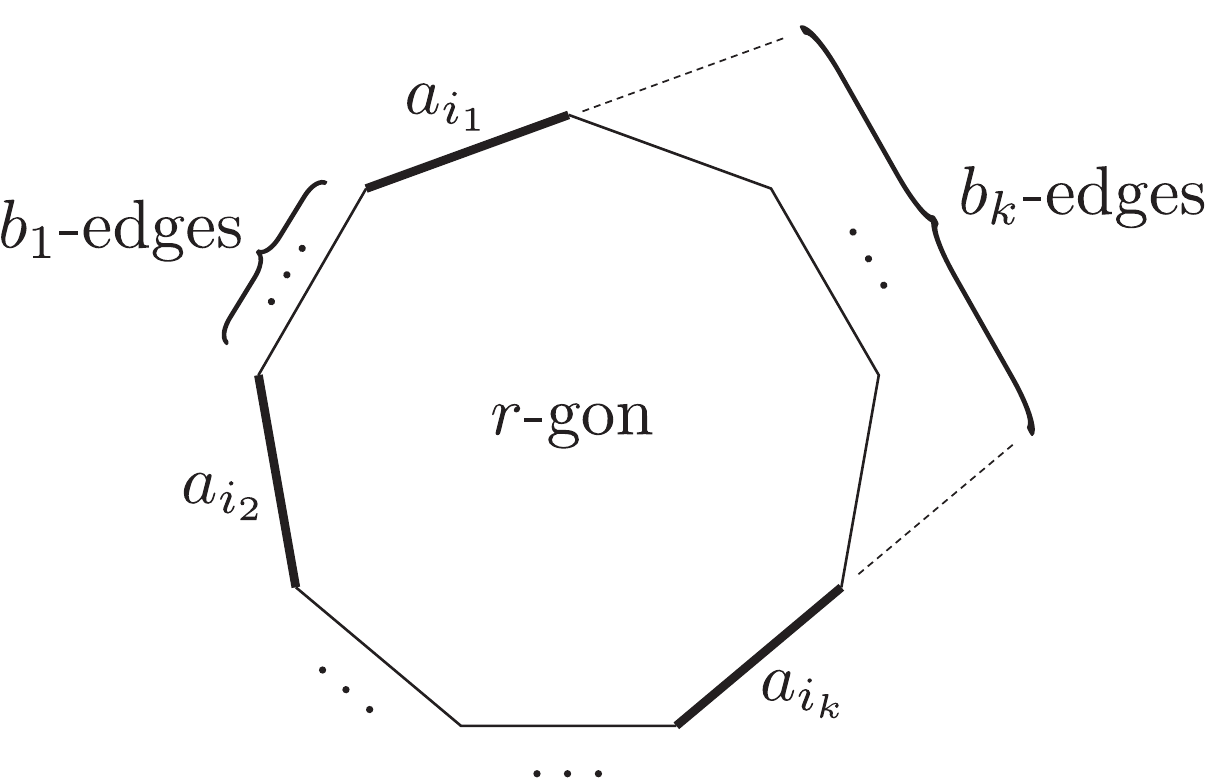}
\caption{Sub-collection of $k$-sides in the $r$-gon}\label{fig:rgon}
\end{center}
\end{figure}
Define $b_j$ by
\begin{equation*}
\begin{array}{c}
b_1 := {i_2} - {i_1} -1, \quad b_2 := {i_3} - {i_2} -1 ,\quad \cdots ,\quad b_{k-2} = i_{k-1} - i_{k-2} -1\\
b_{k-1} := {i_k} - {i_{k-1}} -1 , \quad b_k := (i_1 +r) - i_k -1.
\end{array}
\end{equation*}
$b_j$ are the number of sides between two sides $a_{i_{j+1}}$ and $a_{i_j}$, when traveling around the boundary of the $r$-gon in the counter-clockwise direction. (See Figure \ref{fig:rgon}.)

We see that the number of sub-collections satisfying the above conditions is equivalent to the number of solutions $(b_1, \cdots, b_k)$ satisfying
$$b_1 + \cdots b_k = r -k$$
with $b_j \geq 1$. Letting $b_j' = b_j -1$, we find that this number equals the number of solutions $(b_1', \cdots, b_k')$ satisfying
\begin{equation}\label{eq:H}
b_1' + \cdots b_k' = r -2k
\end{equation}
with $b_j' \geq 0$, and hence, is precisely given by
\begin{equation}\label{eq:HH}
_{k} H_{r-2k} =  \left(\begin{array}{c}r-k-1 \\ r-2k \end{array}\right) = \frac{(r-k-1)!}{(k-1)! (r-2k)!},
\end{equation}
where $H$ represents the {\em combination with repetition}.
That is, there are $ \frac{(r-k-1)!}{(k-1)! (r-2k)!}$-ways of choosing $k$-sides of the $r$-gon which can be resolved simultaneously to give a polygon corresponding to $z^{r-2k}$.

These polygons are lifts of those in $\mathbb{P}^1_{2,2,r}$, and different polygons in $E$ may be identified in the quotient.
In these cases, such identification is given by the $\Z_k$-action. Namely, rotating $(b_1', \cdots, b_k')$ to 
$(b_2', \cdots, b_k', b_1')$. Hence we may divide the count in \eqref{eq:HH} by  $k$.
(If $b_1', \cdots, b_k'$ is symmetric, then $\Z_k$-action on this set may be trivial, but the symmetry factor $\eta$ of these
polygons has to be considered also. The overall contribution to $W$ is given as the division by $k$.
We also need to multiply by $r$ which is the number of  $e$ in the boundary of the polygon.

As a result, the part of potential obtained by these $r-2k$ gons are exactly
$$(-1)^r q^{3r} z^r + r\sum_{k=1}^{\lfloor \frac{r}{2} \rfloor} (-1)^{r+k} q^{3r + 10k}  \frac{(r-k-1)!}{k! (r-2k)!} z^{r-2k}$$
where the first $r$ in the second summand comes from the number of generic points on the boundary of polygons.\\

\begin{theorem}\label{thm:W22r}
The Lagrangian Floer potential of the Seidel Lagrangian in $\mathbb{P}^1_{2,2,r}$ is given as
\begin{equation}\label{eq:pot22r}
-qxyz + q^6 x^2 + q^6 y^2 + (-1)^r q^{3r} z^r + r\sum_{k=1}^{\lfloor \frac{r}{2} \rfloor} (-1)^{r+k} q^{3r + 10k}  \frac{(r-k-1)!}{k! (r-2k)!} z^{r-2k}.
\end{equation}
\end{theorem}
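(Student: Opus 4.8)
The plan is to compute the potential for $\mathbb{P}^1_{2,2,r}$ directly by enumerating all holomorphic polygons for the potential, rather than running the elementary-move algorithm blindly; since the spherical cases have only finitely many polygons, a careful case analysis suffices. First I would set up the combinatorial picture: lift everything to $E = S^2$, tessellate it by the $2r$ copies of the base triangle $\Delta_{2,2,r}$, and draw the preimage of the Seidel Lagrangian together with the hexagon tessellation $Z$, exactly as in Figures~\ref{fig:tess225} and~\ref{fig:abc225}. The total area of $E$ is $16r\sigma$ since the cover is $2r$-sheeted. The key structural observation, to be justified from the $(a,b,c)$-diagram, is that any polygon for the potential with an $x$-corner must be either the minimal $xyz$-triangle or the unique $2$-gon $U_{x^2}$ (and its reflection); this is because the $x$-corners sit at the $\mathbb{Z}/2$-orbifold points $A, B$, and the combinatorial neighborhood of a Seidel Lagrangian through such a point is very constrained (compare Corollary~\ref{cor:bigon_no}, case (c)). This immediately produces the terms $-qxyz + q^6 x^2 + q^6 y^2$, using Theorem~\ref{thm:term} (or directly Lemma~\ref{lem:area}) for the areas and the $\eta = 2$ symmetry of $U_{x^2}$.

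Next I would treat the $z$-corners. The central $r$-gon $U_{z^r}$ with boundary word $(\alpha\beta)^r$ has $\mathbb{Z}/r$ symmetry, so $\eta = r$, and $s(U_{z^r}) = r$; Theorem~\ref{thm:term} gives area $q^{3r}$ and sign $(-1)^r$, yielding the term $(-1)^r q^{3r} z^r$. The heart of the argument is the classification of the remaining $z$-polygons: I claim every polygon for the potential that is not one of the above is obtained from $U_{z^r}$ by ``resolving'' $k$ pairwise non-adjacent sides (an elementary move along each such side that attaches a bigon and deletes two $z$-corners), producing a $(r-2k)$-gon for $z^{r-2k}$, $1 \le k \le \lfloor r/2\rfloor$. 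I would prove this by observing that from $U_{z^r}$ the only elementary move possible along a side is the bigon-resolution (the $a=2$ geometry forces Type II/III faces to degenerate), and that resolutions at two adjacent sides conflict, so the set of resolved sides must be an independent set in the cycle graph $C_r$. Conversely each such independent set of size $k$ gives a genuine polygon. This is the step I expect to be the main obstacle: one must be careful that no \emph{other} polygons arise (e.g. nothing mixing $y$- and $z$-corners beyond what the area/exponent constraint $Q/2 + R/r \le 1$ already forbids — note $S^2$ being spherical means $P/a+Q/b+R/c > 1$, but the finitely-many-polygon structure still needs the diagram analysis), and that the elementary-move description genuinely captures all of them, invoking Theorem~\ref{thm:move} adapted to the spherical case as remarked at the start of Section~\ref{sect:S2}.

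For the enumeration, I would count size-$k$ independent sets in $C_r$ via a gaps argument: recording the $k$ gap-lengths $b_j \ge 1$ with $\sum b_j = r-k$ and substituting $b_j' = b_j - 1 \ge 0$ reduces to the number of non-negative solutions of $b_1' + \cdots + b_k' = r - 2k$, which is $\binom{r-k-1}{r-2k} = \frac{(r-k-1)!}{(k-1)!\,(r-2k)!}$. These are lifts from $\mathbb{P}^1_{2,2,r}$, and the $\mathbb{Z}/k$ cyclic rotation of the gap-tuple identifies them downstairs, so the net count divides by $k$ (I would spell out the subtle point that when the tuple has extra symmetry, the smaller $\mathbb{Z}/k$-orbit is exactly compensated by the $\eta$-factor in formula~\eqref{eq:formulaW}, so division by $k$ is correct uniformly). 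Finally, for each resolved $(r-2k)$-gon, Theorem~\ref{thm:term} gives area $q^{3r + 10k}$ (each bigon-resolution adds area $10\sigma$ in agreement with $\area$ computed via Lemma~\ref{lem:area}), $s + s^{\mathrm{op}} = (r-k) + k = r$, and sign $(-1)^{r+k}$; multiplying by the $\frac{1}{k}$ orbit correction and the factor $r = s + s^{\mathrm{op}}$ gives the coefficient $r\,(-1)^{r+k} q^{3r+10k} \frac{(r-k-1)!}{k!\,(r-2k)!}$ of $z^{r-2k}$. Assembling all contributions yields Equation~\eqref{eq:pot22r}. I would close by noting the boundary case: when $r$ is odd and $k = \lfloor r/2 \rfloor$, the polygon is a $1$-gon contributing a linear term, consistent with the earlier remark that $\mathbb{P}^1_{2,2,\mathrm{odd}}$ has no critical point at the origin.
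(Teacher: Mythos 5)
Your proposal follows essentially the same route as the paper's own proof: the direct enumeration via the $(2,2,r)$-diagram, the observation that $x$- (and $y$-) corners only occur in the minimal triangle and the bigons, the central $r$-gon $(\alpha\beta)^r$ with $\eta=r$, the bigon-resolutions of $k$ pairwise non-adjacent sides giving the $z^{r-2k}$-polygons, the gap-counting $\binom{r-k-1}{r-2k}$ with the $\Z/k$ rotation quotient and the factor $r$ from boundary points, and the area $3r+10k$ with sign $(-1)^{r+k}$ all match the paper's argument. No substantive difference to report.
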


\begin{remark}
\eqref{eq:pot22r} coincides with the mirror potential of $\mathbb{P}^1_{2,2,r}$ given in \cite[p. 283]{R} (but the powers of  $q$ are somewhat different). This could be because that we have chosen the Seidel Lagrangian to divide $\mathbb{P}^1_{a,b,c}$
into 8 pieces of equal area, and another configuration (which is still invariant under reflection along the equator)  might be related to that of \cite{R}.
\end{remark}

\subsection{$(2,3,3)$-case ($E_6$)}
We next deal with three exceptional cases making use of the explicit $(a,b,c)$-diagram for each case. 

We begin with the case of $(2,3,3)$. The universal cover $S^2 \to \mathbb{P}^1_{a,b,c}$ is 12-fold, whose area gives $q^{96}$. The potential contains $-q xyz$ due to this minimal triangle.
We also have terms $q^6 x^2$, $-q^9 y^3$ and $-q^9 z^3$ from polygons corresponding to words $(\beta \gamma)^2$, $(\gamma \alpha)^3$ and $(\alpha \beta)^3$, respectively.

In addition to these, one can apply the elementary move to any side of $y^3$-polygon ($=(\gamma \alpha)^3$)
to get a $2$-gon $U_{yz}$ with vertices $y$ and $z$ (or one can start with  $z^3$-polygon for the same result due to the symmetry between $y$ and $z$); see Figure \ref{fig:e6yz}.  By applying our formula (Theorem \ref{thm:term}) to $U_{yz}$, we see that $U_{yz}$ together with its reflection image contributes to the potential by $-4 q^{22} yz$.

\begin{figure}[ht]
\begin{center}
\includegraphics[height=3in]{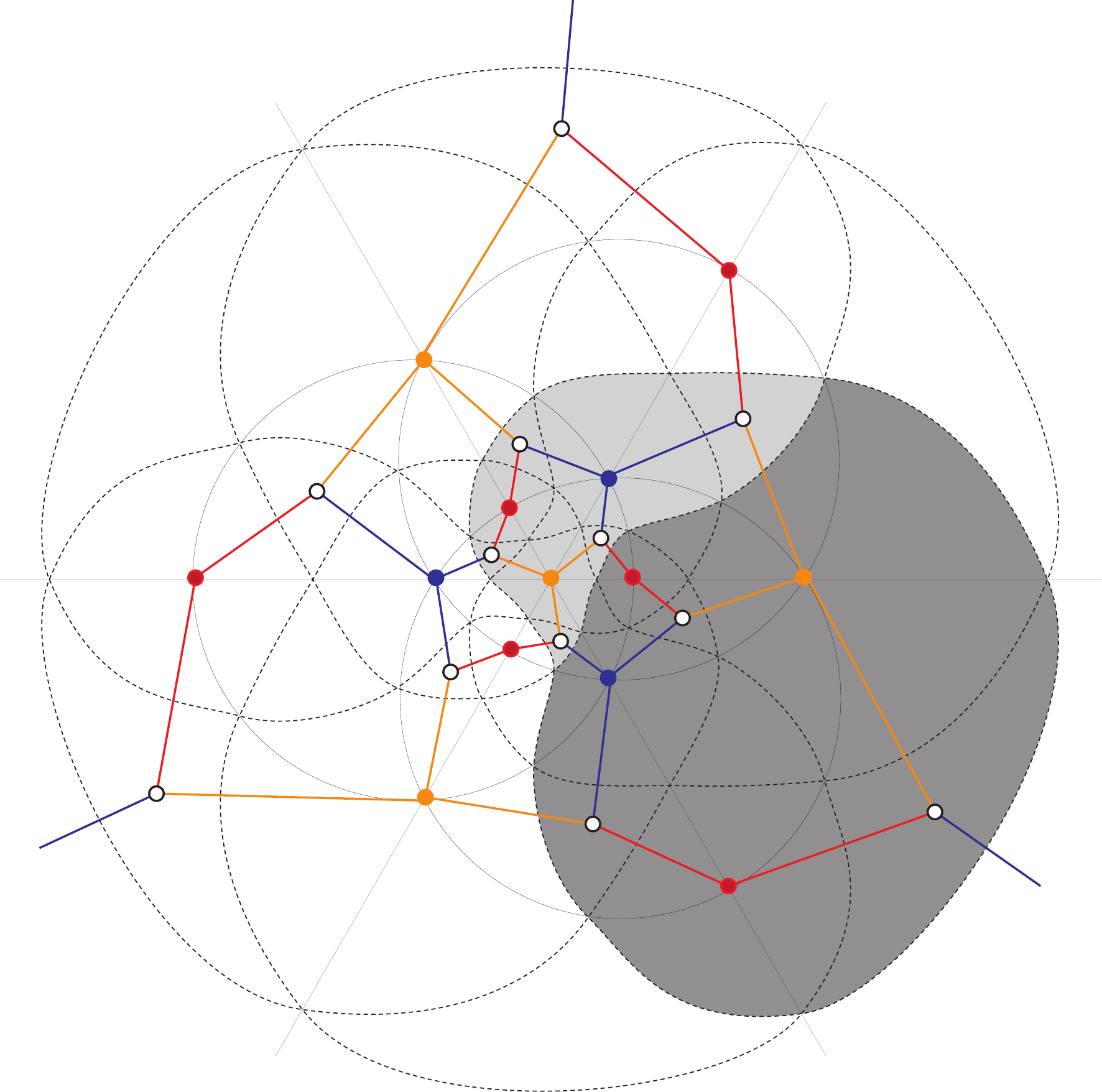}
\caption{$U_{yz}$ and the $0$-gon}\label{fig:e6yz}
\end{center}
\end{figure}

We perform elementary move once again with respect to any side of $U_{yz}$ to get a constant term. Indeed, the outcome is a smooth disk without any turns whose boundary is a single branch of the preimage of the Seidel Lagrangian. (See the darker region of Figure \ref{fig:e6yz} which is produced after the second elementary move.) Recall from Lemma \ref{lem:sphdiss} that the boundary circle divides $S^2$ into two regions with the same area. Hence, it gives the constant term $2 q^{48}$ where the number of generic points on its boundary and its reflection image is $4$, but there is $\Z_2$-symmetry for this disc($=(\gamma \beta \alpha)^2$).

\begin{theorem}\label{thm:W233}
The Lagrangian Floer potential of the Seidel Lagrangian in $\mathbb{P}^1_{2,3,3}$ is given as
\begin{equation}\label{eq:pot233}
-qxyz + q^6 x^2 - q^9 y^3 - q^9 z^3 -4 q^{22} yz +2 q^{48}.
\end{equation}
\end{theorem}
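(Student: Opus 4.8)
The plan is to carry out the polygon count directly in the universal cover. By Lemma~\ref{lem:orbcov} the universal orbifold cover of $\mathbb{P}^1_{2,3,3}$ is $E=S^2$, and since $\chi = 1/2+1/3+1/3-1=1/6$ the cover $S^2\to\mathbb{P}^1_{2,3,3}$ is $12$-fold, so the total area of $S^2$ is $96\sigma$. Every polygon for the potential in $E$ is recorded by its $(a,b,c)$-diagram (Lemma~\ref{lem:diagram2}), and by Theorem~\ref{thm:area} a polygon with corner counts $P,Q,R$ has area $(48-21P-13Q-13R)\sigma$; positivity of the area forces $21P+13Q+13R<48$, so there are only finitely many admissible triples $(P,Q,R)$ and, since the generation number $p$ strictly increases under an elementary move, only finitely many polygons. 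I would therefore enumerate all diagrams generation by generation via elementary moves, starting from the basic $xyz$-diagram, and read off each term from Theorem~\ref{thm:term}, i.e. from $(-1)^{p+[w]_2+[w]_3}\tfrac{2p+[w]_1+[w]_2+[w]_3}{\eta([w])}\,q^{\mathrm{Area}(U)/\sigma}\,x^{[w]_1}y^{[w]_2}z^{[w]_3}$.

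The enumeration proceeds as follows. Generation $-1$ is the basic triangle, $(P,Q,R)=(1,1,1)$, $p=-1$, $\eta=1$, giving area $\sigma$, sign $-1$ and multiplicity $1$, hence $-qxyz$. Generation $0$ consists of the three $0$-th generation diagrams with boundary words $(\beta\gamma)^2$, $(\gamma\alpha)^3$ and $(\alpha\beta)^3$, i.e. $(P,Q,R)=(2,0,0),(0,3,0),(0,0,3)$ with $\eta=2,3,3$; their areas are $6\sigma,9\sigma,9\sigma$ and their signs $+1,-1,-1$, contributing $q^6x^2-q^9y^3-q^9z^3$. Among these, the triangle $[\alpha\beta\gamma]$ and the bigon $[(\beta\gamma)^2]$ admit no elementary move, because for $a=2$ no cut word $\cut(\theta,k)$ begins with $\beta$ or ends with $\gamma$ (Definition~\ref{defn:cut}) and no such subword occurs in them; consequently all further polygons are generated from the $y^3$- and $z^3$-diagrams, and in particular the only polygons carrying an $x$-corner are the $xyz$-triangle and the $x^2$-bigon. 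At generation $1$, an elementary move on $(\gamma\alpha)^3$ (equivalently, by the $y\leftrightarrow z$ symmetry, on $(\alpha\beta)^3$) yields the single diagram $U_{yz}$, a bigon with one $y$- and one $z$-corner; here $(P,Q,R)=(0,1,1)$, $p=1$, $\eta=1$, giving area $22\sigma$, sign $-1$ and multiplicity $4$, hence $-4q^{22}yz$. At generation $2$, the elementary move on $U_{yz}$ produces, up to the $G$-action, a single cornerless disc bounded by one branch of $L_E$ (its boundary word is $(\gamma\beta\alpha)^2$, of length $6=3p$, so $p=2$ and $\eta=2$); by Lemma~\ref{lem:sphdiss} that branch bisects $S^2$, so the disc has area $48\sigma$, which with $(P,Q,R)=(0,0,0)$ gives sign $+1$ and multiplicity $2$, hence $2q^{48}$. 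There is nothing in generation $\ge 3$: the hemisphere is cornerless and bounded by a complete Seidel Lagrangian, so any extension would cover all of $S^2$, which a polygon for the potential cannot. Summing the six contributions gives \eqref{eq:pot233}.

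The main obstacle is verifying the \textbf{completeness} of this list in the $a=2$ regime — that each generation contains \emph{exactly} the diagrams named above and that the process truncates after generation $2$. The delicate point is that for $a=2$ an elementary move involves Type~II (and, when $b=3$, Type~III and IV) faces as in Figure~\ref{fig:move}, so the outcome cannot simply be read off a hexagon tessellation by inspection. I would make this rigorous by running the explicit cut--glue operations of Case~3 of Section~\ref{ss:cutglue} with $(b,c)=(3,3)$ on the boundary words, checking directly that $\CS^0 = \{[(\beta\gamma)^2],[(\gamma\alpha)^3],[(\alpha\beta)^3]\}$, that $\CS^1$ is the single cyclic word $[w(\partial U_{yz})]$, that $\CS^2 = \{[(\gamma\beta\alpha)^2]\}$, and that $\CS^p=\varnothing$ for $p\ge 3$; alternatively, more cheaply, by using the area identity $\mathrm{Area}(U)/\sigma = 48-21P-13Q-13R>0$ to exclude every $(P,Q,R)$ other than the six that occur. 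Once completeness is in hand, the remaining steps are routine substitutions into the area, sign and $\eta$ formulas already established in Theorems~\ref{thm:area} and~\ref{thm:term}.
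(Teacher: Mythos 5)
Your enumeration and all the individual evaluations agree with the paper: the six polygons, their corner profiles, the areas $\sigma,6\sigma,9\sigma,9\sigma,22\sigma,48\sigma$ computed from Theorem~\ref{thm:area}, and the signs and multiplicities $1,1,1,1,4,2$ from Theorem~\ref{thm:term} (with $\eta=1,2,3,3,1,2$) are exactly what the paper obtains, and the overall strategy (generate polygons from the basic triangle by elementary moves in the $12$-fold spherical cover and evaluate each by the combinatorial formulas) is the paper's own.

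However, the completeness step — which you correctly identify as the crux — is not secured by either of the two justifications you offer, and this is a genuine gap. First, the cut--glue formulas of Case~3 in Section~\ref{ss:cutglue} are written for $a=2$, $b=3$ under the standing assumption $1/a+1/b+1/c\le 1$, i.e.\ $c\ge 6$: the glue words contain the exponents $c-4$, $c-5$, $c-6$, which are negative for $c=3$, and Theorem~\ref{thm:move} (elementary moves generate all diagrams) is proved only in that non-spherical regime; the paper explicitly abandons the algorithm in Section~\ref{sect:S2} and finds the spherical polygons directly. So ``running Case~3 with $(b,c)=(3,3)$'' is not a legitimate substitute without reworking that machinery for $\chi>0$. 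Second, the ``cheaper'' argument fails outright: the positivity condition $48-21P-13Q-13R>0$ admits seventeen triples $(P,Q,R)$, not six; for instance $(0,1,0)$, $(0,2,0)$, $(1,1,0)$, $(1,2,0)$, $(0,2,1)$ and $(1,0,2)$ all satisfy it, so the area bound alone cannot exclude potential terms such as $y$, $y^2$, $xy$, $xy^2$ or $y^2z$. (A smaller inaccuracy: the basic triangle does admit elementary moves — the three $0$-th generation diagrams are obtained from it that way; the restriction in Definition~\ref{defn:cut} is only a word-level bookkeeping convention, and the paper's claim is that the full list can be generated without moves on $[\alpha\beta\gamma]$ and $[(\beta\gamma)^2]$, not that such moves do not exist.) To close the gap you would need what the paper actually does: a direct inspection of the explicit finite tessellation and $(2,3,3)$-diagram of $S^2$, showing for example that any polygon with an $x$-corner is the minimal triangle or the $x^2$-bigon, that the only polygon properly containing the $y^3$- or $z^3$-triangle and minimal with that property is $U_{yz}$, and that the move on $U_{yz}$ produces the bisecting cornerless disc of area $48\sigma$ beyond which no polygon for the potential exists (since it cannot cover all of $S^2$); alternatively, re-prove the generation statement of Theorem~\ref{thm:move} in the spherical setting before invoking it.
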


\subsection{$(2,3,4)$-case ($E_7$)}
In this case, $S^2 \to \mathbb{P}^1_{2,3,4}$ is a 24-fold cover and hence, the area of the whole sphere is given by $q^{192}$. As before, the minimal triangle and the 0-th generation polygons from words $(\beta \gamma)^2$, $(\gamma \alpha)^3$ and $(\alpha \beta)^4$ give rise to the following four terms:
$$ -qxyz + q^6 x^2 - q^9 y^3 + q^{12} z^4.$$
Denote the polygons for $y^3$ and $z^4$ by $U_{y^3}$ and $U_{z^4}$. We apply elementary move to $U_{y^3}$ (or $U_{z^4}$) to obtain  a triangle $U_{yz^2}$ corresponding to the term, $5 q^{25} yz^2$,
which is the first generation polygon.

We can further apply elementary moves to three sides of $U_{yz^2}$ but two of them coincides. So, we have essentially two second generation polygons which give $y^2$ and $z^2$ terms, respectively. It is easy to see that both of $2$-gons admit $\Z_2$-symmetry. In fact, these two polygons have word expressions $(\gamma \beta \alpha (\gamma \alpha))^2$ and $(\alpha \gamma \beta (\alpha \beta))^2$; see Figure \ref{fig:z2e7} for the $z^2$-polygon.  After counting the number of generic points on their boundary, we see that their contributions are
$$3 q^{38} y^2 + 3 q^{54} z^2. $$

\begin{figure}[ht]
\begin{center}
\includegraphics[height=3in]{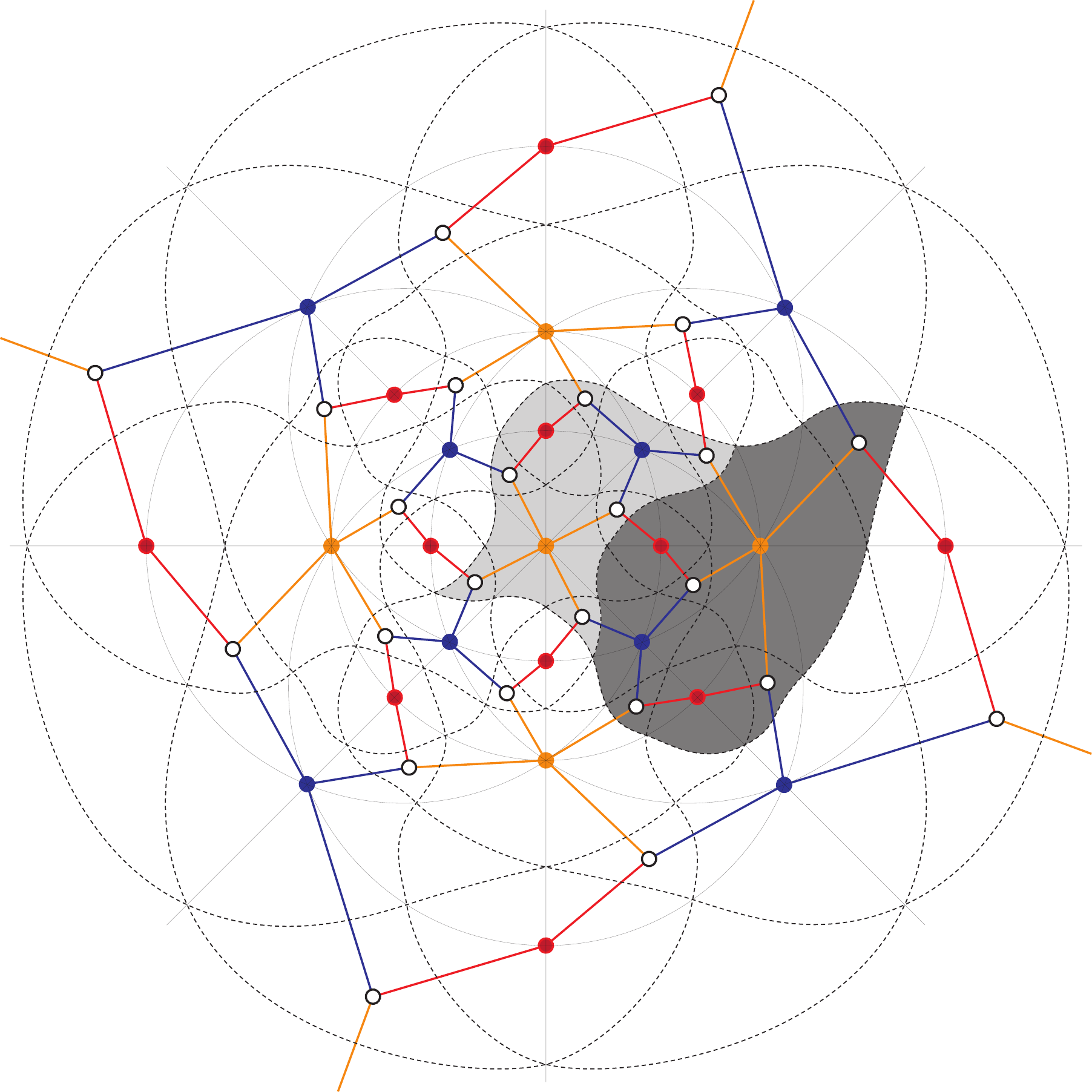}
\caption{$yz^2$- and $z^2$- polygons}\label{fig:z2e7}
\end{center}
\end{figure}

Finally, a third generation polygon become a disc (0-gons) whose corresponding word is $(\gamma \beta \alpha)^3$.
It has $\Z_3$-symmetry, and the corresponding constant  term of $W$ is $-2 q^{96}$.
\begin{theorem}\label{thm:W234}
The Lagrangian Floer potential of the Seidel Lagrangian in $\mathbb{P}^1_{2,3,4}$ is given as
\begin{equation}\label{eq:pot234}
-qxyz + q^6 x^2 - q^9 y^3 + q^{12} z^4  +5 q^{25} yz^2+ 3 q^{54} z^2 +3 q^{38} y^2 - 2 q^{96}.
\end{equation}
\end{theorem}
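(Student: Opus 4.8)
The plan is to prove Theorem~\ref{thm:W234} by a direct, finite enumeration of all polygons for the potential in the universal cover $E=S^2$, which is a $24$-fold orbifold cover of $\mathbb{P}^1_{2,3,4}$ (so $\area(S^2)=192\sigma$), and then to read off each monomial of $W$ from the term formula of Theorem~\ref{thm:term} together with formula~\eqref{eq:formulaW}. Finiteness is immediate: $S^2$ is compact and the Seidel Lagrangians subdivide it into finitely many cells, so each (embedded) polygon for the potential is a union of finitely many such cells; moreover Theorem~\ref{thm:area}(1) with $(a,b,c)=(2,3,4)$ gives
\[
\area(U)=\bigl(3(P+Q+R)+8\,\tfrac{P/2+Q/3+R/4-1}{-1/12}\bigr)\sigma=(96-45P-29Q-21R)\sigma,
\]
so positivity of the area already forces $45P+29Q+21R<96$, confining the corner-counts $(P,Q,R)$ to an explicit finite set. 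As observed at the start of Section~\ref{sect:S2}, the elementary move of Definition~\ref{defn:move} still makes sense on $S^2$ and raises the generation number $p$ by exactly one, so it suffices to run the enumeration generation by generation, using the explicit $(a,b,c)$-diagram for $(2,3,4)$ as the bookkeeping device.

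I would then list the polygons by generation and apply Theorem~\ref{thm:term} to each. Generation $-1$ is the minimal $xyz$-triangle ($\eta=1$), giving $-qxyz$. Generation $0$ consists of the three $0$-th generation diagrams, with boundary words $(\beta\gamma)^2$, $(\gamma\alpha)^3$, $(\alpha\beta)^4$ and symmetry orders $\eta=2,3,4$; their areas are $6\sigma,9\sigma,12\sigma$ by Theorem~\ref{thm:area}(1) and the signs and multiplicities from Theorem~\ref{thm:term} yield $q^6x^2-q^9y^3+q^{12}z^4$. In generation $1$, one elementary move along any side of the $y^3$-polygon (equivalently the $z^4$-polygon) produces the unique triangle $U_{yz^2}$ with $(P,Q,R)=(0,1,2)$, $p=1$, $\eta=1$, hence the term $5q^{25}yz^2$. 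In generation $2$, the elementary moves along the three sides of $U_{yz^2}$ produce only two distinct diagrams up to the $G$-action --- the bigons with boundary words $(\gamma\beta\alpha(\gamma\alpha))^2$ and $(\alpha\gamma\beta(\alpha\beta))^2$, each with $\Z_2$-symmetry (see Figure~\ref{fig:z2e7}) --- contributing $3q^{38}y^2+3q^{54}z^2$. In generation $3$, an elementary move along a side of either bigon yields the corner-free disc with boundary word $(\gamma\beta\alpha)^3$ and $\Z_3$-symmetry; by Lemma~\ref{lem:sphdiss} it bisects $S^2$, so its area is $96\sigma$ and it contributes $-2q^{96}$. A corner-free disc has no sides, so no further elementary move applies and the enumeration halts; summing the contributions gives exactly the stated formula for $W$.

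Most of the above is bookkeeping: for each polygon one records $(P,Q,R)$, the generation $p=[w]_0$, the symmetry order $\eta$, the area from Theorem~\ref{thm:area}(1), and the coefficient $(-1)^{p+[w]_2+[w]_3}(2p+P+Q+R)/\eta$ from Theorem~\ref{thm:term}; doing this for the eight polygons above reproduces the eight terms. The step I expect to be the main obstacle is \emph{completeness}: the area bound alone overcounts, since many triples $(P,Q,R)$ with $45P+29Q+21R<96$ are never realized by an actual $(2,3,4)$-diagram, so one must verify, by inspecting the explicit diagram, that no elementary move has been missed at generations $0$, $1$, $2$, that two of the three elementary moves on $U_{yz^2}$ genuinely produce $G$-equivalent diagrams, and that nothing new appears beyond generation $3$. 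With those coincidences checked and with the termination of the process guaranteed by the finiteness of the cellulation of $S^2$, the argument is complete.
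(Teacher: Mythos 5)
Your proposal is correct and takes essentially the same route as the paper: a generation-by-generation enumeration, via elementary moves in the $24$-fold cover $S^2$, of the eight polygons (minimal triangle; the $x^2$, $y^3$, $z^4$ zeroth-generation polygons; $U_{yz^2}$; the two $\Z_2$-symmetric bigons for $y^2$ and $z^2$; the $\Z_3$-symmetric corner-free disc), with each term read off from the area formula of Theorem~\ref{thm:area} and the coefficient formula of Theorem~\ref{thm:term}. The only difference is your explicit finiteness bound $45P+29Q+21R<96$, which the paper leaves implicit; the completeness check by direct inspection of the $(2,3,4)$-diagram is handled the same way in both arguments.
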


\subsection{$(2,3,5)$-case ($E_8$)}
Finally, we find the potential for $(2,3,5)$-case, which has the most complicated diagram among spherical $(a,b,c)$-diagrams. In this case $S^2 \to \mathbb{P}^1_{2,3,5}$ is a $60$-sheeted cover and the total area of $S^2$ is $q^{480}$.

 Computations for the first few terms are the same as in previous cases. We have
$$ -qxyz + q^6 x^2 - q^9 y^3 - q^{15} z^4$$
from the minimal triangle, $(\beta \gamma)^2$, $(\gamma \alpha)^3$ and $(\alpha \beta)^5$ with considering their symmetries carefully.
We  run the elementary-move from the $y^3$-triangle as in  $(2,3,4)$-case. The first move produces a $4$-gon corresponding to  the term $-6 q^{28} yz^3 $ in the potential.

The second elementary moves with respect to sides of this $4$-gon essentially yield two non-isomorphic polygons which give rise to $y^2 z$- and $z^4$-terms;
$$-7 q^{41} y^2 z + 4 q^{60} z^4. $$
Note that the $z^4$-polygon has the word expression $(\alpha \gamma \beta (\alpha \beta)^2)^2$ and hence, admits $\Z_2$-symmetry; see Figure \ref{fig:yz2e8} below.

\begin{figure}[ht]
\begin{center}
\includegraphics[height=3in]{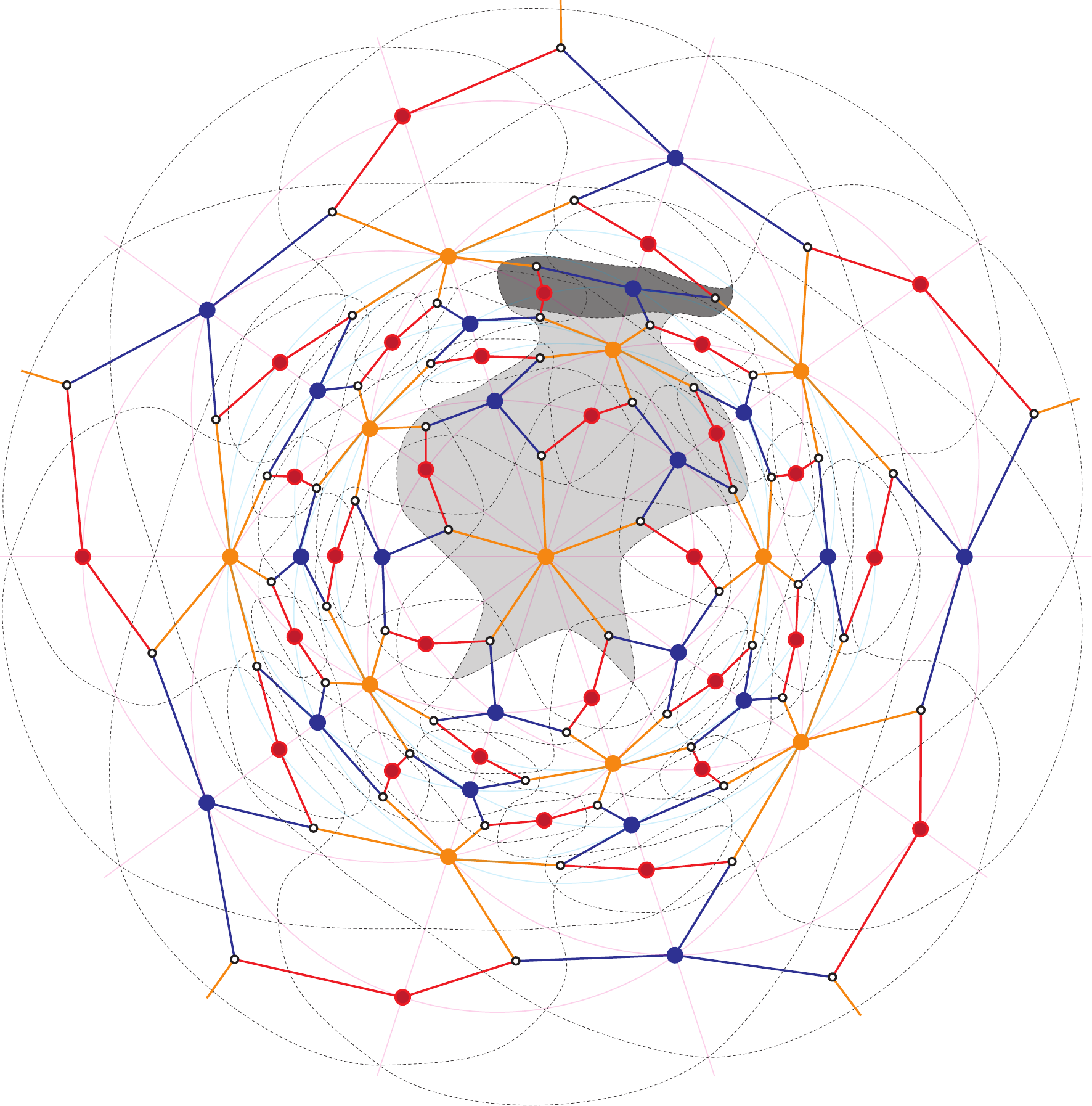}
\caption{The $z^4$-polygon (lighter one) and the $yz^2$-polygons (darker one)}\label{fig:yz2e8}
\end{center}
\end{figure}

The third elementary moves from the second generation polygons produce two polygons corresponding to $y z^2$- and $z^3$-terms, which induces
$$ -9 q^{73} yz^2 -3q^{105} z^3$$
in the potential; see Figure \ref{fig:yz2e8} for the $yz^2$-polygon and \ref{fig:z3e8} for the $z^3$-polygon. Here, the word for the $z^3$-polygon is given by $(\alpha \gamma \beta (\alpha \beta))^3$ and has $\Z_3$-symmetry.

\begin{figure}[ht]
\begin{center}
\includegraphics[height=3in]{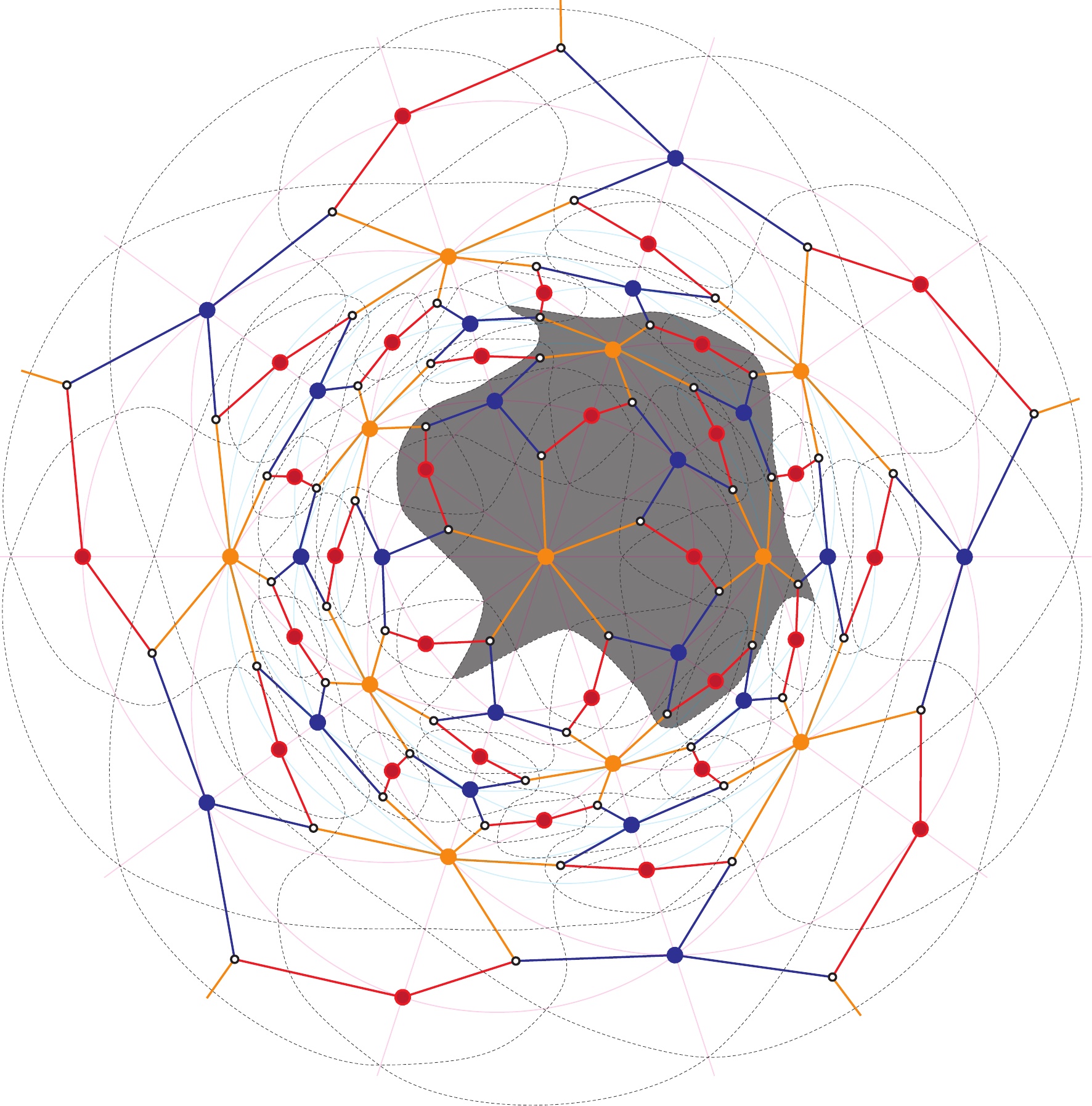}
\caption{The $z^3$-polygon}\label{fig:z3e8}
\end{center}
\end{figure}

There are two kinds of the fourth generation polygons, $y^2$- and $z^2$-polygons both of which have $\Z_2$-symmetries. When expressed as words, they are $(\alpha \gamma \beta (\alpha \beta)^2)^2$ and $(\gamma \beta \alpha \gamma \beta \alpha (\gamma \alpha))^2$, respectively. Thus, they induces the following part of the potential:
$$ 5 q^{86} y^2 + 5 q^{150} z^2.$$

Finally, the last elementary move yields a disc (0-gon) with  area $q^{240}$. This disc corresponds to the word $(\gamma \beta \alpha)^5$, and hence, admits $\Z_5$-symmetry. So, it contributes to the potential by $-2 q^{240}$.

In summary, we have the following.
\begin{theorem}\label{thm:W235}
The Lagrangian Floer potential of the Seidel Lagrangian in $\mathbb{P}^1_{2,3,5}$ is given as
\begin{equation}\label{eq:pot235}
\begin{array}{c}
-qxyz + q^6 x^2 - q^9 y^3 - q^{15} z^5  + 4 q^{60} z^4  -3q^{105} z^3 + 5 q^{150} z^2 \\
 -6 q^{28} yz^3 -9 q^{73} yz^2  -7 q^{41} y^2 z+ 5 q^{86} y^2  -2 q^{240}.
\end{array}
\end{equation}
\end{theorem}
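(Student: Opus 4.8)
The plan is to prove Theorem~\ref{thm:W235} by a direct, finite enumeration of all polygons for the potential bounded by a lift of the Seidel Lagrangian to $E=\mathbb{S}^2$, organised by generation, and then to read each polygon's contribution to $W$ off the area formula (Theorem~\ref{thm:area}) together with the weighted counting formula~\eqref{eq:formulaW}. The key preliminary point is that the enumeration is genuinely finite. The cover $\mathbb{S}^2\to\mathbb{P}^1_{2,3,5}$ is $60$-fold, so $E$ has area $480\sigma$; a polygon for the potential never covers $\mathbb{S}^2$ (by the embeddedness/index lemma for the spherical case), and by Lemma~\ref{lem:sphdiss} a bisecting disc has area $240\sigma$, so every polygon for the potential has area bounded by $240\sigma$ and only finitely many generations occur. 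Since elementary moves (Definition~\ref{defn:move}) still generate all $(a,b,c)$-diagrams in the spherical case — with the $a=2,b=3$ moves handled directly from the explicit diagram, as $c=5<6$ here — it suffices to begin with the basic diagram and follow each branch of the elementary move until it stabilises; the area formula and the formula for the number of $e$-points on the boundary remain valid throughout.

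First I would record the base cases: the minimal $xyz$-triangle ($p=-1$) gives $-qxyz$, and the three $0$-th generation diagrams, with boundary words $(\beta\gamma)^2$, $(\gamma\alpha)^3$, $(\alpha\beta)^5$, give $q^6x^2$, $-q^9y^3$, $-q^{15}z^5$ respectively — here the exponents of $q$ come from $\area(P,Q,R)\,\sigma$ in Theorem~\ref{thm:area}(1) with $(a,b,c)=(2,3,5)$, so that $1-\tfrac1a-\tfrac1b-\tfrac1c=-\tfrac1{30}$, and the coefficients from $(-1)^{s(U)}(s(U)+s^{\mathrm{op}}(U))/\eta(U)$ with $\eta=2,3,5$. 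Then I would iterate the elementary move starting from the $y^3$-diagram and list the outcomes generation by generation: at $p=1$ the unique $4$-gon giving $-6q^{28}yz^3$; at $p=2$ two non-isomorphic polygons giving $-7q^{41}y^2z$ and $+4q^{60}z^4$ (the latter with $\mathbb{Z}/2$-symmetry, word $(\alpha\gamma\beta(\alpha\beta)^2)^2$); at $p=3$ two polygons giving $-9q^{73}yz^2$ and $-3q^{105}z^3$ (the latter with $\mathbb{Z}/3$-symmetry, word $(\alpha\gamma\beta(\alpha\beta))^3$); at $p=4$ two polygons giving $+5q^{86}y^2$ and $+5q^{150}z^2$ (both with $\mathbb{Z}/2$-symmetry); and at $p=5$ the disc with word $(\gamma\beta\alpha)^5$ and $\mathbb{Z}/5$-symmetry giving $-2q^{240}$. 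For each polygon $U$ one computes $\area(U)$ from Theorem~\ref{thm:area}, reads $s(U)$ and $s^{\mathrm{op}}(U)$ off its $(a,b,c)$-diagram as the numbers of $e$- and $e'$-points on $\partial U$, and assembles the term via~\eqref{eq:formulaW}. Since each branch of the move has been followed until it produces the $0$-gon, which admits no further Maslov-index-two move, and since by Lemma~\ref{lem:unique} each cyclic boundary word corresponds to a single polygon up to $G$, the list is exhaustive, and summing the twelve contributions yields the stated formula.

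The main obstacle is the combinatorial complexity of the $(2,3,5)$-diagram, which is the most intricate of the spherical cases (encoding the icosahedral/$E_8$ symmetry). The delicate steps are: drawing and verifying the diagram at each of the six generations; deciding which polygons produced by distinct elementary moves coincide under the deck group $G$, so that each term is counted exactly once (e.g.\ several second- or third-generation moves landing on the same cyclic word); and the sign/symmetry bookkeeping, since a single miscounted $e$-point or an overlooked $\mathbb{Z}/k$-symmetry changes a coefficient. As a cross-check I would verify that the $(x,y,z)$-exponents of every term are consistent with the relation between $P/2+Q/3+R/5$ and the generation $p$ forced by Theorem~\ref{thm:area}, that the $q$-exponents match direct area counts in small cases (as in Example~\ref{ex:count}), and that after an appropriate coordinate change the resulting $W$ exhibits an $E_8$-type singularity at the origin, as expected for $\mathbb{P}^1_{2,3,5}$.
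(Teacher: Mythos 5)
Your proposal is correct and follows essentially the same route as the paper: a direct, generation-by-generation enumeration of the finitely many polygons via elementary moves starting from the minimal $xyz$-triangle and the three $0$-th generation polygons, with areas from Theorem~\ref{thm:area}, coefficients from~\eqref{eq:formulaW} together with the $e$-point counts and $\mathbb{Z}/k$-symmetry factors, and the same boundary words $(\alpha\gamma\beta(\alpha\beta)^2)^2$, $(\alpha\gamma\beta(\alpha\beta))^3$, $(\gamma\beta\alpha)^5$, etc., at each stage. (Your framing remark that non-covering of $S^2$ plus Lemma~\ref{lem:sphdiss} bounds every polygon's area by $240\sigma$ is not quite a proof as stated, but it plays no role in the actual enumeration, which matches the paper's.)
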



\section{Applications}
We let $W$ to be the potential for $\PO$, where $\frac{1}{a} + \frac{1}{b} + \frac{1}{c} \leq1 $.
The main goal of this section is to prove the following theorem.

\begin{theorem}\label{thm:mainapply}
\begin{enumerate}
\item If $\frac{1}{a} + \frac{1}{b} + \frac{1}{c} =1$, then $W$ has a unique singular point at the origin.
\item If $\frac{1}{a} + \frac{1}{b} + \frac{1}{c} <1$, then the hypersurface $W^{-1} (0)$ has an isolated singularity at the origin.
\end{enumerate}
\end{theorem}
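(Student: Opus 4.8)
The plan is to reduce Theorem~\ref{thm:mainapply} to the convergence statement (Theorem~\ref{thm:conv_intro}) together with a careful analysis of the lowest-order terms of $W$. First I would note that $W$ is, by Theorem~\ref{thm:conv_intro}, a convergent power series in $x,y,z$ for $|x|,|y|,|z|$ small (after setting $T=e$), so it defines a genuine holomorphic function on a polydisc around the origin; hence it makes sense to speak of its critical locus and of the singular locus of $W^{-1}(0)$ analytically rather than just formally. The origin is manifestly a critical point of $W$ since, from the form \eqref{eq:W_intro}, $W$ has no constant and no linear term in the hyperbolic and elliptic cases (every polygon for the potential in these cases has at least one corner, because there are no $0$-gons or $1$-gons by Corollary~\ref{cor:bigon_no} and Lemma~\ref{lem:sphdiss} fails only in the spherical case), so $dW$ vanishes at $0$ and $W(0)=0$.

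For the elliptic case $\tfrac1a+\tfrac1b+\tfrac1c=1$, I would argue directly from the explicit potentials computed in Theorems~\ref{thm:W236}, \ref{thm:W244} and Theorem~6.4 of \cite{CHL}. In each of the three cases $(3,3,3),(2,4,4),(2,3,6)$ the potential, after the coordinate changes carried out in Section~\ref{sec:mir244} and Section~\ref{sec:mir236}, takes one of the weighted-homogeneous normal forms $x^3+y^3+z^3+\sigma xyz$, $x^2+y^4+z^4+\sigma y^2z^2$, or $x^2+y^3+z^6+\sigma yz^4$ with $\sigma$ a convergent series in $q$; these are precisely the simple-elliptic singularities $\widetilde E_6,\widetilde E_7,\widetilde E_8$. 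For these I would invoke (or verify by an explicit resultant/discriminant computation) that for generic small $q$ — equivalently, for $\sigma$ in the complement of the finitely many values making the defining polynomial degenerate — the affine hypersurface has an isolated singularity only at the origin, and that the gradient ideal is $\mathfrak{m}$-primary so the origin is the unique critical point. One checks that the relevant discriminant in $\sigma$ is a nonzero holomorphic function of $q$ vanishing only at isolated points, and since $\sigma(q)$ is non-constant (indeed $\sigma\to\infty$ or $\sigma\to$ a specific finite value as $q\to0$, as recorded after \eqref{eq:sigma244} and \eqref{eq:sigma236}), the bad locus is avoided on a punctured disc in $q$; this gives the assertion for all sufficiently small $q\neq0$.

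For the hyperbolic case $\tfrac1a+\tfrac1b+\tfrac1c<1$, I would isolate the leading term of $W$. By Theorem~\ref{thm:area}(1) the area of any polygon for the potential grows strictly with $P+Q+R$ (the coefficient of $S=P+Q+R$ is $3\sigma>0$ and the remaining contribution is non-negative once $P/a+Q/b+R/c\ge1$, which holds for all polygons beyond the minimal triangle and its immediate descendants), so after setting $T=e$ the terms $-q^6xyz+q^{3a}x^a+q^{3b}y^b+q^{3c}z^c$ of \eqref{eq:W_intro} are exactly the lowest-order part of $W$ with respect to a suitable weighting, and all further terms lie in the ideal generated by higher powers. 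I would then write $W=W_0+W_{\mathrm{hot}}$ where $W_0=-q^6xyz+q^{3a}x^a+q^{3b}y^b+q^{3c}z^c$ is the Fermat-type polynomial $x^a+y^b+z^c+\lambda xyz$ (up to rescaling coordinates), which has an isolated singularity at $0$ for generic $\lambda$, hence $\partial W_0/\partial x,\partial W_0/\partial y,\partial W_0/\partial z$ form a regular sequence and $(\partial W_0)$ is $\mathfrak m$-primary, say $\mathfrak m^N\subseteq(\partial W_0)$. The key point is that $W_{\mathrm{hot}}$ consists of terms of strictly higher order than this primary bound (using the area growth from Theorem~\ref{thm:area} to control exponents, and the convergence from Theorem~\ref{thm:conv_intro} to ensure $W_{\mathrm{hot}}$ is a genuine holomorphic function vanishing to high order), so $\partial W=\partial W_0+\partial W_{\mathrm{hot}}$ still generates an $\mathfrak m$-primary ideal in the local ring $\mathcal O_{0}$; a standard Nakayama / Tougeron-type finite-determinacy argument then shows $V(\partial W)\cap V(W)=\{0\}$ near the origin, i.e. $W^{-1}(0)$ has an isolated singularity there.

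The main obstacle I anticipate is making precise that the ``higher-order terms'' $W_{\mathrm{hot}}$ really lie in the ideal $(\partial W_0)$ (or at least do not destroy its $\mathfrak m$-primariness): this requires a quantitative matching between the polygon-area exponents produced by Theorem~\ref{thm:area} and the degree of determinacy $N$ of the singularity of $W_0$, and the bookkeeping depends on $(a,b,c)$. Concretely, one must check that every polygon other than the four lowest ones has $\mathrm{Area}/\sigma$ large enough (and hence $q$-exponent large enough) that its monomial $x^Py^Qz^R$ — which always has total degree $P+Q+R\ge2$ by the corner count — lies in $\mathfrak m^N$; since $W_0$ has Milnor number $(a-1)(b-1)(c-1)$ and is quasi-homogeneous, finite determinacy holds in order $\le a+b+c$ or so, and comparing this with the area lower bound $\mathrm{Area}(U)\ge(3S+8\frac{S/c-1}{1-1/a-1/b-1/c})\sigma$ for polygons with $S$ corners should close the gap for all $(a,b,c)$ with $\tfrac1a+\tfrac1b+\tfrac1c<1$, possibly after treating the finitely many borderline triples separately. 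A cleaner alternative, which I would also pursue, is to appeal to the general principle that a holomorphic function whose Taylor polynomial up to the determinacy degree has an isolated singularity itself has one; then only the truncation $-q^6xyz+q^{3a}x^a+q^{3b}y^b+q^{3c}z^c$ matters and one reduces entirely to the classical fact that $x^a+y^b+z^c+\lambda xyz$ has an isolated singularity for generic $\lambda$, i.e. for $q$ in a punctured neighbourhood of $0$.
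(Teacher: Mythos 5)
Your elliptic-case argument is essentially the paper's: after the coordinate changes one reduces to the normal forms $x^3+y^3+z^3+\sigma xyz$, $x^2+y^4+z^4+\sigma y^2z^2$, $x^2+y^3+z^6+\sigma yz^4$ and checks that $\sigma(q)$ avoids the finitely many degenerate values; the paper verifies this for the actual series $\sigma(q)$ rather than by genericity in $q$, but the substance is the same.

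The hyperbolic case, however, contains a genuine gap, and it is exactly the obstacle you flag at the end: your reduction cannot be closed. The decomposition $W=W_0+W_{\mathrm{hot}}$ with $W_0=-q^6xyz+q^{3a}x^a+q^{3b}y^b+q^{3c}z^c$, followed by $\mathfrak{m}$-adic finite determinacy, confuses two different filtrations. The area formula of Theorem \ref{thm:area} bounds the exponent of $q$, i.e.\ (for a fixed small $q\neq 0$) the size of a coefficient; it says nothing about the $(x,y,z)$-adic order of the corresponding monomial, which is what finite determinacy needs. There are infinitely many polygons whose monomials have small total degree in $(x,y,z)$ but large area: for $a,b,c\ge 3$ one gets, besides terms divisible by $xyz$, series multiplying $x^2y^2$, $y^2z^2$, $z^2x^2$ and higher pure powers (this is the structural form used in the paper's proof), and when $\min(a,b,c)=2$ one gets the genuinely dangerous terms $xy^{l}$ and $x^{k}y$ --- e.g.\ for $(3,b,2)$ the monomial $xy^{b-2}$ is guaranteed to occur (Corollary \ref{cor:specialpropW2}). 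Its degree $b-1$ is strictly below any determinacy order of $W_0$, since dropping $y^b$ from $x^3+y^b+z^2+\lambda xyz$ already produces a non-isolated germ, so $W_0$ is at best $b$-determined; hence neither ``$W_{\mathrm{hot}}\subseteq\mathfrak{m}^N$'' nor your fallback ``only the truncation $W_0$ matters'' is correct, and no comparison of area exponents with a determinacy degree can repair this, because a coefficient $q^{\text{large}}$ is merely a small nonzero number and smallness of coefficients is not a substitute for ideal membership. (Also, the Milnor number of $x^a+y^b+z^c+\lambda xyz$ in the hyperbolic range is $a+b+c-1$, not $(a-1)(b-1)(c-1)$.) This is precisely why the paper's proof has a different architecture: it first constrains which monomials can occur and with what minimal exponents (Lemma \ref{lem:Wxylyzm} excludes $xy^l,yz^m,zx^n$ when $a,b,c\ge3$; Corollaries \ref{cor:specialpropW1} and \ref{cor:specialpropW2} bound the dangerous exponents when one weight is $2$), and then shows by explicit computation in the Jacobian ideal --- using Weierstrass preparation, which is where the convergence theorem \ref{thm:conv} actually enters --- that $xyz,x^a,y^b,z^c$ lie in $(W,W_x,W_y,W_z)$, with the cases $(3,b,2)$, $b>6$, and $(a,b,2)$, $b\ge a\ge 4$, treated separately. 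To salvage your route you would need at least those monomial constraints together with a Newton-polyhedron nondegeneracy argument in place of plain $\mathfrak{m}$-adic determinacy; as written, the passage from the four leading terms to the full $W$ is unjustified.
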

The proof in the elliptic cases are rather simple using the normal form of the potential.
But the proof in the hyperbolic cases are rather complicated since $W$ is an infinite series and we do not have an explicit form of other potential.
Let us first discuss the elliptic cases.
\begin{prop}
If $\frac{1}{a} + \frac{1}{b} + \frac{1}{c} =1$,  then $W$ has a unique singular point at the origin.
\end{prop}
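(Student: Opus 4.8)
The plan is to exploit the explicit normal forms of the potentials computed in Sections~\ref{sec:236} and~\ref{sec:244} (together with the $(3,3,3)$ case from \cite{CHL}), reducing the statement to a finite check on weighted-homogeneous polynomials whose coefficients are concrete power series in $q$. For each of the three elliptic cases $(a,b,c)=(3,3,3),(2,4,4),(2,3,6)$ the potential, after the coordinate changes recorded in Equations~\eqref{eq:244co} and~\eqref{eq:236co}, takes the Saito normal form of a simple elliptic singularity: $x^3+y^3+z^3 + \lambda(q)xyz$ (type $\widetilde{E_6}$), $x^2+y^4+z^4+\lambda(q)y^2z^2$ (type $\widetilde{E_7}$), and $x^2+y^3+z^6+\lambda(q)yz^4$ (type $\widetilde{E_8}$), where $\lambda(q)=-\psi/\phi$, $\sigma_{244}(q)$, $\sigma_{236}(q)$ respectively. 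So it suffices to show that for $|q|$ small the parameter $\lambda(q)$ avoids the finite set of values for which the corresponding weighted-homogeneous polynomial acquires a non-isolated singularity (equivalently, for which the associated plane cubic/quartic-pencil degenerates).

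First I would recall that a weighted-homogeneous polynomial $f$ with an isolated singularity at $0$ has a singular point \emph{only} at $0$, by the Euler relation: if $\nabla f(p)=0$ then $f(p)=\sum w_i p_i \partial_i f(p)/\deg f = 0$, and the scaling orbit through $p$ consists of critical points, so a critical point off the origin forces a whole curve of critical points. Hence for these three normal forms ``unique singular point at the origin'' is equivalent to ``$0$ is an isolated singularity'', which in turn is equivalent to the Jacobian ideal $(\partial_x f,\partial_y f,\partial_z f)$ being $\mathfrak{m}$-primary. For $x^3+y^3+z^3+\lambda xyz$ this fails exactly when $\lambda^3=-27$; for $x^2+y^4+z^4+\lambda y^2z^2$ it fails exactly when $\lambda=\pm2$; for $x^2+y^3+z^6+\lambda yz^4$ it fails exactly when $4\lambda^3=-27$. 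These are precisely the conditions under which the $j$-invariant denominators $i_{abc}$ blow up, which is the same finiteness statement already used implicitly in Section~9.

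Next I would plug in the expansions of $\lambda(q)$ near $q=0$. From \cite[Section~6.1]{CHL} one has $-\psi/\phi = -\check q + O(q^{\,>0})$ with leading term $0$ (indeed $\lambda(0)=0$), so $\lambda(0)^3=0\ne-27$. From Equation~\eqref{eq:sigma244} the excerpt records $\sigma_{244}(q)= -\tfrac{1}{4q^{16}} - 5q^{16} + \cdots$, which tends to $\infty$ as $q\to0$; so for small $q$, $\sigma_{244}(q)\notin\{\pm2\}$. From Equation~\eqref{eq:sigma236}, $\sigma_{236}(q)= -\tfrac{3}{2^{2/3}} + O(q^{48})$, and one computes $4(-3/2^{2/3})^3 = 4\cdot(-27/4) = -27$ — so here the \emph{leading} value sits exactly on the discriminant, and a naive substitution is not enough. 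The genuinely delicate point, and the one I expect to be the main obstacle, is this $(2,3,6)$ case: I must show $4\sigma_{236}(q)^3\ne-27$ for small $q\ne0$, i.e. that the correction term does not vanish. I would handle this by writing $\sigma_{236}(q)= -\tfrac{3}{2^{2/3}}(1+c_1 q^{48}+\cdots)$ with $c_1\ne0$ (the $q^{48}$ coefficient $-864\cdot2^{1/3}$ quoted after~\eqref{eq:sigma236} is nonzero), so $4\sigma_{236}^3 = -27(1+3c_1q^{48}+\cdots) \ne -27$ for $0<|q|$ small; equivalently, invoke that $i_{236}(\sigma_{236}(q)) = q^{-48}+744+\cdots$ has a genuine pole at $q=0$, which forces $27+4\sigma_{236}(q)^3\ne0$ for small $q\ne0$ by definition of $i_{236}$. (Here I use the verified $q$-expansion of $i_{236}\circ\sigma_{236}$ from Section~\ref{sec:mir236}, or alternatively Saito's identity $i_{236}(\check q(Q))=j(Q^6)$ and the fact that $j(Q^6)$ has a pole at $Q=0$.)

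Finally I would assemble: in each elliptic case, after the explicit coordinate change the potential $W$ is, up to an invertible rescaling of the variables (the scalings by $d_y^{1/4}$, $c_y^{1/3}$, etc., which are units in $\Lambda_0$ and do not affect the singular locus), one of the three simple-elliptic normal forms with parameter $\lambda(q)$ avoiding the discriminant for small $q$; hence $\nabla W$ vanishes only at the origin, and $W(0)=0$, so $0$ is the unique singular point. This proves the proposition; the companion statement for $W^{-1}(0)$ in the hyperbolic case is then treated separately (Theorem~\ref{thm:conv} and the combinatorics of $(a,b,c)$-diagrams), as the excerpt indicates.
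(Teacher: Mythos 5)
Your proposal is, in substance, the same argument the paper gives: pass to the normal forms $x^3+y^3+z^3-\sigma(q)\,xyz$, \eqref{eq:244co} and \eqref{eq:236co}, and observe that the critical-point equations admit a nonzero solution only when the parameter lies on a finite discriminant set (in your sign convention $\lambda^3=-27$, $\lambda=\pm2$, $4\lambda^3=-27$ respectively); your Euler-relation reduction to isolatedness is a correct repackaging of the same computation, and the unit rescalings indeed do not affect the critical locus. Two remarks. First, your handling of the $(2,3,6)$ case is in fact more careful than the paper's: you correctly notice that $\sigma_{236}(0)=-3/2^{2/3}$ lies exactly on the discriminant $4\lambda^3=-27$, so one must use the nonvanishing $q^{48}$ correction in \eqref{eq:sigma236} (equivalently, the pole of $i_{236}\circ\sigma_{236}$ at $q=0$) to get $4\sigma_{236}(q)^3\neq-27$ for small $q\neq0$, whereas the paper simply asserts $\sigma^3\neq-3\cdot 6^2/4^2$. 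Second, your verification in the $(3,3,3)$ case contains a factual slip: $-\psi(q)/\phi(q)$ does not tend to $0$ as $q\to0$. Since $\psi$ begins at order $q$ (the minimal $xyz$ triangle) while $\phi$ begins at a strictly higher power of $q$ (any triangle with three $x$-corners has area strictly larger than the minimal triangle), the ratio diverges as $q\to0$, exactly as $\sigma_{244}$ does in \eqref{eq:sigma244}; the large complex structure limit sits at $\check q=\infty$ in the Hesse form, not at $\check q=0$. The conclusion you need, namely $\lambda(q)^3\neq-27$ for small $q$, is still true, and the step is repaired by the same reasoning you used for $(2,4,4)$, so this is a fixable slip rather than a structural gap.
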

\begin{proof}
Let us first consider the $(3,3,3)$-case. The potential after coordinate change is given by
 $W = x^3+y^3 +z^3 - \sigma(q) xyz$ where $\sigma(q)$ is the (inverse) mirror map in \cite{CHL}.
 The critical point equations $\partial_xW = \partial_yW=\partial_zW=0$ give
$$3x^2-\sigma xy = 3y^2 -\sigma xz = 3z^2 -\sigma xy =0.$$
Hence we get $3^3 x^2y^2z^2 = \sigma^3 x^2y^2z^2$. These equations are satisfied if
$x=y=z=0$ or $\sigma =3$. But $\sigma(q) \neq 3$, and hence we have shown that the only singularity of $W$ is the origin.

The proof for $(2,4,4)$ and $(2,3,6)$ are similar using the normal forms \eqref{eq:244co}, \eqref{eq:236co}.
In the $(2,4,4)$ case, we have $W = x^2+y^4 +z^4 + \sigma(q) y^2z^2$, and the critical point equations
give $$2x = 4y^3 +2yz^2\sigma = 4z^3 + 2zy^2\sigma =0.$$ 
Hence $x=0$ and we also have $4^2y^3z^3 = 4y^3z^3\sigma^2$. Since $\sigma \neq 4$, we have $y=0, z=0$, which proves the claim.

In the $(2,3,6)$ case, we have $W = x^2 + y^3 +z^6 + \sigma(q) yz^4$, and the critical point equations give
$$2x= 3y^2 + \sigma z^4 = 6z^5 + 4\sigma yz^3=0.$$
Hence, $x=0$ and we also have $3y^2 \cdot (6z^5)^2 =  (-\sigma z^4)(4 \sigma yz^3)^2$.
Since $\sigma^3 \neq - 3 \cdot 6^2 / 4^2$, we have $y=z=0$, which proves the claim.
\end{proof}

Let us consider the hyperbolic cases $\frac{1}{a} + \frac{1}{b} + \frac{1}{c} < 1$ for the rest of the paper.

We will divide the proof of Theorem \ref{thm:mainapply} into three cases (Proposition \ref{prop:hyperisol3}, \ref{prop:hyperisolc6}, \ref{prop:hyperisolab4}). In the first case when $a,b,c \geq 3$, the potential has a nice property (Lemma \ref{lem:Wxylyzm}), which makes the derivatives of $W$ comparably simple. 

The Lemma \ref{lem:Wxylyzm} may not hold  when one of $a,b,c$ is $2$. For these cases, we will alternatively use the Weierstrass preparation theorem to simplify the derivatives of $W$. In order to use the Weierstrass theorem, we first prove that $W$ is convergent over complex numbers (where we consider $q$ as a fixed complex number).

\subsection{Convergence of open Gromov-Witten potential}
Using the area formula (together with the set of formulas given in Lemma \ref{lem:relations}), we prove that the potential $W$ converges for small $x$, $y$, $z$ and $q$ when $\frac{1}{a} + \frac{1}{b} + \frac{1}{c} <1$. We will use this fact later to show that the hypersurface $W^{-1} (0)$ has an isolated singularity at zero for $\mathbb{P}^1_{2,3,c}$ with $c>6$ (see Subsection \ref{subsec:isolc6}).

\begin{theorem} \label{thm:conv}
There exists a positive number $\epsilon$ such that $W(x,y,z,q)$ is absolutely convergent for $|x|, |y|, |z| < \epsilon$ for  small enough $q$.  Hence the potential $W$ is an analytic function on a neighborhood of $(0,0,0)$ for small enough $q$.
\end{theorem}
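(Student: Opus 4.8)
The plan is to prove absolute convergence by a direct term‑by‑term estimate, using the area formula of Section~\ref{s:area} to control the $q$‑exponents and Lemma~\ref{lem:unique} to control the number of polygons of a given size. By Theorem~\ref{thm:term} (equivalently formula~\eqref{eq:formulaW} together with Theorem~\ref{thm:area}) one may write, for $1/a+1/b+1/c<1$,
\[
W(x,y,z,q)=\sum_{P,Q,R\ge0} c_{P,Q,R}(q)\,x^Py^Qz^R,
\qquad
c_{P,Q,R}(q)=q^{\area(P,Q,R)}\!\!\sum_{U}\ \pm\,\frac{2p(U)+S}{\eta(U)},
\]
where $S:=P+Q+R$, the inner sum runs over the finitely many polygons $U$ for the potential with $[w(\partial U)]_1=P$, $[w(\partial U)]_2=Q$, $[w(\partial U)]_3=R$, and $p(U)=[w(\partial U)]_0$. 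Since $\eta(U)\ge1$, each summand has absolute value at most $2p(U)+S$, so it suffices to bound (i) the $q$‑exponent $\area(P,Q,R)$ from below, (ii) $p(U)$ from above, and (iii) the number of polygons $U$ contributing to $c_{P,Q,R}$.

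The core step is a two–sided linear bound on $\area(P,Q,R)$ in terms of $S$, extracted from Lemma~\ref{lem:area} and Lemma~\ref{lem:relations}. Writing $\area(U)/\sigma=8v_W-5S-8p$ and using the identity $f_6=v_W-2p-S$ together with $f_6\ge0$ (both appearing in the proof of Lemma~\ref{lem:area}) gives $\area(P,Q,R)\ge 8p(U)+3S$; since $p(U)\ge-1$ this yields at once the lower bound $\area(P,Q,R)\ge 3S-8$ and the upper bound $p(U)\le\area(P,Q,R)/8$. For an upper bound on $\area(P,Q,R)$, the inequalities $1/a\ge1/b\ge1/c$ give $P/a+Q/b+R/c\le S/a$, hence $\area(P,Q,R)\le C_1S$ with $C_1:=3+\tfrac{8}{a(1-1/a-1/b-1/c)}$. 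Consequently $2p(U)+S\le(\tfrac{C_1}{4}+1)S$, and the boundary cyclic word $[w(\partial U)]$, whose word length is $2S+3p(U)$, has length at most $C_2S$ for a constant $C_2=C_2(a,b,c)$.

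Next one bounds the number of polygons contributing to $c_{P,Q,R}$. By Lemma~\ref{lem:unique} the assignment $U\mapsto[w(\partial U)]$ is injective on polygons for the potential up to the $G$‑action, so this number is at most the number of cyclically reduced words of length $\le C_2S$ over the six letters $\alpha^{\pm1},\beta^{\pm1},\gamma^{\pm1}$, which is at most $6^{C_2S+1}$. Combining this with the previous paragraph, $|c_{P,Q,R}(q)|\le 6^{C_2S+1}\bigl(\tfrac{C_1}{4}+1\bigr)S\,|q|^{\,3S-8}$ whenever $|q|\le1$. Assembling the estimate and using $\sum_{P+Q+R=S}|x|^P|y|^Q|z|^R\le(S+1)^2\rho^S$ with $\rho:=\max(|x|,|y|,|z|)$,
\[
|W(x,y,z,q)|\ \le\ 6\Bigl(\tfrac{C_1}{4}+1\Bigr)|q|^{-8}\sum_{S\ge0}S\,(S+1)^2\bigl(6^{C_2}|q|^{3}\rho\bigr)^{S},
\]
which converges as soon as $6^{C_2}|q|^{3}\rho<1$. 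Thus with, say, $\epsilon:=1$, the series for $W$ is absolutely convergent on $\{|x|,|y|,|z|<\epsilon\}$ for all $|q|<\min(1,6^{-C_2/3})$, and being a convergent power series it is analytic on a neighbourhood of the origin; shrinking $\epsilon$ (or $q$) gives convergence on any prescribed polydisc once $q$ is sufficiently small.

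The delicate ingredient is the lower bound $\area(P,Q,R)\ge 3S-8$, which rests on $f_6=v_W-2p-S$ and $f_6\ge0$: it is exactly what simultaneously forces the $q$‑exponent to grow linearly in $S$ and keeps the boundary words — hence, via Lemma~\ref{lem:unique}, the number of polygons — from growing faster than exponentially in $S$, so that the geometric decay of $|q|^{3S}$ dominates. (The finitely many exceptional small‑area polygons that can occur when $2\in\{a,b,c\}$ contribute only finitely many monomials to $W$ and are irrelevant to convergence.)
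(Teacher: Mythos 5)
Your proposal is correct and follows essentially the same strategy as the paper's own proof: a linear-in-$S$ lower bound on the area (hence on the $q$-exponent), a linear-in-$S$ bound on $2p+S$, and an at-most-exponential-in-$S$ count of contributing polygons via their boundary words, followed by comparison with a convergent series. The only minor technical differences are that you extract $p=O(S)$ and $\area\ge 3S-8$ from $f_6=v_W-2p-S\ge0$ in Lemma~\ref{lem:relations}/\ref{lem:area} and count boundary words crudely by $6^{C_2S+1}$ using Lemma~\ref{lem:unique}, whereas the paper bounds $p$ via $f_5=3p+S$ together with the minimal pentagon area and counts insertions of corners into $(\gamma\beta\alpha)^p$ with a binomial estimate and a ratio test.
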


\begin{proof}
Recall from \eqref{eq:areapqr1} that the area of the holomorphic polygon $U$ with $P$ $x$-corners, $Q$ $y$-corners and $R$ $z$-corners is given (in terms of the area of $\sigma$) by
$${\rm Area} (U)= 3 (P+Q+R) + 8 \frac{P/a + Q/b + R/c -1}{1-1/a - 1/b - 1/c}$$
%
Therefore, we see that $k_1 S <{\rm Area} (U) < k_2 S$ for some $k_1, k_2 >0$, for $S=P+Q+R$ large enough.

Also, the coefficient of the monomial corresponding to this polygon is 
$$(2p + S) / \eta < 2p +S$$
up to sign where $\eta$ is the number of symmetries that $U$ admits. We now estimate this number purely in terms of $S$. By $(3)$ of Lemma \ref{lem:relations}, we have the relation $f_5 = 3p + S$ where $f_5$ is the number of $5$-gons in the $(a,b,c)$-diagrams for $U$. Therefore, $2p + S < f_5$. Since there are just finitely many different types of $5$-gons, we can take the minimal area $l$ of $5$-gons so that $l f_5 < {\rm Area} (U) < k_2 S$. Hence $f_5 < k_3 S$ for some $k_3 >0$. Combining these two, we obtain the following inequality: $ 2p + S < k_3 S$. In particular, $p < A_0 S$ for some large integer $A_0$.

We next estimate the number of polygons which have the same $S$. Let us denote this number by $N_S$.
We first fix a generation $p$, and count the maximal number of $p$-generation polygons with the same $S=P+Q+R$. To get such polygons, we should plug in $S$-corners between two consecutive letters in $(\gamma \beta \alpha)^p$ (Lemma \ref{lem:standard}). (We remark that not all of these choices give genuine holomorphic polygons.) There are at most $_{3p} H_S$-ways of doing this where $_S H_{3p}$ means the combination with repetitions. Therefore,
\begin{eqnarray*}
N_S &\leq& \sum_{p=1}^{A_0 S} \left. \right. _{3p} \! H_S 
\leq \sum_{p=1}^{AS} \left. \right._{p} \! H_S  \qquad (\mbox{for} \,\, A > 3A_0) \\
&=& \sum_{p=1}^{AS} \left. \right. _{S+1} \! H_{p-1} 
\leq \sum_{p=0}^{AS} \left. \right. _{S+1} \! H_p  = _{S+2} \! H_{AS} 
\end{eqnarray*}
where we used identities $_n  H_r = _{n+r-1} \! C_r = _{n+r-1} \! C_{n-1} = _{r+1} \! H_{n-1} $ and $_n H_0 + _n\! H_1 +\cdots + _n \! H_r = _{n+1} \! H_r$. Thus,
$$N_S \leq \left. \right._{S+2} \! H_{AS} = \left(\begin{array}{c}(A+1) S +1 \\AS\end{array}\right) = \frac{((A+1)S +1)!}{(AS)! (S+1)!}.$$
We define this upper bound by $b_S:=\left. \right._{S+2} \! H_{AS}$. Observe that
$$ b_{S+1} / b_S = \frac{ ((A+1)S +A+2) \cdots ((A+1)S + 2 ) }{(AS+A) \cdots (AS +1) \cdot (S+1) }.$$
Since the degrees of both denominator and numerator coincide, the limit $\lim_{S \to \infty} b_{S+1} / b_S$ is given by the ratio of the leading coefficients, that is, 
\begin{equation}\label{eq:ratiobs}
\lim_{S \to \infty} b_{S+1} / b_S = \frac{(A+1)^{A+1}}{A^A}.
\end{equation}
%

Assume now that $|x|, |y|, |z|, |q| < \epsilon <1$. Then, $e^{-{\rm Area}(U)} < e^{-k_1 S}$. Therefore if we arrange monomials of $W$ in $q$-th power, and apply the above estimates, we obtain
\begin{eqnarray*}
|W| &<&  \sum_{S \geq 1} N_S \, (k_3 S) \, \epsilon^S (q^{k_1})^S \\
  &<& k_3 \sum_{S \geq 1} b_S  S  \, (\epsilon^{k_1+1} )^S
\end{eqnarray*}
It is easy to see that the last sum converges for small $\epsilon$ by ratio test \eqref{eq:ratiobs}.
\end{proof}

\subsection{Preparations for isolatedness}
In order to prove Theorem \ref{thm:mainapply},
we need the following preliminary lemmas which show special properties of the hyperbolic potentials. The first lemma concerns the hyperbolic potentials for $a,b,c \geq 3$.

%
%

\begin{lemma}\label{lem:Wxylyzm}
If $a,b,c \geq 3$, then $W$ does not contain monomials of the forms
\begin{equation}\label{eq:linhigh}
x y^l, y z^m, z x^n.
\end{equation}
for $l,m,n \geq 0$.
\end{lemma}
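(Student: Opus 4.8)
The plan is to rule out monomials $xy^l$, $yz^m$, $zx^n$ by a combinatorial argument on boundary words, using the characterization in Lemma~\ref{lem:standard} together with the nontriviality statement of Lemma~\ref{lem:nontrivial}. First I would recall that, by the proposition expressing $W$ in terms of polygon countings, a monomial $x^P y^Q z^R$ appears in $W$ only if there is a polygon $U$ for the potential with exactly $P$ $x$-corners, $Q$ $y$-corners, $R$ $z$-corners; and by the $(a,b,c)$-diagram dictionary (or directly Lemma~\ref{lem:standard}) the boundary cyclic word $[w(\partial U)]$ has standard representative $w = w_1 w_2 \cdots w_{2k}$ whose even runs $w_{2i}$ are positive powers of words in $\{\alpha\beta, \beta\gamma, \gamma\alpha\}$, with $[w]_1 = P$ the number of $\beta\gamma$'s, $[w]_2 = Q$ the number of $\gamma\alpha$'s, $[w]_3 = R$ the number of $\alpha\beta$'s. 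So I must show no such $w$ is trivial in $G$ (contradicting Lemma~\ref{lem:trivial}) when $(P,Q,R)$ is of the form $(1,l,0)$, $(0,1,m)$ or $(n,0,1)$.

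The key step is to analyze the case $P=1$, $R=0$, i.e. exactly one $\beta\gamma$, no $\alpha\beta$, and $Q=l$ copies of $\gamma\alpha$. By cyclic symmetry between $(a,b,c)$ and the roles of $x,y,z$ it suffices to treat one such family and transport the argument. I would argue that the structure of a standard representative forces a rigid alternating pattern: the odd runs $w_{2i-1}$ are nonempty subwords of $(\gamma\beta\alpha)^\infty$, condition (iii) of Lemma~\ref{lem:standard} dictates which even run can follow which odd run (the last letter of $w_i$ must be $\tau$ of the first letter of $w_{i+1}$), and condition (iv) says the concatenation $w' = w_1 w_3 \cdots w_{2k-1}$ of odd runs is itself a subword of $(\gamma\beta\alpha)^\infty$ of length $3[w]_0$. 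With only one $\beta\gamma$-corner and no $\alpha\beta$-corner available, the allowed transitions are so constrained that $w$ is forced to reduce to a nonempty subword of $(\gamma\beta\alpha)^\infty$ (possibly after carefully tracking the single $\beta\gamma$ insertion and the $\gamma\alpha$ insertions), and then Lemma~\ref{lem:nontrivial} gives that $w$ is nontrivial in $G$, contradicting Lemma~\ref{lem:trivial}. Concretely I expect: each $\gamma\alpha$ run, being a corner at a $y$-generator, when spliced between two subwords of $(\gamma\beta\alpha)^\infty$ either extends the $(\gamma\beta\alpha)^\infty$-pattern or cannot be ``closed up'' without using a $z$-corner ($\alpha\beta$), so without $\alpha\beta$'s the only way to form a \emph{closed} boundary word is to wrap around, producing $(\gamma\beta\alpha)^N$ for some $N$, which is the boundary of a $0$-gon disc and has no $x,y,z$-corners at all — contradicting $P=1$.

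The main obstacle will be making this ``rigidity of transitions'' precise without drowning in cases: one has to show that the presence of at least one of each of two corner-types is genuinely necessary to close a boundary word that contains any corner at all, i.e. that $\{w_{2i}\}$ cannot consist solely of powers of $\beta\gamma$ and $\gamma\alpha$ (and analogously for the other two pairs). I would handle this by using condition (iv): since $w'$ is a subword of $(\gamma\beta\alpha)^\infty$ of length divisible by $3$, and each even run $w_{2i} \in \{\alpha\beta,\beta\gamma,\gamma\alpha\}^{+}$ is sandwiched between the end of $w_{2i-1}$ and the start of $w_{2i+1}$ according to the $\tau$-matching rule, one computes the net ``color shift'' around the cyclic word and shows it is zero only if the multiset of even-run types is balanced in a way incompatible with $(1,l,0)$, $(0,1,m)$, $(n,0,1)$ — the exponents $l,m,n$ (powers of $\gamma\alpha$, $\beta\gamma$, $\alpha\beta$) don't affect this parity/shift bookkeeping, which is exactly why the whole family is excluded at once. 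I should also double-check the degenerate small cases $l=m=n=0$ (the genuinely linear monomials $x,y,z$): these correspond to $1$-gons, excluded in the hyperbolic case by the negative-area argument already given after Corollary~\ref{cor:bigon_no} (every nonempty subword of $(\gamma\beta\alpha)^\infty$ is nontrivial, so no minimal $1$-gon exists), so the statement holds there too. Finally I would note where $a,b,c\ge 3$ is used: it guarantees component cells are single base triangles and that there are no extra bigons (Corollary~\ref{cor:bigon_no}), so the boundary-word combinatorics above is not contaminated by the $a=2$ exceptional configurations — which is precisely why the lemma is stated only for $a,b,c\ge 3$.
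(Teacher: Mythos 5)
There is a genuine gap: the key intermediate claim of your proposal is false. You assert that, absent $\alpha\beta$-corners, a closed boundary word built from $\beta\gamma$- and $\gamma\alpha$-corners can only ``wrap around'' to $(\gamma\beta\alpha)^N$, i.e.\ that no polygon for the potential has only $x$- and $y$-corners. But such polygons exist: the $(4,4,4)$ potential in Example~\ref{ex:enum} contains the term $6x^4y^4q^{56}$ (and $x^4z^4$, $y^4z^4$), coming from polygons with four corners of one type, four of another, and none of the third. So the exclusion in Lemma~\ref{lem:Wxylyzm} cannot be detected by which corner types occur; it hinges on there being \emph{exactly one} corner of one type, and your argument never uses that. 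Your fallback ``net color shift'' bookkeeping is also vacuous: assign $\gamma,\beta,\alpha$ the residues $0,1,2$ mod $3$ so that a pattern step of $(\gamma\beta\alpha)^\infty$ increments by $1$; then each of the three corner words $\beta\gamma,\gamma\alpha,\alpha\beta$ contributes the \emph{same} deviation $+1$, and the total increment around the cyclic word is $3p+2S+S\equiv 0 \pmod 3$ automatically, independently of $(P,Q,R)$. Finally, triviality in $G$ is subtler than Lemma~\ref{lem:nontrivial} can reach (that lemma only covers honest subwords of $(\gamma\beta\alpha)^\infty$), and for instance $H_1^{\mathrm{orb}}(\mathbb{P}^1_{3,4,5})$ is trivial, so no abelian/parity bookkeeping on corner counts alone can produce the contradiction uniformly in $a,b,c\ge 3$.

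For comparison, the paper's proof is geometric first and homological only at the end: assuming a polygon $U$ with one $x$-corner and $l$ $y$-corners, it takes a maximal polygon $V\subsetneq U$ containing that $x$-corner, so that $U$ is obtained from $V$ by a single elementary move; the explicit $\glue(\theta,k)$ formulas of Section~\ref{ss:cutglue} show the two corners of $V$ consumed by this move would force $a=2$ or $c=2$ unless $a=c=3$, and in the residual case $a=c=3$ the abelianization $H_1^{\mathrm{orb}}(\mathbb{P}^1_{3,b,3})$ is nontrivial enough that $[\alpha]+l[\beta]\ne 0$, contradicting Lemma~\ref{lem:trivial}. This also shows the hypothesis $a,b,c\ge 3$ enters through the glue-word formulas (which change form when some order is $2$), not merely through the absence of bigons as you suggest. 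To repair your approach you would need some analogue of this elementary-move reduction; the conditions of Lemma~\ref{lem:standard} together with Lemma~\ref{lem:nontrivial} are not sufficient.
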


\begin{proof} 
The proof will be based on the word expression of the cut-glue operation and the presentation of $\pi_1^{orb} (\mathbb{P}^1_{a,b,c}) / [\pi_1^{orb} (\mathbb{P}^1_{a,b,c}),\pi_1^{orb} (\mathbb{P}^1_{a,b,c})]$.
We have to show that there does not exist a  polygon for the potential whose corners are all the same except one corner. Suppose to the contrary that there exists such a polygon $U$. Without loss of generality, we may assume that $U$ consists of a single $x$-corner and $l$ number of $y$-corners. 
Note that the generation $p$ of  $U$ should be bigger than $1$.

Let $\mathcal{S}$ be the set of polygons $U'$ for the potential such that $U' \varsubsetneq U$, 
and that $U'$ contains the unique $x$-corner of $U$. $\mathcal{S}$ is a nonempty set because there is a minimal triangle incident to this $x$-corner. Therefore, $\mathcal{S}$ is a finite partially ordered set (with the ordering given by ``$\subset$"), and we can take a maximal polygon $V$ for the potential in $\mathcal{S}$. By the definition of the elementary move (Definition \ref{defn:move}), we see that $U$ is obtained from $V$ by a single step elementary move. 

There are exactly two corners $C_1$ and $C_2$ of $V$ which are not corners of $U$.
If $C_1$ or $C_2$ is a $y$-corner, then from the explicit formula in Section~\ref{ss:cutglue} we see that $a=2$ or $c=2$; this is a contradiction.
It is also easy to see that the corners should be $z$ and $x$-corners, in the counter-clockwise.
Since the glue word corresponding to $U\setminus V$ has  no $xy$ or $yz$ terms, 
  the glue word formula in Section~\ref{ss:cutglue} implies that $a=c=3$.
\begin{figure}[htb]
\includegraphics[height=.18\textheight]{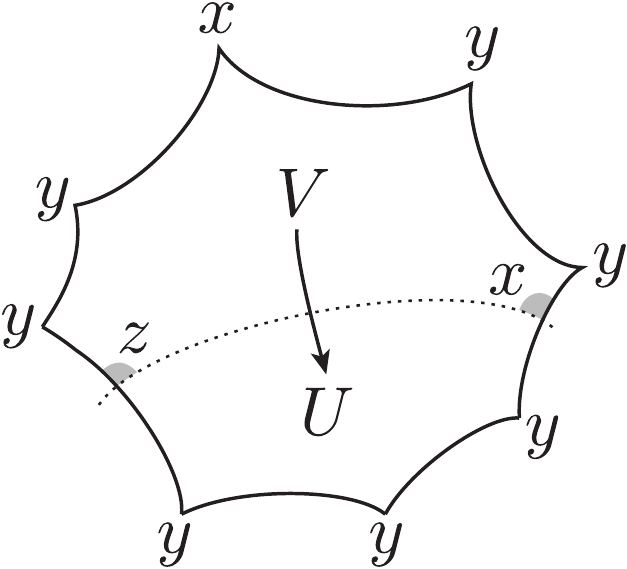}
\caption{The elementary move from $V$ to $U$}\label{fig:Vac3}
\end{figure}

For this remaining case, we argue as follows.
Recall that the fundamental group of $\mathbb{P}^1_{3,b,3}$ has the following presentation
$$\langle \alpha, \beta, \gamma : \alpha^3 = \gamma^3 = \beta^{b} = \alpha \beta \gamma =1 \rangle.$$
Therefore, its abelianization is given by
\begin{eqnarray*}
H_1^{orb} (\mathbb{P}^1_{3,b,3} ) &=& \langle [\alpha], [\beta], [\gamma] : 3[\alpha] = 3 [\gamma] = b \cdot [\beta]  = [\alpha] + [\beta] + [\gamma] =0 \rangle \\
 &=& \left\{
 \begin{array}{lc}
 \Z_3 \langle [\alpha] \rangle \oplus \Z_3 \langle [\beta] \rangle \oplus \Z_3 \langle [\gamma] \rangle / \langle ([\alpha], [\beta], [\gamma]) \rangle & 3\,|\,b   \\
 \Z_3 \langle [\alpha] \rangle \quad ( \it{with}\,\,\, [\gamma] = -[\alpha] \,\,\,\it{and}\,\,\, [\beta]=0) & otherwise
 \end{array}
 \right.
\end{eqnarray*}
where $[\alpha]$, $[\beta]$ and $[\gamma]$ denote the image of $\alpha$, $\beta$ and $\gamma$ in the abelianizaiton.

Now, let $w= w_1 w_2 \cdots w_{2k}$ be the word corresponding to $U$ where $w_{even}$ represents corners of $U$ and the product of $w_{odd}$'s is a subword of $(\gamma \beta \alpha)^{\infty}$ (see Lemma \ref{lem:standard}). Since $[\gamma] + [\beta] + [\alpha] =0$ in the abelianization, it follows that
$$[w] = [w_2] [w_4] \cdots [w_{2k}] = [(\beta \gamma)(\gamma \alpha)^l]=-[\alpha] - l [\beta]$$
where the second equality comes from the fact that $U$ only consist of $a$- and $b$-corners. As $[w] =0$ and $U$ being a holomorphic polygon, we have $[w] = -[\alpha] - l [\beta] =0$. However, this relation is not  possible from the presentation of $H_1^{orb} (\mathbb{P}^1_{3,b,3})$ above. This proves the lemma.

\end{proof}

The following lemma holds for all $a,b,c \geq 2$, whose corollaries (Corollary \ref{cor:specialpropW1}, \ref{cor:specialpropW2}) will crucial to prove Theorem \ref{thm:mainapply}.
\begin{lemma}\label{lem:specialpropW0}
Let $U$ and $V$ be polygons for the potential with $V \subsetneq U$, and let $l$  be a side of $V$ such that $l \nsubseteq \partial U$. Consider the polygon $V'$ for the potential which is obtained by applying the elementary move to $V$ along the side $l$. Then, $V' \subset U$.  
\end{lemma}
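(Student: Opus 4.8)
The plan is to reduce the statement to the characterizing (minimality) property of the elementary move established in Section~\ref{s:move}: if $V'$ is obtained from an $(a,b,c)$-diagram $V$ by an elementary move along a Seidel Lagrangian $L$, then $V'\subseteq X'$ for \emph{every} $(a,b,c)$-diagram $X'$ with $V\subseteq X'$ and $\mathrm{int}(X')\cap(L\cap V)\ne\varnothing$. Thus the whole proof amounts to checking that $U$ plays the role of such an $X'$.

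First I would set up notation: let $L$ be the Seidel Lagrangian in $E$ that contains the side $l$ of $V$, oriented so that $V$ lies locally on the left side of $L$ near $l$. Since the two endpoints of $l$ are corners of $V$, hence double joints of the $(a,b,c)$-diagram of $V$ (Lemma~\ref{lem:double}), the constructive description of the elementary move in Section~\ref{s:move} applies to the pair $(V,L)$, and the resulting diagram is precisely the polygon $V'$ in the statement, with $V\subsetneq V'$. So ``the elementary move to $V$ along the side $l$'' is literally the elementary move along $L$ in the sense of Definition~\ref{defn:move}.

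Next I would verify the two hypotheses for $X'=U$. From $V\subsetneq U$ we get $l\subseteq\partial V\subseteq U$ (viewing $U$ and $V$ as closed regions), and since $U=\mathrm{int}(U)\sqcup\partial U$ and $l\nsubseteq\partial U$ by hypothesis, we conclude $l\cap\mathrm{int}(U)\ne\varnothing$. Because $l\subseteq L$ and $l\subseteq V$, this gives $\mathrm{int}(U)\cap(L\cap V)\supseteq\mathrm{int}(U)\cap l\ne\varnothing$. Moreover $U$ is an $(a,b,c)$-diagram (it is a polygon for the potential; see Lemma~\ref{lem:diagram2}), and $V\subseteq U$. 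The minimality property quoted above then yields $V'\subseteq U$, which is the assertion.

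The only delicate point — and the closest thing to an obstacle — is the bookkeeping at the start: one must make sure that the side $l$ genuinely singles out the oriented Seidel Lagrangian $L$ for which $V$ is the left side and to which the elementary-move construction applies, so that the $V'$ of the statement coincides with the diagram whose minimality is proved in Section~\ref{s:move}. Once this identification is made (which is routine given Lemma~\ref{lem:double} and the explicit construction preceding Theorem~\ref{thm:move}), the remaining argument is the purely set-theoretic verification above.
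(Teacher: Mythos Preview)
Your approach is correct, and it differs from the paper's own proof. You directly invoke the minimality property recorded in Section~\ref{s:move} (stated just after the constructive description of the elementary move and before Lemma~\ref{lem:double}): for every $(a,b,c)$-diagram $X'$ with $X_0\subseteq X'$ and $\mathrm{int}(X')\cap(L\cap X_0)\ne\varnothing$ one has $X\subseteq X'$. Once you verify that $U$ satisfies these hypotheses --- which you do correctly via $l\subseteq L\cap V$ and $l\cap\mathrm{int}(U)\ne\varnothing$ --- the conclusion $V'\subseteq U$ is immediate.

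The paper instead argues by contradiction: assuming $V'\not\subseteq U$, it considers the intersection $U\cap V'$ and observes, from orientation considerations on the boundaries of $U$ and $V'$, that the new corners of $U\cap V'$ are odd-degree immersed generators, so $U\cap V'$ is itself a polygon for the potential. Since then $V\subsetneq U\cap V'\subsetneq V'$, this contradicts the defining minimality of the elementary move in Definition~\ref{defn:move}. Both routes ultimately rest on that same minimality; yours is shorter because it quotes a fact already established, while the paper's is more self-contained at this point and makes the geometric role of the intermediate polygon $U\cap V'$ explicit.
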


\begin{proof}
\begin{figure}[htb]
\includegraphics[height=.18\textheight]{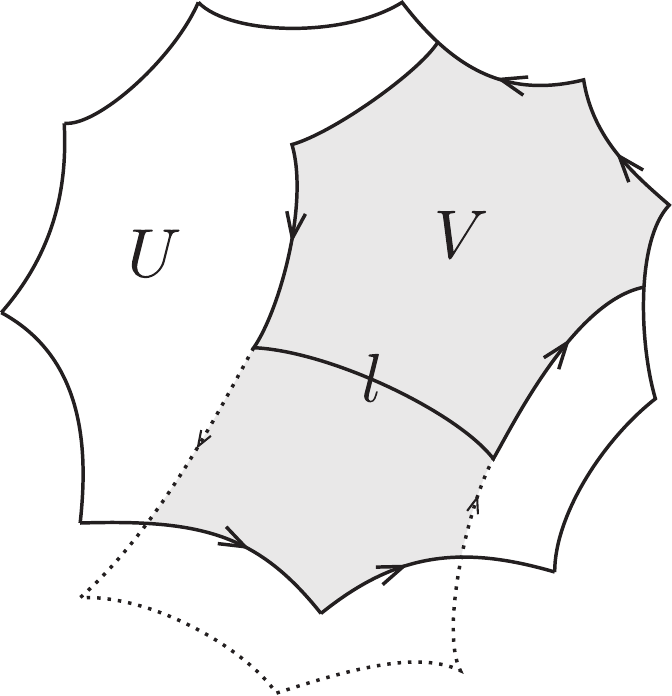}
\caption{The elementary move from $V(\subset U)$}\label{fig:elemovesubset}
\end{figure}
Suppose that $V'$ is not contained in $U$ as in Figure \ref{fig:elemovesubset}.
Considering the boundary orientation of $U$ and $V'$,  the new corners of the intersection $U \cap V'$ has odd degrees, and hence  $U \cap V'$ is also a polygon for the potential (see Figure \ref{fig:elemovesubset}). Hence $U \cap V'$ is properly contained in $V'$, which contradicts the minimality of $V'$ (see Definition \ref{defn:move}).
\end{proof}

\begin{lemma}
Consider the hyperbolic case.
For a  polygon $U$ for the potential, denote by $P$, $Q$, $R$ the numbers of $x$-, $y$- and $z$-corners of $U$, respectively. If  $U'$ is another polygon for the potential with $P' \geq P$, $Q' \geq Q$ and $R' \geq R$, then $\rm{Area} (U') \geq \rm{Area} (U)$.
\end{lemma}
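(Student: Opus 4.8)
The statement to prove is an area-monotonicity claim: if $U'$ has at least as many corners of each type as $U$, then $\area(U') \ge \area(U)$. The plan is to invoke the explicit area formula of Theorem~\ref{thm:area}(1), which in the hyperbolic case reads
\[
\area(U) = \left(3(P+Q+R) + 8\,\frac{P/a + Q/b + R/c - 1}{1 - 1/a - 1/b - 1/c}\right)\sigma,
\]
and observe that this is a function of $(P,Q,R)$ alone, affine-linear in each variable. So the first step is simply to write $\area$ as $f(P,Q,R)\sigma$ for the affine function $f$ above.

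The second step is to check that $f$ is monotone nondecreasing in each of $P$, $Q$, $R$ separately. Since the denominator $1 - 1/a - 1/b - 1/c$ is strictly positive in the hyperbolic case, the coefficient of $P$ in $f$ is
\[
3 + \frac{8/a}{1 - 1/a - 1/b - 1/c} > 0,
\]
and similarly the coefficients of $Q$ and $R$ are strictly positive. Hence increasing any one of $P,Q,R$ while keeping the others fixed strictly increases $f$.

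The third step combines these: given $P' \ge P$, $Q' \ge Q$, $R' \ge R$, we pass from $(P,Q,R)$ to $(P',Q',R')$ in three steps, changing one coordinate at a time, and at each step $f$ does not decrease. Therefore $f(P',Q',R') \ge f(P,Q,R)$, i.e. $\area(U') \ge \area(U)$, with equality only if $(P',Q',R') = (P,Q,R)$. Multiplying through by $\sigma > 0$ preserves the inequality.

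I do not anticipate any real obstacle here: the content is entirely contained in the sign of the coefficients in the closed-form area formula, which is controlled by the hyperbolicity hypothesis $1/a + 1/b + 1/c < 1$. The only point deserving a word of care is that the area formula in Theorem~\ref{thm:area}(1) is stated for \emph{polygons for the potential}, so one should note that both $U$ and $U'$ are assumed to be such polygons (as in the hypothesis), and that Theorem~\ref{thm:area} applies verbatim to each. One could equivalently argue from Lemma~\ref{lem:area} together with Corollary~\ref{cor:counts}, but the direct formula is cleanest.
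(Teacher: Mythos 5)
Your proof is correct and follows essentially the same route as the paper, which likewise deduces the claim directly from the area formula of Theorem~\ref{thm:area} using that $1 - \frac{1}{a} - \frac{1}{b} - \frac{1}{c} > 0$ in the hyperbolic case. The extra detail you give (positivity of the coefficients of $P$, $Q$, $R$ and the coordinate-by-coordinate comparison) is just a fuller write-up of the same one-line argument.
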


\begin{proof}
It follows directly from the area formula (Theorem \ref{thm:area}) since $1- \frac{1}{a} - \frac{1}{b} - \frac{1}{c} >0$.
\end{proof}

\begin{corollary}\label{cor:specialpropW1}
If $U$ is a holomorphic $l$-gon for a hyperbolic potential all of whose corners are of the same type, say $y$, then $l \geq b$. 
\end{corollary}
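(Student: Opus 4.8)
\textbf{Proof proposal for Corollary~\ref{cor:specialpropW1}.}

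The plan is to argue by contradiction, using the combinatorial classification of $(a,b,c)$-diagrams together with the Euler-characteristic relations of Lemma~\ref{lem:relations}. Suppose $U$ is a polygon for the potential all of whose corners are $y$-corners, so $P=R=0$ and $Q=l$. First I would record what this means at the level of the boundary word: by Lemma~\ref{lem:standard}, the standard representative of $[w(\partial U)]$ has all its even runs equal to positive powers of $\gamma\alpha$, so $[w]_1=[w]_3=0$ and $[w]_2=l$. In particular every double joint of the associated diagram $X$ carries a dangling $\beta$-edge, i.e. every pivot of $X$ is a $B$-vertex. I would then invoke the structural part of Lemma~\ref{lem:diagram}: each $B$-vertex has exactly $b$ incident $\beta$-edges, so around each pivot there are $b-1$ interior $\beta$-edges joining it to white vertices between the two dangling edges, giving $b-1$ outer pentagons (or a pentagon/triangle pattern) strung between consecutive $y$-corners along $\partial X$.

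The key step is then a local count along the boundary. Traveling along $\partial U$ from one $y$-corner to the next, one passes a sequence of tips at single joints, and the cut word / glue word formulas of Section~\ref{ss:cutglue} (or directly the valence condition (iv) at the intervening $B$-vertices) force at least $b-2$ single joints between any two consecutive $y$-corners — otherwise some $B$-vertex on $\partial X$ would fail to have its full complement of $b$ incident $\beta$-edges, contradicting (iv) of Lemma~\ref{lem:diagram}(1). Hence the number $3p$ of single joints satisfies $3p \ge l(b-2)$, so in particular $p \ge 1$ once $l\ge 1$ and $b\ge3$, but more to the point the boundary word has length $2l+3p \ge 2l + l(b-2) = lb$. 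On the other hand the word reads a simple closed loop in the Cayley graph of $G$, and I would combine this with the abelianization argument as in the proof of Lemma~\ref{lem:Wxylyzm}: in $H_1^{\mathrm{orb}}(\PO)$ the class of $\partial U$ is $l[\gamma\alpha] = l([\gamma]+[\alpha]) = -l[\beta]$, which must vanish since $\partial U$ bounds; because $[\beta]$ has order $b$ in (a direct summand of) the abelianization, this forces $b \mid l$, and the minimal positive possibility is $l = b$.

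The main obstacle I anticipate is making the local count "at least $b-2$ single joints between consecutive $y$-corners" fully rigorous in the $a=2$ or $b=2$ situations, where component cells are quadrilaterals or unions of six base triangles and the elementary-move bookkeeping (Type II, III, IV faces) is more delicate; here I would fall back on the explicit $\glue$-word expressions in Cases~2 and~3 of Section~\ref{ss:cutglue}, reading off directly that the run of $\alpha$'s and $\beta$'s generated between two $\gamma\alpha$-corners has length forcing the claimed bound. A cleaner alternative, which I would prefer to present, is purely group-theoretic: since $\partial U$ is a simple loop whose label-reading is a product of $l$ copies of $\gamma\alpha$ interspersed with subwords of $(\gamma\beta\alpha)^\infty$, and since each nonempty subword of $(\gamma\beta\alpha)^\infty$ is nontrivial in $G$ by Lemma~\ref{lem:nontrivial}, the relator $\alpha\beta\gamma=1$ and the orders $\beta^b=1$ constrain the cyclic word to have the form $[(\alpha(\gamma\beta\alpha)^{b-2}\gamma)^{l/b}]$ (up to $G$-action), from which both $b \mid l$ and $l \ge b$ are immediate. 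This avoids any geometric casework and reduces the corollary to the combinatorics already established.
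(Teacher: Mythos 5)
Your proposal has two genuine gaps, one in each of its routes. The central combinatorial inequality is false: you claim that the valence condition (iv) of Lemma~\ref{lem:diagram} forces at least $b-2$ single joints between consecutive $y$-corners, hence $3p\ge l(b-2)$ and a boundary word of length at least $lb$. The $0$-th generation polygon $U_{y^b}$, with boundary word $[(\gamma\alpha)^b]$, already refutes this: there the unique $B$-vertex is the common pivot of all $b$ double joints, its $b$ incident $\beta$-edges end precisely at those $b$ double-joint white vertices, and there are no single joints at all, so $p=0$ while $l=b$ (in particular your parenthetical ``$p\ge 1$ once $l\ge1$, $b\ge3$'' is already wrong). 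Nothing in Lemma~\ref{lem:diagram} prevents several $y$-corners from sharing a pivot, so the ``otherwise some $B$-vertex would lack $\beta$-edges'' reasoning does not go through; and even if a word-length bound $2l+3p\ge lb$ held, it would not by itself yield $l\ge b$.

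The group-theoretic fallback also fails. The order of $[\beta]$ in $H_1^{\mathrm{orb}}(\PO)$ can be strictly smaller than $b$: the paper's own computation in the proof of Lemma~\ref{lem:Wxylyzm} shows that for $(a,b,c)=(3,b,3)$ with $3\nmid b$ one has $[\beta]=0$, so $l[\beta]=0$ imposes no condition and $b\mid l$ cannot be deduced (it is in any case a stronger statement than the corollary and is nowhere established). Similarly, the asserted normal form $[(\alpha(\gamma\beta\alpha)^{b-2}\gamma)^{l/b}]$ ``up to $G$-action'' does not follow from the relators together with Lemma~\ref{lem:nontrivial}; knowing a word of the prescribed run structure is trivial in $G$ is far from pinning down its shape. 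The paper's proof is different and much shorter: pick any $y$-corner of $U$; the minimal triangle at that corner lies inside $U$, and by Lemma~\ref{lem:specialpropW0} the elementary move along the side opposite that corner produces the $0$-th generation polygon $U_{y^b}\subseteq U$, which has exactly $b$ $y$-corners; since in the hyperbolic case the area formula of Theorem~\ref{thm:area} is increasing in the corner counts (the lemma immediately preceding the corollary), $\area(U)\ge \area(U_{y^b})$ forces $l\ge b$. Any repair of your argument would essentially need to substitute this containment-plus-monotonicity step for the pivot count.
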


\begin{proof}
Take any vertex $v$ of $U$. Clearly, the minimal triangle containing $v$ lies inside $U$. By Lemma \ref{lem:specialpropW0}, we get the $0$-th generation polygon $V$ inside $U$ which is obtained by the elementary move applying to this minimal triangle along the side opposite to $v$-corner.

Recall that the number of $b$-corners in $0$-th generation polygon is exactly $b$. Since the area of $U$ is bigger than or equal to that of $V$, the area formula (see (1) of Theorem \ref{thm:area}) for hyperbolic potentials tells us that $l \geq b$. i.e. the area is an increasing function on the number of $b$-corners.
\end{proof}

\begin{corollary}\label{cor:specialpropW2}
If one of $a$, $b$, $c$ becomes $2$, say $c=2$, then the terms of the form $xy^l, yz^m, zx^n$ in  \eqref{eq:linhigh} may exist with the following restrictions.
\begin{enumerate} 
\item
Let $(a,b,c) = (3,b,2)$ with $b>6$.
Then, there is a polygon $U_{xy^{l}}$ for the potential which consists of $l$ $y$-corners and one $x$-corner, and the minimal $l$ among all such polygons is $b-2$. Furthermore, if we have a polygon $U_{x^k y}$ for the potential whose corners are all $x$ except one $y$-corner, then the number of $x$-corners is greater than or equal to $\lfloor \frac{b-1}{2} \rfloor$. Here, $\lfloor r \rfloor$ denotes the greatest integer not bigger than $r$.
\item
Let $(a,b,c) = (a,b,2)$ with $b \geq a \geq 4$.
If there exists a polygon $U_{x^l y}$ for the potential whose corners are all $x$ except one $y$-corner, then the number of $x$-corners, $l$, satisfies $l > \frac{(a-2)(b-2)}{2}$. A similar statement holds for the number of $y$-corners, $k$, in the polygon $U_{x y^k}$ whose corners are all $x$ except one $y$-corner. 
\end{enumerate}

\end{corollary}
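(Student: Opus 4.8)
The plan is to prove Corollary~\ref{cor:specialpropW2} by following the same strategy as in Lemma~\ref{lem:Wxylyzm}, but adapted to the cases when one of $a,b,c$ equals $2$, using the explicit cut--glue formulas of Section~\ref{ss:cutglue} together with a careful bookkeeping of which corners the glue words can produce, and combining this with the area-monotonicity Corollaries~\ref{cor:specialpropW1} and the containment Lemma~\ref{lem:specialpropW0}.

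First I would set up the general framework. Suppose $U$ is a polygon for the potential all of whose corners are $x$-corners except exactly one, which is a $y$-corner (the case ``all $y$ except one $x$'' is symmetric after relabeling). As in the proof of Lemma~\ref{lem:Wxylyzm}, let $\mathcal{S}$ be the finite poset of polygons $V\subsetneq U$ for the potential that still contain the distinguished $y$-corner, which is nonempty since the minimal $xyz$-triangle at that corner lies in $U$; choose a maximal element $V\in\mathcal{S}$. By Lemma~\ref{lem:specialpropW0}, applying an elementary move to $V$ along the side not contained in $\partial U$ produces a polygon still inside $U$, hence by maximality $U$ itself is obtained from $V$ by one elementary move, i.e. by a single cut--glue operation. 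So the two corners $C_1,C_2$ of $V$ destroyed by the move, together with the corners of $U\setminus V$ created by the glue word, are all $x$-corners except at most the one $y$-corner. Now I would read off from the explicit glue word formulas (Cases~2 and 3 of Section~\ref{ss:cutglue} when $a=2$, Case~1 when $a\ge3$) exactly which letter-pairs $\alpha\beta,\beta\gamma,\gamma\alpha$ — i.e. which of $z$-,$x$-,$y$-corners — appear, and with what minimal multiplicities. For $(3,b,2)$ with $b>6$ and for $(a,b,2)$ with $b\ge a\ge4$ this pins down the structure of $V$ and forces $V$ to be (or to contain, by a further descent using Lemma~\ref{lem:specialpropW0}) a $0$-th generation diagram with $b$ many $y$-corners, or an $xy^{b-2}$-type polygon, etc.

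Then the quantitative bounds follow from area monotonicity. For part (1), once we know the minimal polygon with a single $x$-corner and only $y$-corners has exactly $b-2$ $y$-corners, Corollary~\ref{cor:specialpropW1} and the hyperbolic area formula (Theorem~\ref{thm:area}(1)) give $l\ge b-2$; the bound $\lfloor\frac{b-1}{2}\rfloor$ on the number of $x$-corners in a polygon $U_{x^ky}$ comes from tracking how many $x$-corners each elementary move starting from the minimal triangle at the $y$-corner can add before the word closes up in $G=\pi_1^{\mathrm{orb}}$, exactly as in the abelianization argument of Lemma~\ref{lem:Wxylyzm} specialized to $(3,b,2)$: one computes $H_1^{\mathrm{orb}}(\mathbb{P}^1_{3,b,2})$ and checks that a boundary word of the form $[(\beta\gamma)(\gamma\alpha)^l]$ or $[(\beta\gamma)(\beta\gamma)^k]$ can be trivial only when $k,l$ are large enough. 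For part (2) with $(a,b,2)$, $b\ge a\ge4$, the key numerical input is that a $0$-th generation $y$-polygon has $b$ $y$-corners and the glue words in Case~2 produce, between consecutive destroyed corners, strings of length controlled by $b-4$ and $c-4 = -2$... here I would instead argue directly: a polygon $U_{x^ly}$ must contain a $0$-th generation diagram around some $x$-vertex (with $a$ $x$-corners), and then each elementary move needed to reach a polygon with only one $y$-corner is forced by the glue formulas to also create $y$-corners unless certain length conditions hold, yielding after counting the inequality $l>\tfrac{(a-2)(b-2)}{2}$; the symmetric statement for $U_{xy^k}$ is identical with $a,b$ swapped.

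The main obstacle I expect is the bookkeeping in the $a=2$ glue word cases: unlike the $a,b,c\ge3$ situation where Lemma~\ref{lem:Wxylyzm} simply excludes such polygons outright, here they genuinely exist, so the argument must produce sharp constants rather than a contradiction. Concretely, the delicate point is to show that every elementary move on a polygon containing the single $y$-corner and otherwise only $x$-corners either creates additional $y$-corners (ruling out that configuration) or adds at least a definite number of $x$-corners, and to verify this uniformly over all four subcase formulas $\glue(\alpha,3k),\glue(\alpha,3k+1),\glue(\gamma,3k-1),\glue(\gamma,3k)$ of Cases~2 and~3. I would organize this as a short case analysis on the value $\theta\in\{\alpha,\gamma\}$ and the residue of $k\bmod 3$, citing Remark~\ref{rem:betagamma} to handle the special polygons with boundary word $(\beta\gamma)^2$ separately, and then feed the resulting corner counts into Theorem~\ref{thm:area}(1) to extract the stated inequalities. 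The remaining steps — the abelianization computation and the area-formula estimates — are routine once the combinatorial structure is nailed down.
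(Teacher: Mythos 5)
Your toolbox is the right one (Lemma~\ref{lem:specialpropW0}, Corollary~\ref{cor:specialpropW1}, area monotonicity), but the engine you propose for the quantitative bounds does not work, and the paper's actual mechanism is missing. Concretely: the bound $k\ge\lfloor\frac{b-1}{2}\rfloor$ in part (1) cannot come from an abelianization argument. The class of the boundary word of $U_{x^ky}$ in $H_1^{\mathrm{orb}}(\mathbb{P}^1_{3,b,2})$ only yields congruence conditions on $k$ modulo small torsion, never a lower bound growing with $b$; worse, for $(a,b,c)=(3,7,2)$ the von Dyck group $D(2,3,7)$ is perfect, so $H_1^{\mathrm{orb}}$ is trivial and the condition is vacuous, while the claim is $k\ge 3$. (The abelianization trick works in Lemma~\ref{lem:Wxylyzm} only because there one seeks a contradiction, not a sharp constant.) Similarly, your top-down scheme (maximal $V\subsetneq U$ plus analysis of the single glue word of the last elementary move) is the Lemma~\ref{lem:Wxylyzm} strategy, but by itself it neither constructs the polygon $U_{xy^{b-2}}$ (needed for the existence half of part (1), where ``the minimal $l$ is $b-2$'') nor produces the counts in part (2); you in effect assume the key fact ``once we know the minimal polygon with a single $x$-corner and only $y$-corners has exactly $b-2$ $y$-corners,'' which is precisely what has to be proved.

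The paper argues bottom-up instead: any polygon $U$ containing a $y$-corner contains the minimal triangle at that corner, hence (by an elementary move staying inside $U$, Lemma~\ref{lem:specialpropW0}) the full $0$-th generation polygon $U_{y^b}$. Applying the elementary move $\glue(\gamma,0)$ to a side of $U_{y^b}$ between two nearest $y$-corners kills those two $y$-corners and creates exactly one $x$-corner when $a=3$ (giving $U_{xy^{b-2}}$, which settles existence), and creates $(a-2)$ $x$-corners when $a\ge4$; moreover one can perform such moves along $\lfloor\frac{b-1}{2}\rfloor$ pairwise non-adjacent sides of $U_{y^b}$, all of the resulting polygons remaining inside $U$ by Lemma~\ref{lem:specialpropW0}. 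Feeding the corner counts of these forced sub-polygons into the monotone hyperbolic area formula (Theorem~\ref{thm:area}(1)) then gives $l\ge b-2$, $k\ge\lfloor\frac{b-1}{2}\rfloor$, and $l\ge(a-2)\lfloor\frac{b-1}{2}\rfloor>\frac{(a-2)(b-2)}{2}$ respectively. This disjoint-moves counting, and the per-move corner creation numbers $1$ resp.\ $a-2$, are the heart of the proof and do not appear in your proposal, so as written the argument has a genuine gap.
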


\begin{proof}$\left. \right.$

\noindent(1) Observe that the operation applied to a side between two $y$-corners with the minimal distance eliminates these two $y$-corners, and produces a single $x$-corner (``$\textnormal{glue}(\gamma,0)$"; See Section~\ref{s:hyp} and  Figure \ref{fig:prop2W1} (a).
\begin{figure}[htb]
\includegraphics[height=.25\textheight]{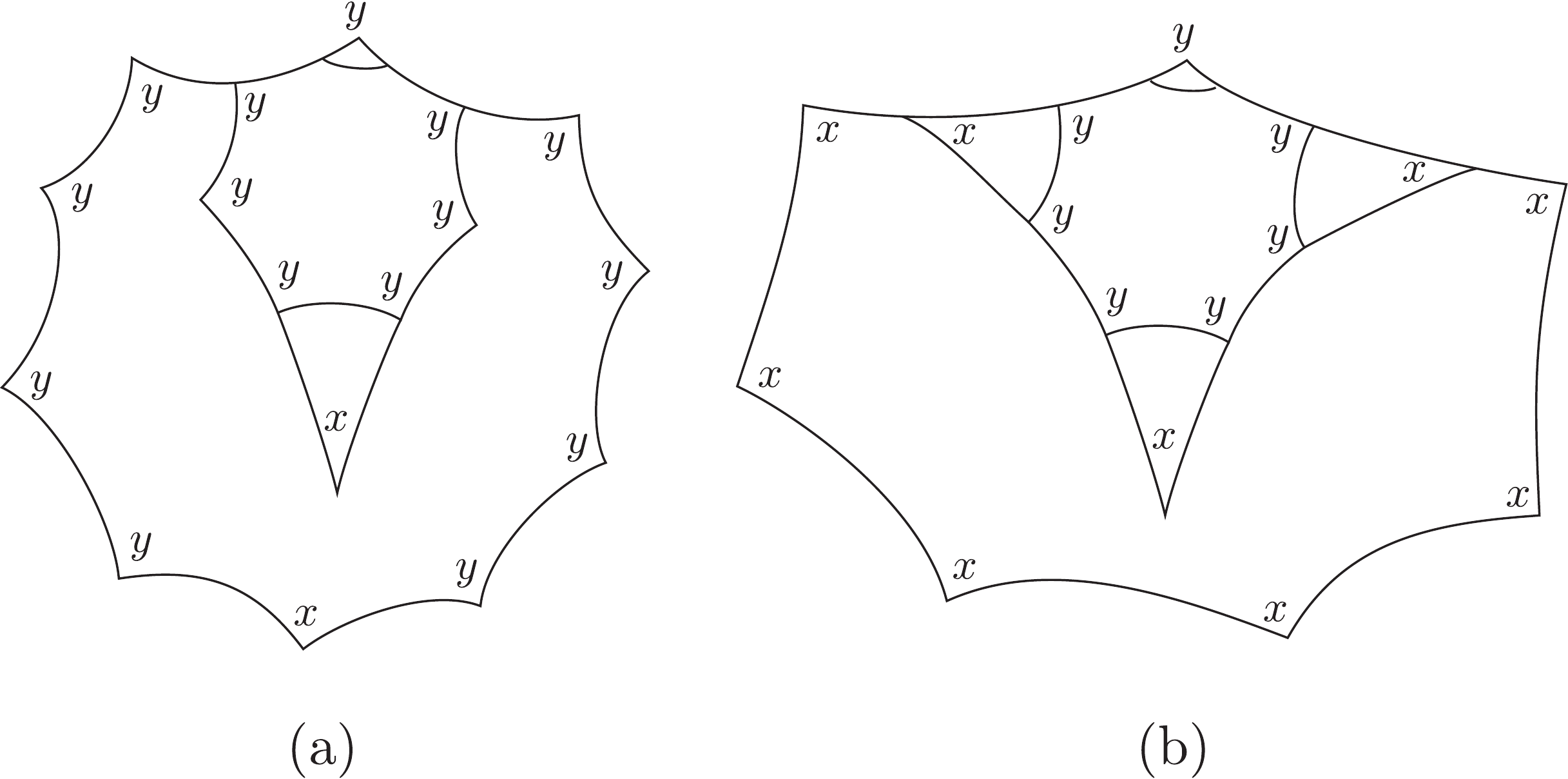}
\caption{(a) $U_{xy^l}$ and (b) $U_{x^k y}$ for $(3,b,2)$ with $b >6$}\label{fig:prop2W1}
\end{figure}

Take any $y$-corner of $U_{xy^l}$ and consider the minimal triangle containing this corner. By Lemma \ref{lem:specialpropW0}, the $0$-th generation polygon $U_{y^b}$ lies inside $U_{xy^l}$. One can apply the elementary move further to $U_{y^b}$ to get the polygon $U_{xy^{b-2}}$ which still lies inside $U_{xy^l}$ by Lemma \ref{lem:specialpropW0}. (See (a) of Figure \ref{fig:prop2W1}.) Then, by the same argument as in the proof of Corollary \ref{cor:specialpropW1}, we have $l \geq b-2$ because the area of $U_{xy^l}$ is bigger or equal to that of $U_{xy^{b-2}}$.

Next, consider the polygon $U_{x^k y}$. As before, we have the $0$-th generation polygon $U_{y^b}$ inside $U_{x^k y}$ as shown in (b) of Figure \ref{fig:prop2W1}. Observe that we can apply the elementary move to $U_{y^b}$ along at least $\lfloor\frac{b-1}{2} \rfloor$-sides of $U_{y^b}$ which are mutually disjoint. Again by comparing the area of the resulting polygon and $U_{x^k y}$,  we have
$k \geq \lfloor \frac{b-1}{2} \rfloor $. Here, $k \geq 3$ as $b \geq 7$.

\noindent(2)
By symmetry, it suffices to consider $U_{x^k y}$ only. Then, the proof of (2)  is almost the same as that of the second part of (1), except the fact that the elementary move applying to the $0$-th generation polygon $U_{y^b}$ creates $(a-2)$ $x$-corners. See Figure \ref{fig:prop2W2}. Therefore, after applying $\lfloor \frac{b-1}{2} \rfloor$ elementary moves to $U_{y_b}$ in the same manner as in (1), we have $((a-2) \lfloor \frac{b-1}{2} \rfloor)$ $x$-corners.
\begin{figure}[htb]
\includegraphics[height=.25\textheight]{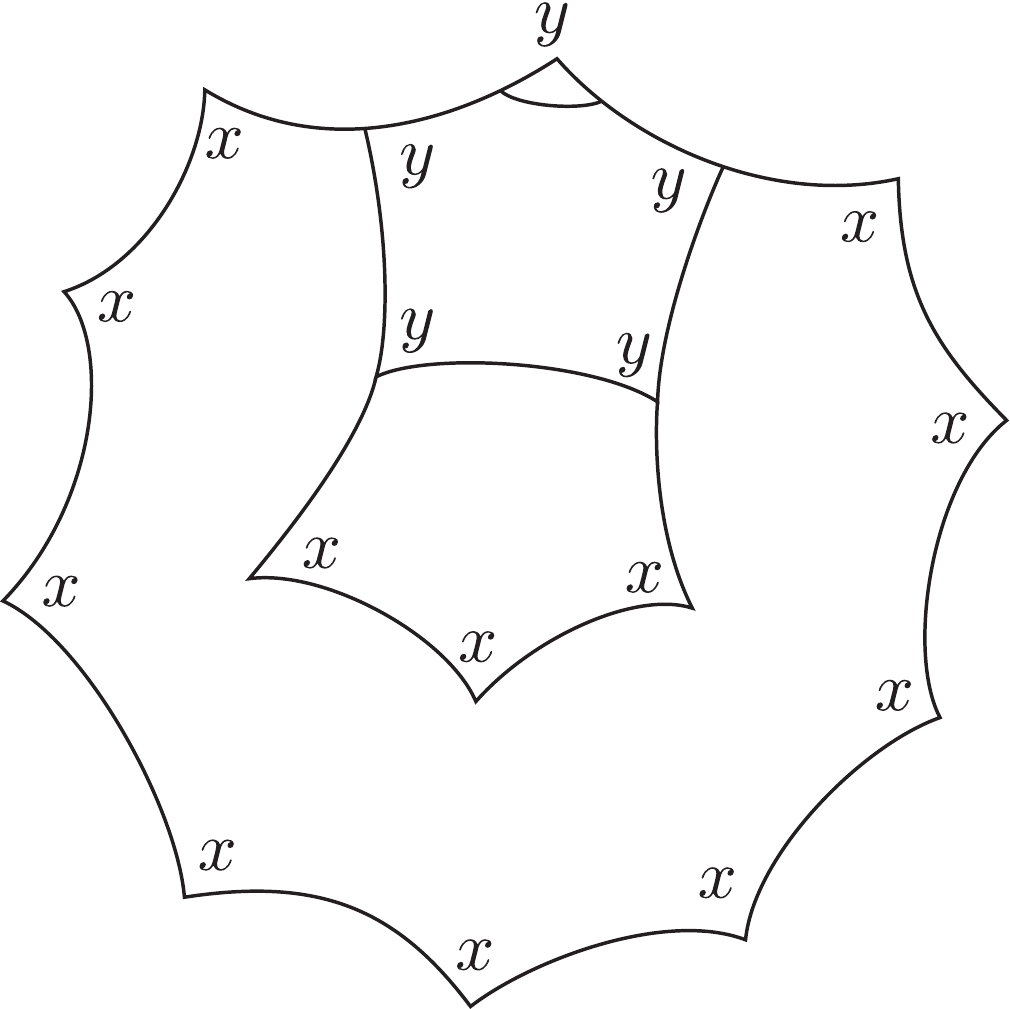}
\caption{$U_{x^k y}$ for $(a,b,2)$ with $b \geq a \geq 4$}\label{fig:prop2W2}
\end{figure}
\end{proof}

\begin{remark}\label{rem:specialpropW2}
The estimate in (2) of Corollary \ref{cor:specialpropW2} implies the following. Since we have assumed that $b \geq a$, $b$ is bigger or equal to $5$, and hence, $l \leq  2 (a-2) $. As  $a \geq 4$, we see that $l \geq a$.
%
\end{remark}

\subsection{Isolatedness for $(a,b,c)$ with $a,b,c\geq3$}
We are now ready to prove the main proposition in this section.

\begin{prop}\label{prop:hyperisol3}
For $a,b,c \geq 3$ with $\frac{1}{a} + \frac{1}{b} + \frac{1}{c} <1$, the origin is the only critical point of $W$ that lies over the critical value zero.
\end{prop}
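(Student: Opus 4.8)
The plan is to show that any critical point of $W$ lying over the critical value $0$ must be the origin, using the special structure of the hyperbolic potential established in Lemma~\ref{lem:Wxylyzm} together with the leading-order analysis of $W$. First I would recall from \eqref{eq:W_intro} and Theorem~\ref{thm:conv} that $W$ is a convergent power series of the form $W = -q^6 xyz + q^{3a}x^a + q^{3b}y^b + q^{3c}z^c + (\text{higher order})$, and that by Lemma~\ref{lem:Wxylyzm} the monomials $xy^l$, $yz^m$, $zx^n$ never appear. The absence of these monomials is the crucial input: it means that in $\partial_x W$ every term is divisible by one of $yz$, $x^{a-1}$, or by a monomial involving at least two distinct variables to positive powers in a controlled way; more precisely, $\partial_x W = -q^6 yz + a q^{3a}x^{a-1} + \cdots$ where every remaining monomial lies in the ideal generated by appropriate powers, and similarly for $\partial_y W$ and $\partial_z W$.

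Next I would set up the critical point equations $\partial_x W = \partial_y W = \partial_z W = 0$ and $W = 0$, and suppose $(x_0,y_0,z_0)$ is a solution with, say, $x_0 \neq 0$. The strategy is to derive a contradiction by combining the three equations multiplicatively, imitating the argument in the elliptic case: from $\partial_x W = 0$ one solves for a dominant relation among $x_0, y_0, z_0$, and substituting into the other two equations forces all coordinates to vanish unless $q$ satisfies some algebraic identity which we can rule out for $|q|$ small (indeed, as $q \to 0$ the leading behaviour degenerates to the Fermat-type singularity $x^a + y^b + z^c$, whose only singular point is the origin, and this persists under the convergent perturbation). Concretely, I expect to show that if two of the coordinates are nonzero then the lowest-order terms of the relevant $\partial$-equations cannot cancel against the higher-order corrections for small $q$, and if exactly one coordinate, say $x_0$, is nonzero then $\partial_y W$ and $\partial_z W$ evaluated at $(x_0,0,0)$ reduce (using the vanishing of $xy^l$, $zx^n$-type terms, so that $\partial_y W$ and $\partial_z W$ have no pure-$x$ terms) to $0 = 0$ trivially, but then $\partial_x W(x_0,0,0) = a q^{3a} x_0^{a-1} + \cdots$, a convergent series in $x_0$ with nonzero leading coefficient, which forces $x_0 = 0$ for $|x_0|$ small — and a priori critical points over the critical value need to be located near the origin, which I would justify via the grading/valuation structure that makes $W$ weighted-homogeneous to leading order.

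The main obstacle will be controlling the higher-order terms rigorously: unlike the elliptic case, $W$ is a genuine infinite series, so the multiplicative elimination argument must be supplemented with estimates showing the tail does not interfere with the leading-order conclusion, and one must be careful that ``critical point over the critical value $0$'' is the right notion — there could in principle be critical points far from the origin. I would handle the latter by exploiting that $W$, after the coordinate normalization, is weighted homogeneous of positive degree modulo higher-order corrections (the degrees being $\sigma \cdot \text{area}$), so that the weighted-Euler-vector-field identity relates $W$ to its partials; at a critical point this forces $W$ to vanish automatically, and more importantly confines the critical locus. The cleanest route is probably: reduce to the Fermat leading part $x^a+y^b+z^c - \sigma(q) xyz - (\text{other leading monomials permitted})$, observe that for $q=0$ this is the quasi-homogeneous singularity whose Jacobian ideal is $(x^{a-1}, y^{b-1}, z^{c-1})$ up to the $xyz$-term, hence has isolated singularity, and then invoke that isolatedness of the singularity of a weighted-homogeneous germ is an open condition preserved under the convergent higher-order perturbation guaranteed by Theorem~\ref{thm:conv}, provided Lemma~\ref{lem:Wxylyzm} has removed the dangerous low-degree monomials that would otherwise obstruct this. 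I would then conclude that $\dim_{\CC} \Jac(W)$ is finite, equivalently $W^{-1}(0)$ has an isolated singularity at the origin, and that no other critical point maps to $0$.
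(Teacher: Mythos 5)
Your proposal does not reach the actual content of the proposition; the two places where you defer the work are exactly where the paper's argument lives. The paper's proof is purely algebraic: using Lemma \ref{lem:Wxylyzm} and Corollary \ref{cor:specialpropW1} it writes $W= p_x x^a + p_y y^b+ p_z z^c + Axyz+ B x^2 y^2 + C y^2 z^2 +D z^2 x^2$ with $p_x,p_y,p_z,A$ units, works in the Jacobian ideal \emph{localized at $W=0$}, solves $W_y=0$, $W_z=0$ for $zx$ and $xy$, substitutes into $xW_x=0$ to get $p_x x^a=(\mbox{unit})\cdot xyz$ (and cyclically), and then substitutes everything back into $W=0$ to obtain $\bigl(q(\tfrac1a+\tfrac1b+\tfrac1c-1)+\cdots\bigr)xyz=0$; hyperbolicity makes this coefficient a unit, so $xyz$ and hence $x^a,y^b,z^c$ lie in $(W,W_x,W_y,W_z)$, forcing $x=y=z=0$. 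Your outline never identifies where $\tfrac1a+\tfrac1b+\tfrac1c<1$ enters beyond quoting the lemma, and the genuinely hard cases (two or three coordinates nonzero) are dismissed with ``lowest-order terms cannot cancel against higher-order corrections,'' which is precisely the assertion to be proved, not a step; note also that the constraint $W=0$ is essential there, not a bookkeeping detail. Your treatment of the axis case $(x_0,0,0)$ is correct but is the easy part.

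Your fallback ``cleanest route'' is moreover structurally wrong. In the hyperbolic case the potential contains $-q^6xyz$, which has ordinary degree $3\le\min(a,b,c)$ and weighted degree $\tfrac1a+\tfrac1b+\tfrac1c<1$ with respect to the natural weights $(\tfrac1a,\tfrac1b,\tfrac1c)$; thus near the origin the leading part of $W$ is $xyz$, whose critical locus is the union of the coordinate axes, and the Fermat terms $x^a+y^b+z^c$ are the \emph{higher}-order corrections, not the other way around. So no openness/finite-determinacy argument starting from the isolated Fermat singularity applies, and ``reduce to $q=0$'' is vacuous since every coefficient of $W$ carries a positive power of $q$. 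The appeal to a weighted Euler identity to conclude that critical points automatically lie over the value $0$ also fails: $W$ is not weighted homogeneous (its terms occupy infinitely many distinct weighted degrees), and the proposition genuinely restricts to the zero fiber for this reason. Finally, convergence (Theorem \ref{thm:conv}) is not needed for this proposition — the paper's argument is formal — it is only used later for the $c=2$ cases via the Weierstrass preparation theorem.
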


\begin{proof}
In order to look at the critical fiber $W^{-1} (0)$, we consider the {\em Jacobian ideal ring} of the potential $W$ localized at $W=0$ which is defined by
\begin{equation}\label{eq:locJac}
{\rm Jac} (W)|_{W=0} :=\dfrac{ \Lambda [[x,y,z ]] }{(W, \partial_x W, \partial_x W, \partial_x W)}.
\end{equation}
From now on, we write $W_{x_i}$ instead of $\partial_{x_i} W$ for simplicity. We claim that $x^a$, $y^b$ and $z^c$ are contained in the denominator of \eqref{eq:locJac}. From the lemma, we can write the potential $W$ as
$$W= p_x x^a + p_y y^b+ p_z z^c + Axyz+ B x^2 y^2 + C y^2 z^2 +D z^2 x^2$$
where $p_x$, $p_y$ and $p_z$ only depend on $x$, $y$ and $z$ respectively, and $A$ is an element of $\Lambda [[x,y,z ]]$ and $B$, $C$ and $D$ are elements of $\Lambda [[x,y ]]$, $\Lambda [[y,z ]]$ and $\Lambda [[x,z ]]$, respectively. In particular, $A$ is a unit in $\Lambda[[x,y,z ]]$ because of the term $-q xyz$ in the potential, and so are $p_x$, $p_y$ and $p_z$ by Corollary \ref{cor:specialpropW1}. Then,
\begin{equation}\label{eq:Jacrel1}
\begin{array}{l}
 W_x = a p_x x^{a-1} +  (A + xA_x) yz +  (2B + xB_x) xy^2+ (2C + xC_x) z^2 x\\
 W_y = b p_y y^{b-1} +  (A + yA_y) zx +  (2B + yB_y) x^2 y+ (2D + yD_y) y z^2 \\
 W_z = c p_z z^{c-1} +  (A + zA_z) xy +  (2C + zC_z) y^2 z+ (2D + zD_z) z x^2 
\end{array}
\end{equation}
Note that $A+xA_{x}$, $A+yA_{y}$ and $A+zA_{z}$ are all invertible since they start with $-q \in \Lambda$. From the first equation (multiplied by $x$), we know that
\begin{equation}\label{eq:xa}
a p_x x^a + (A+ xA_x) xyz +  (2B + xB_x) x^2y^2+ (2C + xC_x) z^2 x^2
\end{equation}
is in $(W, W_x, W_y, W_z)$.

From the second and the third equation of \eqref{eq:Jacrel1},
\begin{equation}\label{eq:Jacrel2}
\begin{array}{l}
 zx = (A + yA_y)^{-1} y   \left( b p_y y^{b-2} +    (2B + yB_y) x^2 + (2D + yD_y)  z^2 \right) \\
  xy =   (A + zA_z)^{-1} z \left( c p_z z^{c-2} +   (2C + zC_z) y^2 + (2D + zD_z) x^2 \right)
\end{array}
\end{equation}
up to the Jacobian relations. Combining \eqref{eq:xa} and \eqref{eq:Jacrel2}, we obtain
$$ ap_x x^a + f_x xyz =0$$
in the ring \eqref{eq:locJac} where $f_x$ is given by
\begin{equation*}
\begin{array}{ll}
f_x =&  A+ xA_x + \dfrac{(2B+xB_x)(c p_z z^{c-2} +   (2C + zC_z) y^2 + (2D + zD_z) x^2 )xy}{(A+zA_z)} \\
&+ \dfrac{(2C+xC_x)(b p_y y^{b-2} +    (2B + yB_y) x^2 + (2D + yD_y)  z^2) zx}{(A+zA_z)}
\end{array}
\end{equation*}
and hence is a unit beginning with $-q$. To summarize, we have 
\begin{equation}\label{eq:xaybzc}
\begin{array}{l}
p_x x^a = \left(\frac{q}{a} + \cdots \right) xyz \\
p_y y^b = \left(\frac{q}{b} + \cdots \right) xyz \\
p_z z^c = \left(\frac{q}{c} + \cdots \right) xyz 
\end{array}
\end{equation}
in the Jacobian ideal ring localized at $W=0$. (Here, ``$\cdots$" does not contain a constant term.) Plugging these into the formula of $W$ and replacing factors $xy$, $yz$, $zx$ (in $Bxy \underline{(xy)}$, $Cyz \underline{(yz)}$, $Dzx \underline{(zx)}$, respectively) as in \eqref{eq:Jacrel2}, we get
$$ \left( q\left(\frac{1}{a} + \frac{1}{b} + \frac{1}{c} -1 \right) + \cdots \right) xyz =0$$
in \eqref{eq:locJac}. The coefficient of $xyz$ in the above equation is certainly invertible since $\frac{1}{a} + \frac{1}{b} + \frac{1}{c} \neq 0$ for hyperbolic $(a,b,c)$, and hence, $xyz \in (W, W_x, W_y, W_z)$, and from \eqref{eq:xaybzc}, $x^a, y^b, z^c\in (W, W_x, W_y, W_z)$ as claimed. Therefore, if $W= W_x = W_y = W_z =0$, then $x=y=z=0$. 
\end{proof}

\begin{corollary}\label{cor:jacfindim}
The Jacobian ideal ring ${\rm Jac} (W)|_{W=0}$ \eqref{eq:locJac} is finite dimensional over $\Lambda$.
\end{corollary}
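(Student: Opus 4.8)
The plan is to obtain this corollary as an essentially formal consequence of the computation carried out in the proof of Proposition~\ref{prop:hyperisol3}. Recall the standard fact that, over the Novikov field $\Lambda$, a quotient ring $\Lambda[[x,y,z]]/I$ is finite-dimensional as a $\Lambda$-vector space precisely when $I$ is $\mathfrak{m}$-primary, i.e.\ contains a positive power of each of the three variables. In the proof of Proposition~\ref{prop:hyperisol3} we worked inside ${\rm Jac}(W)|_{W=0}=\Lambda[[x,y,z]]/(W,W_x,W_y,W_z)$ and established both that $xyz\in(W,W_x,W_y,W_z)$ and the relations \eqref{eq:xaybzc}, where $p_x,p_y,p_z$ are units of $\Lambda[[x,y,z]]$ (they have nonzero constant terms, coming from the $x^a$, $y^b$, $z^c$ monomials of $W$ together with Corollary~\ref{cor:specialpropW1}). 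Multiplying the three identities in \eqref{eq:xaybzc} by $p_x^{-1},p_y^{-1},p_z^{-1}$ and using $xyz\in(W,W_x,W_y,W_z)$ yields $x^a,\,y^b,\,z^c\in(W,W_x,W_y,W_z)$. Consequently the canonical surjection $\Lambda[[x,y,z]]\twoheadrightarrow{\rm Jac}(W)|_{W=0}$ factors through $R:=\Lambda[[x,y,z]]/(x^a,y^b,z^c)$.

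It then remains only to record that $R$ is finite-dimensional over $\Lambda$. Any power series, reduced modulo $(x^a,y^b,z^c)$ by repeatedly discarding occurrences of $x^a$, $y^b$, $z^c$, becomes a $\Lambda$-linear combination of the finitely many monomials $x^iy^jz^k$ with $0\le i<a$, $0\le j<b$, $0\le k<c$; these monomials are visibly $\Lambda$-independent in $R$, so $\dim_\Lambda R=abc$. Since ${\rm Jac}(W)|_{W=0}$ is a quotient of $R$ and $\Lambda$ is a field, we conclude $\dim_\Lambda{\rm Jac}(W)|_{W=0}\le abc<\infty$. This is the whole argument for $a,b,c\ge3$.

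The identical reasoning covers the remaining hyperbolic cases: once one knows that a positive power of each variable lies in the Jacobian ideal — which is exactly what is proved in Propositions~\ref{prop:hyperisolc6} and~\ref{prop:hyperisolab4} for the cases with some of $a,b,c$ equal to $2$ — the same reduction to $\Lambda[[x,y,z]]/(x^N,y^N,z^N)$ gives finite-dimensionality. I do not expect a genuine obstacle in this corollary beyond the isolatedness theorems themselves; the only points that need a line of justification are the elementary monomial-reduction argument for $R$, the observation that over a field finite generation as a module is the same as finite dimension, and — the one piece of bookkeeping most likely to trip one up — checking that the factors $p_x,p_y,p_z$ of \eqref{eq:xaybzc} are honestly invertible in $\Lambda[[x,y,z]]$ rather than merely in the localized quotient, which is guaranteed because they have nonzero constant terms.
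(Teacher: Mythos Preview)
Your proof is correct and follows essentially the same approach as the paper: both deduce finite-dimensionality from the fact, established in Proposition~\ref{prop:hyperisol3}, that $x^a,y^b,z^c$ (and $xyz$) lie in $(W,W_x,W_y,W_z)$. The only minor difference is that the paper also uses $xyz\in(W,W_x,W_y,W_z)$ to exhibit the smaller spanning set $\{x^my^n,\,y^nz^l,\,z^lx^m\}$ with $0\le m\le a-1$, $0\le n\le b-1$, $0\le l\le c-1$, whereas you content yourself with the cruder bound $\dim_\Lambda \leq abc$ coming from $\Lambda[[x,y,z]]/(x^a,y^b,z^c)$; for the purpose of the corollary either suffices.
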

\begin{proof}
Since $x^a, y^b, z^c, xyz$ are in $(W, W_x, W_y, W_z)$, it is easy to see that $\{x^m y^n, y^n z^l, z^l x^m\}$ with $0 \leq m \leq a-1$, $0 \leq n \leq b-1$, $0 \leq l \leq c-1$ generate this ring as a $\Lambda$-vector space.
\end{proof}


\subsection{Isolatedness for $(3,b,2)$ with $b>6$}\label{subsec:isolc6}
The remaining case is when one of $a,b,c$ becomes $2$.
In this case, there can exist terms appearing in \eqref{eq:linhigh}:. (Indeed, $xy^{b-2}$ always exists in the potential for $(3,b,2)$ as shown in the proof of Corollary \ref{cor:specialpropW2}.) Thus, the argument given in the previous subsection is no longer valid. However,  
we can take a more direct approach as follows.

\begin{definition}
A Weierstrass polynomial $P(z)$ is
$$z^k + g_{k-1} z^{k-1} + ... + g_0$$
where for every $i = 0,\ldots,k-1$,
$g_i (z_2, ..., z_n)$
is analytic and
$g_i (0, ..., 0) = 0$.
\end{definition}

\begin{theorem} [Weierstrass preparation]  For an analytic function $f$, if
$f(0, ...,0) = 0$, but $f(z, z_2, ..., z_n)$ as a power series has a term only involving $z$, we can write (locally near $(0, ..., 0)$)
$$f(z, z_2, ..., z_n) = P(z)h(z, z_2, ..., z_n)$$
with $h$ analytic and $h(0, ..., 0)$ not $0$, and $P$ a Weierstrass polynomial.  Moreover, if the highest degree of terms only involving $z$ is $k$, then the degree of $P$, considered as a polynomial in $z$, is $k$. 
\end{theorem}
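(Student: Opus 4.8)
This is the classical Weierstrass preparation theorem, so the plan is to reproduce the standard contour-integral argument (one could alternatively just cite a standard reference). Write $w=(z_2,\dots,z_n)$ and $g(z)=f(z,0,\dots,0)$. By hypothesis $g\not\equiv 0$ with $g(0)=0$, so $g$ vanishes to some finite order $k\ge 1$ at $z=0$; this $k$ — the smallest $j$ for which $f$ has a nonzero $z^j$-term among the terms involving only $z$ — is the number that will come out as the $z$-degree of $P$.

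First I would choose a good polydisc. Pick $\epsilon>0$ so small that $g(z)\ne 0$ for $0<|z|\le\epsilon$. Since $f$ is continuous and does not vanish on the compact set $\{|z|=\epsilon,\ w=0\}$, there is a polydisc $\Delta\ni 0$ in the $w$-variables with $f(z,w)\ne 0$ whenever $|z|=\epsilon$ and $w\in\Delta$. Then for each $w\in\Delta$ the argument principle says $N(w)=\frac{1}{2\pi i}\oint_{|z|=\epsilon}\frac{\partial_z f(z,w)}{f(z,w)}\,dz$ counts, with multiplicity, the zeros of $z\mapsto f(z,w)$ in $|z|<\epsilon$; being continuous and integer-valued it is constant, and $N(0)=k$, so $N\equiv k$. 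Call these zeros $a_1(w),\dots,a_k(w)$ (with multiplicity); they all tend to $0$ as $w\to 0$.

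Next I would build $P$ from symmetric functions of the $a_j$. The power sums $\sigma_m(w)=\sum_{j=1}^k a_j(w)^m=\frac{1}{2\pi i}\oint_{|z|=\epsilon} z^m\,\frac{\partial_z f(z,w)}{f(z,w)}\,dz$ are holomorphic on $\Delta$ (differentiation under the integral sign) and vanish at $w=0$. Setting $P(z,w)=\prod_{j=1}^k (z-a_j(w))=z^k+g_{k-1}(w)z^{k-1}+\cdots+g_0(w)$, each coefficient $g_i$ is, up to sign, an elementary symmetric function of the $a_j$, hence by Newton's identities a polynomial without constant term in $\sigma_1,\dots,\sigma_k$; thus each $g_i$ is holomorphic on $\Delta$ with $g_i(0)=0$, so $P$ is a Weierstrass polynomial of $z$-degree $k$. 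Finally I would check that $h:=f/P$ is a unit: for fixed $w$, $f(\cdot,w)$ and $P(\cdot,w)$ have the same zeros with the same multiplicities in $|z|<\epsilon$, so $h(\cdot,w)$ extends holomorphically across them to a nowhere-zero function; joint holomorphy follows from the representation $h(z,w)=\frac{1}{2\pi i}\oint_{|\zeta|=\epsilon}\frac{f(\zeta,w)/P(\zeta,w)}{\zeta-z}\,d\zeta$ for $|z|<\epsilon$, $w\in\Delta$, whose integrand is holomorphic in $(\zeta,w)$ near $\{|\zeta|=\epsilon\}$ precisely because $f$ — hence $P$ — is zero-free there. Since $h$ is nowhere zero, $h(0,0)\ne 0$, and $f=Ph$, as required.

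The step I expect to be the main obstacle is the last one: showing the pointwise quotient $f/P$ is analytic in all variables jointly, rather than merely slice-wise in $z$ or merely on the complement of $\{P=0\}$. This is exactly why the earlier choice of $\epsilon$ and $\Delta$ is essential — the contour $\{|\zeta|=\epsilon\}$ must avoid the zeros of $f(\cdot,w)$ for every $w\in\Delta$ — and one must also verify that the slice-wise removable-singularity extension coincides with the contour integral. The remaining ingredients (the argument principle, Newton's identities, differentiation under the integral sign) are routine.
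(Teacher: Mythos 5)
Your proof is correct: it is the standard classical argument for the Weierstrass preparation theorem (argument principle to count the zeros $a_1(w),\dots,a_k(w)$ of $f(\cdot,w)$ in a small disc, holomorphy of the power sums via contour integrals and Newton's identities to get the coefficients $g_i(w)$, and the Cauchy-integral representation of $f/P$ on the zero-free circle $|\zeta|=\epsilon$ to get joint analyticity of the unit $h$). The paper itself gives no proof of this statement at all — it is quoted as a known classical theorem and used as a black box in the isolated-singularity arguments (Propositions \ref{prop:hyperisolc6} and \ref{prop:hyperisolab4}) — so there is nothing in the paper to compare your argument against; supplying the textbook proof is entirely appropriate, and one could equally just cite a standard reference in several complex variables. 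One small remark: your $k$ is the order of vanishing of $f(z,0,\dots,0)$ at $z=0$, i.e.\ the \emph{lowest} degree among the pure-$z$ terms, which is the correct degree of the Weierstrass polynomial; the paper's phrase ``the highest degree of terms only involving $z$'' is a slip of wording, and the way the theorem is actually applied in the paper (e.g.\ to $\tilde{W}_x$, where the pure-$x$ part is a unit times $x^2$ and $P$ has degree $2$) matches your reading, not the literal one.
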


We remark that we can apply this theorem to our potential $W$ since $W$ converges near the origin (with a small $q$), so that it is an analytic function defined on a small neighborhood of the origin.

\begin{prop}\label{prop:hyperisolc6}
Let $W$ be the mirror potential for $\mathbb{P}^1_{3,b,2}$ with $b > 6$. Then, the hypersurface $W^{-1} (0)$ has an isolated singularity only at the origin.
\end{prop}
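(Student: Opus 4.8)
The plan is to reduce the problem to a two-variable situation using the Weierstrass preparation theorem, exploiting the fact that $W$ converges near the origin by Theorem~\ref{thm:conv}. First I would observe that the potential for $\mathbb{P}^1_{3,b,2}$ contains the term $q^6 x^2$ (coming from the unique bigon with two $x$-corners of area $4\sigma$, given by Corollary~\ref{cor:bigon_no}), so $W$ has a nonzero pure-$x$ term of lowest degree $2$. Since $W(0,0,0)=0$ and $W$ is analytic near the origin, Weierstrass preparation lets me write, near $(0,0,0)$,
\[
W(x,y,z) = \big(x^2 + g_1(y,z)\, x + g_0(y,z)\big)\, h(x,y,z),
\]
with $h$ analytic, $h(0,0,0)\neq 0$, and $g_0, g_1$ analytic functions vanishing at the origin. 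Thus $W^{-1}(0)$ near the origin coincides with the zero locus of the Weierstrass polynomial $P(x,y,z) = x^2 + g_1 x + g_0$, and it suffices to show that $P^{-1}(0)$ has an isolated singularity at the origin, i.e. that the only common solution near $0$ of $P = P_x = P_y = P_z = 0$ is the origin.

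From $P_x = 2x + g_1 = 0$ we get $x = -g_1/2$, and substituting into $P=0$ gives $g_0 - g_1^2/4 = 0$; call this function $\Delta(y,z)$, the ``discriminant.'' The equations $P_y = P_z = 0$ become, after eliminating $x$, the vanishing of $\partial_y \Delta$ and $\partial_z \Delta$ respectively (using $P_x = 0$ to kill the terms where the derivative hits the explicit $x$). So the claim reduces to showing that $\Delta(y,z)$ has an isolated critical point with critical value $0$ at the origin — equivalently, that $\Delta$ is a ``nondegenerate enough'' function of $(y,z)$, e.g. that $\Delta, \partial_y\Delta, \partial_z\Delta$ have only $(0,0)$ as a common zero near the origin. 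Here is where I would use the combinatorial input: I need to identify the low-order terms of $g_0$ and $g_1$ as functions of $y,z$. Corollary~\ref{cor:specialpropW2}(1) tells us precisely which mixed monomials can appear — $xy^{b-2}$ is the lowest $xy^\ell$ term, and any $x^k y$ term forces $k \ge \lfloor (b-1)/2\rfloor \ge 3$ — while Corollary~\ref{cor:specialpropW1} guarantees the pure $y^b$ and $z^c = z^2$ terms are present with unit (in $q$) coefficients. So $g_1(y,z)$ is a sum of terms of $y$-degree $\ge b-2 \ge 5$ together with higher-order $z$-contributions, and the $z^2$ term of $W$ (hence of $P$, hence of $g_0$) survives as the lowest-order term in $z$.

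The main obstacle, and the step requiring the most care, will be pinning down enough of the Newton polygon of $\Delta(y,z)$ to conclude isolatedness. The term $q^6 z^2$ of $W$ contributes $q^6 z^2$ to $g_0$ (since $g_1$ has no pure-$z$ linear or constant part and $h(0,0,0)\neq 0$), and the $y^3$ term $-q^9 y^3$ contributes a pure $y^3$ to $g_0$; meanwhile $g_1^2/4$ starts in $y$-degree $\ge 2(b-2) \ge 10 > 3$, so it does not interfere with the $y^3$ or $z^2$ terms of $\Delta$. Hence the lowest-order part of $\Delta$ is (a unit times) $z^2$ plus (a unit times) $y^3$ plus strictly higher-order terms, which is the $A_2$ (cusp) singularity $z^2 + y^3$ — manifestly an isolated singularity. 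Then $\Delta$, $\partial_y\Delta$, $\partial_z\Delta$ generate an ideal whose zero locus near the origin is just $\{0\}$ by the standard argument (the leading form $z^2+y^3$ already has this property, and adding higher-order terms does not create new nearby solutions, by e.g. the finite-determinacy of $A_2$ or a direct Newton-polygon estimate). Tracing back, this forces $x = -g_1/2 = 0$ as well, so $W^{-1}(0)$ has an isolated singularity at the origin. I expect the delicate points to be (i) verifying that no mixed monomial of $W$ of intermediate degree contaminates the cusp leading form of $\Delta$ — this is exactly what Corollaries~\ref{cor:specialpropW1} and~\ref{cor:specialpropW2} are designed to rule out — and (ii) making the elimination of $x$ via $P_x=0$ fully rigorous inside the local analytic ring rather than merely formally.
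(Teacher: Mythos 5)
Your high-level plan (convergence of $W$, Weierstrass preparation, elimination of one variable, then a two-variable analysis) is indeed the paper's strategy, but the execution breaks down at the very first identification of terms, and the resulting "cusp" conclusion is not available. For $(a,b,c)=(3,b,2)$ in the paper's labelling, the pure powers occurring in $W$ are $x^3$, $y^b$ and $z^2$: the unique bigon of Corollary \ref{cor:bigon_no} has two corners at the order-$2$ point, i.e.\ it contributes a $z^2$ term, not $q^6x^2$, and there is no $y^3$ term at all since $b\ge 7$ (Corollary \ref{cor:specialpropW1}). So $W$ has no pure-$x$ term of degree $2$, the Weierstrass polynomial in $x$ has degree $3$, and the quadratic-formula elimination $x=-g_1/2$ you perform does not exist. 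Your later claim that the discriminant $\Delta$ has leading part $(\mathrm{unit})z^2+(\mathrm{unit})y^3$ comes from simultaneously using two incompatible labellings (an $x^2$ and a $y^3$ term as in the $\mathbb{P}^1_{2,3,c}$ convention, together with $z^c=z^2$ as in the paper's convention); under any consistent labelling this leading form is wrong.

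If you instead prepare in the genuinely quadratic variable $z$ (this is what the paper does, via $W_z=2p_3z+sxy+\cdots=0$), the reduced two-variable series is $\tilde W=Ax^3+By^b+Cx^2y^2+D\,xy^{b-2}+E\,x^ky$ with $A,B,C$ units and $k\ge\lfloor(b-1)/2\rfloor\ge 3$. Its lowest-degree homogeneous part is $A(0)x^3$ alone (degree $3$), which is \emph{not} an isolated form, and for $b>6$ the monomial $x^2y^2$ lies strictly below the segment joining $(3,0)$ and $(0,b)$, so no argument of the type "leading form is an isolated singularity and higher-order terms don't matter" applies; finite determinacy of $A_2$ is simply not relevant here. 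Isolatedness is genuinely delicate: for example $x^3+2x^2y^2+xy^4=x(x+y^2)^2$ is non-reduced, so the monomial shape alone does not suffice — one needs the quantitative constraints $l\ge b-2$, $k\ge\lfloor(b-1)/2\rfloor$ \emph{and} the hypothesis $b>6$. Indeed the paper's proof reduces everything to the non-vanishing of $A_2(0)C_3(0)-A_3(0)C_2(0)=\frac{6-b}{b}\,C(0)\bigl(A(0)+\frac13\bigr)$, where $b>6$ enters crucially; your sketch never uses $b>6$, which is a structural symptom of the gap, since at $b=6$ (the simple elliptic boundary case) isolatedness depends on the actual value of the coefficient of $x^2y^2$. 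To repair the argument you would either have to carry out the paper's unit-tracking eliminations, or verify Newton non-degeneracy of $\tilde W$ with respect to the Newton boundary having vertices $(3,0),(2,2),(0,b)$ (checking in particular that $C(0)\ne0$ and that $xy^{b-2}$, $x^ky$ lie strictly above this boundary, which again uses $b>6$); neither step is present in your proposal.
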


\begin{proof}
By Corollary \ref{cor:specialpropW1} and (1) of Corollary \ref{cor:specialpropW2},
the potential can be written as
$$W=  p_1 x^3 + p_2 y^b +p_3 z^2 + s xyz + t xy^{b-2} + u x^{k} y + v x^2 y^2$$
for some $p_1, p_2 , p_3, s, t,u,v$ in $\Lambda [[x,y,z]]$ and$k \geq 3$ from (1) of Corollary \ref{cor:specialpropW2}. 
Moreover, $p_1, p_2, p_3,s,t$ are units in the formal power series ring and, $t$ and $u$ are series only involving $y$ and $x$, respectively. 
Then, from the relation
$$W_z= 2p_3 z + s xy =0$$
in the Jacobian ideal ring, we may eliminate the last variable $z$ and obtain a two-variable power series of the form
\begin{equation} \label{eq:W}
\tilde{W} = A x^3 + B y^c + C x^2 y^2 + D(y) xy^{b-2} + E(x) x^{k} y,
\end{equation}
where $A,B,C,D$ are units, and $D$ and $E$ are series only in respectively $y$ and $x$.
Observe that
\begin{align}\label{eq:Wx}
\tilde{W}_x =  (3A + xA_x) x^2 + (2C + x C_x) x y^2  + D y^{b-2} + (kE + xE_x) x^{k-1} y,\\
\label{eq:Wy}
\tilde{W}_y = (b B + y B_y) y^{b-1} + (2C + y C_y) x^2 y + ((b-2)D+ y D_y) x y^{b-3} + E x^k.
\end{align}

Applying the Weierstrass preparation theorem to \eqref{eq:Wx}, we see that
\begin{equation}\label{eq:WPx}
\tilde{W}_x = (x^2 - g_1(y) x - g_2(y)) \cdot U
\end{equation}
where $g_1(0) =g_2(0)=0$ and $U$ is a unit in $\Lambda[[x,y]]$. Note that $U(0) =3A(0)$ by comparing the coefficients of $x^2$ in \eqref{eq:Wx} and \eqref{eq:WPx}. (By $P(0)$ for a power series $P$, we mean the constant term of $P$.) Thus, $x^2 = g_1(y) x +g_2(y)$ in the Jacobian ideal ring. 

Now, we put $x=0$ for both \eqref{eq:Wx} and \eqref{eq:WPx} to get $g_2(y) = V_1 y^{b-2} $ where $V_1$ is a unit in $\Lambda[[x,y]]$ and $V_1(0) = D(0)$. Now, we take the $x$-derivatives of \eqref{eq:Wx} and \eqref{eq:WPx}:
\begin{eqnarray*}
\tilde{W}_{xx} &=& (2x - g_1(y) ) \cdot U +  (x^2 - g_1(y) x - y^{b-2} \cdot (unit)) \cdot U_x  \\
&=& (6A + 6 x A_x + x^2 A_{xx} )x + (2C +4x C_x + x^2 C_{xx}) y^2 \\
&& + (k(k-1)E + 2k x E_x +x^2 E_{xx}) x^{k-2} y
\end{eqnarray*}
Since $b-2 >2$, the only $y^2$-term in the first equation can appear in $g_1(y)$. Thus, we have $g_1(y) = V_2 y^2 $ for some unit $V_2$ with $V_2(0) = - \frac{2C(0)}{U(0)} = - \frac{2C(0)}{3A(0)}$ by comparing the leading term for $y$. In summary,  $x^2 =  V_2  xy^2  + V_1  y^{b-2} $, and hence,
\begin{equation}\label{eq:subytox}
y^{b-2} = \frac{1}{V_1} x^2 - \frac{V_2}{V_1} x y^2
\end{equation}
holds in the Jacobian ideal ring. Set $V_3 := \frac{1}{V_1}$ and $V_4:=-\frac{V_2}{V_1}$. Then, 
$$V_3(0) = \frac{1}{D(0)}, \qquad V_4(0) =  \frac{2C(0)}{3A(0) D(0)}.$$

By considering Equation \eqref{eq:W} and \eqref{eq:Wy} and eliminating $y^b$, we see that
$$A_1 x^3 + C_1 x^2 y^2 + D_1 xy^{b-2} + E_1 x^k y = 0,$$
where $A_1(=A)$, $C_1$ and $D_1$ are units. We compute $C_1$ and $D_1$ precisely for later purpose:
$$C_1 = C - B \dfrac{ 2C + yC_y}{bB + y B_y}$$ 
$$ D_1 = D - B \dfrac{ (b-2)D  + y D_y }{bB + y B_y}.$$
Thus, $C_1(0)=\frac{b-2}{b} C(0)$, and $D_1 (0) =  \frac{2}{b} D(0) $.

We next substitute $V_3 x^2 + V_4 xy^2$ for $y^{b-2}$ to get
\begin{equation} \label{eq:A2C2}
A_2 x^3 + C_2 x^2 y^2 + E_1 x^k y =0.
\end{equation}
Here, $A_2$ and $C_2$ are units whose constant terms are given by
\begin{equation*}
\begin{array}{l}
A_2(0) = A(0) +D_1(0) V_3(0) = A(0) + \dfrac{2}{b}.\\
C_2(0)=  C_1(0) + D_1(0) V_4 (0)=\dfrac{b-2}{b} C (0) + \dfrac{4}{3b} \dfrac{C(0)}{A(0) }.
\end{array}
\end{equation*}

By the same substitution applying to the equation $x \tilde{W}_x=0$ \eqref{eq:Wx}, we have 
\begin{equation}\label{eq:A3C3}
A_3 x^3 +C_3 x^2 y^2 + E_1 x^k y =0
\end{equation}
for some units $A_3$ and $C_3$ with
\begin{equation*}
\begin{array}{l}
A_3(0)= 3A(0) + D(0) V_3(0) =3A(0) +1,\\
C_3(0)= 2C(0) + D(0) V_4(0)=2C(0)\left( 1 + \dfrac{1}{3A(0)}\right).
\end{array}
\end{equation*}
Finally, eliminating $x^2 y^2$-terms from Equation \eqref{eq:A2C2} and \eqref{eq:A3C3},
$$A_4 x^3 + E_2 x^k y = x^3 (A_4 + E_2 x^{k-3} y) = 0$$
We claim that the coefficient $A_4$ of $x^3$ is a unit in $\Lambda [[ x,y]]$, or equivalently $A_2 C_3- A_3 C_2$ begins with a constant term, which will imply that $x^3 \cdot (unit) =0$ in the Jacobian ideal ring (i.e. $x^3 \in (\tilde{W}, \tilde{W}_x, \tilde{W}_y)$) since $E_2 x^k$ is divisible by $x^3$ (i.e. $k \geq 3$). 

Indeed,
\begin{equation}\label{eq:A2C3}
A_2 (0) C_3 (0)- A_3 (0) C_2 (0)=\frac{6-b}{b} C(0) \left( A(0) + \frac{1}{3}\right),
\end{equation}
and \eqref{eq:A2C3} cannot be zero since $b > 6$, and $A(0)$ begins with a power of $q$. 

Now, since $k \geq 3$ and $C_3$ is a unit, \eqref{eq:A3C3} tells us that $x^2 y^2 \in (\tilde{W}, \tilde{W}_x, \tilde{W}_y)$. Finally, by applying \eqref{eq:subytox} to $yz^{c-2}$-term in $y \tilde{W}_y =0$, the only term which does not involve neither $x^i $ with $i \geq 3$ nor $x^2 y^2$ is $y^b$, and hence, $y^b \in (\tilde{W}, \tilde{W}_x, \tilde{W}_y)$.

%
%
%

\end{proof}

For the case when $(a,b,c)=(a,b,2)$ with $b \geq a \geq 4$, the proof goes almost parallel except the fact that we have two unknown weights $a$,$b$ whereas $b$ is the only such a term previously.

\begin{prop}\label{prop:hyperisolab4}
Let $W$ be the mirror potential for hyperbolic $\mathbb{P}^1_{a,b,2}$ with $b \geq a \geq 4$. Then, the hypersurface $W^{-1} (0)$ has an isolated singularity only at the origin.
\end{prop}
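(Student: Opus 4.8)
The plan is to follow the template of Proposition~\ref{prop:hyperisolc6} closely, since the case $(a,b,2)$ with $b\ge a\ge4$ is structurally parallel: by Corollary~\ref{cor:specialpropW1} and part (2) of Corollary~\ref{cor:specialpropW2} together with Remark~\ref{rem:specialpropW2}, the only monomials of the forbidden shapes $xy^l$, $yz^m$, $zx^n$ that can occur involve $x^k y$ (or $x y^k$) with $k\ge a$, and in particular $k\ge3$. First I would write
\[
W = p_1 x^a + p_2 y^b + p_3 z^2 + s\,xyz + t\, x y^k + u\, x^k y + v\, x^2 y^2 + \cdots
\]
where $p_1,p_2,p_3,s$ are units in $\Lambda[[x,y,z]]$ (the units $p_1,p_2,p_3$ coming from Corollary~\ref{cor:specialpropW1} applied to $x^a,y^b,z^2$, and $s$ from the $-qxyz$ term), and the exponents attached to the mixed $x$--$y$ terms are all $\ge a\ge4$. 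Because $W$ is analytic near the origin for small $q$ by Theorem~\ref{thm:conv}, all manipulations with the Weierstrass preparation theorem are legitimate.

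The key steps, in order. (i) Use $W_z = 2p_3 z + s\,xy + \cdots = 0$ to solve $z$ as an analytic function of $x,y$ and substitute, reducing to a two-variable series
\[
\tilde W = A x^a + B y^b + C x^2 y^2 + (\text{terms }x^{\ge a}y\text{ or }xy^{\ge a}),
\]
with $A,B,C$ units. (ii) Apply the Weierstrass preparation theorem to $\tilde W_x$, viewing it as a power series in $x$ whose lowest pure-$x$ term is $x^{a-1}$; this writes $\tilde W_x = (x^{a-1} - g_{a-2}(y)x^{a-2} - \cdots - g_0(y))\cdot(\text{unit})$, giving a relation expressing $x^{a-1}$ in terms of lower powers of $x$ with coefficients that are power series in $y$ vanishing at the origin. (iii) Determine the leading behavior of the $g_i(y)$ by repeatedly differentiating in $x$ and comparing with the explicit low-order terms of $\tilde W_x$, exactly as in the $b>6$ proof; this should yield a relation of the form $y^{b-2} = V_3 x^{a-1} + (\text{lower order in }x)$ with $V_3$ a unit, so that $y^{b-2}$ is expressible modulo the Jacobian ideal. (iv) Combine $\tilde W = 0$, $\tilde W_x = 0$, $\tilde W_y = 0$ and the substitution for $y^{b-2}$ to eliminate successively the monomials $y^b$, $x^2y^2$, and finally arrive at $A_{\star} x^a = (\text{divisible by }x^a)$ with $A_\star$ a unit, provided a certain explicit combination of constant terms — a rational expression in $a,b$ and $A(0),B(0),C(0)$ — is nonzero. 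The nonvanishing should follow from $b\ge a\ge4$ (which forces the combinatorial factor, analogous to $(6-b)/b$ in the previous case, to be nonzero) together with the fact that $A(0),B(0),C(0)$ are units beginning with powers of $q$. This proves $x^a\in(\tilde W,\tilde W_x,\tilde W_y)$, then $x^2y^2$ and $y^b$ follow as before, and hence $W=W_x=W_y=W_z=0$ forces $x=y=z=0$.

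The main obstacle I anticipate is step (iii)–(iv): bookkeeping the constant terms through the cascade of Weierstrass substitutions with two free parameters $a$ and $b$ (rather than the single parameter $b$ in the $(3,b,2)$ case) and checking that the final coefficient of $x^a$ genuinely fails to vanish. In the $(3,b,2)$ case the obstruction reduced to the single clean condition $b>6$; here one must verify that the analogous discriminant-type expression, which will be a polynomial in $\tfrac1a,\tfrac1b$ times a unit, cannot accidentally vanish for any admissible hyperbolic pair $(a,b)$ with $b\ge a\ge4$. One must also be slightly careful that the exponents $k$ in the $x^ky$ and $xy^k$ terms, while bounded below by $a$, may take several values; but since every such term is divisible by $x^a$ (resp. $y^a$, hence by $x^2y^2$ once one extra power of $x$ or $y$ is available), they are harmless in the elimination. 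Apart from this, the argument is a routine adaptation of the preceding proposition, so I would present it compactly, citing Corollary~\ref{cor:specialpropW1}, Corollary~\ref{cor:specialpropW2}, Remark~\ref{rem:specialpropW2}, Theorem~\ref{thm:conv}, and the Weierstrass preparation theorem, and only spelling out the constant-term computation that underlies the nonvanishing claim.
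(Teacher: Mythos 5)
Your plan follows essentially the same route as the paper's proof: eliminate $z$ via $W_z$, apply Weierstrass preparation to $\tilde W_x$ viewed in $x$ with lowest pure-$x$ term $x^{a-1}$, run the same elimination cascade as in Proposition \ref{prop:hyperisolc6} using $\tilde W$, $x\tilde W_x$, $y\tilde W_y$, and reduce everything to one non-vanishing constant term, after which $x^a$, $x^2y^2$, $y^b$ lie in the Jacobian ideal. Two caveats on the details you deferred: the Weierstrass remainder yields a relation for $y^k$, where $k$ is the (unspecified) exponent of the $xy^k$ term coming from Corollary \ref{cor:specialpropW2}(2) --- there is no guaranteed $xy^{b-2}$ term here, unlike the $(3,b,2)$ case, though this does not affect the cascade --- and the final coefficient computes to $-\frac{(a-2)(b-2)-4}{b}\bigl(A(0)+\frac{1}{a}\bigr)$, whose nonvanishing is precisely the hyperbolicity condition $(a-2)(b-2)>4$ rather than a consequence of $b\ge a\ge 4$ alone (it would vanish in the elliptic case $(4,4,2)$), so the hypothesis of hyperbolicity, not just the inequality on $a,b$, is what closes the argument.
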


\begin{proof}
The potential in this case is of the following form:
\begin{equation}
W= p_1 x^a + p_2 y^b + p_3 z^2 + s xyz + t xy^{k} + u x^l y +v x^2 y^2
\end{equation}
where $s$ is a unit in $\Lambda[[x,y,z]]$ because of the minimal triangle, and so are $p_1$, $p_2$, $p_3$ (Corollary \ref{cor:specialpropW1}). Also $t$ and $u$ only depends on $y$ and $x$, respectively. From Remark \ref{rem:specialpropW2}, we may assume that $l \geq a$. As before, we use the relation
$$W_z= 2p_3 z + s xy =0$$
in the Jacobian ideal ring to eliminate the variable $z$ and obtain a two-variable power series of the form
\begin{equation} \label{eq:W2}
\tilde{W} = A x^a + B y^b + C x^2 y^2 + D(y) xy^{k} + E(x) x^{l} y,
\end{equation}
where $A,B,C$ are units in $\Lambda[[x,y]]$, and $D$ and $E$ are series only in respectively $y$ and $x$. Taking the $x$- and $y$-derivatives of $\tilde{W}$, we have
\begin{align}\label{eq:Wx2}
\tilde{W}_x =  (a A + x A_x) x^{a-1} + (2C + x C_x) x y^2  + D y^{k} + (l E + xE_x) x^{l-1} y,\\
\label{eq:Wy2}
\tilde{W}_y = (b B + y B_y) y^{b-1} + (2C + y C_y) x^2 y + (k D+ y D_y) x y^{k-1} + E x^l.
\end{align}
By applying Weierstrass preparation theorem to \eqref{eq:Wx2}, we can find a series $G$ in $\Lambda [[x,y]]$ such that
\begin{equation}\label{eq:WeierWx2}
\tilde{W}_x = (x^{a-1} - h_1 (y) x^{a-1} - \cdots - h_{a-1} (y) - h(y) ) \cdot U
\end{equation}
where $h_i$'s and $h$ vanish at $y=0$ and $U$ is a unit.
Comparing \eqref{eq:Wx2} and \eqref{eq:WeierWx2}, we see that $U(0) = a A(0)$ and $h(y) = V \cdot y^k$ for some unit $V$ with $V(0) = D(0)$. 

Let us write $G xy:= h_1 (y) x^{a-1} + \cdots + h_{a-1} (y)$ for simplicity which makes sense since each $h_i$ is divisible $y=0$. Then,
\begin{equation}\label{eq:WeierWx2G}
\tilde{W}_x = (x^{a-1} -G xy - V y^k ) \cdot U,
\end{equation}
and we claim that $G$ is divisible by $y$.
Indeed, by taking the $x$-derivatives of \eqref{eq:Wx2} and \eqref{eq:WeierWx2G} indepedently,
\begin{equation}\label{eq:WeierWxx2Ga}
\tilde{W}_{xx} = ((a-1)x^{a-2} -G_x x y - G y ) \cdot U + (x^{a-1} -G xy - V y^k ) \cdot U_x
\end{equation}
and
\begin{equation}\label{eq:WeierWxx2Gb}
\begin{array}{cl}
\tilde{W}_{xx} =& ( a (a-1) A + 2a x A_x  + x^2 A_{xx} ) x^{a-2}+ (2C + 4 x C_x + x^2 C_{xx} ) y^2 \\
& + ( l(l-1)E +2l x E_x  + x^2 E_{xx}  )  x^{l-2} y
\end{array}
\end{equation}
Plugging $x=0$ in both of equations, it follows that
$ -U G \cdot y= 2C \cdot y^2 $,
and since $U$ and $C$ are units, $G = y \bar{G}$ for some unit $\bar{G}$ with $\bar{G}(0) = -\frac{2C(0)}{U(0)}$. 
Now, from \eqref{eq:WeierWx2G}, we have the following relation in the Jacobian ideal:
\begin{equation}\label{eq:1VG}
y^k = \frac{1}{V} (x^{a-1} - \bar{G} xy^2)
\end{equation}

Observe that we can eliminate ``$By^b$" term in \eqref{eq:W2} making use of the relation ``$y \times \eqref{eq:Wy2} =0$". This leads us to the following relation in the Jacobian ideal:
 $$A x^a + C_1 x^2 y^2 + D_1 xy^k + E_1 x^l y=0$$
 with units $C_1$ and $D_1$ satisfying $C_1 (0) = \frac{b-2}{b } C(0)$ and $D_1 (0) =  \frac{b- k}{b} D(0) $. We, then, substitute $\frac{1}{V} (x^{a-1} - \bar{G} xy^2)$ \eqref{eq:1VG} for $y^k$ in $D_1 xy^k$ (in the above equation) to get
 \begin{equation}\label{eq:A2C2ab4}
 A_2 x^a + C_2 x^2 y^2 + E_1 x^l y =0
 \end{equation}
where $A_2$ and $C_2$ are units with
\begin{eqnarray*}
&&A_2(0)= A(0) + \frac{ D_1 (0)}{V(0) }=  A(0) + \frac{b-k}{b} ,\\
&&C_2(0)= C_1(0) - D_1(0) \frac{ \bar{G} (0)}{V(0)} = \frac{b-2}{b} C(0) + \frac{2b-2k}{ab} \frac{ C(0)}{ A(0)}.
\end{eqnarray*}

On the other hand, we can repeat the same procedure with $y \tilde{W}_y =0$ to have
\begin{equation}\label{eq:A3C3ab4}
A_3 x^a + C_3 x^2 y^2 + E_3 x^l y =0
\end{equation}
for some units $A_3$ and $C_3$ with
\begin{eqnarray*}
&&A_3(0)= a A(0) + \frac{ D(0)}{V(0) }= a A(0) + 1 ,\\
&&C_3(0)= 2 C (0) - D(0) \frac{ \bar{G} (0)}{V(0)} = 2C(0) + \frac{2C(0)}{a A(0)}  .
\end{eqnarray*}

Lastly, combing \eqref{eq:A2C2ab4} and \eqref{eq:A3C3ab4}, we see that
$$A_4 x^a + E_2 x^l y = x_a (A_4 + E_2 x^{l-a} y) =0$$
with 
$$A_4 (0) = A_2 (0) C_3 (0) - A_3(0) C_2( 0) = -\frac{ (a-2)(b-2) -4}{b} \left( A(0) + \frac{1}{a} \right)$$
which shows that $A_4$ is a unit since $(a-2)(b-2) > 4$. As $(l-a) \geq 0$ from (2) of Corollary \ref{cor:specialpropW2}, we have $x^a \in (\tilde{W}, \tilde{W}_x, \tilde{W}_y)$. The remaining step is precisely the same as that of the proof of Proposition \ref{prop:hyperisolc6}, and we omit.
\end{proof}

Finally, we remark that Corollary \ref{cor:jacfindim} also holds in these cases by the same reason.
%
%

\appendix

\bibliographystyle{amsalpha}

\end{document}